\documentclass[12pt,reqno]{amsart}
\usepackage{enumerate, latexsym, amsmath, amsfonts, amssymb, amsthm, color,mathrsfs}
\textwidth=480pt \evensidemargin=0pt \oddsidemargin=0pt
\def\pmod #1{\ ({\rm{mod}}\ #1)}
\def\Z{\mathbb Z}
\def\C{\mathbb C}

\def\N{\mathbb N}

\def\R{\mathbb R}
\def\Q{\mathbb Q}

\def\sE{\mathscr E}
\def\sF{\mathscr F}
\def\sP{\mathscr P}

\def\jacob #1#2{\left(\frac{#1}{#2}\right)}
\def\pmod #1{\ ({\rm{mod}}\ #1)}

\def\cA{\mathcal{A}}
\def\cB{\mathcal{B}}
\def\bi{{\mathbf{i}}}

\def\1{{\bf{1}}}

\theoremstyle{plain}
\newtheorem{theorem}{Theorem}

\newtheorem{lemma}{Lemma}
\newtheorem{corollary}{Corollary}

\theoremstyle{definition}
\newtheorem*{acknowledgment}{Acknowledgments}
\theoremstyle{remark}

 \vspace{4mm}

\begin{document}

\hbox{}
\medskip

\title
{$p$-adic analogues of hypergeometric identities}

\author{Guo-Shuai Mao}
\address {Department of Mathematics, Nanjing
University, Nanjing 210093, People's Republic of China}
\email{mg1421007@smail.nju.edu.cn}
\author{Hao Pan}
\address{School of Applied Mathematics, Nanjing University of Finance and Economics, Nanjing 210046, People's Republic of China}
\email{haopan79@zoho.com}

 \begin{abstract}
We prove many congruences modulo $p^2$ for the truncated hypergeometric series in a unified way. For example, for any odd prime $p$ and two $p$-integers $\alpha,\beta$, we have the $p$-adic Gauss identity 
$$
\sum_{k=0}^{p-1}\frac{(\alpha)_k(\beta)_k}{(1)_k^2}\equiv-\frac{\Gamma_p(1-\alpha-\beta)}{\Gamma_p(1-\alpha)\Gamma_p(1-\beta)}\pmod{p^2},
$$
provided that $p$ is less than the sum of the least non-negative residues of $\alpha$ and $\beta$ modulo $p$. Furthermore, the $p$-adic analogues of some common hypergeometric identities, including the balanced ${}_4F_3$ transformation and Whipple's ${}_7F_6$ transformation, are established. We also confirm a conjecture of Deines et al.
\end{abstract}

\keywords{truncated hypergeometric series; $p$-adic gamma function; hypergeometric transformation}
\subjclass[2010]{Primary 33C05; Secondary 05A10, 11A07, 11B65, 11S80, 33B15, 33C20}
\thanks{}
\maketitle

\section{Introduction}
\setcounter{lemma}{0}
\setcounter{theorem}{0}
\setcounter{corollary}{0}
\setcounter{remark}{0}
\setcounter{equation}{0}
\setcounter{conjecture}{0}

Define the {\it hypergeometric series}
\begin{equation}\label{hypergeometricseries}
{}_{m+1}F_m\bigg[\begin{matrix}
\alpha_0&\alpha_1&\ldots&\alpha_m\\
&\beta_1&\ldots&\beta_m
\end{matrix}\bigg|\,z\bigg]:=\sum_{k=0}^{\infty}\frac{(\alpha_0)_k(\alpha_1)_k\cdots(\alpha_m)_k}{(\beta_1)_k\cdots(\beta_m)_k}\cdot\frac{z^k}{k!},
\end{equation}
where $\alpha_0,\ldots,\alpha_m,\beta_1,\ldots,\beta_m,z\in\C$ and
$$
(\alpha)_k=\begin{cases}\alpha(\alpha+1)\cdots(\alpha+k-1),&\text{if }k\geq 1,\\
1,&\text{if }k=0.\end{cases}
$$
It is easy to see that (\ref{hypergeometricseries}) absolutely converges whenever $|z|<1$, or $|z|=1$ and $\Re(\beta_1+\cdots+\beta_m)>\Re(\alpha_0+\cdots+\alpha_m)$. If (\ref{hypergeometricseries}) is convergent, it is also called a {\it hypergeometric function}.
The hypergeometric functions play a very important role in mathematics. A reason is that many common mathematical functions, such as $e^x$, $\log x$, $\arcsin x$, $\arctan x$, can be expressed in the form of a hypergeometric series. The explicit expressions of a few hypergeometric functions are known. For example, a well-known result due to Gauss (cf. \cite[Theorem 2.2.2]{AAR99}) says that
\begin{equation}\label{Gaussidentity}
{}_{2}F_1\bigg[\begin{matrix}
\alpha&\beta\\
&\gamma
\end{matrix}\bigg|\,1\bigg]=\frac{\Gamma(\gamma)\Gamma(\gamma-\alpha-\beta)}{\Gamma(\gamma-\alpha)\Gamma(\gamma-\beta)}
\end{equation}
provided that $\Re(\gamma)>\Re(\alpha+\beta)$, where $\Gamma(\cdot)$ is the gamma function. Furthermore, there exist some transformations between the hypergeometric functions. For example, we have the Pfaff transformation (cf. \cite[2.2.6]{AAR99})
\begin{equation}\label{Eulertransformation}
{}_{2}F_1\bigg[\begin{matrix}
\alpha&\beta\\
&\gamma
\end{matrix}\bigg|\,z\bigg]=(1-z)^{-\alpha}\cdot{}_{2}F_1\bigg[\begin{matrix}
\alpha&\gamma-\beta\\
&\gamma
\end{matrix}\bigg|\,\frac{z}{z-1}\bigg].
\end{equation}

For $n=0,1,2,\ldots$, define {\it the truncated hypergeometric function}
$$
{}_{m+1}F_m\bigg[\begin{matrix}
\alpha_0&\alpha_1&\ldots&\alpha_m\\
&\beta_1&\ldots&\beta_m
\end{matrix}\bigg|\,z\bigg]_n:=\sum_{k=0}^{n}\frac{(\alpha_0)_k(\alpha_1)_k\cdots(\alpha_m)_k}{(\beta_1)_k\cdots(\beta_m)_k}\cdot\frac{z^k}{k!}.
$$
That is, a truncated hypergeometric function is the summation of the first finitely many terms in the corresponding hypergeometric series. Unlike the original hypergeometric functions, few explicit formulas are known for the truncated hypergeometric functions. On the other hand, in the recent years, the arithmetic properties of the truncated hypergeometric functions were widely studied. For example, Ahlgren and Ono \cite{AhOn00} proved that for any odd prime $p$,
\begin{equation}\label{AhlgrenOno}
{}_{4}F_3\bigg[\begin{matrix}
\frac12&\frac12&\frac12&\frac12\\
&1&1&1
\end{matrix}\bigg|\,1\bigg]_{p-1}\equiv a(p)\pmod{p^2},
\end{equation}
where $a(p)$ is the $p$-th Fourier coefficient of $\eta(2z)^4\eta(4z)^2\in S_{4}(\Gamma_0(8))$ and $\eta(\cdot)$ is the Dedekind eta-function. Ahlgren and Ono's result solves a conjectured congruence concerning the Ap\'ery numbers.
Subsequently, Kilbourn \cite{Kilbourn06} showed that (\ref{AhlgrenOno}) is still valid if $p^2$ is replaced by $p^3$. Furthermore, it has been shown that the truncated hypergeometric functions are closely related to the Gaussian hypergeometric functions \cite{Ahlgren01,FOP04,Ono98}, which is a finite field analogue of the original hypergeometric functions.

In \cite{Mortenson03,Mortenson04}, Mortenson proved that for any prime $p\geq 5$,
\begin{align}\label{Mortensoncongruences}
{}_{2}F_1\bigg[\begin{matrix}\frac12&\frac12\\&1\end{matrix}\bigg|\,1\bigg]_{p-1}&\equiv\jacob{-1}p\pmod{p^2},\quad
{}_{2}F_1\bigg[\begin{matrix}\frac13&\frac23\\&1\end{matrix}\bigg|\,1\bigg]_{p-1}\equiv\jacob{-3}p\pmod{p^2},\notag\\
{}_{2}F_1\bigg[\begin{matrix}\frac14&\frac34\\&1\end{matrix}\bigg|\,1\bigg]_{p-1}&\equiv\jacob{-2}p\pmod{p^2},\quad
{}_{2}F_1\bigg[\begin{matrix}\frac16&\frac56\\&1\end{matrix}\bigg|\,1\bigg]_{p-1}\equiv\jacob{-1}p\pmod{p^2},
\end{align}
where $\jacob{\cdot}{\cdot}$ denotes the Legendre symbol. The above congruences confirms several conjectures of Rodriguez-Villegas \cite{RVillegas03}, which are motivated by the Calabi-Yau manifolds. A key ingredient of Mortenson's proofs is the Gross-Koblitz formula, which can transfer a $p$-adic gamma function to a Gauss sum.
Moreover, in \cite{Mortenson05}, Mortenson established a general frame to study the supercongruences concerning the truncated hypergeometric functions by using the Gross-Koblitz formula and the Gaussian hypergeometric functions.
Also, an elementary proof of those congruences in (\ref{Mortensoncongruences}) was given in \cite{SunZW13}. 

On the other hand, in \cite{SunZH14}, Sun found that Mortenson's congruences can be extended to a unified form. For any $\alpha\in\Q$ which is $p$-integral (i.e. the denominator of $\alpha$ is prime to $p$), let $\langle\alpha\rangle_p$
be the least non-negative residue of $\alpha$ modulo $p$, i.e., the integer lying in $\{0,1,\ldots,p-1\}$ such that $\langle\alpha\rangle_p\equiv\alpha\pmod{p}.$ Sun proved that for any $p$-integral $\alpha\in\Q$,
\begin{equation}\label{SunZHalpha1}
{}_2F_1\bigg[\begin{matrix}
\alpha&1-\alpha\\
&1
\end{matrix}\bigg|\,1\bigg]_{p-1}\equiv(-1)^{\langle-\alpha\rangle_p}\pmod{p^2}.
\end{equation}
It is not difficult to verify that Mortenson's congruences in (\ref{Mortensoncongruences}) are the special cases of (\ref{SunZHalpha1}) when $\alpha=1/2,1/3,1/4,1/6$.
In fact, Sun obtained the following general result:
\begin{equation}\label{SunZHalphaz}
{}_2F_1\bigg[\begin{matrix}
\alpha&1-\alpha\\
&1
\end{matrix}\bigg|\,z\bigg]_{p-1}\equiv(-1)^{\langle-\alpha\rangle_p}
{}_2F_1\bigg[\begin{matrix}
\alpha&1-\alpha\\
&1
\end{matrix}\bigg|\,1-z\bigg]_{p-1}\pmod{p^2}.
\end{equation}
As we shall see later, (\ref{SunZHalphaz}) can be viewed as a $p$-adic analogue of a special case of the linear transformation due to Pfaff \cite[Eq. (2.3.14)]{AAR99}
\begin{equation}\label{nbetagammaz1z}
{}_2F_1\bigg[\begin{matrix}
-n&\beta\\
&\gamma
\end{matrix}\bigg|\,z\bigg]=\frac{(\gamma-\beta)_n}{(\gamma)_n}\cdot{}_2F_1\bigg[\begin{matrix}
-n&\beta\\
&\beta-\gamma-n+1
\end{matrix}\bigg|\,1-z\bigg],
\end{equation}
where $n\in\N=\{0,1,2,\ldots\}$. For more supercongruences involving the truncated hypergeometric functions, the readers may read \cite{AhOn00,BaSa1507,BD1805,DFLST16,He17a,He17b,He17c,KLMSY16,Kilbourn06,Liu17,Liu1610,Liu17b,
Long11,LoRa16,LTNZ1705,McCarthy12a,McCarthy12b,Mortenson03,Mortenson05,OsSc09,OsStZu1701,
OsZu16,RR1803,SunZH11,SunZH13,SunZW11,SunZW12,SunZW13,Swisher15,Tauraso12,Tauraso1701,Zudilin09}.

In this paper, we shall focus on the relation between the congruences concerning the truncated hypergeometric functions and the identities concerning the original hypergeometric functions. In particular, we can show that many hypergeometric identities have its $p$-adic analogue, i.e., a congruence modulo $p^2$ concerning the corresponding truncated hypergeometric function.

First, substituting $\gamma=1$ in the Gauss identity (\ref{Gaussidentity}), we get
\begin{equation}\label{alphabeta12F1}
{}_{2}F_1\bigg[\begin{matrix}
\alpha&\beta\\
&1
\end{matrix}\bigg|\,1\bigg]=\frac{\Gamma(1-\alpha-\beta)}{\Gamma(1-\alpha)\Gamma(1-\beta)}.
\end{equation}
In order to give a $p$-adic analogue of (\ref{alphabeta12F1}), we need the $p$-adic gamma function. For a prime $p$, let  $\Z_p$ denote the ring of all $p$-adic integers and let $$\Z_p^{\times}:=\{a\in\Z_p:\,a\text{ is prime to }p\}.$$
For each $\alpha\in\Z_p$, define the $p$-adic order $\nu_p(\alpha):=\max\{n\in\N:\, p^n\mid \alpha\}$ and the $p$-adic norm $|\alpha|_p:=p^{-\nu_p(\alpha)}$. Define the $p$-adic gamma function $\Gamma_p(\cdot)$ by
$$
\Gamma_p(n)=(-1)^n\prod_{\substack{1\leq j<n\\ (k,p)=1}}k,\qquad n=1,2,3,\ldots,
$$
and
$$
\Gamma_p(\alpha)=\lim_{\substack{|\alpha-n|_p\to 0\\ n\in\N}}\Gamma_p(n),\qquad \alpha\in\Z_p.
$$
In particular, we set $\Gamma_p(0)=1$. Throughout the whole paper, we only need to use the most basic properties of $\Gamma_p$, and all of them can be found in \cite{Murty02,Robert00}.
For example, we know that
\begin{equation}\label{Gammapx1Gammapx}
\frac{\Gamma_p(x+1)}{\Gamma_p(x)}=\begin{cases}-x,&\text{if }|x|_p=1,\\
-1,&\text{if }|x|_p>1.
\end{cases}
\end{equation}
Now we can give  the following $p$-adic analogue of (\ref{alphabeta12F1}).
\begin{theorem}\label{alphabeta11}
Let $p$ be an odd prime and $\alpha,\beta\in\Z_p$.
If $\langle-\alpha\rangle_p+\langle-\beta\rangle_p<p$, then
\begin{equation}\label{alphabeta11c1}
{}_2F_1\bigg[\begin{matrix}
\alpha&\beta\\
&1
\end{matrix}\bigg|\,1\bigg]_{p-1}\equiv-\frac{\Gamma_p(1-\alpha-\beta)}{\Gamma_p(1-\alpha)\Gamma_p(1-\beta)}\pmod{p^2}.
\end{equation}
And if $\langle-\alpha\rangle_p+\langle-\beta\rangle_p\geq p$, then
\begin{equation}\label{alphabeta11c2}
{}_2F_1\bigg[\begin{matrix}
\alpha&\beta\\
&1
\end{matrix}\bigg|\,1\bigg]_{p-1}\equiv\big(\alpha+\beta+\langle-\alpha\rangle_p+\langle-\beta\rangle_p-p\big)\cdot \frac{\Gamma_p(1-\alpha-\beta)}{\Gamma_p(1-\alpha)\Gamma_p(1-\beta)}\pmod{p^2}.
\end{equation}
\end{theorem}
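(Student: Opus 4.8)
Write $a=\ab{-\alpha}_p$ and $b=\ab{-\beta}_p$, so that $\alpha+a,\beta+b\in p\Z_p$. The plan is to reduce the truncated sum, modulo $p^2$, to a combination of \emph{terminating} series that can be summed in closed form by the Chu--Vandermonde identity, and only afterwards to bring in the $p$-adic gamma function. The key observation is that each $(\alpha)_k$ is a polynomial in $\alpha$ with integer coefficients, so $(\alpha)_k-(-a)_k$ is divisible by $\alpha+a$ in $\Z[\alpha]$; hence $(\alpha)_k-(-a)_k\in p\Z_p$, and likewise $(\beta)_k-(-b)_k\in p\Z_p$. Their product therefore lies in $p^2\Z_p$, which yields the bilinear congruence
\[
(\alpha)_k(\beta)_k\equiv (\alpha)_k(-b)_k+(-a)_k(\beta)_k-(-a)_k(-b)_k\pmod{p^2}.
\]
Summing over $0\le k\le p-1$ and noting that each of the three resulting series terminates (one of its numerator parameters being a non-positive integer $\le p-1$), the Chu--Vandermonde evaluation gives
\[
\sum_{k=0}^{p-1}\frac{(\alpha)_k(\beta)_k}{(1)_k^2}\equiv \frac{(1-\alpha)_b}{b!}+\frac{(1-\beta)_a}{a!}-\binom{a+b}{a}\pmod{p^2}.
\]

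The second stage is to recognise the right-hand side as the claimed $\Gamma_p$-expression. I would use the functional equation (\ref{Gammapx1Gammapx}) in its iterated form $(x)_n=(-1)^n\Gamma_p(x+n)\Gamma_p(x)^{-1}\prod_{0\le j<n,\,p\mid(x+j)}(x+j)$, applied with $(x,n)=(1-\alpha,b)$ and symmetrically $(1-\beta,a)$. Since $1-\alpha+j\equiv 1+a+j\pmod p$, the product over $p$-divisible factors is \emph{empty} exactly when $a+b<p$ and consists of the single factor $p-(\alpha+a)$ when $a+b\ge p$: this is precisely the origin of the dichotomy in the statement. To reconcile the arguments $1-\alpha+b$ and $1-\beta+a$ with the target argument $1-\alpha-\beta$ (which they match only modulo $p$, the differences $\beta+b$ and $\alpha+a$ lying in $p\Z_p$), I would expand to first order through the $p$-adic digamma function $\psi_p=\Gamma_p'/\Gamma_p$, whose increments $\psi_p(x+1)-\psi_p(x)$ equal $1/x$ or $0$ according to the two branches of (\ref{Gammapx1Gammapx}).

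Assembling these pieces, the zeroth-order terms reproduce $-\Gamma_p(1-\alpha-\beta)\big(\Gamma_p(1-\alpha)\Gamma_p(1-\beta)\big)^{-1}$ via the elementary evaluation $\Gamma_p(1+n)=(-1)^{n+1}n!$ for $0\le n\le p-1$, while the first-order contributions of the two Chu--Vandermonde terms agree with the first-order expansion of the $\Gamma_p$-ratio, since each digamma increment $\psi_p(1+a+b)-\psi_p(1+a)$ is exactly the harmonic sum $\sum_{a<i\le a+b}1/i$; this settles the case $a+b<p$. The main obstacle is the complementary case $a+b\ge p$: here $\binom{a+b}{a}$ is divisible by $p$ (a single base-$p$ carry, by Kummer's theorem), the relevant harmonic range $[a+1,a+b]$ contains the non-$p$-integral term $1/p$, and the extra factor $p-(\alpha+a)$ appears. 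Tracking how these $1/p$ singularities pair against the $p$-divisibility of $\binom{a+b}{a}$ so as to collapse into the single prefactor $\alpha+\beta+a+b-p=(\alpha+a)+(\beta+b)-p\in p\Z_p$ is the delicate bookkeeping. One must also treat separately the boundary situations in which $1+a$, $1+b$, or $1+a+b$ is itself divisible by $p$, where the second branch of (\ref{Gammapx1Gammapx}) governs the $\Gamma_p$-values.
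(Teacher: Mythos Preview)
Your approach is correct and genuinely different from the paper's. The paper perturbs only one variable: it freezes $\alpha$ at the integer $-a$, uses the terminating Gauss/Chu--Vandermonde sum to get the exact value, converts to $\Gamma_p$, and then recovers general $\alpha$ by a first-order Taylor expansion in $\alpha+a$, matching the derivative of the truncated sum against the derivative of the $\Gamma$-ratio via their Lemma~\ref{Gammapadicderivativealphabeta}. Your bilinear identity $(\alpha)_k(\beta)_k\equiv(\alpha)_k(-b)_k+(-a)_k(\beta)_k-(-a)_k(-b)_k\pmod{p^2}$ instead perturbs \emph{both} variables symmetrically, reducing the problem to three closed-form Chu--Vandermonde sums before any $\Gamma_p$ enters; the digamma bookkeeping then replaces the paper's derivative-matching lemma. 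Your route is more elementary and self-contained for this statement (no need for Lemma~\ref{Gammapadicderivativealphabeta}), while the paper's single-variable derivative template is what they reuse uniformly for every transformation in the paper, which is its real payoff.

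One comment: you describe the case $a+b\ge p$ as requiring ``delicate bookkeeping'' with $1/p$ singularities, but in fact it is cleaner than the first case in your framework. Each of your three terms is already $O(p)$: $(1-\alpha)_b/b!\equiv -(p-\alpha-a)\,\Gamma_p(1-\alpha-\beta)\big/\Gamma_p(1-\alpha)\Gamma_p(1-\beta)$, the symmetric term contributes $-(p-\beta-b)$ times the same ratio, and $-\binom{a+b}{a}=p\,\Gamma_p(1+a+b)\big/\Gamma_p(1+a)\Gamma_p(1+b)$ contributes $+p$ times it. These add to $(\alpha+a)+(\beta+b)-p$ with no first-order digamma corrections needed, since the leading prefactors already lie in $p\Z_p$. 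Likewise, the ``boundary situations'' $a=p-1$, $b=p-1$, $a+b=p-1$ require no special handling: the identity $n!=(-1)^{n+1}\Gamma_p(1+n)$ for $0\le n\le p-1$ and the continuity of $\Gamma_p$ cover them uniformly.
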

For example, substituting $\alpha=1/3$ and $\beta=1/4$ in (\ref{alphabeta11c1}), we have
$$
{}_2F_1\bigg[\begin{matrix}
\frac13&\frac14\\
&1
\end{matrix}\bigg|\,1\bigg]_{p-1}\equiv-\frac{\Gamma_p(\frac5{12})}{\Gamma_p(\frac23)\Gamma_{p}(\frac34)}\pmod{p^2}
$$
for any prime $p\equiv 1\pmod{4}$.
Also, (\ref{SunZHalpha1}) can be deduced from (\ref{alphabeta11c1}) since 
\begin{equation}
\Gamma_p(\alpha)\Gamma_p(1-\alpha)=(-1)^{\langle-\alpha\rangle_p-1}.
\end{equation}
Furthermore, we mention that the negative sign in the right side of (\ref{alphabeta11c1}) is due to $\Gamma_p(1)=-1$.
In fact, as we shall see soon, if the ``$p-1$'' in the right subscript of the left side of (\ref{alphabeta11c1}) is replaced by ``$\langle-\gamma\rangle_p$'' , then we may get a complete $p$-adic analogue of (\ref{Gaussidentity}).

Before we introduce more results, let us briefly describe the way to prove (\ref{alphabeta11c1}). Our first observation is that if $\alpha=-\langle-\alpha\rangle_p$, then the truncated hypergeometric function is actually an original hypergeometric function. At this moment, the desired congruence easily follows from the identity (\ref{alphabeta12F1}). Thus for general $\alpha\in\Z_p$, we only need to prove
\begin{align}\label{alphabeta11c1ap}
&\frac1p\bigg({}_2F_1\bigg[\begin{matrix}
\alpha&\beta\\
&1
\end{matrix}\bigg|\,1\bigg]_{p-1}-{}_2F_1\bigg[\begin{matrix}
-a&\beta\\
&1
\end{matrix}\bigg|\,1\bigg]\bigg)\notag\\
\equiv&\frac1p\bigg(\frac{\Gamma_p(1+a-\beta)}{\Gamma_p(1+a)\Gamma_p(1-\beta)}-\frac{\Gamma_p(1-\alpha-\beta)}{\Gamma_p(1-\alpha)\Gamma_p(1-\beta)}\bigg)\pmod{p},
\end{align}
where $a=\langle-\alpha\rangle_p$. Obviously (\ref{alphabeta11c1ap}) is just a congruence modulo $p$, which can be easily deduced from the combinatorial identity
\begin{equation}\label{alphabeta11c1ape}
\sum_{k=0}^b\binom{b}{k}\sum_{j=1}^k\frac{(-1)^j}{j}\cdot\binom{a}{k-j}=-\binom{a+b}{b}\sum_{j=a+1}^{a+b}\frac1j
\end{equation}
where $b=\langle-\beta\rangle_p$. Furthermore, apparently (\ref{alphabeta11c1ape}) is an equivalent form of
$$
\frac{d}{dx}\bigg({}_2F_1\bigg[\begin{matrix}
-a+x&-b\\
&1
\end{matrix}\bigg|\,1\bigg]\bigg)\bigg|_{x=0}=
\frac{d}{dx}\bigg(\frac{\Gamma(1+a+b-x)}{\Gamma(1+a-x)\Gamma(1+b)}\bigg)\bigg|_{x=0}.
$$
Notice that there are two steps in the above proof of (\ref{alphabeta11c1}). One is that the congruence can be directly deduced from the corresponding  hypergeometric identity provided $\alpha=-\langle-\alpha\rangle_p$. Thus the original congruence modulo $p^2$ is reduced to a new congruence modulo $p$. The next step is that by taking the derivatives of the hypergeometric identity in the variable $\alpha$, we may get a combinatorial identity which easily implies the desired congruence modulo $p$.
Then the proof is complete. The above two steps both heavily depend on the corresponding  hypergeometric identity. In fact, as we shall see later, by constructing some suitable polynomials, the desired congruence modulo $p^2$ may be reduced to compute the derivatives of those polynomials modulo $p$, which often can be deduced by taking the derivatives in the both sides of corresponding hypergeometric identity.  With help of the same idea, in most cases, if at least one of $\alpha_0,\ldots,\alpha_m,\beta_1,\ldots,\beta_m$ is variable in a hypergeometric identity, then we might expect to get a congruence modulo $p^2$ involving the corresponding truncated hypergeometric function.

In the next section, we shall systematically introduce our main results of this paper, including the $p$-adic analogues of some  quadratic transformations of ${}_2F_1$, the transformation of balanced ${}_4F_3$ series, and Whipple's transformation on ${}_7F_6$ series. Throughout the whole paper, we always assume that $p$ is an odd prime.

\section{Main results}
\setcounter{lemma}{0}
\setcounter{theorem}{0}
\setcounter{corollary}{0}
\setcounter{remark}{0}
\setcounter{equation}{0}
\setcounter{conjecture}{0}

First, let us consider the $p$-adic analogue of the linear and quadratic transformations of the ${}_2F_1$ series.
Substituting $\beta=1-\alpha$ in the quadratic transformation (\cite[Eq. (3.1.3)]{AAR99})
\begin{equation}\label{alphabetaz4z1ze}
{}_2F_1\bigg[\begin{matrix}
\alpha&\beta\\
&\frac12+\frac12(\alpha+\beta)
\end{matrix}\bigg|\,z\bigg]={}_2F_1\bigg[\begin{matrix}
\frac12\alpha&\frac12\beta\\
&\frac12+\frac12(\alpha+\beta)
\end{matrix}\bigg|\,4z(1-z)\bigg],
\end{equation}
we may get
\begin{equation}\label{alphaz4z1ze}
{}_2F_1\bigg[\begin{matrix}
\alpha&1-\alpha\\
&1
\end{matrix}\bigg|\,z\bigg]=
{}_2F_1\bigg[\begin{matrix} 
\frac12\alpha&\frac12-\frac12\alpha\\
&1
\end{matrix}\bigg|\,4z(1-z)\bigg].
\end{equation}
The identity (\ref{alphaz4z1ze}) has the following $p$-adic analogue. 
\begin{theorem}\label{alphaz4z1z}
Suppose that $\alpha\in\Z_p$ and $\langle-\alpha\rangle_p$ is even. Then for any $z\in\Z_p$,
\begin{equation}\label{alphaz4z1zc}
{}_2F_1\bigg[\begin{matrix}
\alpha&1-\alpha\\
&1
\end{matrix}\bigg|\,z\bigg]_{p-1}\equiv
{}_2F_1\bigg[\begin{matrix}
\frac12\alpha&\frac12-\frac12\alpha\\
&1
\end{matrix}\bigg|\,4z(1-z)\bigg]_{p-1}\pmod{p^2}.
\end{equation}
\end{theorem}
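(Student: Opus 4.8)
The plan is to avoid differentiation altogether and instead compare the two truncated series coefficient-by-coefficient in the variable $z$. First I would read the specialized quadratic transformation (\ref{alphaz4z1ze}) as an identity of formal power series in $z$ whose coefficients are polynomials in $\alpha$. Writing the left-hand series as $\sum_{n\ge0}C_n(\alpha)z^n$ with $C_n(\alpha)=(\alpha)_n(1-\alpha)_n/(n!)^2$, and expanding $(4z(1-z))^m=4^m\sum_j(-1)^j\binom{m}{j}z^{m+j}$ on the right, (\ref{alphaz4z1ze}) is equivalent to the family of identities
\[
C_n(\alpha)=\sum_{\lceil n/2\rceil\le m\le n}\frac{(\tfrac12\alpha)_m(\tfrac12-\tfrac12\alpha)_m}{(m!)^2}\,4^m(-1)^{n-m}\binom{m}{n-m}\qquad(n\ge0),
\]
which hold identically in $\alpha$. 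Denoting by $f(\alpha)$ and $g(\alpha)$ the two truncated sums appearing in (\ref{alphaz4z1zc}) and regrouping $g(\alpha)$ by powers of $z$, the coefficient of $z^n$ in $g(\alpha)$ for $0\le n\le p-1$ runs over all $m$ with $\lceil n/2\rceil\le m\le n\le p-1$ and hence equals exactly $C_n(\alpha)$ by the displayed identity, matching $f(\alpha)$. Thus $f$ and $g$ agree identically in every $z$-degree $\le p-1$, and only the ``excess'' high-degree terms survive:
\[
g(\alpha)-f(\alpha)=\sum_{n=p}^{2p-2}E_n(\alpha)z^n,\qquad E_n(\alpha)=\sum_{\lceil n/2\rceil\le m\le p-1}\frac{(\tfrac12\alpha)_m(\tfrac12-\tfrac12\alpha)_m}{(m!)^2}\,4^m(-1)^{n-m}\binom{m}{n-m}.
\]

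It remains to show $E_n(\alpha)\equiv0\pmod{p^2}$ for every $p\le n\le 2p-2$, and this is the crux of the argument. The decisive point is that each summation index satisfies $m\ge\lceil n/2\rceil\ge(p+1)/2$, while $m\le p-1$. Setting $a=\langle-\alpha\rangle_p$, which is even by hypothesis, and reducing modulo $p$ with $\alpha\equiv-a$, the factor $\tfrac12\alpha+i$ of $(\tfrac12\alpha)_m$ vanishes modulo $p$ at $i=a/2$, while the factor $\tfrac12-\tfrac12\alpha+i$ of $(\tfrac12-\tfrac12\alpha)_m$ vanishes at $i=(p-1-a)/2$. Both indices are nonnegative integers (here the evenness of $a$ is used) not exceeding $(p-1)/2$, hence at most $m-1$, so they genuinely occur among the factors $i=0,\dots,m-1$. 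Therefore $(\tfrac12\alpha)_m$ and $(\tfrac12-\tfrac12\alpha)_m$ are each divisible by $p$, their product is divisible by $p^2$, and since $m\le p-1$ makes $(m!)^{-2}$ a $p$-adic unit, every summand of $E_n(\alpha)$ lies in $p^2\Z_p$. As $z^n\in\Z_p$, this gives $g(\alpha)\equiv f(\alpha)\pmod{p^2}$, which is (\ref{alphaz4z1zc}).

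The main obstacle is exactly this double divisibility, and it is also where the hypothesis on $\langle-\alpha\rangle_p$ is indispensable. When $a$ is even, both half-integer parameters meet a zero modulo $p$ at an integer index below $(p+1)/2$, forcing two independent factors of $p$; when $a$ is odd, one of the two zeros is pushed to an index $\ge p-1$, which falls outside the admissible range $i\le m-1\le p-2$, so only a single factor of $p$ survives and the method yields merely a congruence modulo $p$, consistent with the restriction in the statement. I would therefore centre the write-up on a short counting lemma recording, for $(p+1)/2\le m\le p-1$ and $\alpha\equiv-a$ with $a$ even, that both $(\tfrac12\alpha)_m$ and $(\tfrac12-\tfrac12\alpha)_m$ are divisible by $p$. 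The only other point needing care is the passage from the analytic identity (\ref{alphaz4z1ze}) to the polynomial coefficient identities above: this is routine, since two polynomials in $\alpha$ agreeing on the convergence domain of the transformation agree identically.
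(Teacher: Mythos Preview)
Your proposal is correct. The core observation is the same as the paper's --- compare coefficients of $z^n$ using the quadratic transformation (\ref{alphaz4z1ze}), match them exactly for $0\le n\le p-1$, and then show the ``excess'' contribution for $p\le n\le 2p-2$ is negligible because, for $m\ge(p+1)/2$, the Pochhammer factors in the right-hand series pick up divisibility by $p$.

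The execution differs in a useful way. The paper parametrizes $\alpha=-a-x$, uses Lemma~\ref{taylorexpansionrational} to reduce the $p^2$-congruence to a $p$-congruence for the $x$-derivative, and then handles the excess coefficients of $G(z)$ via the product rule: one Pochhammer $(-\tfrac12a)_k$ vanishes exactly at $x=0$ (this is where evenness of $a$ enters), so only the other survives in the derivative and contributes a single $p$. You instead work directly at the given $\alpha$ and observe that for $m\ge(p+1)/2$ both $(\tfrac12\alpha)_m$ and $(\tfrac12-\tfrac12\alpha)_m$ lie in $p\Z_p$ (since $a/2$ and $(p-1-a)/2$ are integers in $\{0,\dots,m-1\}$ when $a$ is even), so the product is in $p^2\Z_p$ outright. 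This bypasses the Taylor step entirely and is a genuine simplification for this particular theorem. The paper's differentiation framework, on the other hand, is set up to handle the many later theorems uniformly, including those where no such direct $p^2$-divisibility of the excess is available.
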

For example, for each prime $p\equiv 1,3\pmod{5}$, we have
$$
{}_2F_1\bigg[\begin{matrix}
\frac25&\frac35\\
&1
\end{matrix}\bigg|\,z\bigg]_{p-1}\equiv
{}_2F_1\bigg[\begin{matrix}
\frac15&\frac3{10}\\
&1
\end{matrix}\bigg|\,4z(1-z)\bigg]_{p-1}\pmod{p^2}.
$$
In fact, as we shall see later, (\ref{alphaz4z1zc}) can viewed as a congruence for the polynomials in $z$. That is, for each $n\geq 0$, the coefficients of $z^n$ in the both sides of (\ref{alphaz4z1zc}) are congruent modulo $p^2$.

A similar example is the Clausen identity \cite[p. 116, Ex. 13]{AAR99}
\begin{equation}\label{Clausenidentity}
\bigg({}_2F_1\bigg[\begin{matrix}
\frac12\alpha&\frac12\beta\\
&\frac12+\frac12(\alpha+\beta)\end{matrix}\bigg|\,z\bigg]\bigg)^2=
{}_3F_2\bigg[\begin{matrix}
\alpha&\beta&\frac12(\alpha+\beta)\\
&\alpha+\beta&\frac12+\frac12(\alpha+\beta)
\end{matrix}\bigg|\,z\bigg].
\end{equation}
Combining (\ref{Clausenidentity}) with (\ref{alphabetaz4z1ze}), we get
\begin{equation}
\bigg({}_2F_1\bigg[\begin{matrix}
\alpha&\beta\\
&\frac12+\frac12(\alpha+\beta)
\end{matrix}\bigg|\,z\bigg]\bigg)^2=
{}_3F_2\bigg[\begin{matrix}
\alpha&\beta&\frac12(\alpha+\beta)\\
&\alpha+\beta&\frac12+\frac12(\alpha+\beta)
\end{matrix}\bigg|\,4z(1-z)\bigg].
\end{equation}
In particular,
\begin{equation}\label{alpha1alphaz124z1ze}
\bigg({}_2F_1\bigg[\begin{matrix}
\alpha&1-\alpha\\
&1\end{matrix}\bigg|\,z\bigg]\bigg)^2=
{}_3F_2\bigg[\begin{matrix}
\alpha&1-\alpha&\frac12\\
&1&1
\end{matrix}\bigg|\,4z(1-z)\bigg]
\end{equation}
We also have the $p$-adic analogue of (\ref{alpha1alphaz124z1ze}) as follows.
\begin{theorem}\label{alpha1alphaz124z1z}
Let $\alpha,z\in\Z_p$. Then
\begin{equation}\label{alpha1alphaz124z1zdxzn}\bigg({}_2F_1\bigg[\begin{matrix}
\alpha&1-\alpha\\
&1\end{matrix}\bigg|\,z\bigg]_{p-1}\bigg)^2\equiv
{}_3F_2\bigg[\begin{matrix}
\alpha&1-\alpha&\frac12\\
&1&1
\end{matrix}\bigg|\,4z(1-z)\bigg]_{p-1}\pmod{p^2}.
\end{equation}
\end{theorem}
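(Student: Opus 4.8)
The plan is to treat the difference of the two sides of (\ref{alpha1alphaz124z1zdxzn}) as a single polynomial $H(\alpha,z)$ with $p$-adic integer coefficients and to run the two-step scheme already used for Theorem \ref{alphabeta11}. Write
\[
F(\alpha,z):={}_2F_1\bigg[\begin{matrix}\alpha&1-\alpha\\&1\end{matrix}\bigg|\,z\bigg]_{p-1},\qquad
G(\alpha,z):={}_3F_2\bigg[\begin{matrix}\alpha&1-\alpha&\tfrac12\\&1&1\end{matrix}\bigg|\,4z(1-z)\bigg]_{p-1},
\]
and set $H(\alpha,z):=F(\alpha,z)^2-G(\alpha,z)$. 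Because for $0\le k\le p-1$ the factors $1/(k!)^2$ and $(\tfrac12)_k/k!=\binom{2k}{k}/4^k$ are $p$-adic integers, we have $H\in\Z_p[\alpha,z]$. The base case is $\alpha=-a$ with $a=\langle-\alpha\rangle_p\in\{0,1,\ldots,p-1\}$: there $(\alpha)_k$ vanishes for $k>a$, so both truncated sums collapse to the corresponding terminating hypergeometric functions, and the exact identity (\ref{alpha1alphaz124z1ze}) forces $H(-a,z)=0$ identically in $z$. Thus $H$, regarded as a polynomial in $\alpha$ over the domain $\Z_p[z]$, vanishes at the $p$ distinct points $0,-1,\ldots,-(p-1)$; dividing by the monic integer polynomial $(\alpha)_p=\prod_{j=0}^{p-1}(\alpha+j)$ gives $H(\alpha,z)=(\alpha)_p\,Q(\alpha,z)$ with $Q\in\Z_p[\alpha,z]$.

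Next I would pass from modulus $p^2$ to modulus $p$. For $\alpha\in\Z_p$ only the factor $\alpha+a$ of $(\alpha)_p$ is divisible by $p$, while the remaining factors satisfy $\prod_{j\ne a}(\alpha+j)\equiv\prod_{j\ne a}(j-a)\equiv-1\pmod p$ by Wilson's theorem; hence $(\alpha)_p\equiv-(\alpha+a)\pmod{p^2}$ and $\nu_p((\alpha)_p)\ge1$. Since also $Q(\alpha,z)\equiv Q(-a,z)\pmod p$ (as $\alpha\equiv-a$), we obtain $H\equiv-(\alpha+a)\,Q(-a,z)\pmod{p^2}$, so it suffices to prove $Q(-a,z)\equiv0\pmod p$ for every $a$ and every $z\in\Z_p$. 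Differentiating $H=(\alpha)_p Q$ and evaluating at $\alpha=-a$, where $(\alpha)_p$ has a simple zero with unit derivative $\prod_{j\ne a}(j-a)$, turns the whole problem into
\[
\frac{\partial}{\partial\alpha}\Big(F(\alpha,z)^2-G(\alpha,z)\Big)\Big|_{\alpha=-a}\equiv0\pmod p\qquad(0\le a\le p-1).
\]

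To handle this I would differentiate the exact identity (\ref{alpha1alphaz124z1ze}) rather than the truncations. Writing $\widetilde F,\widetilde G$ for the full (non-truncated) series, the relation $\widetilde F^2=\widetilde G$ gives the formal power-series identity $2\,\widetilde F\,\partial_\alpha\widetilde F=\partial_\alpha\widetilde G$; evaluating at $\alpha=-a$ and using $F(-a,z)=\widetilde F(-a,z)$ (the terminating ${}_2F_1[-a,1+a;1;z]$) recasts the target as a statement about the difference between the truncated and the full $\alpha$-derivatives, i.e.\ about the ``tails'' with summation index $k\ge p$. One checks that, since $4z(1-z)$ has degree two and $\partial_\alpha H|_{\alpha=-a}$ has $z$-degree at most $2(p-1)$, only the finitely many tail terms with $p\le k\le 2(p-1)$ contribute to any coefficient of $z^n$; in particular the coefficients of $z^n$ for $n\le p-1$ vanish outright.

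The main obstacle is the control of these tails. After setting $\alpha=-a$ the tail coefficient of the ${}_2F_1$ derivative at index $k$ equals $(-1)^a(k-1-a)!\,(a+k)!/(k!)^2$, and an analogous (but messier, because of the argument $4z(1-z)$) expression arises on the ${}_3F_2$ side; crucially, the ${}_2F_1$ tail carries denominators $(k!)^2$ and the ${}_3F_2$ tail denominators $(k!)^3$, so for $k\ge p$ neither tail is $p$-integral by itself, and only their combination is, the non-$p$-integral parts cancelling precisely because of the exact identity. One therefore cannot separate the two hypergeometric contributions: the heart of the argument is a careful $p$-adic valuation analysis of the surviving factorial ratios showing that their combined contribution to each $[z^n]$ is a $p$-adic integer divisible by $p$. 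Equivalently, this is a single combinatorial identity obtained by differentiating the coefficient-of-$z^n$ form of (\ref{alpha1alphaz124z1ze}) in $\alpha$ and reducing modulo $p$ via Wilson's theorem and standard factorial congruences; establishing that identity is the only non-formal step, all the reductions above being routine.
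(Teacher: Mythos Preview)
Your framework is correct and coincides with the paper's: one reduces to the base case $\alpha=-a$ (where the truncated series equals the terminating one and the Clausen-type identity (\ref{alpha1alphaz124z1ze}) gives equality), and then must show that the $\alpha$-derivative of the difference vanishes modulo $p$ coefficientwise in $z$. Your observation that the coefficients of $z^n$ for $n\le p-1$ vanish outright is exactly what the paper uses.

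The gap is in your treatment of the range $p\le n\le 2(p-1)$. You propose to compare truncated derivatives to full-series derivatives and analyze cancellation between non-$p$-integral tails; you then concede this ``careful $p$-adic valuation analysis'' as the one non-formal step and do not carry it out. This detour is both incomplete and unnecessary. The paper observes that \emph{each side separately} has $[z^n]\equiv 0\pmod p$ for $n\ge p$, so no tail cancellation is needed at all:
\begin{itemize}
\item On the ${}_3F_2$ side, the coefficient of $z^n$ in $\partial_\alpha G|_{\alpha=-a}$ is a sum over $k$ with $\binom{k}{n-k}\ne 0$, forcing $k\ge n/2\ge p/2$; but for $(p+1)/2\le k\le p-1$ one has $(\tfrac12)_k\equiv 0\pmod p$, so every term is killed.
\item On the squared ${}_2F_1$ side, $\partial_\alpha F^2|_{\alpha=-a}=2F(-a,z)\cdot\partial_\alpha F|_{\alpha=-a}$. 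Since $(-a)_k=0$ for $k>a$ and $(1+a)_k\equiv 0\pmod p$ for $k\ge p-a$, the factor $F(-a,z)$ has nonzero coefficients mod $p$ only for $k<\min\{a+1,p-a\}$, while $\partial_\alpha F|_{\alpha=-a}$ has nonzero coefficients mod $p$ only for $k<\max\{a+1,p-a\}$; their product therefore has $[z^n]\equiv 0\pmod p$ once $n\ge\min+\max-1=p$.
\end{itemize}
These two elementary observations replace your entire tails analysis. Your belief that ``one cannot separate the two hypergeometric contributions'' is exactly what led you astray: the truncated derivatives (unlike the full-series tails) are $p$-integral by construction, and each already vanishes modulo $p$ in the high range.
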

Of course, (\ref{alpha1alphaz124z1zdxzn}) is also a congruence for  the polynomials in $z$.
Moreover, it follows from Theorems \ref{alphaz4z1z} and \ref{alpha1alphaz124z1z} that
\begin{corollary}
Suppose that $\alpha,z\in\Z_p$ and $\langle-\alpha\rangle_p$ is even.
\begin{equation}\bigg({}_2F_1\bigg[\begin{matrix}
\frac12\alpha&\frac12-\frac12\alpha\\
&1\end{matrix}\bigg|\,z\bigg]_{p-1}\bigg)^2\equiv
{}_3F_2\bigg[\begin{matrix}
\alpha&1-\alpha&\frac12\\
&1&1
\end{matrix}\bigg|\,z\bigg]_{p-1}\pmod{p^2}.
\end{equation}
\end{corollary}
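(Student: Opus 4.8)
The plan is to derive this corollary formally from Theorems \ref{alphaz4z1z} and \ref{alpha1alphaz124z1z}, treating both sides as polynomials in a free variable rather than arguing one value of $z$ at a time. Write
$$
A(z)={}_2F_1\bigg[\begin{matrix}\frac12\alpha&\frac12-\frac12\alpha\\&1\end{matrix}\bigg|\,z\bigg]_{p-1},\qquad B(z)={}_3F_2\bigg[\begin{matrix}\alpha&1-\alpha&\frac12\\&1&1\end{matrix}\bigg|\,z\bigg]_{p-1},
$$
both of which lie in $\Z_p[z]$ and have degree at most $p-1$. The goal is $A(z)^2\equiv B(z)\pmod{p^2}$, and I would prove this as a congruence between the coefficients of each power of $z$, which in particular yields the asserted pointwise congruence for every $z\in\Z_p$.

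First I would invoke the fact, recorded after the statements of Theorems \ref{alphaz4z1z} and \ref{alpha1alphaz124z1z}, that both are coefficient-wise congruences in the free variable. Writing $F(w)$ for the truncated series ${}_2F_1[\alpha,1-\alpha;1\,|\,w]_{p-1}$ and using the hypothesis that $\langle-\alpha\rangle_p$ is even (so that Theorem \ref{alphaz4z1z} applies), squaring the congruence of Theorem \ref{alphaz4z1z} gives
$$
F(w)^2\equiv A\big(4w(1-w)\big)^2\pmod{p^2}
$$
coefficient-wise in $w$; here I use that if $U\equiv V\pmod{p^2}$ coefficient-wise then $U^2-V^2=(U-V)(U+V)\equiv0\pmod{p^2}$ coefficient-wise as well. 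On the other hand, Theorem \ref{alpha1alphaz124z1z} reads $F(w)^2\equiv B\big(4w(1-w)\big)\pmod{p^2}$ coefficient-wise. Subtracting, the polynomial $P(z):=A(z)^2-B(z)\in\Z_p[z]$ satisfies
$$
P\big(4w(1-w)\big)\equiv0\pmod{p^2}
$$
coefficient-wise in $w$.

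It then remains to deduce $P(z)\equiv0\pmod{p^2}$, and this is the only genuinely new point. The substitution $z\mapsto 4w(1-w)$ replaces $z$ by a degree-two polynomial in $w$ with leading coefficient $-4$, which is a unit modulo $p^2$ since $p$ is odd. Hence if $N$ were the largest index for which the coefficient $c_N$ of $z^N$ in $P$ is nonzero modulo $p^2$, then only the terms with $k\le N$ survive modulo $p^2$ and the coefficient of $w^{2N}$ in $P(4w(1-w))$ would equal $(-4)^N c_N$, again a nonzero residue modulo $p^2$; this contradicts the displayed vanishing, so no such $N$ exists and $P\equiv0\pmod{p^2}$. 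Equivalently, the ring homomorphism $\Z/p^2\Z[z]\to\Z/p^2\Z[w]$ sending $z\mapsto 4w(1-w)$ is injective because $-4$ is invertible there. This at once gives $A(z)^2\equiv B(z)\pmod{p^2}$ coefficient-wise, hence for every $z\in\Z_p$.

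The main obstacle, and the reason the coefficient-wise formulation is indispensable, is that the identity $z=4w(1-w)=1-(1-2w)^2$ realizes only those $z\in\Z_p$ for which $1-z$ is a square in $\Z_p$; a purely pointwise argument over $\Z_p$ would leave untouched the values of $z$ with $1-z$ a nonsquare. Phrasing the whole argument at the level of polynomial coefficients circumvents this completely, since the injectivity of $z\mapsto 4w(1-w)$ above uses only that $-4$ is a unit and does not require the map $w\mapsto 4w(1-w)$ to be surjective onto $\Z_p$.
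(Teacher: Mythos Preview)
Your argument is correct and is precisely the intended route: the paper does not spell out a proof but simply records that the corollary ``follows from Theorems \ref{alphaz4z1z} and \ref{alpha1alphaz124z1z},'' and your deduction is the natural way to make that sentence rigorous. Your observation that the substitution $z\mapsto 4w(1-w)$ is injective on $(\Z/p^2\Z)[z]$ because its leading coefficient $-4$ is a unit is the one genuinely needed detail (and your remark that a naive pointwise argument would miss the $z$ with $1-z$ a nonsquare is well taken); the rest is exactly as the paper intends.
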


However, most transformations of the ${}_2F_1$ series, unlike (\ref{alphabetaz4z1ze}) and (\ref{Clausenidentity}), will involve a factor of the form $(1-z)^{-\alpha}$, $(1-z/2)^{-\alpha}$ or  $(1-z)^{-\frac12\alpha}$. The main difficulty, which we must meet, is how to give the $p$-adic analogues of such factors. For example, the factor $(1-z)^{-\alpha}$ is involved in (\ref{Eulertransformation}).
So we need to introduce a function $\lambda_p$.

For any $a\in\Z$, $s\in\Z_p$ and $z\in\Z_p^\times$, define
\begin{equation}\label{zlambda}
z^{a+s(p-1)}:=z^a\cdot(z^{p-1})^{a}=z^{a}\sum_{k=0}^\infty\binom{s}{k}(z^{p-1}-1)^k.
\end{equation}
Since $z^{p-1}\equiv1\pmod{p}$, we have
\begin{align*}
z^{(s+1)(p-1)}=&\sum_{k=0}^\infty\binom{s+1}{k}(z^{p-1}-1)^k=
\sum_{k=0}^\infty\binom{s}{k}(z^{p-1}-1)^k+\sum_{k=1}^\infty\binom{s}{k-1}(z^{p-1}-1)^k\\
=&z^{s(p-1)}+(z^{p-1}-1)\cdot z^{s(p-1)}=z^{p-1}\cdot z^{s(p-1)},
\end{align*}
i.e., $z^{a+(s+1)(p-1)}=z^{(a+p-1)+sp}$.
Hence $z^{a+s(p-1)}$ is well-defined for each $z\in\Z_p^\times$.
For $\alpha\in\Z_p$,
define
\begin{equation}\label{lambdapalpha}
\lambda_p(\alpha):=-\langle-\alpha\rangle_p+\frac{\alpha+\langle-\alpha\rangle_p}{p}\cdot(p-1).
\end{equation}
As we shall see soon, $z^{-\lambda_p(\alpha)}$ can be viewed as a $p$-adic analogue of $z^{-\alpha}$ in most of ${}_2F_1$ transformations.
In fact, $\lambda_p$ also appears in a $p$-adic analogue of Gauss' multiplicative formula \cite[the proposition of Section 7.1.3]{Robert00}:
\begin{equation}\label{padicGMF}
\prod_{k=0}^{m-1}\Gamma_p\bigg(x+\frac km\bigg)=m^{-\lambda_p(mx)}\Gamma_p(mx)\prod_{k=0}^{m-1}\Gamma_p\bigg(\frac km\bigg),
\end{equation}
where $m\geq 1$ is prime to $p$. 
Furthermore, for any $z\in\Z_p^{\times}$, it is easy to see that
\begin{equation}\label{lambdalambdas}
z^{\lambda_p(\alpha)}\equiv z^{\lambda_p^{(2)}(\alpha)}\pmod{p^2},
\end{equation}
where
\begin{equation}
\lambda_p^{(2)}(\alpha):=\frac{\langle-\alpha\rangle_p-\langle-\alpha\rangle_{p^2}}{p}\cdot(p-1)-\langle-\alpha\rangle_p.
\end{equation}

Setting $\gamma=1$ in the Pfaff transformation (\ref{Eulertransformation}), we have
\begin{equation}\label{Euleralphabeta1}
{}_{2}F_1\bigg[\begin{matrix}
\alpha&\beta\\
&1
\end{matrix}\bigg|\,z\bigg]=(1-z)^{-\alpha}\cdot{}_{2}F_1\bigg[\begin{matrix}
\alpha&1-\beta\\
&1
\end{matrix}\bigg|\,\frac{z}{z-1}\bigg].
\end{equation}
Unfortunately, we don't know what the $p$-adic analogue of (\ref{Euleralphabeta1}) is.
On the other hand, we can give a partial $p$-adic analogue of
\begin{equation}\label{alphaalphazz1e}
{}_2F_1\bigg[\begin{matrix}
\alpha&\alpha\\
&1
\end{matrix}\bigg|\,z\bigg]=(1-z)^{-\alpha}{}_2F_1\bigg[\begin{matrix}
\alpha&1-\alpha\\
&1
\end{matrix}\bigg|\,\frac{z}{z-1}\bigg],
\end{equation}
which is evidently a special case of (\ref{Euleralphabeta1}). 
\begin{theorem}\label{alphaalphazz1} Let $\alpha\in\Z_p$ and $z\in\Z_p^\times$. 
Suppose that $z-1$ is prime to $p$. Then
\begin{align}\label{alphaalphazz1c}
&{}_2F_1\bigg[\begin{matrix}
\alpha&\alpha\\
&1
\end{matrix}\bigg|\,z\bigg]_{p-1}-z^{1-\lambda_p(\alpha)}{}_2F_1\bigg[\begin{matrix}
\alpha&\alpha\\
&1
\end{matrix}\bigg|\,\frac1z\bigg]_{p-1}\notag\\
\equiv&(1-z)^{1-\lambda_p(\alpha)}{}_2F_1\bigg[\begin{matrix}
\alpha&1-\alpha\\
&1
\end{matrix}\bigg|\,\frac{z}{z-1}\bigg]_{p-1}\pmod{p^2}.
\end{align}
\end{theorem}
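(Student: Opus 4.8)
The plan is to use the two-step method outlined in the introduction. Put $a:=\langle-\alpha\rangle_p$ and write $\alpha=-a+ps$ with $s=(\alpha+a)/p\in\Z_p$, so that $\alpha$ sweeps out the residue disc $\{\alpha:\langle-\alpha\rangle_p=a\}$ as $s$ ranges over $\Z_p$. First I would settle the base case $\alpha=-a$, where $1-\lambda_p(-a)=1+a$ by \eqref{lambdapalpha} and the two truncated series (being cut off at $p-1\ge a$) become the hypergeometric polynomials $F(z):={}_2F_1[-a,-a;1;z]=\sum_{k=0}^a\binom ak^2z^k$ and $G(z):={}_2F_1[-a,1+a;1;z/(z-1)]$. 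Here $F$ is palindromic, so $z^aF(1/z)=F(z)$, while the special Pfaff identity \eqref{alphaalphazz1e} at $\alpha=-a$ reads $F(z)=(1-z)^aG(z)$. Combining the two,
\[
F(z)-z^{1+a}F(1/z)=F(z)-zF(z)=(1-z)^{1+a}G(z),
\]
which is exactly \eqref{alphaalphazz1c} at $\alpha=-a$, now as an honest identity.

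For general $\alpha$ I would reduce the claim modulo $p^2$ to a congruence modulo $p$. About $\alpha=-a$ each factor $(\alpha)_k$ is a polynomial in $\alpha+a=ps$ with integer coefficients, and from \eqref{zlambda} one gets $z^{1-\lambda_p(\alpha)}\equiv z^{1+a}\bigl(1-s(z^{p-1}-1)\bigr)$ and $(1-z)^{1-\lambda_p(\alpha)}\equiv(1-z)^{1+a}\bigl(1-s((1-z)^{p-1}-1)\bigr)\pmod{p^2}$, both affine in $s$. Since $(\alpha+a)^2=p^2s^2\equiv0\pmod{p^2}$ and $k!$ is a unit for $k\le p-1$, the difference $\Phi(\alpha)$ of the two sides of \eqref{alphaalphazz1c} is affine in $\alpha+a$ modulo $p^2$, so
\[
\Phi(\alpha)\equiv\Phi(-a)+ps\,\Phi'(-a)\pmod{p^2},
\]
where $\Phi'(-a)$ is the first-order coefficient, a Laurent polynomial in $z$ with $p$-integral coefficients. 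As $\Phi(-a)=0$ and $ps\equiv0\pmod p$, it remains to prove $\Phi'(-a)\equiv0\pmod p$.

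Computing $\Phi'(-a)$ exposes the essential asymmetry. In the two sums $\sum_k(\alpha)_k^2z^{\pm k}$ the coefficients have double zeros at $\alpha=-a$ for $k>a$, so those terms and their first $\alpha$-derivatives vanish, leaving only finite sums over $k\le a$ together with Fermat-quotient factors $\tfrac{z^{p-1}-1}{p}$ and $\tfrac{(1-z)^{p-1}-1}{p}$ coming from the power factors. In the sum $\sum_k(\alpha)_k(1-\alpha)_kw^k$ with $w=z/(z-1)$, however, the coefficients have only simple zeros for $k>a$, so the range $a<k\le p-1-a$ survives at first order (the terms $k\ge p-a$ drop out because $(1+a)_k\equiv0\pmod p$), contributing the genuine tail $(-1)^a(1-z)^{1+a}\sum_{a<k\le p-1-a}\tfrac{(k-1-a)!\,(k+a)!}{(k!)^2}w^k$. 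I would prove the resulting congruence $\Phi'(-a)\equiv0\pmod p$ by differentiating the classical identities in $\alpha$ at $\alpha=-a$: differentiating \eqref{alphaalphazz1e} at $\alpha=-a$ yields a relation among the finite derivative sums of $F$ and $G$, an infinite $\log(1-z)$-type tail and the finite pieces, while the $z\leftrightarrow1/z$ reciprocity responsible for the middle term of \eqref{alphaalphazz1c} — which is not governed by \eqref{alphaalphazz1e} — contributes in its degenerate form a matching $\log z$-term.

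The main obstacle will be exactly this last matching. I expect the hard part to be reconciling the genuinely infinite logarithmic tails of the differentiated classical identities with the finite tail that survives modulo $p$, and at the same time identifying the two $\log$-contributions with the Fermat quotients produced by $z^{-\lambda_p(\alpha)}$ and $(1-z)^{-\lambda_p(\alpha)}$; carrying this out should require Wilson- and Wolstenholme-type congruences to control the discarded factorials together with the standard congruence relating the Fermat quotient to the $p$-adic logarithmic derivative. Verifying the $p$-integrality of the higher Taylor coefficients, needed to discard the $O((\alpha+a)^2)$ remainder, is routine since $k!$ is a unit for $k\le p-1$.
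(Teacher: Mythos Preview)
Your overall structure matches the paper's: reduce to the base case $\alpha=-a$ via the classical Pfaff identity and palindromy, then show the first-order Taylor coefficient in $\alpha+a$ vanishes mod $p$. Your base case argument is correct and agrees with the paper's.

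The gap is in your method for the key step $\Phi'(-a)\equiv0\pmod p$. You propose to differentiate the classical identity \eqref{alphaalphazz1e} at $\alpha=-a$ and then match the resulting $\log(1-z)$-tail (and the analogous $\log z$ from the reciprocity) with the Fermat quotients coming from $z^{-\lambda_p(\alpha)}$ and $(1-z)^{-\lambda_p(\alpha)}$. But the differentiated classical identity lives over $\C$ (or as a formal power series), and neither $\log(1-z)$ nor the infinite derivative tail $\sum_{k>a}\cdots w^k$ has any direct $p$-adic meaning for general $z\in\Z_p^\times$: these series simply do not converge there. You acknowledge this is "the main obstacle" but offer no bridge; invoking Wilson/Wolstenholme-type congruences and "the standard congruence relating the Fermat quotient to the $p$-adic logarithmic derivative" is not yet a mechanism, because the obstruction is conceptual (two incompatible analytic settings), not merely a missing estimate.

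The paper bypasses this entirely with a parameter-shift trick. To compute $\Phi'(0)$ mod $p$ it replaces $1+a$ by $1+a-p$ inside ${}_2F_1[-a-x,\,1+a;\,1;\,w]_{p-1}$ (legitimate mod $p$ since $(1+a)_k\equiv(1+a-p)_k$ for $k\le p-1$), so the series \emph{terminates} at $k=p-1-a$. Pfaff then applies as a polynomial identity, converting back to ${}_2F_1[-a-x,\,p-a;\,1;\,z]$; the derivative becomes a difference quotient $(F(p)-F(0))/p$ with both endpoints explicit, and the non-truncated ${}_2F_1[-a-p,\,p-a;\,1;\,z]$ is evaluated mod $p^2$ by direct Pochhammer/harmonic-number computation. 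No logarithms appear at all: the Fermat-quotient piece arises as the concrete term $(1-z^p-(1-z)^p)/p$ when one evaluates at $x=p$, not from any analytic matching. This device, together with Lemma~\ref{polynomialzsp} (which packages the affine-in-$s$ reduction you describe), is the missing ingredient in your proposal.
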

For example, for any $p\equiv 1\pmod{3}$, we have
\begin{align*}
&{}_2F_1\bigg[\begin{matrix}
\frac13&\frac13\\
&1
\end{matrix}\bigg|\,z\bigg]_{p-1}-z^{1+\frac{1}{3}p(p-1)}{}_2F_1\bigg[\begin{matrix}
\frac13&\frac13\\
&1
\end{matrix}\bigg|\,\frac1z\bigg]_{p-1}\\
\equiv&(1-z)^{1+\frac{1}{3}p(p-1)}{}_2F_1\bigg[\begin{matrix}
\frac13&\frac23\\
&1
\end{matrix}\bigg|\,\frac{z}{z-1}\bigg]_{p-1}\pmod{p^2}.
\end{align*}
On the other hand, if $z$ is replaced by $z^{-1}$ in (\ref{alphaalphazz1e}), we may get
\begin{align*}
{}_2F_1\bigg[\begin{matrix}
\alpha&1-\alpha\\
&1
\end{matrix}\bigg|\,\frac{z}{z-1}\bigg]_{p-1}\equiv
(-1)^{\langle-\alpha\rangle_p}{}_2F_1\bigg[\begin{matrix}
\alpha&1-\alpha\\
&1
\end{matrix}\bigg|\,\frac{1}{1-z}\bigg]_{p-1}
\pmod{p^2},
\end{align*}
which is apparently an equivalent form of (\ref{SunZHalphaz}).

Let us turn to another quadratic transformations. 
Letting $y=z/(z-1)$, clearly we have $4y(1-y)=-4z/(1-z)^2$. So combining  (\ref{alphaalphazz1e}) with (\ref{alphaz4z1ze}), we have
\begin{equation}\label{alpha2alpha214z1ze}
{}_2F_1\bigg[\begin{matrix}
\alpha&\alpha\\
&1
\end{matrix}\bigg|\,z\bigg]=(1-z)^{-\alpha}{}_2F_1\bigg[\begin{matrix}\frac12\alpha&\frac12-\frac12\alpha\\
&1
\end{matrix}\bigg|\,-\frac{4z}{(1-z)^2}\bigg].
\end{equation}
Likewise, combining (\ref{alphaalphazz1c}) with (\ref{alphaz4z1zc}), we also obtain  a $p$-adic analogue of (\ref{alpha2alpha214z1ze}).
\begin{theorem}\label{alpha2alpha214z1z}
Suppose that $\alpha\in\Z_p$ with $\langle-\alpha\rangle_p$ is even,
and that $z\in\Z_p^\times$ with $z-1$ is prime to $p$. Then
\begin{align}\label{alpha2alpha214z1zc}
&{}_2F_1\bigg[\begin{matrix}
\alpha&\alpha\\
&1
\end{matrix}\bigg|\,z\bigg]_{p-1}-z^{1-\lambda_p(\alpha)}{}_2F_1\bigg[\begin{matrix}
\alpha&\alpha\\
&1
\end{matrix}\bigg|\,\frac{1}{z}\bigg]_{p-1}\notag\\
\equiv&(1-z)^{1-\lambda_p(\alpha)}{}_2F_1\bigg[\begin{matrix}
\frac12\alpha&\frac12-\frac12\alpha\\
&1
\end{matrix}\bigg|\,-\frac{4z}{(1-z)^2}\bigg]_{p-1}\pmod{p^2}.
\end{align}
\end{theorem}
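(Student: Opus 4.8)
The plan is to obtain (\ref{alpha2alpha214z1zc}) by transporting to the truncated setting the classical derivation of (\ref{alpha2alpha214z1ze}). Recall that the identity (\ref{alpha2alpha214z1ze}) arises from the linear transformation (\ref{alphaalphazz1e}) together with the quadratic transformation (\ref{alphaz4z1ze}) upon setting $y=z/(z-1)$, which converts the argument $4y(1-y)$ appearing in (\ref{alphaz4z1ze}) into $-4z/(1-z)^2$. The $p$-adic analogues of (\ref{alphaalphazz1e}) and (\ref{alphaz4z1ze}) are precisely Theorem \ref{alphaalphazz1} and Theorem \ref{alphaz4z1z}, so the whole proof reduces to chaining these two congruences through the same substitution.

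First I would apply Theorem \ref{alphaalphazz1}, whose hypotheses ($z\in\Z_p^\times$ and $z-1$ prime to $p$) are part of the present assumptions, to obtain the congruence (\ref{alphaalphazz1c}). Its left-hand side is already identical to the left-hand side of the target (\ref{alpha2alpha214z1zc}), so it remains only to transform the truncated ${}_2F_1$ occurring on its right-hand side, namely the one with upper parameters $\alpha,1-\alpha$, lower parameter $1$, and argument $z/(z-1)$. Setting $y:=z/(z-1)$, the assumption that $z-1$ is prime to $p$ gives $(z-1)^{-1}\in\Z_p$ and hence $y\in\Z_p$, so Theorem \ref{alphaz4z1z} (applicable because $\langle-\alpha\rangle_p$ is even) may be invoked with argument $y$. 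This replaces that factor, modulo $p^2$, by the truncated ${}_2F_1$ with upper parameters $\frac12\alpha,\frac12-\frac12\alpha$, lower parameter $1$, and argument $4y(1-y)$; a one-line computation gives $4y(1-y)=-4z/(1-z)^2$, which matches the argument on the right-hand side of (\ref{alpha2alpha214z1zc}).

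To finish I would multiply this last congruence by the prefactor $(1-z)^{1-\lambda_p(\alpha)}$. Since $1-z\in\Z_p^\times$, this quantity is a well-defined element of $\Z_p^\times$ by (\ref{zlambda}), so multiplication by it preserves congruences modulo $p^2$; substituting the transformed factor back into the right-hand side of (\ref{alphaalphazz1c}) then yields exactly (\ref{alpha2alpha214z1zc}). The argument is pure bookkeeping once Theorems \ref{alphaalphazz1} and \ref{alphaz4z1z} are in hand, so there is no genuine analytic obstacle; the only points that need care are verifying that the hypotheses of the two invoked theorems are simultaneously met here --- specifically that $y=z/(z-1)$ lies in $\Z_p$, so that Theorem \ref{alphaz4z1z} legitimately applies, and that $(1-z)^{1-\lambda_p(\alpha)}$ is a $p$-adic unit, so that it can be carried through the congruence modulo $p^2$ without loss. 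Both are guaranteed by the single hypothesis that $z-1$ is prime to $p$.
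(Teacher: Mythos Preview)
Your proposal is correct and is exactly the approach the paper takes: the paper states explicitly that (\ref{alpha2alpha214z1zc}) is obtained by combining (\ref{alphaalphazz1c}) with (\ref{alphaz4z1zc}), and your argument carries out precisely that combination, including the verification that $y=z/(z-1)\in\Z_p$ and that the unit prefactor $(1-z)^{1-\lambda_p(\alpha)}$ can be multiplied through.
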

Let us give an explanation on (\ref{alpha2alpha214z1zc}). Replacing $z$ by $z^{-1}$ in (\ref{alpha2alpha214zz1e}), we get
\begin{equation}\label{alpha2alpha214z1ze2}
z^{-\alpha}{}_2F_1\bigg[\begin{matrix}
\alpha&\alpha\\
&1
\end{matrix}\bigg|\,\frac1z\bigg]=(z-1)^{-\alpha}{}_2F_1\bigg[\begin{matrix}\frac12\alpha&\frac12-\frac12\alpha\\
&1
\end{matrix}\bigg|\,-\frac{4z}{(1-z)^2}\bigg].
\end{equation}
Thus when $\alpha$ is an even integer, 
\begin{equation}\label{alpha2alpha214z1ze3}
{}_2F_1\bigg[\begin{matrix}
\alpha&\alpha\\
&1
\end{matrix}\bigg|\,z\bigg]-z^{1-\alpha}{}_2F_1\bigg[\begin{matrix}
\alpha&\alpha\\
&1
\end{matrix}\bigg|\,\frac1z\bigg]=(1-z)^{1-\alpha}{}_2F_1\bigg[\begin{matrix}\frac12\alpha&\frac12-\frac12\alpha\\
&1
\end{matrix}\bigg|\,-\frac{4z}{(1-z)^2}\bigg].
\end{equation}
Evidently (\ref{alpha2alpha214z1zc}) factly can be viewed as a $p$-adic of (\ref{alpha2alpha214z1ze3}). In fact, in the $p$-adic analogue of a quadratic transformation of the form
$$
{}_2F_1\bigg[\begin{matrix}
*&*\\
&1
\end{matrix}\bigg|\,z\bigg]=\big(1-\upsilon(z)\big)^{-\alpha}{}_2F_1\bigg[\begin{matrix}*&*\\
&1
\end{matrix}\bigg|\,\Omega(z)\bigg]
$$
where $\Omega$ is a quadratic rational function, there often will be two terms appearing in the left side: one is concerning $z$, and the other is concerning $\varrho(z)$, where $\varrho$ is a rational function such that $\Omega\big(\varrho(z)\big)=\Omega(z)$.

Similarly, substituting $\beta=\alpha$ in the transformation \cite[Eq. (3.1.9)]{AAR99}
\begin{equation}\label{alphabetaalphabeta14zz1}
{}_2F_1\bigg[\begin{matrix}
\alpha&\beta\\
&\alpha-\beta+1
\end{matrix}\bigg|\,z\bigg]=(1+z)^{-\alpha}{}_2F_1\bigg[\begin{matrix}\frac12\alpha&\frac12+\frac12\alpha\\
&\alpha-\beta+1
\end{matrix}\bigg|\,\frac{4z}{(1+z)^2}\bigg],
\end{equation}
we have
\begin{equation}\label{alpha2alpha214zz1e}
{}_2F_1\bigg[\begin{matrix}
\alpha&\alpha\\
&1
\end{matrix}\bigg|\,z\bigg]=(1+z)^{-\alpha}{}_2F_1\bigg[\begin{matrix}\frac12\alpha&\frac12+\frac12\alpha\\
&1
\end{matrix}\bigg|\,\frac{4z}{(1+z)^2}\bigg].
\end{equation}
\begin{theorem}\label{alpha2alpha214zz1}
Suppose that $\alpha\in\Z_p$ and $z\in\Z_p^\times$ with $z+1$ is prime to $p$.
\begin{align}\label{alpha2alpha214zz1c}
&{}_2F_1\bigg[\begin{matrix}
\alpha&\alpha\\
&1
\end{matrix}\bigg|\,z\bigg]_{p-1}+z^{1-\lambda_p(\alpha)}{}_2F_1\bigg[\begin{matrix}
\alpha&\alpha\\
&1
\end{matrix}\bigg|\,\frac{1}{z}\bigg]_{p-1}\notag\\
\equiv&(1+z)^{1-\lambda_p(\alpha)}{}_2F_1\bigg[\begin{matrix}
\frac12\alpha&\frac12+\frac12\alpha\\
&1
\end{matrix}\bigg|\,\frac{4z}{(1+z)^2}\bigg]_{p-1}\pmod{p^2}.
\end{align}
\end{theorem}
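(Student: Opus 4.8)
The plan is to treat (\ref{alpha2alpha214zz1c}) as a first-order $p$-adic perturbation of the classical identity (\ref{alpha2alpha214zz1e}), following the two-step method of the introduction. Set $a=\langle-\alpha\rangle_p$ and write $\alpha=-a+p\theta$ with $\theta\in\Z_p$. \textbf{Base case} $\theta=0$: here $\lambda_p(-a)=-a$ by (\ref{lambdapalpha}), so $z^{1-\lambda_p(\alpha)}=z^{1+a}$ and $(1+z)^{1-\lambda_p(\alpha)}=(1+z)^{1+a}$ are ordinary powers and every truncated series terminates, with $P(w):={}_2F_1[-a,-a;1\,|\,w]_{p-1}=\sum_{k=0}^a\binom ak^2w^k$. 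Since $\binom ak^2=\binom a{a-k}^2$, the polynomial $P$ is self-reciprocal, $z^aP(1/z)=P(z)$, so the left-hand side collapses to $P(z)+z^{1+a}P(1/z)=(1+z)P(z)$; identity (\ref{alpha2alpha214zz1e}) at $\alpha=-a$ rewrites this as $(1+z)^{1+a}{}_2F_1[-a/2,(1-a)/2;1\,|\,4z/(1+z)^2]$, which is exactly the right-hand side. Thus (\ref{alpha2alpha214zz1c}) holds identically at $\alpha=-a$, and the ``$+$'' sign in front of the second term is precisely what produces the factor $1+z$.

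For general $\theta$ I would expand every ingredient to first order in $p$. The coefficients $\binom{\alpha+k-1}{k}^2$ are integer-valued polynomials in $\alpha$, so their $p$-adic Taylor expansions about $-a$ have $p$-integral divided derivatives and the $(p\theta)^{\ge2}$ terms vanish modulo $p^2$. Crucially, for $a<k\le p-1$ the factor $\binom{\alpha+k-1}{k}$ has a simple zero at $\alpha=-a$, so its square has a double zero and contributes nothing at orders $0$ and $p\theta$; this truncates both ${}_2F_1[\alpha,\alpha;1|z]_{p-1}$ and ${}_2F_1[\alpha,\alpha;1|1/z]_{p-1}$ to degree $a$ modulo $p^2$. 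The prefactors expand, via (\ref{zlambda}) with $s=-\theta$, as $z^{1-\lambda_p(\alpha)}\equiv z^{1+a}(1-\theta(z^{p-1}-1))$ and $(1+z)^{1-\lambda_p(\alpha)}\equiv(1+z)^{1+a}(1-\theta((1+z)^{p-1}-1))\pmod{p^2}$; the apparent $1/p$ in $\lambda_p$ is harmless because $s=-\theta\in\Z_p$ and $z^{p-1}-1\equiv0\pmod p$. Collecting terms, the order-$0$ part cancels by the base case, and after dividing by $p$ the whole statement reduces to one congruence modulo $p$ tying together the $\alpha$-derivatives of the three truncated series at $\alpha=-a$ and the Fermat quotients $(z^{p-1}-1)/p$ and $((1+z)^{p-1}-1)/p$.

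To prove that mod-$p$ congruence I would differentiate (\ref{alpha2alpha214zz1e}) in $\alpha$ at $\alpha=-a$. On the left the derivative is again a degree-$a$ polynomial (the double zeros kill the tail), matching the clean left-hand side above. On the right, differentiating $(1+z)^{-\alpha}$ yields a $\log(1+z)$ factor whose $p$-adic incarnation is exactly the Fermat-quotient term $((1+z)^{p-1}-1)/p$, while the derivative of the hypergeometric factor supplies the rest; reduced modulo $p$ this classical identity becomes the required congruence.

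The main obstacle will be the right-hand tail. In contrast to the left-hand side, the series ${}_2F_1[\tfrac12\alpha,\tfrac12+\tfrac12\alpha;1|4z/(1+z)^2]$ has, at $\alpha=-a$, only a simple zero in its high coefficients (one of $\tfrac12\alpha$, $\tfrac12+\tfrac12\alpha$ is a non-positive integer, the other is not), so $\partial_\alpha$ does not annihilate the terms with $\lfloor a/2\rfloor<k\le p-1$. These surviving tail terms are the $p$-adic shadow of the classical $\log(1+z)$, and showing that the truncated $\alpha$-derivative of the right-hand series equals the naive derivative of the terminating polynomial plus exactly the Fermat-quotient correction, so that it matches the left-hand side modulo $p$, is the delicate computation on which the proof turns. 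By comparison, justifying the term-by-term first-order expansion modulo $p^2$ (integrality of divided derivatives and the vanishing of the $k>a$ contributions) is routine.
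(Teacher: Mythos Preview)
Your base case $\alpha=-a$ is correct, and the perturbation strategy is sound in outline. You also correctly isolate the real difficulty: the right-hand tail, where only a simple zero is present and the $\alpha$-derivative does not vanish. But you stop exactly there, announcing that matching this tail to the Fermat-quotient correction is ``the delicate computation on which the proof turns'' without carrying it out. That is the gap: the identification of $\log(1+z)$ with $((1+z)^{p-1}-1)/p$ is a heuristic, not a theorem, and making it precise for this particular truncated sum is nontrivial work you have not done.

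The paper avoids this obstacle entirely by a different device. Instead of differentiating in $\alpha$ at $\alpha=-a$ (which is what forces logarithms into the picture), it evaluates the classical identity (\ref{alpha2alpha214zz1e}) at a \emph{second} negative integer $\alpha=-a-p$. There both sides are again honest terminating polynomials, with all exponents ordinary integers; the only mismatch is that the full series runs to degree $a+p$ while the truncated one stops at $p-1$. The self-reciprocity $\binom{a+p}{k}^2=\binom{a+p}{a+p-k}^2$---your own observation, one level up---shows the overflow $p\le k\le a+p$ is, modulo $p^2$, exactly $z^{a+p}$ times the truncated series in $1/z$ (this is computation (\ref{apap1zapapzapap1z2F1c})). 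Hence one obtains a second exact relation $\Psi_1(p)+z^p\Psi_2(p)\equiv(1+z)^p\Phi(p)\pmod{p^2}$. Lemma~\ref{polynomialzsp}, which packages the statement $z^{1+s(p-1)}f(sp)-zf(0)\equiv s\bigl(z^pf(p)-zf(0)\bigr)\pmod{p^2}$, then interpolates linearly between the two known values $s=0$ and $s=1$ to reach arbitrary $s$.

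In short, the paper replaces ``differentiate and match Fermat quotients'' by ``evaluate at two integer points and interpolate.'' The Fermat quotients never appear explicitly; they are hidden inside Lemma~\ref{polynomialzsp}. Your approach can in principle be completed, but the tail identity you would need to prove is essentially equivalent to the $x=p$ evaluation, and the latter route is both shorter and avoids any transcendental bookkeeping.
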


Consider the quadratic transformation \cite[Theorem 3.1.3]{AAR99}
\begin{equation}\label{alphabeta12zalpha12zz1e}
{}_2F_1\bigg[\begin{matrix}
\alpha&\beta\\
&2\beta
\end{matrix}\bigg|\,z\bigg]=\bigg(1-\frac z2\bigg)^{-\alpha}{}_2F_1\bigg[\begin{matrix}\frac12\alpha&\frac12+\frac12\alpha\\
&\frac12+\beta
\end{matrix}\bigg|\,\frac{z^2}{(2-z)^2}\bigg].
\end{equation}
Replacing $\beta$ by $1/2$ in (\ref{alphabeta12zalpha12zz1e}), we have
\begin{equation}\label{alpha12zalpha12zz1e}
{}_2F_1\bigg[\begin{matrix}
\alpha&\frac12\\
&1
\end{matrix}\bigg|\,z\bigg]=\bigg(1-\frac z2\bigg)^{-\alpha}{}_2F_1\bigg[\begin{matrix}
\frac12\alpha&\frac12+\frac12\alpha\\
&1
\end{matrix}\bigg|\,\frac{z^2}{(z-2)^2}\bigg].
\end{equation}
\begin{theorem}\label{alpha12zalpha12zz1}
Let $\alpha,z\in\Z_p$. Suppose that both $z-1$ and $z-2$ are prime to $p$.
\begin{align}\label{alpha12zalpha12zz1G}
&{}_2F_1\bigg[\begin{matrix}
\alpha&\frac12\\
&1
\end{matrix}\bigg|\,z\bigg]_{p-1}+(1-z)^{1-\lambda_p(\alpha)}{}_2F_1\bigg[\begin{matrix}
\alpha&\frac12\\
&1
\end{matrix}\bigg|\,\frac{z}{z-1}\bigg]_{p-1}\notag\\
\equiv&
2\bigg(1-\frac z2\bigg)^{1-\lambda_p(\alpha)}{}_2F_1\bigg[\begin{matrix}
\frac12\alpha&\frac12+\frac12\alpha\\
&1
\end{matrix}\bigg|\,\frac{z^2}{(z-2)^2}\bigg]_{p-1}\pmod{p^2}.
\end{align}
\end{theorem}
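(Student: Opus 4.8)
The plan is to follow the same two-step framework that underlies Theorems \ref{alphaalphazz1}--\ref{alpha2alpha214zz1}: fix the residue $a=\langle-\alpha\rangle_p\in\{0,1,\dots,p-1\}$, write $\alpha=-a+pt$ with $t\in\Z_p$, and regard both sides of (\ref{alpha12zalpha12zz1G}) as $p$-adic functions of $\alpha$ on this residue disc. Since $\lambda_p(\alpha)=-a+t(p-1)$ is affine in $t$ there, every factor in (\ref{alpha12zalpha12zz1G}) is $p$-adically analytic in $t$. Writing $F(\alpha)$ for the difference of the two sides, I would prove $F(\alpha)\equiv0\pmod{p^2}$ by showing (i) $F(-a)=0$ exactly, and (ii) $\tfrac1p\bigl(F(\alpha)-F(-a)\bigr)\equiv0\pmod p$; because $\alpha+a=pt$ and all truncated series are $\Z_p$-coefficient polynomials in $\alpha$, a first-order Taylor expansion then gives $F(\alpha)\equiv F(-a)+pt\cdot(\text{derivative})\equiv0\pmod{p^2}$.

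Step (i), the base case. At $\alpha=-a$ one has $1-\lambda_p(-a)=1+a$, so all the $\lambda_p$-powers become honest integer powers, and each truncated series terminates within $k\le a\le p-1$ (the right-hand ${}_2F_1\bigl[\tfrac12\alpha,\tfrac12+\tfrac12\alpha;1\bigr]$ terminates through either $-\tfrac a2$ or $\tfrac{1-a}2$ according to the parity of $a$, in both cases by the $\lceil a/2\rceil\le p-1$ step). Hence the truncations equal the genuine hypergeometric functions and (\ref{alpha12zalpha12zz1G}) collapses to an exact identity, which I would derive from the quadratic transformation (\ref{alpha12zalpha12zz1e}) at $\alpha=-a$ together with its companion obtained by the substitution $z\mapsto z/(z-1)$: one checks $\Omega(z/(z-1))=\Omega(z)$ for $\Omega(z)=z^2/(z-2)^2$ and $1-\tfrac12\cdot\tfrac{z}{z-1}=\tfrac{z-2}{2(z-1)}$, so the two left-hand terms combine, after extracting the common ${}_2F_1\bigl[-\tfrac a2,\tfrac{1-a}2;1\bigm|\Omega(z)\bigr]$, into exactly $2(1-z/2)^{1+a}$ times that series; the factor $2$ and the sign $(-1)^{1+a}$ come out precisely as required.

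Step (ii), the mod-$p$ derivative. Here I would expand each ingredient to first order in $t$. By (\ref{zlambda}), $(1-z)^{1-\lambda_p(\alpha)}\equiv(1-z)^{1+a}\bigl(1-t\,u_1\bigr)$ and $(1-z/2)^{1-\lambda_p(\alpha)}\equiv(1-z/2)^{1+a}\bigl(1-t\,u_2\bigr)\pmod{p^2}$, where $u_1=(1-z)^{p-1}-1$ and $u_2=(1-z/2)^{p-1}-1$ are $\equiv0\pmod p$, so that $u_1/p,\,u_2/p\in\Z_p$ are Fermat quotients. Since each truncated ${}_2F_1$ is a polynomial in $\alpha$ with $p$-integral coefficients, its increment from $-a$ to $\alpha$ is $pt$ times its $\alpha$-derivative at $-a$, modulo $p^2$. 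Collecting these, $\tfrac1p\bigl(F(\alpha)-F(-a)\bigr)$ reduces modulo $p$ to a finite identity in $z$ relating the $\alpha$-derivatives of the three truncated series at $\alpha=-a$, the base powers $(1-z)^{1+a},\,(1-z/2)^{1+a}$, and the Fermat quotients $u_1/p,\,u_2/p$; the $\alpha$-derivative of $(1-z/2)^{-\alpha}$ in the classical transformation is what supplies the terms matching the Fermat quotients. I would establish this identity by differentiating (\ref{alpha12zalpha12zz1e}), and its $z\mapsto z/(z-1)$ companion, with respect to $\alpha$ at $\alpha=-a$.

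The main obstacle is the tail of the left-hand derivative. Although ${}_2F_1[\alpha,\tfrac12;1\mid z]$ terminates at $k=a$ when $\alpha=-a$, its $\alpha$-derivative does not: for $a+1\le k\le p-1$ one has $\tfrac{d}{d\alpha}(\alpha)_k\big|_{-a}=(-1)^a a!\,(k-1-a)!\neq0\pmod p$, so the truncated derivative carries genuine contributions from $k>a$ that are absent from the classical terminating identity. Matching these tail terms (together with the analogous tail of the right-hand truncated series, which starts at $k>\lceil a/2\rceil$) against the Fermat-quotient terms produced by the $\lambda_p$-powers is the crux; it amounts to proving a combinatorial identity in the spirit of (\ref{alphabeta11c1ape}), obtained by reading off the coefficient of each power of $z$ in the differentiated transformation and using Fermat's little theorem to rewrite the $k>a$ factors. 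Once this finite identity is in hand, steps (i) and (ii) combine to yield (\ref{alpha12zalpha12zz1G}).
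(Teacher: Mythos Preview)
Your framework and step (i) match the paper exactly: set $a=\langle-\alpha\rangle_p$, verify the identity at $\alpha=-a$ via the classical transformation (\ref{alpha12zalpha12zz1e}) and its $z\mapsto z/(z-1)$ companion, then reduce to a mod-$p$ statement about the first-order variation. Your check that $\Omega(z/(z-1))=\Omega(z)$ and that the two base terms combine to $2(1-z/2)^{a+1}$ is correct.

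The gap is in step (ii). You propose to differentiate (\ref{alpha12zalpha12zz1e}) in $\alpha$ at $\alpha=-a$, but that produces an \emph{infinite}-series identity containing the transcendental factor $-\log(1-z/2)$ from $\tfrac{d}{d\alpha}(1-z/2)^{-\alpha}$; it is not at all obvious how to turn this into a finite mod-$p$ statement in which $\log(1-z/2)$ is replaced by the Fermat quotient $\bigl((1-z/2)^{p-1}-1\bigr)/p$, and you leave this as an unspecified ``combinatorial identity in the spirit of (\ref{alphabeta11c1ape})''. That is precisely where the work lies, and the paper does \emph{not} proceed this way.

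Instead, the paper avoids derivatives and logarithms altogether by using Lemma~\ref{polynomialzsp}: it suffices to verify the congruence at the second integer point $x=p$, i.e.\ to show
\[
\Psi_1(p)+(1-z)^{p}\Psi_2(p)\ \equiv\ 2\Big(1-\tfrac z2\Big)^{p}\Phi(p)\pmod{p^2}.
\]
At $\alpha=-a-p$ the classical identity (\ref{alpha12zalpha12zz1e}) is again an exact terminating identity (since $\lfloor(a+p)/2\rfloor\le p-1$, the right-hand truncation loses nothing), so the only discrepancy comes from the tails $\sum_{k=p}^{a+p}$ of the two left-hand series. The key technical ingredient is Corollary~\ref{apk12pkzk}, which shows these two tails are related by exactly the factor $(1-z)^a$ modulo $p^2$ and hence cancel. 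That corollary in turn is proved (via Lemma~\ref{apkbetapkzkt}) by differentiating the \emph{linear} Euler--Pfaff transformation (\ref{Eulertransformation}) in the $\beta$-variable at $\beta=\tfrac12$, not by differentiating the quadratic transformation (\ref{alpha12zalpha12zz1e}) in $\alpha$. This is a genuinely different input from what you proposed, and it is what makes step (ii) go through cleanly.
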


Moreover, since $y^2/(y-2)^2=4z^2/(1+z^2)^2$ if $y=4z/(1+z)^2$, combining (\ref{alpha2alpha214zz1e}) and (\ref{alpha12zalpha12zz1e}), we get
\begin{equation}\label{alphaalphaz2alpha124zz12e}
{}_2F_1\bigg[\begin{matrix}
\alpha&\alpha\\
&1
\end{matrix}\bigg|\,z^2\bigg]=(1+z)^{-2\alpha}{}_2F_1\bigg[\begin{matrix}
\alpha&\frac12\\
&1
\end{matrix}\bigg|\,\frac{4z}{(1+z)^2}\bigg].
\end{equation}
\begin{theorem}\label{alphaalphaz2alpha124zz12}
Suppose that $\alpha,z\in\Z_p$ with $z(1+z)$ is prime to $p$.
\begin{align}\label{alphaalphaz2alpha124zz12A}
&{}_2F_1\bigg[\begin{matrix}
\alpha&\alpha\\
&1
\end{matrix}\bigg|\,z^2\bigg]_{p-1}+z^{1-2\lambda_p(\alpha)}{}_2F_1\bigg[\begin{matrix}
\alpha&\alpha\\
&1
\end{matrix}\bigg|\,\frac1{z^2}\bigg]_{p-1}\notag\\
\equiv&
(1+z)^{1-2\lambda_p(\alpha)}{}_2F_1\bigg[\begin{matrix}
\alpha&\frac12\\
&1
\end{matrix}\bigg|\,\frac{4z}{(1+z)^2}\bigg]_{p-1}\pmod{p^2}.
\end{align}
\end{theorem}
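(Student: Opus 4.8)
The plan is to prove \eqref{alphaalphaz2alpha124zz12A} by the two-step scheme described after Theorem~\ref{alphabeta11}, with $\alpha$ as the moving parameter and $z$ fixed. Write $a=\langle-\alpha\rangle_p$, so $0\le a\le p-1$ and $\alpha\in-a+p\Z_p$. A word on the shape of the statement: \eqref{alphaalphaz2alpha124zz12A} is the $p$-adic analogue not of \eqref{alphaalphaz2alpha124zz12e} alone but of the identity one gets by adding \eqref{alphaalphaz2alpha124zz12e} at $z$ and at $1/z$, since $4z/(1+z)^2$ is invariant under $z\mapsto1/z$; this is exactly why the second summand ${}_2F_1[\alpha,\alpha;1\,|\,1/z^2]_{p-1}$ appears on the left. (Combining the congruences of Theorems~\ref{alpha2alpha214zz1} and~\ref{alpha12zalpha12zz1} by the substitutions $z\mapsto z^2$ and $z\mapsto 4z/(1+z)^2$ does not give \eqref{alphaalphaz2alpha124zz12A}; it yields a genuinely different, $z\mapsto-z$ symmetric congruence, so a direct argument is needed.) On the disk $-a+p\Z_p$ the residue $\langle-\alpha\rangle_p=a$ is constant and $\lambda_p(\alpha)=-a+\tfrac{p-1}{p}(\alpha+a)$ is affine, so by \eqref{zlambda} both $z^{1-2\lambda_p(\alpha)}$ and $(1+z)^{1-2\lambda_p(\alpha)}$ are $p$-adic analytic in $\alpha$ with coefficients in $\Z_p$ (when expanded in powers of $\alpha+a$), while the truncated series are polynomials in $\alpha$ over $\Z_p$. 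Writing $F(\alpha)$ for the difference of the two sides of \eqref{alphaalphaz2alpha124zz12A}, every Taylor coefficient of $F$ at $\alpha=-a$ lies in $\Z_p$; since $\alpha+a\in p\Z_p$, all terms of order $\ge2$ lie in $p^2\Z_p$, and it suffices to prove (A) $F(-a)=0$ and (B) $F'(-a)\equiv0\pmod p$.

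For (A): at $\alpha=-a$ each truncated series terminates at $k=a$ and equals the corresponding genuine hypergeometric function, and $z^{1-2\lambda_p(-a)}=z^{1+2a}$, $(1+z)^{1-2\lambda_p(-a)}=(1+z)^{1+2a}$ are ordinary powers. Applying \eqref{alphaalphaz2alpha124zz12e} with $\alpha=-a$ gives ${}_2F_1[-a,-a;1\,|\,z^2]=(1+z)^{2a}\,{}_2F_1[-a,\tfrac12;1\,|\,4z/(1+z)^2]$; applying it again with $z$ replaced by $1/z$ and using $(1+1/z)^{2a}=(1+z)^{2a}/z^{2a}$ gives $z^{1+2a}\,{}_2F_1[-a,-a;1\,|\,1/z^2]=z\,(1+z)^{2a}\,{}_2F_1[-a,\tfrac12;1\,|\,4z/(1+z)^2]$. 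Adding the two and factoring out $(1+z)^{2a}(1+z)=(1+z)^{1+2a}$ yields precisely $(1+z)^{1+2a}\,{}_2F_1[-a,\tfrac12;1\,|\,4z/(1+z)^2]$, that is, $F(-a)=0$.

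Step (B) is where the real work lies. Two features simplify the derivative. First, each ${}_2F_1[\alpha,\alpha;1\,|\,\cdot]_{p-1}$ carries $(\alpha)_k^2$, which has a double zero at $\alpha=-a$ for $k>a$; hence its $\alpha$-derivative at $-a$ involves only the bulk indices $k\le a$ and carries no truncation tail. Second, the prefactors contribute Fermat-quotient terms: from $\tfrac{d}{d\alpha}(1-2\lambda_p(\alpha))=-\tfrac{2(p-1)}{p}$ one finds $\tfrac{d}{d\alpha}z^{1-2\lambda_p(\alpha)}\big|_{-a}\equiv-2\,q_p(z)\,z^{1+2a}\pmod p$ and likewise for $1+z$, where $q_p(u)=(u^{p-1}-1)/p$. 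The only term with a genuine tail is ${}_2F_1[\alpha,\tfrac12;1\,|\,4z/(1+z)^2]_{p-1}$, which carries a single factor $(\alpha)_k$ with $\tfrac{d}{d\alpha}(\alpha)_k\big|_{-a}=(-1)^a a!\,(k-1-a)!$ for $k>a$, producing a tail $\sum_{a<k\le p-1}$. The plan is to account for the bulk pieces and the $q_p(z),q_p(1+z)$ pieces by differentiating \eqref{alphaalphaz2alpha124zz12e} in $\alpha$ (again at both $z$ and $1/z$, mirroring step (A)), so that (B) collapses to showing that the leftover finite tail of the ${}_2F_1[\alpha,\tfrac12;1\,|\,\cdot]$-derivative vanishes modulo $p$.

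This tail congruence is the main obstacle. Unlike in Theorem~\ref{alphabeta11}, there is no terminating companion of \eqref{alphaalphaz2alpha124zz12e} that stays terminating when $\alpha$ is perturbed—the parameter $\tfrac12$ does not force termination—so the tail cannot simply be read off from a differentiated polynomial identity. I expect to control it by a finite combinatorial identity modulo $p$ of the type \eqref{alphabeta11c1ape}, using $\tfrac{d}{d\alpha}(\alpha)_k|_{-a}=(-1)^a a!(k-1-a)!$, the vanishing $(1/2)_k\equiv0\pmod p$ for $k>(p-1)/2$, and standard harmonic-sum and Fermat-quotient congruences. The delicate point is that the ordinary logarithmic derivatives coming from differentiating \eqref{alphaalphaz2alpha124zz12e} must be matched against the $p$-adic Fermat quotients $q_p(z),q_p(1+z)$ produced by the prefactors; it is this pairing, together with the tail cancellation, rather than either ingredient alone, that forms the technical heart of the proof.
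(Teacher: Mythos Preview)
Your two-step scheme matches the paper's philosophy, and step~(A) is identical to the paper's $x=0$ check. The divergence is in step~(B). You propose computing $F'(-a)\bmod p$ directly, which---as you correctly anticipate in your final paragraph---forces a confrontation between the $p$-adic Fermat quotients $q_p(z),q_p(1+z)$ (from differentiating the $\lambda_p$-prefactors) and the archimedean $\log(1+z)$ (from differentiating $(1+z)^{-2\alpha}$ in \eqref{alphaalphaz2alpha124zz12e}), intertwined with the tail. The paper bypasses this via Lemma~\ref{polynomialzsp}: rather than a derivative, it checks the congruence at the \emph{second integer point} $\alpha=-a-p$, where \eqref{alphaalphaz2alpha124zz12e} again holds exactly. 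All Fermat-quotient bookkeeping is absorbed into the single relation $u^{1+s(p-1)}f(sp)-uf(0)\equiv s\bigl(u^pf(p)-uf(0)\bigr)\pmod{p^2}$, and the task reduces to verifying $(1+z)^{2p}\Phi(p)\equiv\Psi_1(p)+z^{2p}\Psi_2(p)\pmod{p^2}$.

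What remains at $\alpha=-a-p$ is the gap between the full ${}_2F_1\bigl[-a-p,\tfrac12;1\mid w\bigr]$ and its truncation, namely the tail $\sum_{k=0}^{a}\frac{(-a-p)_{p+k}(\tfrac12)_{p+k}}{(1)_{p+k}^2}\,w^k$ with $w=4z/(1+z)^2$. This tail does \emph{not} vanish modulo~$p$---so the hope in your penultimate paragraph is too optimistic. Lemma~\ref{apk12pk4zz1k} computes it explicitly as a specific nonzero combination of $\Phi(0)/4^{p-1}$ and $p\cdot\bigl(\Psi_1'(0)+z\Psi_2'(0)\bigr)$; its proof differentiates the two-parameter transformation \eqref{alphabetaz2} in an auxiliary variable (perturbing $\beta=\tfrac12$ and the bottom parameter simultaneously) and uses the symmetry $\binom{a}{k}=\binom{a}{a-k}$. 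Once this is substituted back together with the corresponding $z^2$-tail from \eqref{apap1zapapzapap1z2F1c}, all leftover terms assemble into a multiple of $(z^{p-1}-1)^2\equiv0\pmod{p^2}$. So the combinatorial identity you are hunting for is precisely Lemma~\ref{apk12pk4zz1k}, and it is the two-point evaluation---not direct differentiation---that makes it tractable to state and prove.
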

Replacing $z$ by $4z/(1+z)$ in (\ref{alpha12zalpha12zz1G}) and applying (\ref{alpha2alpha214zz1c}), we obtain that
\begin{align*}
&{}_2F_1\bigg[\begin{matrix}
\alpha&\frac12\\
&1
\end{matrix}\bigg|\,\frac{4z}{(1+z)^2}\bigg]_{p-1}+\bigg(\frac{1-z}{1+z}\bigg)^{2-2\lambda_p(\alpha)}{}_2F_1\bigg[\begin{matrix}
\alpha&\frac12\\
&1
\end{matrix}\bigg|\,-\frac{4z}{(1-z)^2}\bigg]_{p-1}\notag\\
\equiv&
\frac{2\cdot(1+z^2)^{1-\lambda_p(\alpha)}}{(1+z)^{2-2\lambda_p(\alpha)}}{}_2F_1\bigg[\begin{matrix}
\frac12\alpha&\frac12+\frac12\alpha\\
&1
\end{matrix}\bigg|\,\frac{4z^2}{(1+z^2)^2}\bigg]_{p-1}\\
\equiv&\frac{2}{(1+z)^{2-2\lambda_p(\alpha)}}
\bigg({}_2F_1\bigg[\begin{matrix}
\alpha&\alpha\\
&1
\end{matrix}\bigg|\,z^2\bigg]_{p-1}+z^{2-2\lambda_p(\alpha)}{}_2F_1\bigg[\begin{matrix}
\alpha&\alpha\\
&1
\end{matrix}\bigg|\,\frac{1}{z^2}\bigg]_{p-1}\bigg)\pmod{p^2}.
\end{align*}
It follows that
\begin{align}\label{alphaalphaz2alpha124zz12B}
&{}_2F_1\bigg[\begin{matrix}
\alpha&\alpha\\
&1
\end{matrix}\bigg|\,z^2\bigg]_{p-1}-z^{1-2\lambda_p(\alpha)}{}_2F_1\bigg[\begin{matrix}
\alpha&\alpha\\
&1
\end{matrix}\bigg|\,\frac1{z^2}\bigg]_{p-1}\notag\\
\equiv&
(1-z)^{1-2\lambda_p(\alpha)}{}_2F_1\bigg[\begin{matrix}
\alpha&\frac12\\
&1
\end{matrix}\bigg|\,-\frac{4z}{(1-z)^2}\bigg]_{p-1}\pmod{p^2}.
\end{align}
Of course, although the above process requires that both $1+z$ and $1+z^2$ is prime to $p$,
(\ref{alphaalphaz2alpha124zz12B}) is still valid for those exceptional $z$'s, since (\ref{alphaalphaz2alpha124zz12B}) can be proved directly in the similar way as (\ref{alphaalphaz2alpha124zz12A}).

Finally, note that $y/(y-1)=z^2/(4z-4)$ if $y=z^2/(z-2)^2$. It follows from (\ref{alphaalphazz1e}) and (\ref{alpha12zalpha12zz1e}) that
\begin{equation}\label{alpha12zalpha12z1ze}
{}_2F_1\bigg[\begin{matrix}
\alpha&\frac12\\
&1
\end{matrix}\bigg|\,z\bigg]=(1-z)^{-\frac12\alpha}{}_2F_1\bigg[\begin{matrix}
\frac12\alpha&\frac12-\frac12\alpha\\
&1
\end{matrix}\bigg|\,-\frac{z^2}{4z-4}\bigg].
\end{equation}
However, the $p$-adic analogue of (\ref{alpha12zalpha12z1ze}) looks a little different.
\begin{theorem}\label{alpha12zalpha12z1z}
Let $\alpha,z\in\Z_p$. If $\langle-\alpha\rangle_p$ is even and $z-1$ is prime to $p$, then
\begin{align}\label{alpha12zalpha12z1zG}
&{}_2F_1\bigg[\begin{matrix}
\alpha&\frac12\\
&1
\end{matrix}\bigg|\,z\bigg]_{p-1}+(1-z)^{\langle-\alpha\rangle_p}{}_2F_1\bigg[\begin{matrix}
\alpha&\frac12\\
&1
\end{matrix}\bigg|\,\frac{z}{z-1}\bigg]_{p-1}\notag\\
\equiv&
2(1-z)^{\frac12\langle-\alpha\rangle_p}{}_2F_1\bigg[\begin{matrix}
\frac12\alpha&\frac12-\frac12\alpha\\
&1
\end{matrix}\bigg|\,\frac{z^2}{4z-4}\bigg]_{p-1}\pmod{p^2}.
\end{align}
\end{theorem}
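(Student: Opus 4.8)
Throughout fix $z\in\Z_p$ with $z-1\in\Z_p^\times$ and put $a:=\langle-\alpha\rangle_p$, which is even by hypothesis; abbreviate the three truncated sums as $S_1(\alpha)={}_2F_1[\alpha,\tfrac12;1\,|\,z]_{p-1}$, $S_2(\alpha)={}_2F_1[\alpha,\tfrac12;1\,|\,\tfrac{z}{z-1}]_{p-1}$ and $S_3(\alpha)={}_2F_1[\tfrac12\alpha,\tfrac12-\tfrac12\alpha;1\,|\,\tfrac{z^2}{4z-4}]_{p-1}$. The plan is to treat the two sides of \eqref{alpha12zalpha12z1zG} as polynomials in $\alpha$ and reduce the claim to one exact identity and one derivative congruence. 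On the residue disc $\alpha\in-a+p\Z_p$ the exponents $\langle-\alpha\rangle_p=a$ and $\tfrac12\langle-\alpha\rangle_p=a/2$ stay constant, so $(1-z)^a,(1-z)^{a/2}\in\Z_p^\times$ are genuine powers and, since $k!$ and $4(z-1)$ are units for $0\le k\le p-1$, all three of $S_1,S_2,S_3$ lie in $\Z_p[\alpha]$. Writing $\alpha=-a+pt$ and Taylor-expanding, $f(-a+pt)\equiv f(-a)+pt\,f'(-a)\pmod{p^2}$ for every $f\in\Z_p[\alpha]$ (the terms of order $\ge2$ absorb a factor $p^2$). Hence \eqref{alpha12zalpha12z1zG} is equivalent to the conjunction of
\[
S_1(-a)+(1-z)^aS_2(-a)=2(1-z)^{a/2}S_3(-a)
\]
and
\[
S_1'(-a)+(1-z)^aS_2'(-a)\equiv 2(1-z)^{a/2}S_3'(-a)\pmod p,
\]
the derivatives being taken in $\alpha$.

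For the first (base-case) identity I would use that at $\alpha=-a$ the series ${}_2F_1[-a,\tfrac12;1\,|\,\cdot\,]$ terminates at degree $a\le p-1$ and ${}_2F_1[-\tfrac a2,\tfrac12+\tfrac a2;1\,|\,\cdot\,]$ at degree $a/2\le\tfrac{p-1}2$, so $S_1(-a),S_2(-a),S_3(-a)$ coincide with the corresponding genuine hypergeometric polynomials. The rational function $\Omega(z)=\tfrac{z^2}{4z-4}$ is invariant under $\varrho(z)=\tfrac{z}{z-1}$, and $1-\varrho(z)=(1-z)^{-1}$; applying the classical transformation \eqref{alpha12zalpha12z1ze} once at $z$ and once at $\varrho(z)$, and then adding $(1-z)^a$ times the second to the first, reproduces the base-case identity exactly. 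This is precisely where the two summands on the left and the factor $2$ on the right of \eqref{alpha12zalpha12z1zG} come from, and it matches the general pattern (a main term in $z$ and a companion term in $\varrho(z)$) described after Theorem~\ref{alpha2alpha214z1z}.

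The crux is the derivative congruence. Differentiating \eqref{alpha12zalpha12z1ze} in $\alpha$ at $\alpha=-a$ accounts for the summands with $k\le a$ (resp. $k\le a/2$): there $(\alpha)_k|_{-a}\ne0$ and the logarithmic derivatives $\sum_{0\le j<k}(\alpha+j)^{-1}$ assemble precisely into the derivative of the classical identity. The difficulty is the tail $a<k\le p-1$, which is invisible to the terminating identity: for such $k$ one has $(\alpha)_k|_{-a}=0$ yet $\tfrac{d}{d\alpha}(\alpha)_k|_{-a}=a!\,(k-1-a)!\ne0$, so these terms genuinely survive in $S_1'(-a)$ and $S_2'(-a)$. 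I expect to show that their total contribution cancels modulo $p$: the terms with $k>\tfrac{p-1}2$ drop out because $(\tfrac12)_k\equiv0\pmod p$, and the surviving range $a<k\le\tfrac{p-1}2$ must be collapsed by an explicit combinatorial identity — the analogue here of \eqref{alphabeta11c1ape} — obtained by differentiating \eqref{alpha12zalpha12z1ze} and simplified by Wilson-type factorial congruences together with the reflection $k\mapsto p-1-k$. Carrying out this tail cancellation uniformly in $z$ (equivalently, coefficient-by-coefficient in the expansion in $z$, which is legitimate because $z-1\in\Z_p^\times$ makes $(z-1)^{-1}\in\Z_p[[z]]$) is the main obstacle; the two preceding steps are essentially formal.

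Finally, one might instead hope to derive \eqref{alpha12zalpha12z1zG} by combining Theorems~\ref{alpha2alpha214z1z}, \ref{alpha2alpha214zz1} and \ref{alpha12zalpha12zz1} with a $p$-adic Pfaff transformation relating ${}_2F_1[\tfrac12\alpha,\tfrac12+\tfrac12\alpha;1]$ to ${}_2F_1[\tfrac12\alpha,\tfrac12-\tfrac12\alpha;1]$; but since (as noted after \eqref{Euleralphabeta1}) no $p$-adic analogue of that linear transformation is available, the direct perturbation argument above appears to be the natural route.
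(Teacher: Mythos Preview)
Your reduction to a base case plus a derivative congruence is correct and is the paper's general methodology, and your observation that the two contributions from differentiating $(1-z)^{-\alpha/2}$ at $z$ and at $\varrho(z)=\tfrac{z}{z-1}$ cancel is exactly why the exponent in \eqref{alpha12zalpha12z1zG} is the plain integer $\langle-\alpha\rangle_p$ rather than $\lambda_p(\alpha)$. However, your derivative step has a real gap: you have not shown that the tail contributions (from $a<k\le\tfrac{p-1}{2}$ in $S_1',S_2'$ and from $\tfrac a2<k\le\tfrac{p-1-a}{2}$ in $S_3'$) cancel modulo $p$. Differentiating \eqref{alpha12zalpha12z1ze} only sees the terminating part, so it cannot by itself supply the missing combinatorial identity; the analogy with \eqref{alphabeta11c1ape} is misleading because in that situation there is no tail at all. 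Your sketch ``Wilson congruences plus the reflection $k\mapsto p-1-k$'' is not enough to pin down what identity you would actually prove.

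The paper circumvents this difficulty by a genuinely different device. It perturbs \emph{two} parameters, writing
\[
\Psi_i(x,y)={}_2F_1\bigg[\begin{matrix}-a+x&\tfrac12(1-y)\\&1\end{matrix}\bigg|\,\cdot\,\bigg],\qquad
\Phi(x)=(1-z)^{a/2}\,{}_2F_1\bigg[\begin{matrix}-\tfrac12(a-x)&\tfrac12+\tfrac12(a-x)\\&1\end{matrix}\bigg|\,\tfrac{z^2}{4z-4}\bigg].
\]
Besides the base point $(0,0)$ it uses the second lattice point $(p,p)$, where the top parameters become $p-a$ and $\tfrac{1-p}{2}$ and the series again terminates. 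Applying \eqref{alpha12zalpha12z1ze} in reverse with $a$ replaced by $p{-}1{-}a$ expresses $2\Phi(p)$ through auxiliary sums $\Omega_1,\Omega_2$; the Pfaff transformation \eqref{Eulertransformation} gives the exact symmetry $\Omega_1(y)=\Omega_2(-y)$, hence $\Omega_1(p)+\Omega_2(p)\equiv\Omega_1(0)+\Omega_2(0)\pmod{p^2}$; and the same Pfaff symmetry $\Psi_1(0,y)=\Psi_2(0,-y)$ makes the $y$-derivatives cancel, so $\Psi_1(p,p)+\Psi_2(p,p)\equiv\Psi_1(p,0)+\Psi_2(p,0)\pmod{p^2}$. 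Every evaluation is at a point where all series genuinely terminate, so no tail ever appears: the needed mod-$p$ information comes entirely from exact symmetries of classical identities, not from an ad hoc combinatorial cancellation. If you wish to salvage the one-parameter route you must supply that tail identity directly; the two-parameter argument is both shorter and more conceptual.
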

Evidently in (\ref{alpha12zalpha12z1zG}), the $p$-adic analogue of $(1-z)^{-\alpha}$ is $(1-z)^{\langle-\alpha\rangle_p}$, rather than $(1-z)^{-\lambda_p(\alpha)}$. For example, for prime $p\equiv1\pmod{5}$,
$$
{}_2F_1\bigg[\begin{matrix}
\frac15&\frac12\\
&1
\end{matrix}\bigg|\,z\bigg]_{p-1}+(1-z)^{\frac{p-1}5}{}_2F_1\bigg[\begin{matrix}
\frac15&\frac12\\
&1
\end{matrix}\bigg|\,\frac{z}{z-1}\bigg]_{p-1}\equiv
2(1-z)^{\frac{p-1}{10}}{}_2F_1\bigg[\begin{matrix}
\frac1{10}&\frac25\\
&1
\end{matrix}\bigg|\,\frac{z^2}{4z-4}\bigg]_{p-1}\pmod{p^2}.
$$

Now we have listed all $p$-adic quadratic ${}_2F_1$ transformations in this paper. In particular, substituting the special values of $\alpha$ and $z$ in the above $p$-adic transformations, we may obtain some simple consequences.
For example, substituting $z=1/2$ in (\ref{alphaz4z1zc}) and applying (\ref{alphabeta11c1}), we may get
\begin{equation}\label{2F1alpha1alpha12}
{}_2F_1\bigg[\begin{matrix}
\alpha&1-\alpha\\
&1
\end{matrix}\bigg|\,\frac12\bigg]_{p-1}\equiv
{}_2F_1\bigg[\begin{matrix}
\frac12\alpha&\frac12-\frac12\alpha\\
&1
\end{matrix}\bigg|\,1\bigg]_{p-1}\equiv-\frac{\Gamma_p(\frac12)}{\Gamma_p(1-\frac12\alpha)\Gamma_p(\frac12+\frac12\alpha)}\pmod{p^2}
\end{equation}
which was firstly proved by Liu \cite{Liu1610}. 
On the other hand, in \cite{CoHa91}, Coster and Hamme established a connection between the Legendre polynomials modulo $p^2$ and the elliptic curves having complex multiplication. For example, according to their results, we can obtain that
$$
{}_2F_1\bigg[\begin{matrix}
\frac12&\frac12\\
&1
\end{matrix}\bigg|\,-1\bigg]_{p-1}\equiv (-1)^{\frac{p-1}{4}}(a+b\sqrt{-1})\pmod{p^2},
$$
where the prime $p=a^2+b^2\equiv 1\pmod{4}$ with $a\equiv 1\pmod{4}$, and $\sqrt{-1}\in\Z_p$ with $\sqrt{-1}\equiv a/b\pmod{p}$. So by (\ref{alphaz4z1zc}) and using the Gross-Koblitz formula, we may get
\begin{equation}
{}_2F_1\bigg[\begin{matrix}
\frac14&\frac14\\
&1
\end{matrix}\bigg|\,-8\bigg]_{p-1}\equiv (-1)^{\frac{p+3}{4}}\cdot\frac{\Gamma_p(\frac12)}{\Gamma_p(\frac34)^2}\pmod{p^2}
\end{equation}
for each prime $p\equiv 1\pmod{4}$. In Section \ref{sectionq2F1z4z1z}, we shall explain the relation between  the Legendre polynomials and ${}_2F_1\bigg[\begin{matrix}
\frac14&\frac14\\
&1
\end{matrix}\bigg|\,z\bigg]_{p-1}$ modulo $p^2$. Furthermore, in Section \ref{section2F1cm}, with help of the results of Coster and Hamme, we can show that for some special values of $z$,
$$
{}_2F_1\bigg[\begin{matrix}
\frac14&\frac14\\
&1
\end{matrix}\bigg|\,z\bigg]_{p-1}\equiv (1-z)^{-\lambda_p(\frac14)}
{}_2F_1\bigg[\begin{matrix}
\frac14&\frac34\\
&1
\end{matrix}\bigg|\,\frac z{z-1}\bigg]_{p-1}\pmod{p^2},
$$ 
which is a complete $p$-adic analogue of (\ref{alphaalphazz1e}) for $\alpha=1/4$, provided that the corresponding elliptic curve for $z$ has complex multiplication.
However, for those cubic ${}_2F_1$ transformations, e.g., Ramanujan's cubic transformation
$$
{}_2F_1\bigg[\begin{matrix}
\frac13&\frac23\\
&1
\end{matrix}\bigg|\,-\bigg(\frac{1-z}{1+2z}\bigg)^3\bigg]=(1+2z){}_2F_1\bigg[\begin{matrix}
\frac13&\frac23\\
&1
\end{matrix}\bigg|\,z^3\bigg],
$$
we have no idea on their $p$-adic analogues.

Let us consider the identities involving ${}_3F_2$ series. The first one is the Watson identity \cite[Theorem 3.5.5 (i)]{AAR99}
\begin{equation}\label{Watsonidentity}
{}_3F_2\bigg[\begin{matrix}
\alpha&\beta&\gamma\\
&2\beta&\frac12(\alpha+\gamma+1)
\end{matrix}\bigg|\,1\bigg]=\frac{\Gamma(\frac12)\Gamma(\frac12+\beta)\Gamma(\frac12+\frac12(\alpha+\gamma))\Gamma(\frac12+\beta-\frac12(\alpha+\gamma))}{\Gamma(\frac12+\frac12\alpha)\Gamma(\frac12+\frac12\gamma)\Gamma(\frac12+\beta-\frac12\alpha)\Gamma(\frac12+\beta-\frac12\gamma)}.
\end{equation}
Substituting $\gamma=1-\alpha$ in (\ref{Watsonidentity}), we get
\begin{equation}\label{alphaalphabeta2betaH}
{}_3F_2\bigg[\begin{matrix}
\alpha&1-\alpha&\beta\\
&1&2\beta
\end{matrix}\bigg|\,1\bigg]=\frac{\Gamma(\frac12)\Gamma(\frac12+\beta)\Gamma(\beta)}{\Gamma(\frac12+\frac12\alpha)\Gamma(1-\frac12\alpha)\Gamma(\frac12+\beta-\frac12\alpha)\Gamma(\beta+\frac12\alpha)}.
\end{equation}
\begin{theorem}\label{alphaalphabeta2beta}
Let $\alpha,\beta\in\Z_p$.
Suppose that $\langle-\beta\rangle_p<p/2$ and $(2\beta)_{p-1}\not\equiv0\pmod{p^2}$. If $\langle-\alpha\rangle_p$ is even, then
\begin{align}\label{alphaalphabeta2betaAG}
{}_3F_2\bigg[\begin{matrix}
\alpha&1-\alpha&\beta\\
&1&2\beta
\end{matrix}\bigg|\,1\bigg]_{p-1}\equiv
-\frac{\Gamma_p(\frac12)\Gamma_p(\frac12+\beta)\Gamma_p(\beta)}{\Gamma_p(\frac12+\frac12\alpha)\Gamma_p(1-\frac12\alpha)\Gamma_p(\frac12+\beta-\frac12\alpha)\Gamma_p(\beta+\frac12\alpha)}\pmod{p^2}.
\end{align}
On the other hand, if $\langle-\alpha\rangle_p$ is odd, then
\begin{align}\label{alphaalphabeta2betaBG}
{}_3F_2\bigg[\begin{matrix}
\alpha&1-\alpha&\beta\\
&1&2\beta
\end{matrix}\bigg|\,1\bigg]_{p-1}
\equiv0\pmod{p^2}.
\end{align}
\end{theorem}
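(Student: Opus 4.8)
The plan is to run the same two-step scheme that the authors used for Theorem~\ref{alphabeta11}. Write $a=\langle-\alpha\rangle_p$ and $b=\langle-\beta\rangle_p$, regard the truncated sum on the left of (\ref{alphaalphabeta2betaAG}) as a function $P(\alpha)$ of $\alpha$ (with $\beta$ and $p$ fixed), and write $G(\alpha)$ for the $p$-adic gamma quotient on the right of (\ref{alphaalphabeta2betaAG}). Since $\alpha\equiv-a\pmod p$, the difference $\alpha+a$ lies in $p\Z_p$, so expanding $P$ and $G$ about the terminating point $\alpha=-a$ gives
$$
P(\alpha)\equiv P(-a)+P'(-a)(\alpha+a)\pmod{p^2},\qquad G(\alpha)\equiv G(-a)+G'(-a)(\alpha+a)\pmod{p^2},
$$
the quadratic tails lying in $p^2\Z_p$ because $p$ is odd and the relevant Taylor coefficients are $p$-integral. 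Thus the theorem reduces, in the even case, to the value congruence $P(-a)\equiv G(-a)\pmod{p^2}$ together with the derivative congruence $P'(-a)\equiv G'(-a)\pmod p$, and in the odd case to $P(-a)=0$ together with $P'(-a)\equiv0\pmod p$.

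For the base point $\alpha=-a$ the factor $(\alpha)_k=(-a)_k$ annihilates every term with $k>a$, so $P(-a)$ is the terminating Watson sum and equals the classical quotient on the right of (\ref{alphaalphabeta2betaH}). When $a=2m$ is even this collapses to the finite Pochhammer ratio
$$
P(-a)=\frac{(\tfrac12-m)_m\,(\beta-m)_m}{m!\,(\tfrac12+\beta)_m},
$$
and a direct computation using only the functional equation (\ref{Gammapx1Gammapx}) shows that $G(-a)$ equals the very same ratio, so in fact $P(-a)=G(-a)$; here the overall minus sign in (\ref{alphaalphabeta2betaAG}) is exactly what is produced by $\Gamma_p(1)=-1$, just as in the remark after Theorem~\ref{alphabeta11}. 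When $a$ is odd the argument $\tfrac12+\tfrac12\alpha$ of the gamma function in the denominator of (\ref{alphaalphabeta2betaH}) becomes the non-positive integer $\tfrac{1-a}{2}$, so the classical quotient vanishes and $P(-a)=0$; this zero is a feature of the ordinary gamma function and not of $\Gamma_p$, so the $p$-adic quotient itself does not vanish and it is the truncated sum that inherits the classical zero. The hypotheses $\langle-\beta\rangle_p<p/2$ and $(2\beta)_{p-1}\not\equiv0\pmod{p^2}$ guarantee here that all $\Gamma_p$-values in $G$ are defined and that the terms of $P(-a)$ are $p$-integral (for $k>2b$ the single factor of $p$ in $(2\beta)_k$ is cancelled by the factor $\beta+b$ of $(\beta)_k$).

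For the derivative congruence I would differentiate $P$ at $\alpha=-a$. The terms with $k\le a$ contribute through ordinary logarithmic derivatives of $(\alpha)_k(1-\alpha)_k$, whereas for $k>a$ the factor $(\alpha)_k$ vanishes and the product rule leaves only
$$
\frac{d}{d\alpha}(\alpha)_k\Big|_{\alpha=-a}=(-1)^a\,a!\,(k-1-a)!,
$$
multiplied by $(1+a)_k(\beta)_k/\big((k!)^2(2\beta)_k\big)$. On the other side $G'(-a)$ is computed by logarithmic differentiation of the $\Gamma_p$-quotient, which by (\ref{Gammapx1Gammapx}) turns into a combination of $p$-adic harmonic sums. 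Matching the two is the exact ${}_3F_2$-analogue of the passage from (\ref{alphabeta11c1ap}) to the combinatorial identity (\ref{alphabeta11c1ape}) in the proof of Theorem~\ref{alphabeta11}: the required identity is to be obtained by differentiating a terminating form of Watson's theorem in the perturbation $\alpha=-a+x$ and reducing modulo $p$, and in the odd case the same identity must collapse to $0$.

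The main obstacle, absent in the ${}_2F_1$ setting of Theorem~\ref{alphabeta11}, is the denominator factor $(2\beta)_k$. For Theorem~\ref{alphabeta11} one may simply set $\beta=-b$ to force termination, because the denominator $(1)_k=k!$ never meets $p$; here, by contrast, $(2\beta)_k$ acquires a factor of $p$ once $k>2b$, so the $p$-adic truncation at $p-1$ differs from the classical terminating sum precisely in the range $2b<k\le p-1$, and the naive substitution $\beta=-b$ makes the right-hand side of (\ref{alphaalphabeta2betaH}) singular. The tail over $2b<k\le p-1$ is therefore invisible to the classical evaluation and must be treated separately: using $b<p/2$ (so that exactly one such $p$-factor occurs in range) and $\nu_p\big((2\beta)_{p-1}\big)=1$, one pairs the $p$-divisible factor $2\beta+2b$ of $(2\beta)_k$ with the factor $\beta+b$ of $(\beta)_k$, the unknown quotient $(\beta+b)/(2\beta+2b)=\tfrac12$ being a unit, so that each tail term is $p$-integral with an explicitly computable residue. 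Proving that the head plus this tail equals $G'(-a)$ modulo $p$ (and equals $0$ in the odd case) is the technical heart of the argument; granting it, the two displayed congruences hold and Theorem~\ref{alphaalphabeta2beta} follows.
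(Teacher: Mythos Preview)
Your scheme is the generic one from Theorem~\ref{alphabeta11}, and you have correctly located the obstacle: after replacing $\beta$ by $-b$ the lower parameter would have to be $-2b$, but Watson's identity (\ref{Watsonidentity}) needs the lower parameter to be \emph{exactly} $2\beta$, so the naive differentiation of a terminating Watson sum does not produce the right object, and the tail over $2b<k\le p-1$ has to be handled by hand.  You stop short of doing this, so the proposal is incomplete at precisely the step you call ``the technical heart of the argument''.

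The paper sidesteps this obstacle entirely by a trick special to the $\alpha\leftrightarrow1-\alpha$ symmetry of (\ref{alphaalphabeta2betaH}).  With $a=\langle-\alpha\rangle_p$ set
\[
\Psi(x)={}_3F_2\bigg[\begin{matrix}-a+x&1+a-x&\beta\\ &1&2\beta\end{matrix}\bigg|\,1\bigg]_{p-1},
\]
which is your $P(-a+x)$.  At $x=0$ the first parameter $-a$ forces termination at $k=a$, and Watson's identity evaluates $\Psi(0)$.  The point you are missing is that at $x=p$ the \emph{second} parameter becomes $1+a-p=-(p-1-a)$, a non-positive integer in $\{-(p-1),\dots,0\}$, so $\Psi(p)$ is \emph{again} a terminating Watson sum (now with $a$ replaced by $p-1-a$, which has the same parity as $a$ since $p$ is odd).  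Thus both $\Psi(0)$ and $\Psi(p)$ are computed exactly by (\ref{alphaalphabeta2betaH}); converting the resulting $\Gamma$-quotients to $\Gamma_p$ gives $\Psi(0)=-\Phi(0)$ and $\Psi(p)=-\Phi(p)$ in the even case, and $\Psi(0)=\Psi(p)=0$ in the odd case (the classical pole you noted).  Linear interpolation
\[
\Psi(sp)-\Psi(0)\equiv s\bigl(\Psi(p)-\Psi(0)\bigr)\pmod{p^2},\qquad s=(\alpha+a)/p,
\]
and the analogous relation for $\Phi$, then finish the proof with no derivative computation and no tail analysis at all.  Your route may well be pushed through, but this second evaluation point is what makes the argument clean.
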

Notice that two requirements $\langle-\beta\rangle_p<p/2$ and $(2\beta)_{p-1}\not\equiv0\pmod{p^2}$ of Theorem \ref{alphaalphabeta2beta} are both necessary. In fact, if $\langle-\beta\rangle_p>p/2$, then for any $\langle-2\beta\rangle_p<k\leq \langle-\beta\rangle_p$, we shall have $(\beta)_k/(2\beta)_{k}$ is not $p$-integral. Also, the condition $(2\beta)_{p-1}\not\equiv0\pmod{p^2}$ can prevent a high power of $p$ appearing in the denominators of the left sides of (\ref{alphaalphabeta2betaAG}) and
(\ref{alphaalphabeta2betaBG}).
As the examples of Theorem \ref{alphaalphabeta2beta}, we have
$$
{}_3F_2\bigg[\begin{matrix}
\frac12&\frac12&\frac13\\
&1&\frac23
\end{matrix}\bigg|\,1\bigg]_{p-1}\equiv-\frac{\Gamma_p(\frac13)\Gamma_p(\frac12)\Gamma_p(\frac56)}{\Gamma_p(\frac34)^2\Gamma_p(\frac7{12})^2}\pmod{p^2}
$$
for any prime $p\equiv 1\pmod{12}$
and
$$
{}_3F_2\bigg[\begin{matrix}
\frac12&\frac12&\frac13\\
&1&\frac23
\end{matrix}\bigg|\,1\bigg]_{p-1}\equiv0\pmod{p^2}
$$
for any prime $p\equiv 7\pmod{12}$.

The next is Dixon's well-poised sum \cite[Theorem 3.4.1]{AAR99}
\begin{align}\label{dixonwellpoisedsum}
{}_3F_2\bigg[\begin{matrix}
\alpha&\beta&\gamma\\
&\alpha-\beta+1&\alpha-\gamma+1
\end{matrix}\bigg|\,1\bigg]=\frac{\Gamma(\frac12\alpha+1)\Gamma(\alpha-\beta+1)\Gamma(\alpha-\gamma+1)\Gamma(\frac12\alpha-\beta-\gamma+1)}{\Gamma(\alpha+1)\Gamma(\frac12\alpha-\beta+1)\Gamma(\frac12\alpha-\gamma+1)\Gamma(\alpha-\beta-\gamma+1)}.
\end{align}
Letting $\gamma=\alpha$ in (\ref{dixonwellpoisedsum}), we have
\begin{equation}\label{alphaalphabetaalphabeta1H}
{}_3F_2\bigg[\begin{matrix}
\alpha&\alpha&\beta\\
&1&\alpha-\beta+1
\end{matrix}\bigg|\,1\bigg]=\frac{\Gamma(1+\frac12\alpha)\Gamma(1+\alpha-\beta)\Gamma(1-\frac12\alpha-\beta)}{\Gamma(\alpha+1)\Gamma(1-\frac12\alpha)\Gamma(1-\beta)\Gamma(1+\frac12\alpha-\beta)}.
\end{equation}
\begin{theorem}\label{alphaalphabetaalphabeta1}
Let $\alpha,\beta\in\Z_p$.
 Suppose that $p^2$ doesn't divide $(\alpha-\beta+1)_{p-1}$.

\medskip\noindent
(1) Suppose that $\langle-\alpha\rangle_p$ is even and $\langle-\alpha\rangle_p\leq \langle -\beta\rangle_p<(p-\langle-\alpha\rangle_p)/2$. Then
\begin{align}\label{alphaalphabetaalphabeta1GA}
{}_3F_2\bigg[\begin{matrix}\alpha&\alpha&\beta\\
&1&\alpha-\beta+1\end{matrix}\bigg|\,1\bigg]_{p-1}
\equiv-\frac{2\Gamma_p(1+\frac12\alpha)\Gamma_p(1+\alpha-\beta)\Gamma_p(1-\frac12\alpha-\beta)}{
\Gamma_p(1+\alpha)\Gamma_p(1-\frac{1}2\alpha)\Gamma_p(1-\beta)\Gamma_p(1+\frac{1}2\alpha-\beta)}\pmod{p^2}.
\end{align}

\medskip\noindent
(2) Suppose that $\langle-\alpha\rangle_p$ is odd and $\max\{\langle-\alpha\rangle_p,(p-\langle-\alpha\rangle_p)/2\}\leq \langle-\beta\rangle_p<(p+\langle-\alpha\rangle_p)/2$. Then
\begin{align}\label{alphaalphabetaalphabeta1GB}
{}_3F_2\bigg[\begin{matrix}\alpha&\alpha&\beta\\
&1&\alpha-\beta+1\end{matrix}\bigg|\,1\bigg]_{p-1}\equiv0\pmod{p^2}.
\end{align}

\medskip\noindent
(3) Suppose that $\langle-\alpha\rangle_p$ is odd and $\langle-\alpha\rangle_p\leq \langle-\beta\rangle_p<(p-\langle-\alpha\rangle_p)/2$. Then
\begin{align}\label{alphaalphabetaalphabeta1GC}
{}_3F_2\bigg[\begin{matrix}\alpha&\alpha&\beta\\
&1&\alpha-\beta+1\end{matrix}\bigg|\,1\bigg]_{p-1}\equiv
-\frac{(\alpha+\langle-\alpha\rangle_p)\cdot\Gamma_p(1+\frac12\alpha)\Gamma_p(1+\alpha-\beta)\Gamma_p(1-\frac12\alpha-\beta)}{
\Gamma_p(1+\alpha)\Gamma_p(1-\frac{1}2\alpha)\Gamma_p(1-\beta)\Gamma_p(1+\frac{1}2\alpha-\beta)}\pmod{p^2}.
\end{align}
\end{theorem}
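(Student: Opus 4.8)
The plan is to follow the template used for Theorem \ref{alphabeta11}: propagate the congruence from the integer point $(\alpha,\beta)=(-a,-b)$, where $a=\langle-\alpha\rangle_p$ and $b=\langle-\beta\rangle_p$, out to the whole disk $\alpha\in-a+p\Z_p$, $\beta\in-b+p\Z_p$ by a first-order $p$-adic Taylor expansion. Write $\alpha=-a+ps$ and $\beta=-b+pt$ with $s,t\in\Z_p$. First I would show that modulo $p^2$ the truncated sum $F(\alpha,\beta)$ agrees with its terminating part $\sum_{k=0}^{a}$: for $k>a$ the factor $(\alpha)_k^2$ contributes $p^2$ and $(\beta)_k$ an extra $p$ once $k>b$, while the hypothesis $p^2\nmid(\alpha-\beta+1)_{p-1}$ forces the single possible factor of $p$ in the denominator $(\alpha-\beta+1)_k$ to have valuation exactly one; the range inequalities on $b$ are precisely what make these valuations add up to at least $2$. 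On the resulting terminating sum the denominators $(\alpha-\beta+1)_k$ ($k\le a$) are $p$-adic units, so it is $p$-adically analytic with $p$-integral second derivatives, and the Taylor expansion modulo $p^2$ is legitimate.

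The decisive simplification is that at $(\alpha,\beta)=(-a,-b)$ the value and both first partials of $F$ coincide exactly with those of the classical right-hand side $D(\alpha,\beta)$ of Dixon's identity (\ref{alphaalphabetaalphabeta1H}). Indeed, when $\alpha=-a$ the series terminates and equals $D$ identically in $\beta$ by (\ref{alphaalphabetaalphabeta1H}); moreover, for every $k>a$ the numerator $(\alpha)_k^2$ has a double zero at $\alpha=-a$, so each tail term together with its $\alpha$-derivative vanishes there (the denominators $(b-a+1)_k$ are nonzero because $b\ge a$). Hence $\partial_\alpha F(-a,-b)=\partial_\alpha D(-a,-b)$ and $\partial_\beta F(-a,-b)=\partial_\beta D(-a,-b)$ exactly, and the whole problem reduces to comparing $D$, $\partial_\alpha D$, $\partial_\beta D$ at $(-a,-b)$ with the corresponding data of the $p$-adic $\Gamma_p$-expression $G$ appearing on the right of the theorem, modulo $p^2$ for the values and modulo $p$ for the derivatives.

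For the value $D(-a,-b)$ I would evaluate Dixon's quotient with the reflection formula $\Gamma(z)\Gamma(1-z)=\pi/\sin(\pi z)$ and convert the remaining factorials to $\Gamma_p$ through $\Gamma_p(1+n)=(-1)^{n+1}n!$ (valid for $0\le n\le p-1$), collapsing the half-integer factors $\Gamma(1\pm\tfrac\alpha2)$ by the $m=2$ instance of the multiplication formula (\ref{padicGMF}). The three cases emerge at this point. For $a$ even the colliding poles of $\Gamma(1-\tfrac a2)$ and $\Gamma(1-a)$ cancel with residue ratio $2$, producing the factor $2$ of (\ref{alphaalphabetaalphabeta1GA}); for $a$ odd only $\Gamma(1+\alpha)$ is singular, so $D(-a,\beta)\equiv0$ identically in $\beta$, which gives (\ref{alphaalphabetaalphabeta1GB}) and also forces $D$ to vanish to first order, the resulting linear factor being exactly the $\alpha+\langle-\alpha\rangle_p=\alpha+a$ of (\ref{alphaalphabetaalphabeta1GC}). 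Whether case (2) or case (3) occurs is governed by whether $a+2b<p$ or $a+2b\ge p$: in the latter regime a factorial of size $\ge p$ enters the numerator of the first-order coefficient $\partial_\alpha D(-a,-b)$, so that $\partial_\alpha D\equiv0\pmod p$ and $(\alpha+a)\partial_\alpha D\equiv0\pmod{p^2}$. The remaining derivative matchings $\partial_\alpha D\equiv\partial_\alpha G$ and $\partial_\beta D\equiv\partial_\beta G\pmod p$ then reduce to identifying the classical harmonic and digamma sums with the logarithmic derivative of $\Gamma_p$ modulo $p$, exactly as the combinatorial identity (\ref{alphabeta11c1ape}) arose by differentiating (\ref{alphabeta12F1}) in the proof of Theorem \ref{alphabeta11}.

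The main obstacle is not any single computation but the uniform bookkeeping across the three regimes: tracking precisely which classical $\Gamma$-factors become singular as $a$ ranges over even and odd residues, matching each pole or residue against the pole-free $\Gamma_p$-side with the correct sign and the lone factor $2$ coming from the reflection and duplication formulas, and verifying that the inequalities on $b$ are exactly those needed both to annihilate the truncation tail modulo $p^2$ and to force the robust vanishing in case (2). This is the same difficulty already encountered in Theorem \ref{alphaalphabeta2beta}, and I expect the Watson-type argument used there to transfer to the present Dixon-type setting once the roles of the parameters are adjusted.
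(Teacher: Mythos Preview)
Your approach is sound and would work, but it differs from what the paper actually does. The paper gives no direct proof of this theorem at all: as noted after Theorem~\ref{alphaalpha12alphabetagamma5F4B}, Theorem~\ref{alphaalphabetaalphabeta1} is obtained as a corollary of Theorems~\ref{alphaalpha12alphabetagamma5F4A} and~\ref{alphaalpha12alphabetagamma5F4B} by substituting $\beta=\alpha/2$ in the ${}_5F_4$ series, which collapses $(1+\tfrac12\alpha)_k/(\alpha-\beta+1)_k$ and $(\beta)_k/(\tfrac12\alpha)_k$ and leaves exactly the Dixon ${}_3F_2$. When $\langle-\alpha\rangle_p$ is even one has $\langle-\alpha/2\rangle_p=\tfrac12\langle-\alpha\rangle_p$ and the ratio condition (iii) of Theorem~\ref{alphaalpha12alphabetagamma5F4A} is automatic, yielding case~(1); when $\langle-\alpha\rangle_p$ is odd one has $\langle-\alpha/2\rangle_p=(p+\langle-\alpha\rangle_p)/2\geq\langle-\alpha\rangle_p$ and the two range conditions of Theorem~\ref{alphaalpha12alphabetagamma5F4B} translate precisely into the inequalities for $\langle-\beta\rangle_p$ that separate cases~(2) and~(3).

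Your route—a direct two-variable Taylor expansion of the truncated ${}_3F_2$ around $(-a,-b)$, matched against Dixon's classical right-hand side—is the same method the paper uses to prove the ${}_5F_4$ theorems themselves (and Theorems~\ref{alphabeta11} and~\ref{alphaalphabeta2beta}), just applied one level lower. The advantage of your approach is that it is self-contained: you never need the ${}_5F_4$ machinery, and the crucial observation that $D(-a,\beta)\equiv 0$ identically for $a$ odd (because only $\Gamma(1+\alpha)$ is singular) immediately isolates the linear factor $\alpha+a$ and explains the split between (2) and (3). The paper's route buys economy once Theorems~\ref{alphaalpha12alphabetagamma5F4A}--\ref{alphaalpha12alphabetagamma5F4B} are in hand, since the specialisation $\beta=\alpha/2$ is a one-line check; but those ${}_5F_4$ proofs are themselves more intricate (they require the carefully constructed $\Psi(x)$ of Lemma~\ref{PsixPxQx} to handle the factor $(1+\tfrac12\alpha)_k/(\tfrac12\alpha)_k$). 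So your plan trades one nontrivial $\Gamma$-versus-$\Gamma_p$ bookkeeping for another; neither is clearly shorter, but yours is conceptually closer to first principles for this particular statement.
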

For example,
$$
{}_3F_2\bigg[\begin{matrix}\frac12&\frac12&\frac13\\
&1&\frac 76\end{matrix}\bigg|\,1\bigg]_{p-1}\equiv-\frac{2\Gamma_p(\frac5{12})\Gamma_p(\frac 76)\Gamma_p(\frac54)}{
\Gamma_p(\frac23)\Gamma_p(\frac34)\Gamma_p(\frac{11}{12})\Gamma_p(\frac32)}\pmod{p^2}
$$
for any prime $p\equiv 5\pmod{12}$ and
$$
{}_3F_2\bigg[\begin{matrix}\frac12&\frac12&\frac13\\
&1&\frac 76\end{matrix}\bigg|\,1\bigg]_{p-1}\equiv0\pmod{p^2}
$$
for any prime $p\equiv 11\pmod{12}$.

Third, the Pfaff-Saalsch\"utz theorem \cite[Theorem 2.2.6]{AAR99} says that
\begin{align}\label{pffafsaalschutz}
{}_3F_2\bigg[\begin{matrix}
-n&\alpha&\beta\\
&\gamma&\delta
\end{matrix}\bigg|\,1\bigg]
=\frac{(\gamma-\alpha)_n(\gamma-\beta)_n}{(\gamma)_n(\gamma-\alpha-\beta)_n},
\end{align}
where $n\in\N$ and $\gamma+\delta=\alpha+\beta+1-n$. In particular, setting $\gamma=1$ in (\ref{pffafsaalschutz}), we get
\begin{align}\label{nalphabeta1alphabetanH}
{}_3F_2\bigg[\begin{matrix}
-n&\alpha&\beta&\\
&1&\alpha+\beta-n
\end{matrix}\bigg|\,1\bigg]
=\frac{(1-\alpha)_n(1-\beta)_n}{n!\cdot (1-\alpha-\beta)_n}.
\end{align}
\begin{theorem}\label{nalphabeta1alphabetan}
Let $\alpha,\beta,\gamma\in\Z_p$.
Suppose that $\max\{\langle-\alpha\rangle_p,\langle-\beta\rangle_p\}\leq\langle-\gamma\rangle_p$ and $(\gamma)_{p-1}$ is not divisible by $p^2$. If
$
\langle-\alpha\rangle_p+\langle-\beta\rangle_p<\langle-\gamma\rangle_p$,
then
\begin{align}\label{nalphabeta1alphabetanG}
{}_3F_2\bigg[\begin{matrix}\alpha&\beta&\gamma-\alpha-\beta\\
&1&\gamma\end{matrix}\bigg|\,1\bigg]_{p-1}
\equiv-\frac{\Gamma_p(1+\alpha-\gamma)\Gamma_p(1+\beta-\gamma)\Gamma_p(1-\alpha-\beta)}{\Gamma_p(1-\alpha)\Gamma_p(1-\beta)\Gamma_p(1-\gamma)\Gamma_p(1+\alpha+\beta-\gamma)}\pmod{p^2}.
\end{align}
If
$\langle-\alpha\rangle_p+\langle-\beta\rangle_p>p$,
then
\begin{equation}\label{nalphabeta1alphabetanGB}
{}_3F_2\bigg[\begin{matrix}\alpha&\beta&\gamma-\alpha-\beta\\
&1&\gamma\end{matrix}\bigg|\,1\bigg]_{p-1}\equiv
0\pmod{p^2}.
\end{equation}
\end{theorem}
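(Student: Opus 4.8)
The plan is to follow the two-step strategy used for Theorem~\ref{alphabeta11}. Write $a=\langle-\alpha\rangle_p$, $b=\langle-\beta\rangle_p$, $c=\langle-\gamma\rangle_p$, let $S(\alpha)$ be the truncated sum on the left of (\ref{nalphabeta1alphabetanG}) and $G(\alpha)$ the $\Gamma_p$-quotient on its right, both viewed as functions of $\alpha$ with $\beta,\gamma$ held fixed. Because the denominators $(1)_k,(\gamma)_k,k!$ are free of $\alpha$, each summand of $S$ is a polynomial in $\alpha$; hence $S$ is a polynomial and its Taylor expansion about $\alpha=-a$ is exact and finite. The idea is to settle the case $\alpha=-a$ directly from Pfaff--Saalsch\"utz and then to control general $\alpha$ through a single $p$-adic first derivative.

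First I would treat $\alpha=-a$. Here $(\alpha)_k=(-a)_k$ annihilates every term with $k>a$, so $S(-a)$ is a terminating series and (\ref{nalphabeta1alphabetanH}) applies with $-a$ as the terminating parameter, giving the closed form $S(-a)=\dfrac{(1-\beta)_a(1+\beta-\gamma-a)_a}{a!\,(1-\gamma-a)_a}$. Under $\max\{a,b\}\le c$ together with $a+b<c$, a short valuation count shows that none of these four Pochhammer symbols contains a factor divisible by $p$; applying (\ref{Gammapx1Gammapx}) repeatedly then converts each symbol exactly into a ratio of $p$-adic gamma values through $(x)_m=(-1)^m\Gamma_p(x+m)/\Gamma_p(x)$, and after collecting signs one obtains $S(-a)=G(-a)$ as an exact identity. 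In the complementary regime $a+b>p$ the same count shows that $(1-\beta)_a$ and $(1+\beta-\gamma-a)_a$ each acquire exactly one factor divisible by $p$, whence $S(-a)\equiv0\pmod{p^2}$, which is (\ref{nalphabeta1alphabetanGB}) at $\alpha=-a$.

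Next, writing $\alpha+a=pt$ with $t\in\Z_p$ and expanding the polynomial $S$, one gets $S(\alpha)\equiv S(-a)+pt\,S'(-a)\pmod{p^2}$ once the higher Taylor coefficients are seen to be $p$-integral; this is exactly what $\max\{a,b\}\le c$ and $(\gamma)_{p-1}\not\equiv0\pmod{p^2}$ provide, since for $k>c$ the factor of $p$ in $(\gamma)_k$ is cancelled by one in $(\beta)_k$ (as $b\le c$), while $(\gamma)_{p-1}\not\equiv0\pmod{p^2}$ forces $\nu_p((\gamma)_k)\le1$. The same expansion of $G$, via the logarithmic derivative of $\Gamma_p$, gives $G(\alpha)\equiv G(-a)+pt\,G'(-a)\pmod{p^2}$, so by the previous paragraph the theorem reduces to $S'(-a)\equiv G'(-a)\pmod p$ (respectively $S'(-a)\equiv0\pmod p$ when $a+b>p$). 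To compute $S'(-a)$ modulo $p$ I would reduce $\beta,\gamma$ to their residues $-b,-c$ and introduce a formal variable $x$, so that the relevant object is the terminating series $\widetilde S(x)$ obtained by setting $\alpha=-a+x$, $\beta=-b$, $\gamma=-c$; since $-b$ still terminates it, (\ref{nalphabeta1alphabetanH}) evaluates it exactly as $\widetilde S(x)=\dfrac{(1+a-x)_b(1+c-a-b+x)_b}{b!\,(1+c-b)_b}$, whose derivative at $x=0$ is elementary, and one matches this against $G'(-a)$ through the same $\Gamma_p\leftrightarrow$ Pochhammer dictionary.

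The main obstacle is the mod-$p$ identity $S'(-a)\equiv\widetilde S'(0)$ itself, because differentiation revives the tail: for $k>a$ the value $(-a)_k$ vanishes but its $\alpha$-derivative $\prod_{0\le j\le k-1,\,j\ne a}(j-a)$ does not, so $S'(-a)$ a priori runs over all $k\le p-1$ whereas $\widetilde S'(0)$ stops at $k=b$. One must therefore show that every summand $D_k$ of $S'(-a)$ with $k>b$ is divisible by $p$: for $b<k\le c$ one has $\nu_p\big((\beta)_k/(\gamma)_k\big)\ge1$, while for $k>c$ the factor $(\gamma+a-\beta)_k$, which is divisible by $p$ once $k>c-a-b$, restores the missing power, so that $\nu_p(D_k)\ge1$ throughout and here all three hypotheses are used. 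The remaining matching of $\widetilde S'(0)$ with $G'(-a)$ then reduces to identifying the $x$-derivative of the Pfaff--Saalsch\"utz closed form with that of its $p$-adic gamma counterpart, i.e.\ to the standard reduction of $p$-adic digamma values to harmonic sums modulo $p$; when $a+b>p$ this last step is replaced by the observation that both factors of $\widetilde S(0)$ are divisible by $p$, which forces $\widetilde S'(0)\equiv0\pmod p$.
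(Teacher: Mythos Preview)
Your argument is correct. The tail analysis for $k>b$ is sound in both regimes: for $b<k\le c$ the factor $(\beta)_k$ supplies the needed $p$, while for $k>c$ one has $k>a$, so $D_k=(-a)_k'\cdot(\gamma+a-\beta)_k$, and in the first case $\langle-(\gamma+a-\beta)\rangle_p=c-a-b<c<k$ whereas in the second case $\langle-(\gamma+a-\beta)\rangle_p=p+c-a-b\le c<k$, so $(\gamma+a-\beta)_k\equiv0\pmod p$ either way. The final matching $\widetilde S'(0)\equiv G'(-a)$ indeed reduces to a harmonic-sum identity: both sides equal $H_{c-a}-H_{a+b}+H_a-H_{c-a-b}\pmod p$, which is exactly what (\ref{padicgammaderivative}) gives.

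Your route differs from the paper's. The paper does not prove Theorem~\ref{nalphabeta1alphabetan} directly; it deduces (\ref{nalphabeta1alphabetanG}) and (\ref{nalphabeta1alphabetanGB}) as the specialization $\gamma=\epsilon=1$ of Theorems~\ref{alphabetagammadeltaepsilon4F3} and~\ref{alphabetagammadeltaepsilon4F3B} respectively. Those ${}_4F_3$ results are proved by the same two-step scheme you use, but with the Whipple balanced ${}_4F_3$ transformation (\ref{Whippletransformation4F3}) (and the variant (\ref{alphabetagammadeltaepsilon4F3HB}) for the divisibility part) in place of Pfaff--Saalsch\"utz, and with the auxiliary polynomial $\Psi(x)={}_4F_3\bigl[\begin{smallmatrix}\varrho_a-x&-a+x&\beta&\gamma\\&1&\delta&\epsilon\end{smallmatrix}\big|\,1\bigr]_{p-1}$ varying two top parameters simultaneously. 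In the paper's version of the derivative step, rather than matching $\psi'(0)$ against a $p$-adic digamma computation, one evaluates $\Psi$ at a second integer point $\Psi(tp)$ (where $\varrho_a-tp=-\langle-\rho\rangle_p$) and interpolates. Your approach is more economical for this particular ${}_3F_2$ statement and uses only Pfaff--Saalsch\"utz; the paper's detour has the advantage of delivering the full ${}_4F_3$ transformation as a by-product.
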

For example, 
$$
{}_3F_2\bigg[\begin{matrix}\frac14&\frac13&\frac1{12}\\
&1&\frac23\end{matrix}\bigg|\,1\bigg]_{p-1}\equiv
-\frac{\Gamma_p(\frac5{12})\Gamma_p(\frac{7}{12})}{\Gamma_p(\frac13)\Gamma_p(\frac34)\Gamma_p(\frac{11}{12})}\pmod{p^2}
$$
for any prime $p\equiv 1\pmod{12}$ and 
$$
{}_3F_2\bigg[\begin{matrix}\frac14&\frac13&\frac1{12}\\
&1&\frac23\end{matrix}\bigg|\,1\bigg]_{p-1}\equiv0\pmod{p^2}
$$
for any prime $p\equiv 7\pmod{12}$. We also mention that the special case $\gamma=1$ of (\ref{nalphabeta1alphabetanGB}) was proved by Pan and Zhang in \cite{PZ15}.

We also have the following ${}_3F_2$ transformation due to Kummer \cite[Corollary 3.3.5]{AAR99}:
\begin{equation}\label{Kummer3F2transformation}
{}_3F_2\bigg[\begin{matrix}\alpha&\beta&\gamma\\
&\delta&\epsilon\end{matrix}\bigg|\,1\bigg]\equiv\frac{\Gamma(\delta)\Gamma(\delta+\epsilon-\alpha-\beta-\gamma)}{
\Gamma(\delta-\alpha)\Gamma(\delta+\epsilon-\beta-\gamma)}\cdot{}_3F_2\bigg[\begin{matrix}\alpha&\epsilon-\beta&\epsilon-\gamma\\
&\epsilon&\epsilon+\epsilon-\beta-\gamma\end{matrix}\bigg|\,1\bigg].
\end{equation}
Setting $\epsilon=1$, we get
\begin{equation}\label{alphabetagamma1delta13F2H}
{}_3F_2\bigg[\begin{matrix}\alpha&\beta&\gamma\\
&1&\delta\end{matrix}\bigg|\,1\bigg]=\frac{\Gamma(\delta)\Gamma(1+\delta-\alpha-\beta-\gamma)}{
\Gamma(\delta-\alpha)\Gamma(1+\delta-\beta-\gamma)}\cdot{}_3F_2\bigg[\begin{matrix}\alpha&1-\beta&1-\gamma\\
&1&1+\delta-\beta-\gamma\end{matrix}\bigg|\,1\bigg].
\end{equation}
\begin{theorem}\label{alphabetagamma1delta13F2}
Let $\alpha,\beta,\gamma,\delta\in\Z_p$.
Suppose that

\medskip\noindent(i)
$\max\{\langle-\alpha\rangle_p,\langle-\beta\rangle_p,\langle-\gamma\rangle_p\}\leq \langle-\delta\rangle_p$;
\qquad(ii) $\langle-\alpha\rangle_p+\langle-\beta\rangle_p+\langle-\gamma\rangle_p<p+\langle-\delta\rangle_p$;

\medskip\noindent
(iii) $\langle-\beta\rangle_p+\langle-\gamma\rangle_p\geq\langle-\delta\rangle_p$;
\qquad(iv) $(\delta)_{p-1},(1+\delta-\beta-\gamma)_{p-1}\not\equiv0\pmod{p^2}$.

\medskip\noindent
Then
\begin{align}\label{alphabetagamma1delta13F2G}
&{}_3F_2\bigg[\begin{matrix}\alpha&\beta&\gamma\\
&1&\delta\end{matrix}\bigg|\,1\bigg]_{p-1}\notag\\
\equiv&\frac{\Gamma_p(\delta)\Gamma_p(1+\delta-\alpha-\beta-\gamma)}{
\Gamma_p(\delta-\alpha)\Gamma_p(1+\delta-\beta-\gamma)}\cdot{}_3F_2\bigg[\begin{matrix}\alpha&1-\beta&1-\gamma\\
&1&1+\delta-\beta-\gamma\end{matrix}\bigg|\,1\bigg]_{p-1}\pmod{p^2}.
\end{align}
\end{theorem}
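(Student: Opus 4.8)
The plan is to follow the two-step scheme described after Theorem~\ref{alphabeta11}, taking $\alpha$ as the moving variable and keeping $\beta,\gamma,\delta$ fixed. Put $a=\langle-\alpha\rangle_p$, $b=\langle-\beta\rangle_p$, $c=\langle-\gamma\rangle_p$, $d=\langle-\delta\rangle_p$, let $L(\alpha)$ be the truncated series on the left of (\ref{alphabetagamma1delta13F2G}), let $C(\alpha)$ be the $\Gamma_p$-quotient on the right, let $S(\alpha)$ be the truncated series multiplying it, and set $R(\alpha)=C(\alpha)S(\alpha)$. First I would settle $p$-integrality. A denominator $(\delta)_k$ first meets a factor of $p$ at $k=d+1$, whereas $(\alpha)_k,(\beta)_k,(\gamma)_k$ meet theirs at $k=a+1,b+1,c+1$; since (i) forces $a,b,c\le d$, the numerator is divisible by $p$ no later than the denominator, so $L(\alpha)\in\Z_p$. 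Conditions (ii) and (iii) play the same bookkeeping role for $S(\alpha)$, and (iv) prevents a second factor of $p$ in $(\delta)_{p-1}$ or $(1+\delta-\beta-\gamma)_{p-1}$, so $R(\alpha)$ is controlled modulo $p^2$.

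With integrality in hand I would dispose of the terminating case $\alpha=-a$. Here $(-a)_k=0$ for $k>a$ and $a\le p-1$, so both truncated series reduce to honest terminating ${}_3F_2$'s and Kummer's transformation (\ref{alphabetagamma1delta13F2H}) holds literally; for a terminating series its $\Gamma$-quotient collapses to the rational expression $(1+\delta-\beta-\gamma)_a/(\delta)_a$. Now (\ref{Gammapx1Gammapx}) rewrites each Pochhammer symbol through $\Gamma_p$: by (i) the numbers $\delta+j$ with $0\le j<a$ are $p$-units, so $(\delta)_a=(-1)^a\Gamma_p(\delta+a)/\Gamma_p(\delta)$, and by (ii)--(iii) the numbers $1+\delta-\beta-\gamma+j$ reduce into $\{1,\dots,p-1\}$ modulo $p$, so $(1+\delta-\beta-\gamma)_a=(-1)^a\Gamma_p(1+\delta+a-\beta-\gamma)/\Gamma_p(1+\delta-\beta-\gamma)$. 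The two factors $(-1)^a$ cancel, leaving precisely $C(-a)$; hence (\ref{alphabetagamma1delta13F2H}) gives the exact equality $L(-a)=R(-a)$.

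Next I would reduce the general case to a congruence modulo $p$. Writing $\alpha=-a+ps$ with $s\in\Z_p$, one checks from the integrality above that $L$ (a polynomial in $\alpha$) and $R$ (the product of a polynomial with the $p$-adically analytic $C$) have $p$-integral Taylor coefficients at $\alpha=-a$; consequently $L(\alpha)\equiv L(-a)+ps\,L'(-a)$ and $R(\alpha)\equiv R(-a)+ps\,R'(-a)$ modulo $p^2$, where $'$ denotes $d/d\alpha$. Since $L(-a)=R(-a)$, the whole statement (\ref{alphabetagamma1delta13F2G}) collapses to the single congruence $L'(-a)\equiv R'(-a)\pmod p$.

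It remains to establish this derivative congruence, and this is where I expect the real work to lie. To turn it into a finite statement I would specialize $\gamma$ to its residue $-c$, so that $L$ becomes a genuinely terminating ${}_3F_2$; differentiating the terminating form of (\ref{alphabetagamma1delta13F2H}) in $\alpha$ and evaluating at $\alpha=-a$ should then yield a finite combinatorial identity, the exact counterpart of (\ref{alphabeta11c1ape}), which carries the $L'(-a)$ side. The difficulty is the $R'(-a)$ side: it contains the $\alpha$-derivative of the truncated series $S$, whose terms with $k>a$ survive differentiation (the factor $\alpha+a$ is killed but not its derivative), so these ``tail'' contributions must be shown to match, modulo $p$, the over-count produced when Kummer's identity is differentiated, while at the same time the classical logarithmic derivative of the $\Gamma$-quotient must be traded, via (\ref{Gammapx1Gammapx}), for the $p$-adic digamma sum hidden in $C'(-a)$. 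Organizing this cancellation and checking that hypotheses (i)--(iv) line up every sign and every residue branch is the technical heart of the proof; once it is in place the result follows formally, exactly as for Theorem~\ref{alphabeta11}.
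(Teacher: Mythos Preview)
Your approach is correct and coincides with the paper's. Two refinements dissolve the ``tail'' worry you flag. First, hypotheses (i)--(iv) force the $k$-th summand of $L$ to vanish modulo $p^2$ for $M<k\le p-1$ where $M=\max\{a,b,c\}$, and similarly the $k$-th summand of $S$ for $N<k\le p-1$ where $N=\max\{a,p-1-b,p-1-c\}$; so one may truncate at $M$ and $N$ from the outset, and the polynomials $\Psi(x)=L(-a+x)|_M$ and $\Phi(x)=S(-a+x)|_N$ already capture everything modulo $p^2$. Second, for the derivative step the paper specializes not only $\gamma$ but all three of $\beta,\gamma,\delta$ to integer representatives $(-b,\,p-c,\,p-d)$, chosen so that both sides of (\ref{alphabetagamma1delta13F2H}) are terminating series and the middle factor is a genuine ratio of ordinary $\Gamma$-functions; differentiating that exact identity at $x=0$ is then purely algebraic, and Lemma~\ref{Gammapadicderivativealphabeta} converts the resulting classical $\Gamma$-derivative into the $p$-adic derivative of your $C$. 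With these two devices the product-rule verification $\Psi'(0)\equiv \Omega'(0)\Phi(0)+\Omega(0)\Phi'(0)\pmod p$ is immediate, with no leftover tail terms to chase.
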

For example, for any prime $p\equiv 1\pmod{6}$,
\begin{align*}
{}_3F_2\bigg[\begin{matrix}\frac13&\frac12&\frac23\\
&1&\frac56 \end{matrix}\bigg|\,1\bigg]_{p-1}\equiv\frac{\Gamma_p(\frac56)\Gamma_p(\frac13)}{
\Gamma_p(\frac12)\Gamma_p(\frac23)}\cdot{}_3F_2\bigg[\begin{matrix}\frac13&\frac12&\frac13\\
&1&\frac23\end{matrix}\bigg|\,1\bigg]_{p-1}\pmod{p^2}.
\end{align*}

Whipple also found a quadratic transformation concerning the ${}_3F_2$ series \cite[Eq. (3.1.15)]{AAR99}: 
\begin{align}\label{Whipplequadratictransformation3F2}
&{}_3F_2\bigg[\begin{matrix}
\alpha&\beta&\gamma\\
&\alpha-\beta+1&\alpha-\gamma+1
\end{matrix}\bigg|\,z\bigg]\notag\\
=&(1-z)^{-\alpha}{}_3F_2\bigg[\begin{matrix}
\alpha-\beta-\gamma+1&\frac12\alpha&\frac12\alpha+\frac12\\
&\alpha-\beta+1&\alpha-\gamma+1
\end{matrix}\bigg|\,-\frac{4z}{(1-z)^2}\bigg].
\end{align}
Letting $\gamma=\alpha$ in (\ref{Whipplequadratictransformation3F2}), we have
\begin{equation}\label{alpha2alpha21beta4zz1e}
{}_3F_2\bigg[\begin{matrix}
\alpha&\alpha&\beta\\
&1&\alpha-\beta+1
\end{matrix}\bigg|\,z\bigg]=(1-z)^{-\alpha}{}_3F_2\bigg[\begin{matrix}
1-\beta&\frac12\alpha&\frac12\alpha+\frac12\\
&1&\alpha-\beta+1
\end{matrix}\bigg|\,-\frac{4z}{(1-z)^2}\bigg].
\end{equation}
\begin{theorem}\label{alpha2alpha21beta4zz1}
Let $\alpha,\beta\in\Z_p$ and $z\in\Z_p^\times$ with $z-1$ is prime to $p$.
Suppose that

\medskip\noindent(i)
$\langle-\alpha\rangle_p\leq \langle-\beta\rangle_p$;\ \  (ii) $p+\langle-\alpha\rangle_p>2\langle-\beta\rangle_p$;
\ \ (iii) $(\alpha-\beta+1)_{p-1}\not\equiv0\pmod{p^2}$.

\medskip\noindent
Then
\begin{align}\label{alpha2alpha21beta4zz1G}
&{}_3F_2\bigg[\begin{matrix}
\alpha&\alpha&\beta\\
&1&\alpha-\beta+1
\end{matrix}\bigg|\,z\bigg]_{p-1}+(-z)^{1-\lambda_p(\alpha)}{}_3F_2\bigg[\begin{matrix}
\alpha&\alpha&\beta\\
&1&\alpha-\beta+1
\end{matrix}\bigg|\,\frac{1}{z}\bigg]_{p-1}\notag\\
\equiv&(1-z)^{1-\lambda_p(\alpha)}{}_3F_2\bigg[\begin{matrix}
1-\beta&\frac12\alpha&\frac12\alpha+\frac12\\
&1&\alpha-\beta+1
\end{matrix}\bigg|\,-\frac{4z}{(1-z)^2}\bigg]_{p-1}\pmod{p^2}.
\end{align}
\end{theorem}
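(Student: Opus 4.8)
The plan is to treat \eqref{alpha2alpha21beta4zz1G} as the ${}_3F_2$-counterpart of Theorem \ref{alpha2alpha214z1z}: it rests on the same quadratic substitution $\Omega(z)=-4z/(1-z)^2$, which is invariant under the involution $z\mapsto 1/z$ (indeed $\Omega(1/z)=\Omega(z)$), and this is precisely why two terms, one in $z$ and one in $1/z$, appear on the left. Write $a=\langle-\alpha\rangle_p$ and $b=\langle-\beta\rangle_p$ and follow the two-step scheme of the paper: first dispose of the base case $\alpha=-a$, then upgrade to arbitrary $\alpha\equiv-a\pmod p$ by a first-order $p$-adic expansion, which reduces the congruence modulo $p^2$ to a single congruence modulo $p$.

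\textbf{Base case.} When $\alpha=-a$ all three truncated series terminate: the $z$- and $1/z$-series stop at $k=a\le p-1$ since $(\alpha)_k=(-a)_k$ vanishes for $k>a$, while the transformed series stops at $k=\lfloor a/2\rfloor$ since $(\tfrac{\alpha}{2})_k(\tfrac{\alpha+1}{2})_k=(\alpha)_{2k}/4^k$ vanishes for $2k>a$. Moreover $\lambda_p(-a)=-a$ by \eqref{lambdapalpha}, so $(-z)^{1-\lambda_p(\alpha)}=(-z)^{1+a}$ and $(1-z)^{1-\lambda_p(\alpha)}=(1-z)^{1+a}$ are honest powers. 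Applying the classical terminating identity \eqref{alpha2alpha21beta4zz1e} at $z$ and, using $\Omega(1/z)=\Omega(z)$, at $1/z$, and eliminating the common transformed polynomial, gives ${}_3F_2[\ldots;1/z]=(-z)^{-a}\,{}_3F_2[\ldots;z]$, whence the left side collapses to $(1-z)\,{}_3F_2[\alpha,\alpha,\beta;1,\alpha-\beta+1;z]$; by \eqref{alpha2alpha21beta4zz1e} again the right side equals the same quantity. Thus \eqref{alpha2alpha21beta4zz1G} holds as an exact identity at $\alpha=-a$.

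\textbf{General case.} For $\alpha=-a+ps$ with $s\in\Z_p$ I would expand each ingredient modulo $p^2$. By \eqref{zlambda} the powers satisfy $z^{1-\lambda_p(\alpha)}\equiv z^{1+a}\bigl(1-s(z^{p-1}-1)\bigr)\pmod{p^2}$, and likewise for $(-z)$ and $(1-z)$; writing $A(\alpha),B(\alpha),C(\alpha)$ for the truncated sums in $z$, $1/z$, $\Omega(z)$, each is a rational function of $\alpha$ with $F(\alpha)\equiv F(-a)+ps\,F'(-a)\pmod{p^2}$. Subtracting the zeroth-order part, which cancels by the base case, and dividing by $p$, the whole congruence reduces to the single statement modulo $p$
\[
A'(-a)+(-z)^{1+a}B'(-a)-(1-z)^{1+a}C'(-a)\equiv q_z(-z)^{1+a}B(-a)-q_{1-z}(1-z)^{1+a}C(-a)\pmod p,
\]
where $q_x=(x^{p-1}-1)/p$ is the $p$-integral Fermat-quotient term produced by the powers. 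To prove this I would differentiate \eqref{alpha2alpha21beta4zz1e} in $\alpha$ at $\alpha=-a$, both at $z$ and at $1/z$; the contributions from the derivative of the transformed series cancel identically (by the same algebra as in the base case), leaving only the logarithmic-derivative terms coming from $(1-z)^{-\alpha}$ and $(1-1/z)^{-\alpha}$, which must match $q_z,q_{1-z}$ through the defining slope of $\lambda_p$ together with \eqref{lambdalambdas}.

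The hard part will be the bookkeeping of the transformed series in this last step. For the $z$- and $1/z$-series the tail terms $k>a$ carry $(\alpha)_k^2$, a \emph{double} zero at $\alpha=-a$, so they neither survive modulo $p^2$ nor contribute to the first derivative, and truncation is harmless. The transformed series, however, carries $(\alpha)_{2k}/4^k$ with only a \emph{simple} zero, so its tail $k>a/2$ contributes both a residual term modulo $p^2$ and a nonzero derivative, and one must also control the deep tail $k\ge p$. This is exactly where the hypotheses enter: condition (i) $\langle-\alpha\rangle_p\le\langle-\beta\rangle_p$ keeps the denominators $(\alpha-\beta+1)_k$ $p$-integral over the relevant range; condition (ii) $p+\langle-\alpha\rangle_p>2\langle-\beta\rangle_p$ prevents a factor of $p$ from entering those denominators before the numerator $(\alpha)_{2k}(1-\beta)_k$ has acquired two; and condition (iii) $(\alpha-\beta+1)_{p-1}\not\equiv0\pmod{p^2}$ forbids a spurious high power of $p$ there. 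Verifying that, under (i)--(iii), all residual tail contributions are $p$-integral and fall into the correct residue classes modulo $p$ — so that the displayed congruence indeed follows from the differentiated hypergeometric identity — is the main obstacle.
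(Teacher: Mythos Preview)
Your base case is fine and the reduction to a mod-$p$ statement of the displayed form is correct, but the method you propose for proving that statement---differentiate \eqref{alpha2alpha21beta4zz1e} in $\alpha$ and let the residual tails reproduce the Fermat quotients---does not go through as stated. Differentiating the classical identity in $\alpha$ produces the factor $-\log(1-z)\,(1-z)^{-\alpha}$, and the formal logarithm $\log(1-z)=-\sum_{n\ge1}z^n/n$ neither converges $p$-adically for $z\in\Z_p^\times$ nor reduces in any direct way to the Fermat quotient $q_{1-z}=\big((1-z)^{p-1}-1\big)/p$. You have not explained how the ``matching through the defining slope of $\lambda_p$'' is supposed to work, and this is not a bookkeeping issue with the transformed-series tail: it is the missing idea. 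The paper makes exactly this point just before Lemma~\ref{polynomialzsp}, observing that one ``can't take the derivative of $z^{-\lambda_p(a+x)}$.''

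The paper's device is to replace the continuous derivative by a discrete one: evaluate the classical identity at the \emph{second} negative integer $\alpha=-a-p$, where it is again an exact polynomial identity, and interpolate to general $\alpha$ via Lemma~\ref{polynomialzsp}. At $\alpha=-a-p$ the prefactor is $(1-z)^{a+p}$, and it is the extra $(1-z)^{p-1}$ that manufactures the Fermat-quotient correction. The remaining work, parallel to the proof of Theorem~\ref{alpha2alpha214zz1}, is to relate the untruncated (degree $a+p$) series at $\alpha=-a-p$ to the two truncated pieces in $z$ and $1/z$: one verifies the reflection
\[
\sum_{k=0}^{a}\frac{(-a-p)_{p+k}^2(\beta)_{p+k}}{(1)_{p+k}^2(1-\beta-a-p)_{p+k}}\,z^{k}
=\frac{(\beta)_p(-\beta-a-p+1)_a}{(-\beta-a-p+1)_p(\beta)_a}
\sum_{k=0}^{a}\frac{(-a-p)_{k}^2(\beta)_{k}}{(1)_{k}^2(1-\beta-a-p)_{k}}\,z^{a-k},
\]
and uses hypotheses (i)--(iii) to reduce the prefactor on the right modulo $p^2$ and to check that the middle range $a<k<p$ contributes $O(p^2)$. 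This gives $\Psi_1(p)+(-z)^p\Psi_2(p)\equiv(1-z)^p\Phi(p)\pmod{p^2}$, and Lemma~\ref{polynomialzsp} together with the base case then yields \eqref{alpha2alpha21beta4zz1G}.
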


Let us turn to the ${}_4F_3$ series. The well-known Whipple formula is a basic transformation between two balanced ${}_4F_3$ series, which says
\begin{align}\label{Whippletransformation4F3}
{}_4F_3\bigg[\begin{matrix}-n&\alpha&\beta&\gamma\\
&\delta&\epsilon&\rho\end{matrix}\bigg|\,1\bigg]=\frac{(\delta-\alpha)_n(\epsilon-\alpha)_n}{
(\delta)_n(\epsilon)_n}
\cdot{}_4F_3\bigg[\begin{matrix}-n&\alpha&\rho-\beta&\rho-\gamma\\
&\rho&1+\alpha-n-\delta&1+\alpha-n-\epsilon\end{matrix}\bigg|\,1\bigg],
\end{align}
where $n\in\N$ and $\alpha+\beta+\gamma-n+1=\delta+\epsilon+\rho$. Setting $\rho=1$ in (\ref{Whippletransformation4F3}), we get
 \begin{align}\label{alphabetagammadeltaepsilon4F3H}
{}_4F_3\bigg[\begin{matrix}-n&\alpha&\beta&\gamma\\
&1&\delta&\epsilon\end{matrix}\bigg|\,1\bigg]=\frac{(\delta-\alpha)_n(\epsilon-\alpha)_n}{
(\delta)_n(\epsilon)_n}\cdot{}_4F_3\bigg[\begin{matrix}-n&\alpha&1-\beta&1-\gamma\\
&1&1+\alpha-n-\delta&1+\alpha-n-\epsilon\end{matrix}\bigg|\,1\bigg],
\end{align}
where $\alpha+\beta+\gamma-n=\delta+\epsilon$. The $p$-adic analogue of (\ref{alphabetagammadeltaepsilon4F3H}) is a little complicated.
Under several assumptions, we have a congruence between two truncated ${}_4F_3$ functions.
\begin{theorem}\label{alphabetagammadeltaepsilon4F3}
Let $\alpha,\beta,\gamma,\delta,\epsilon\in\Z_p$.
Suppose that

\medskip\noindent
(i) $\langle-\alpha\rangle_p\leq\min\{\langle-\delta\rangle_p, \langle-\epsilon\rangle_p\}$,
$\langle-\beta\rangle_p\leq \langle-\delta\rangle_p$ and $\langle-\gamma\rangle_p\leq \langle-\epsilon\rangle_p$;

\medskip\noindent(ii) $\langle-\alpha\rangle_p+\langle-\beta\rangle_p+\langle-\gamma\rangle_p\leq\langle-\delta\rangle_p+\langle-\epsilon\rangle_p$;

\medskip\noindent
(iii) $\langle-\beta\rangle_p+\langle-\gamma\rangle_p\geq \max\{\langle-\delta\rangle_p, \langle-\epsilon\rangle_p$\};

\medskip\noindent
(iv) $(\delta)_{p-1}$, $(\epsilon)_{p-1}$, $(1+\delta-\beta-\gamma)_{p-1}$ and $(1+\epsilon-\beta-\gamma)_{p-1}$ are not divisible by $p^2$.

\medskip\noindent
Let
$\rho=\delta+\epsilon-\alpha-\beta-\gamma.$
Then
\begin{align}\label{alphabetagammadeltaepsilon4F3G}
{}_4F_3\bigg[\begin{matrix}\rho&\alpha&\beta&\gamma\\
&1&\delta&\epsilon\end{matrix}\bigg|\,1\bigg]_{p-1}
\equiv&\frac{\Gamma_p(\beta+\gamma-\delta)\Gamma_p(\beta+\gamma-\epsilon)\Gamma_p(\delta)\Gamma_p(\epsilon)}{
\Gamma_p(\delta-\rho)\Gamma_p(\epsilon-\rho)\Gamma_p(\delta-\alpha)\Gamma_p(\epsilon-\alpha)}\notag\\
&\cdot{}_4F_3\bigg[\begin{matrix}\rho&\alpha&1-\beta&1-\gamma\\
&1&1+\delta-\beta-\gamma&1+\epsilon-\beta-\gamma\end{matrix}\bigg|\,1\bigg]_{p-1}\pmod{p^2}.
\end{align}
\end{theorem}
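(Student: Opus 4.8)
The plan is to regard both sides of \eqref{alphabetagammadeltaepsilon4F3G} as functions of the single variable $\alpha$, keeping $\beta,\gamma,\delta,\epsilon$ fixed and writing $\rho=\rho(\alpha)=\delta+\epsilon-\alpha-\beta-\gamma$, and then to perturb $\alpha$ around the special value at which the terminating Whipple transformation \eqref{alphabetagammadeltaepsilon4F3H} applies exactly. Set $a=\langle-\alpha\rangle_p$, $b=\langle-\beta\rangle_p$, $c=\langle-\gamma\rangle_p$, $d=\langle-\delta\rangle_p$, $e=\langle-\epsilon\rangle_p$ and $r=\langle-\rho\rangle_p$; conditions (ii) and (iii) together force $0\le d+e-a-b-c\le p-1$, so $r=d+e-a-b-c$ and $\rho\equiv-r\pmod p$. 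Put $\alpha_0:=\delta+\epsilon-\beta-\gamma+r$, so that $\rho(\alpha_0)=-r$; a one-line computation gives $\alpha_0\equiv-a\pmod p$, whence $\alpha-\alpha_0\in p\Z_p$. Let $\Phi(\alpha)$ denote the truncated ${}_4F_3$ on the left of \eqref{alphabetagammadeltaepsilon4F3G} and $\Psi(\alpha)$ the product of the $\Gamma_p$-factor with the truncated ${}_4F_3$ on the right, and write $T_k(\alpha)$ for the $k$-th summand of $\Phi$. Conditions (i)--(iii) make each $T_k$ (and each summand on the right) $p$-integral: for $k>d$ the factor $\delta+d\equiv0$ in $(\delta)_k$ is cancelled by $\beta+b\equiv0$ in $(\beta)_k$ since $b\le d$, and likewise $\epsilon+e\equiv0$ is cancelled by $\gamma+c\equiv0$ since $c\le e$. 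Hence $\Phi$ is a polynomial in $\alpha$ with coefficients in $\Z_p$, and by (iv) $\Psi$ is $p$-adic analytic with $p$-integral local expansion near $\alpha_0$.

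First I would reduce the congruence to two cleaner statements. Since $\alpha-\alpha_0\in p\Z_p$ and both $\Phi,\Psi$ have $p$-integral Taylor expansions at $\alpha_0$, the quadratic and higher terms lie in $p^2\Z_p$, so
\[
\Phi(\alpha)\equiv\Phi(\alpha_0)+(\alpha-\alpha_0)\Phi'(\alpha_0),\qquad \Psi(\alpha)\equiv\Psi(\alpha_0)+(\alpha-\alpha_0)\Psi'(\alpha_0)\pmod{p^2}.
\]
Thus \eqref{alphabetagammadeltaepsilon4F3G} is equivalent to the base case $\Phi(\alpha_0)\equiv\Psi(\alpha_0)\pmod{p^2}$ together with the derivative case $\Phi'(\alpha_0)\equiv\Psi'(\alpha_0)\pmod{p}$, the latter only modulo $p$ because it is multiplied by $\alpha-\alpha_0\in p\Z_p$.

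For the base case, at $\alpha=\alpha_0$ we have $\rho=-r$ with $r\le p-1$, so $(\rho)_k=(-r)_k$ vanishes for $k>r$ and both truncated series collapse to the genuine terminating ${}_4F_3$'s of \eqref{alphabetagammadeltaepsilon4F3H} with $n=r$. Classical Whipple then gives $\Phi(\alpha_0)=\frac{(\delta-\alpha_0)_r(\epsilon-\alpha_0)_r}{(\delta)_r(\epsilon)_r}\cdot S$, where $S$ is the terminating right-hand sum, while by definition $\Psi(\alpha_0)$ is the $\Gamma_p$-prefactor times this same $S$. Since $S$ is $p$-integral, it suffices to check that the Pochhammer prefactor equals the $\Gamma_p$-prefactor modulo $p^2$; this is a direct application of \eqref{Gammapx1Gammapx} and its iterate, where condition (iv) ensures that $(\delta)_r,(\epsilon)_r,(\delta-\alpha_0)_r,(\epsilon-\alpha_0)_r$ carry no unexpected factor of $p$, so the Pochhammer-to-$\Gamma_p$ conversion is clean.

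The hard part is the derivative case. Differentiating $\Phi$ term by term and using $\rho'(\alpha)=-1$, the summands with $k\le r$ produce the derivative of the terminating sum, but the tail $r<k\le p-1$ does \emph{not} drop out: although $(\rho)_k$ vanishes at $\rho=-r$, one finds $\frac{d}{d\alpha}(\rho)_k\big|_{\alpha_0}=(-1)^{r+1}r!\,(k-1-r)!\neq0$, so every tail term $\frac{d}{d\alpha}T_k(\alpha_0)$ contributes. The same phenomenon occurs in $\Psi'(\alpha_0)$, where moreover the logarithmic derivative of the $\Gamma_p$-factor produces $p$-adic harmonic sums $\sum 1/j$. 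Thus $\Phi'(\alpha_0)\equiv\Psi'(\alpha_0)\pmod p$ unwinds to a finite combinatorial identity relating these harmonic sums to the tail contributions on both sides. I would derive this identity by differentiating the terminating transformation \eqref{alphabetagammadeltaepsilon4F3H} along a balance-preserving direction (moving $\alpha$ while compensating $\delta$ so that $\alpha+\beta+\gamma-r=\delta+\epsilon$ persists), which is a legitimate polynomial identity in a free parameter, and then separately accounting for the extra $-\partial_\rho$ contribution coming from the tail. The principal obstacle is exactly this bookkeeping: organizing $\sum_{k>r}\frac{d}{d\alpha}T_k(\alpha_0)$ together with the $\Gamma_p$-digamma sums and reconciling them modulo $p$, playing here the role that the combinatorial identity \eqref{alphabeta11c1ape} plays in the ${}_2F_1$ case, but now with two truncated series and four $\Gamma_p$-factors to match.
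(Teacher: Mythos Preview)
Your framework is right: both sides are $p$-integral rational functions of a single perturbation parameter, so linearization modulo $p^2$ reduces the congruence to a base case (exact Whipple) plus a derivative identity modulo $p$. Your base point $\alpha_0$ (where $\rho=-r$ and the series terminate at $k=r$) and your verification of the base case are correct.

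Where you diverge from the paper is in the derivative step, and the paper's device there is worth knowing because it removes precisely the obstacle you flag. The paper parametrizes by $x$ with $\alpha$-slot $=-a+x$ and $\rho$-slot $=\varrho_a-x$ (here $\varrho_a=\delta+\epsilon+a-\beta-\gamma$), so that $x=sp$ recovers the target $\alpha$. The point is that there are \emph{two} values of $x$ in $p\Z_p$ at which one of the top parameters becomes a nonpositive integer in $\{0,-1,\dots,-(p-1)\}$: at $x=0$ the slot $-a$ terminates the series, and at $x=tp$ (with $t=(\varrho_a+r)/p$, which is exactly your $\alpha_0$) the slot $-r$ terminates it. At \emph{each} of these two points the truncated sums coincide with the full terminating ${}_4F_3$'s, Whipple's identity \eqref{alphabetagammadeltaepsilon4F3H} applies exactly, and the Pochhammer prefactor converts cleanly to the $\Gamma_p$-prefactor $\Omega$ under hypotheses (i)--(iv). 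Hence $\Psi(0)=\Omega(0)\Phi(0)$ and $\Psi(tp)=\Omega(tp)\Phi(tp)$ as exact equalities. Since $\Psi,\Omega,\Phi$ all linearize modulo $p^2$ on $p\Z_p$, two exact values determine the linear part, and one reads off $\Psi(sp)\equiv\Omega(sp)\Phi(sp)\pmod{p^2}$ by interpolation:
\[
\Psi(sp)-\Psi(0)\equiv\frac{s}{t}\bigl(\Psi(tp)-\Psi(0)\bigr)=\frac{s}{t}\bigl(\Omega(tp)\Phi(tp)-\Omega(0)\Phi(0)\bigr)\equiv\Omega(sp)\Phi(sp)-\Omega(0)\Phi(0)\pmod{p^2}.
\]
No tail terms, no harmonic sums, no combinatorial identity to establish: the second exact evaluation replaces the derivative computation entirely. (The degenerate case $t=0$, i.e.\ $\varrho_a=-r$ already, is handled by a direct application of Whipple.)

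Your single-point approach is not wrong, but the ``principal obstacle'' you describe---matching the tail $\sum_{k>r}\partial_\alpha T_k(\alpha_0)$ against the $\Gamma_p$-digamma contributions---is real and you have not carried it out; you only sketch differentiating \eqref{alphabetagammadeltaepsilon4F3H} in a balance-preserving direction, which would require further work to reconcile with the actual direction $\partial_\alpha$ (which moves $\rho$, not $\delta$). The paper's two-point trick is the missing idea that makes the proof short.
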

Although many conditions are involved in Theorem \ref{alphabetagammadeltaepsilon4F3},
we emphasize that all those conditions are necessary, to make both sides of (\ref{alphabetagammadeltaepsilon4F3G}) $p$-integral and not divisible by $p$.
Also, by modifying the conditions in the above theorem, we may get a result on the divisibility of the truncated ${}_4F_3$ function.
\begin{theorem}\label{alphabetagammadeltaepsilon4F3B}
Let $\alpha\in\Z_p^\times$ and $\beta,\gamma,\delta,\epsilon\in\Z_p$. Suppose that

\medskip\noindent
(i) $\langle-\alpha\rangle_p\leq\min\{\langle-\delta\rangle_p, \langle-\epsilon\rangle_p\}$,
$\langle-\beta\rangle_p\leq \langle-\delta\rangle_p$ and $\langle-\gamma\rangle_p\leq \langle-\epsilon\rangle_p$;

\medskip\noindent
(ii) $\langle-\alpha\rangle_p+\langle-\beta\rangle_p+\langle-\gamma\rangle_p\geq p+\max\{\langle-\delta\rangle_p,\langle-\epsilon\rangle_p\}$;

\medskip\noindent
(iii) $(\delta)_{p-1},(\epsilon)_{p-1}\not\equiv0\pmod{p^2}$.

\medskip\noindent
Then
\begin{align}\label{alphabetagammadeltaepsilon4F3GB}
{}_4F_3\bigg[\begin{matrix}\rho&\alpha&\beta&\gamma\\
&1&\delta&\epsilon\end{matrix}\bigg|\,1\bigg]_{p-1}
\equiv0\pmod{p^2},
\end{align}
where $\rho=\delta+\epsilon-\alpha-\beta-\gamma$.
\end{theorem}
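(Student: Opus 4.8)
The plan is to mimic the two–step strategy used for the companion Theorem \ref{alphabetagammadeltaepsilon4F3}: first reduce the truncated ${}_4F_3$ to a genuine terminating balanced series by specializing one numerator parameter to its negative residue, then transport the conclusion back to arbitrary $p$-adic parameters by a first–order perturbation. Throughout I write $a=\langle-\alpha\rangle_p$, $b=\langle-\beta\rangle_p$, $c=\langle-\gamma\rangle_p$, $d=\langle-\delta\rangle_p$, $e=\langle-\epsilon\rangle_p$; note that the series is balanced, since $\rho+\alpha+\beta+\gamma=\delta+\epsilon$, so Whipple's transformation (\ref{alphabetagammadeltaepsilon4F3H}) becomes available as soon as a numerator parameter is a non-positive integer. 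As a preliminary step I would verify, exactly as for Theorem \ref{alphabetagammadeltaepsilon4F3}, that conditions (i) and (iii) force every term with $0\le k\le p-1$ to be $p$-integral, so that the sum is a $p$-adic integer and only terms of $p$-adic valuation at most $1$ can influence it modulo $p^2$.

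The combinatorial heart is a dichotomy coming from (i): one cannot have both $c>d$ and $b>e$, for together with $b\le d$ and $c\le e$ this would give the impossible chain $c>d\ge b>e\ge c$. Hence at least one of $c\le d$ or $b\le e$ holds, and I would split accordingly. In the case $c\le d$ (so $c\le\min\{d,e\}$) I specialize $\gamma=-c$; the series then terminates at $k=c$ and is a genuine balanced ${}_4F_3$ to which (\ref{alphabetagammadeltaepsilon4F3H}) applies, with our $\alpha$ playing the distinguished role, producing the prefactor $(\delta-\alpha)_c(\epsilon-\alpha)_c/\bigl((\delta)_c(\epsilon)_c\bigr)$. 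Condition (ii) yields $a+c\ge p+\max\{d,e\}-b>\max\{d,e\}$, so both $(\delta-\alpha)_c$ and $(\epsilon-\alpha)_c$ acquire exactly one factor of $p$, while $c\le\min\{d,e\}$ keeps $(\delta)_c$ and $(\epsilon)_c$ $p$-adic units; thus the prefactor has valuation at least $2$ and the specialized series is $\equiv0\pmod{p^2}$. The case $b\le e$ is handled symmetrically by specializing $\beta=-b$ and using $a+b>\max\{d,e\}$.

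It remains to pass from the integer specialization back to the given $p$-adic $\beta,\gamma$. Following the method described in the introduction, I would regard the truncated series as a function of the perturbed parameter (with $\rho=\delta+\epsilon-\alpha-\beta-\gamma$ tracking it), write the given parameter as its negative residue plus $p$ times a $p$-adic integer, and expand to first order; the zeroth–order term is the specialized series already shown to vanish modulo $p^2$, so it suffices to prove that the first–order (derivative) correction is divisible by $p$. This correction I would compute by differentiating Whipple's identity (\ref{alphabetagammadeltaepsilon4F3H}) in the same parameter and reducing the resulting combinatorial identity modulo $p$, just as (\ref{alphabeta11c1ape}) was used in Theorem \ref{alphabeta11}; condition (ii) should again supply the surviving factor of $p$ in the derivative of the prefactor, and condition (iii) prevents spurious powers of $p$ from entering the denominators $(\delta)_{p-1}$ and $(\epsilon)_{p-1}$.

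The main obstacle is precisely this last step. Unlike the genuine case, the derivative of the truncated series also receives contributions from the tail $k>c$, where $(\gamma)_k$ has a simple zero at $\gamma=-c$ but a nonzero derivative, and one must show that these tail contributions combine with the head to a multiple of $p$. Controlling this first–order term modulo $p$—equivalently, showing that the relevant derivative of the Whipple prefactor retains one factor of $p$ while the transformed ${}_4F_3$ stays $p$-integral—is the delicate point, and is exactly where conditions (ii) and (iii) must be used in their sharp form.
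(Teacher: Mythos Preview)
Your overall strategy (specialize one parameter, then perturb) matches the paper's, but the execution diverges in two ways that create a real gap.

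First, the paper does \emph{not} use identity (\ref{alphabetagammadeltaepsilon4F3H}); it uses the companion transformation
\[
{}_4F_3\!\bigg[\begin{matrix}-n&\alpha&\beta&\gamma\\ &1&\delta&\epsilon\end{matrix}\bigg|\,1\bigg]
=\frac{(1-\alpha)_n(\epsilon-\alpha)_n}{n!\,(\epsilon)_n}\cdot
{}_4F_3\!\bigg[\begin{matrix}-n&\alpha&\delta-\beta&\delta-\gamma\\ &\delta&\alpha-n&1+\alpha-n-\epsilon\end{matrix}\bigg|\,1\bigg],
\]
(identity (\ref{alphabetagammadeltaepsilon4F3HB}) in the paper), and it perturbs $\alpha$ (jointly with $\rho$), not $\gamma$ or $\beta$. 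The reason is precisely the point you flagged only vaguely: having valuation $\ge 2$ in the prefactor does \emph{not} by itself give $\Psi(0)\equiv 0\pmod{p^2}$. You must also control the $p$-adic valuation of the transformed ${}_4F_3$. With your choice of identity and distinguished parameter $\alpha$, the transformed denominators $(1+\alpha-c-\delta)_k$ and $(1+\alpha-c-\epsilon)_k$ each pick up a factor of $p$ well before $k=c$ (since $a+c>\max\{d,e\}$), so the transformed series can have valuation as low as $-2$; your argument then proves only $\Psi(0)\in\Z_p$, not $\Psi(0)\equiv 0\pmod{p^2}$. The paper's identity is chosen so that the transformed denominators $(\varrho_a-a)_k$ and $(1+\varrho_a-a-\epsilon)_k$ are dominated termwise by the numerators $(\delta-\beta)_k$ and $(\delta-\gamma)_k$: concretely, hypotheses (i)--(ii) give
\[
\langle-(\varrho_a-a)\rangle_p=d+e-b-c\ge d-b,\qquad
\langle-(1+\varrho_a-a-\epsilon)\rangle_p=p+d-b-c-1\ge d-c,
\]
which forces $p^{r_1+r_2-2}$ times each term of the transformed series to be $p$-integral, exactly compensating the prefactor.

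Second, your derivative step is more problematic than you suggest. In your scheme the perturbed variable $\gamma$ is the \emph{terminating} parameter $-n$; differentiating (\ref{alphabetagammadeltaepsilon4F3H}) in $n$ is not a polynomial operation, and the tail $k>c$ contributes genuinely through $\frac{d}{dx}(-c+x)_k|_{x=0}$. The paper sidesteps this by perturbing the pair $(\alpha,\rho)\to(-a+x,\varrho_a-x)$: both $-a$ and $-f:=-\langle-\rho\rangle_p$ are legitimate terminating integers, so the identity can be applied \emph{twice} (once with $-a$ as $-n$, once with $-f$ as $-n$). The two resulting prefactors $(1+f+x)_a(\epsilon+f+x)_a$ and $(1+a-x)_f(\epsilon+a-x)_f$ each have two factors vanishing mod $p$ at $x=0$, so their $x$-derivatives still vanish mod $p$ by the product rule; and the remaining cross-terms (prefactor times derivative of the transformed series) are shown $p$-integral using the same inequalities as above. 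This symmetric two-variable trick is what makes $\Psi'(0)\equiv 0\pmod p$ fall out; it has no analogue if you perturb the terminating parameter itself.
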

We need to give an explanation on the relation between Theorems \ref{nalphabeta1alphabetan}, 
\ref{alphabetagamma1delta13F2}, \ref{alphabetagammadeltaepsilon4F3} and
\ref{alphabetagammadeltaepsilon4F3B}. Evidently (\ref{nalphabeta1alphabetanG}) and (\ref{nalphabeta1alphabetanGB}) can be easily deduced from (\ref{alphabetagammadeltaepsilon4F3G}) and (\ref{alphabetagammadeltaepsilon4F3GB}) respectively, by substituting $\gamma=\epsilon=1$ in Theorems 
\ref{alphabetagammadeltaepsilon4F3} and
\ref{alphabetagammadeltaepsilon4F3B}. 
On the other hand, notice that (\ref{alphabetagamma1delta13F2H}) is also an  consequence of (\ref{alphabetagammadeltaepsilon4F3H}), if $\alpha$, $\beta$, $\gamma$,
$\delta$ and $\epsilon$ are all fixed and $n$ tends to $+\infty$.
However, clearly such a operation is invalid for the $p$-adic analogues. Seemingly
(\ref{alphabetagamma1delta13F2}) can't be deduced from (\ref{alphabetagammadeltaepsilon4F3G}) directly.

Another basic formula on the hypergeometric series is Whipple's ${}_7F_6$ transformation \cite[Theorem 3.4.5]{AAR99}:
\begin{align}\label{Whippletransformation7F6}
&{}_7F_6\bigg[\begin{matrix}\alpha&1+\frac12\alpha&\beta&\gamma&\delta&\epsilon&\rho\\
&\frac12\alpha&\alpha-\beta+1&\alpha-\gamma+1&\alpha-\delta+1&\alpha-\epsilon+1&\alpha-\rho+1\end{matrix}\bigg|\,1\bigg]\notag\\
=&\frac{\Gamma(\alpha-\beta+1)\Gamma(\alpha-\gamma+1)\Gamma(\alpha-\delta+1)\Gamma(\alpha-\beta-\gamma-\delta+1)}{\Gamma(\alpha+1)\Gamma(\alpha-\beta-\gamma+1)\Gamma(\alpha-\beta-\delta+1)\Gamma(\alpha-\gamma-\delta+1)}\notag\\
&\cdot{}_4F_3\bigg[\begin{matrix} \alpha-\epsilon-\rho+1&\beta&\gamma&\delta\\ &\beta+\gamma+\delta-\alpha&\alpha-\epsilon+1&\alpha-\rho+1\end{matrix}\bigg|\,1\bigg].
\end{align}
Setting $\rho=\alpha$ in (\ref{Whippletransformation7F6}), we have
\begin{align}\label{alphabetagammadeltaepsilon7F6H}
 &{}_7F_6\bigg[\begin{matrix} \alpha&\alpha&\frac12\alpha+1&\beta&\gamma&\delta&\epsilon\\ &1&\frac12\alpha&\alpha-\beta+1&\alpha-\gamma+1&\alpha-\delta+1&\alpha-\epsilon+1\end{matrix}\bigg|\,1\bigg]\notag\\
 =
&\frac{\Gamma(\alpha-\beta+1)\Gamma(\alpha-\gamma+1)\Gamma(\alpha-\delta+1)\Gamma(\alpha-\beta-\gamma-\delta+1)}{\Gamma(\alpha+1)\Gamma(\alpha-\beta-\gamma+1)\Gamma(\alpha-\beta-\delta+1)\Gamma(\alpha-\gamma-\delta+1)}\notag\\
&\cdot{}_4F_3\bigg[\begin{matrix} 1-\epsilon&\beta&\gamma&\delta\\ &1&\alpha-\epsilon+1&\beta+\gamma+\delta-\alpha\end{matrix}\bigg|\,1\bigg].
 \end{align}
Of course, the $p$-adic analogues of (\ref{alphabetagammadeltaepsilon7F6H}) become more complicated.
\begin{theorem}\label{alphabetagammadeltaepsilon7F6A}
Let $\alpha,\beta,\gamma,\delta,\epsilon\in\Z_p$.
Suppose that

\medskip\noindent
(i) $\langle-\beta\rangle_p<\langle-\alpha\rangle_p\leq\min\{\langle-\gamma\rangle_p, \langle-\delta\rangle_p, \langle-\epsilon\rangle_p\}$;

\medskip\noindent
(ii) $p+\langle-\alpha\rangle_p>\max\{\langle-\gamma\rangle_p+\langle-\epsilon\rangle_p,\langle-\delta\rangle_p+\langle-\epsilon\rangle_p, \langle-\beta\rangle_p+\langle-\gamma\rangle_p+\langle-\delta\rangle_p\}$;

\medskip\noindent
(iii) $\langle-\alpha\rangle_p/\alpha=\langle-\beta\rangle_p/\beta$.

\medskip\noindent
(iv) $(\alpha-\beta+1)_{p-1}$, $(\alpha-\gamma+1)_{p-1}$, $(\alpha-\delta+1)_{p-1}$, $(\alpha-\epsilon+1)_{p-1}$, $(\beta+\gamma+\delta-\alpha)_{p-1}$ are not divisible by $p^2$.

\medskip\noindent
Then
\begin{align}\label{alphabetagammadeltaepsilon7F6AG}
&{}_7F_6\bigg[\begin{matrix}\alpha&\alpha&1+\frac12\alpha&\beta&\gamma&\delta&\epsilon\\
&1&\frac12\alpha&\alpha-\beta+1&\alpha-\gamma+1&\alpha-\delta+1&\alpha-\epsilon+1\end{matrix}\bigg|\,1\bigg]_{p-1}\notag\\
\equiv&\frac{\langle-\alpha\rangle_p}{\langle-\alpha\rangle_p-\langle-\beta\rangle_p}\cdot\frac{\Gamma_p(\alpha-\beta+1)\Gamma_p(\alpha-\gamma+1)\Gamma_p(\alpha-\delta+1)\Gamma_p(\alpha-\beta-\gamma-\delta+1)}{\Gamma_p(\alpha+1)\Gamma_p(\alpha-\beta-\gamma+1)\Gamma_p(\alpha-\beta-\delta+1)\Gamma_p(\alpha-\gamma-\delta+1)}\notag\\
&\cdot{}_4F_3\bigg[\begin{matrix} 1-\epsilon&\beta&\gamma&\delta\\ &1&\alpha-\epsilon+1&\beta+\gamma+\delta-\alpha\end{matrix}\bigg|\,1\bigg]_{p-1}\pmod{p^2}.
\end{align}
\end{theorem}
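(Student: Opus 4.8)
The plan is to follow the two-step reduction that proves Theorem \ref{alphabeta11} and the earlier well-poised cases: first establish the congruence at a terminating specialization, where the truncated series become genuine finite sums and the classical $\rho=\alpha$ Whipple transformation (\ref{alphabetagammadeltaepsilon7F6H}) applies; then control the first-order $p$-adic deviation by differentiating that same identity. Write $a=\langle-\alpha\rangle_p$ and $b=\langle-\beta\rangle_p$, so that condition (i) gives $0\le b<a$. The structural observation that drives everything is that condition (iii), $\langle-\alpha\rangle_p/\alpha=\langle-\beta\rangle_p/\beta$, is equivalent to $a\beta=b\alpha$, hence to $\alpha-\beta=\tfrac{a-b}{a}\,\alpha$; this locks $\beta$ to $\alpha$ along a single line and identifies the mysterious prefactor as
\[
\frac{\langle-\alpha\rangle_p}{\langle-\alpha\rangle_p-\langle-\beta\rangle_p}=\frac{a}{a-b}=\frac{\alpha}{\alpha-\beta}.
\]
I would therefore introduce a single perturbation variable $x$ with $\alpha=-a+x$ and $\beta=-b+\tfrac{b}{a}x$, keeping $\gamma,\delta,\epsilon$ fixed, and note that for the actual parameters $x=\alpha+a\equiv 0\pmod p$. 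Hence it suffices to match the constant term exactly and the coefficient of $x$ modulo $p$.

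For the base case $x=0$ (that is, $\alpha=-a$, $\beta=-b$), the factor $(\beta)_k=(-b)_k$ annihilates every term with $k>b$, so both the very-well-poised ${}_7F_6$ and the balanced ${}_4F_3$ collapse to sums over $0\le k\le b\le p-1$; each truncation at $p-1$ then equals the corresponding terminating series, and the classical transformation relates them exactly via a ratio of Pochhammer symbols. The work here is to re-express that finite ratio through $\Gamma_p$ rather than $\Gamma$: because $\alpha-\beta+1=b-a+1$ is a non-positive integer at the base point, the ordinary $\Gamma$-quotient of (\ref{alphabetagammadeltaepsilon7F6H}) is singular, and passing to $\Gamma_p$ by means of the reflection formula $\Gamma_p(\alpha)\Gamma_p(1-\alpha)=(-1)^{\langle-\alpha\rangle_p-1}$ together with (\ref{Gammapx1Gammapx}) is exactly what produces both the overall sign and the prefactor $a/(a-b)$. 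Conditions (ii) and (iv) are invoked precisely to force every Pochhammer symbol entering the $\Gamma_p$-quotient to be a $p$-adic unit, so that no spurious power of $p$ intrudes.

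For the first-order term I would differentiate the classical identity (\ref{alphabetagammadeltaepsilon7F6H}) in the locked direction $\partial_\alpha+\tfrac{b}{a}\partial_\beta$ and evaluate at the base point, just as the derivative of (\ref{alphabeta12F1}) produced the combinatorial identity (\ref{alphabeta11c1ape}) in the model argument. Since the two sides agree identically as functions of the parameters, so do their derivatives, and this derivative identity is what forces the coefficients of $x$ on the two sides of (\ref{alphabetagammadeltaepsilon7F6AG}) to be congruent modulo $p$. The subtlety is that truncation at $p-1$ now contributes a genuine tail $b<k\le p-1$ whose summands are individually $O(p)$; one must show that, modulo $p^2$, this tail reassembles into the first-order variation of the right-hand side, and here the very-well-poised factor $(\alpha+2k)/\alpha$ and the proportional movement of $\beta$ are what make the tail's derivative match.

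The main obstacle I anticipate is $p$-integrality and pole cancellation arising from the very-well-poised factor. For indices with $a-b\le k\le b$, which occur whenever $a\le 2b$, the paired denominator $(\alpha-\beta+1)_k$ is divisible by $p$ while the numerator is a unit, so the individual summands of the truncated ${}_7F_6$ are \emph{not} $p$-integral; only after summation---using the terminating classical identity, whose right-hand side is manifestly $p$-integral under (ii) and (iv)---do these poles cancel. Carrying that cancellation through to the first-order term, while simultaneously tracking the exceptional index $k=a/2$ (present when $a$ is even) at which $\alpha+2k\equiv 0\pmod p$, is the delicate bookkeeping on which the whole argument turns, and I expect conditions (i)--(iv) to be exactly what keeps both sides of (\ref{alphabetagammadeltaepsilon7F6AG}) $p$-integral and prime to $p$ throughout.
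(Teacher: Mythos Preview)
Your proposal is correct and follows essentially the same route as the paper: the linked perturbation $\alpha\mapsto -a(1+x)$, $\beta\mapsto -b(1+x)$ exploiting condition (iii), the base case via the classical Whipple identity with the prefactor $a/(a-b)$ emerging from the simple pole of $\Gamma(1+\alpha)$ against that of $\Gamma(1+\alpha-\beta)$, and the first-order term obtained by differentiating the identity. The paper resolves the pole-cancellation obstacle you flag by explicitly extracting the single vanishing factor $(b-a)x$ from each $(\alpha-\beta+1)_k$ with $k\ge a-b$ and then proving, via the very-well-poised pairing $k\leftrightarrow a-k$ together with the reflection $(z)_k(1-a-z)_{a-k}=(-1)^a(z)_{a-k}(1-a-z)_k$, that the resulting $\Psi(x)$ is a rational function with denominator a $p$-unit at $x=0$; your worry about the index $k=a/2$ dissolves automatically, since there the very-well-poised factor $1-2k/(a+ax)=x/(1+x)$ already supplies the compensating factor of $x$.
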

(\ref{alphabetagammadeltaepsilon7F6AG}) is actually a very curious congruence. In fact, under the assumptions of Theorem \ref{alphabetagammadeltaepsilon7F6A}, if $\langle-\alpha\rangle_p\leq 2\langle-\beta\rangle_p$, then
we may have
$$
\frac{(\alpha)_k^2(1+\frac12\alpha)_k(\beta)_k(\gamma)_k(\delta)_k(\epsilon)_k}{(\frac12\alpha)_k(\alpha-\beta+1)_k(\alpha-\gamma+1)_k(\alpha-\delta+1)_k(\alpha-\epsilon+1)_k}\not\in\Z_p
$$
for any $\langle-\alpha\rangle_p-\langle-\beta\rangle_p\leq k\leq \langle-\beta\rangle_p$. Fortunately, even if $\langle-\alpha\rangle_p\leq 2\langle-\beta\rangle_p$, the truncated ${}_7F_6$ function in (\ref{alphabetagammadeltaepsilon7F6AG}) is still $p$-integral and (\ref{alphabetagammadeltaepsilon7F6AG}) is also valid.

We also give an explanation on the condition $\langle-\alpha\rangle_p/\alpha=\langle-\beta\rangle_p/\beta$. Assume that $\beta=r/d$ and $\alpha=sr/d$ where $(r,d)=1$ and $1\leq r<d/s$. If $1\leq t<d/s$ and $p$ is a prime with $tp\equiv r\pmod{d}$, then clearly $\langle-\beta\rangle_p=(tp-r)/d$ and $\langle-\alpha\rangle_p=(stp-sr)/d$. Then in such a case, we have $\langle-\alpha\rangle_p/\langle-\beta\rangle_p=\alpha/\beta=s$. For example, for any $p\equiv1\pmod{12}$,
\begin{align*}
&{}_7F_6\bigg[\begin{matrix}\frac23&\frac23&\frac43&\frac1{12}&\frac34&\frac34&\frac34\\
&1&\frac13&\frac{19}{12}&\frac{11}{12}&\frac{11}{12}&\frac{11}{12}\end{matrix}\bigg|\,1\bigg]_{p-1}\\
\equiv&\frac{8}{7}\cdot\frac{\Gamma_p(\frac32)\Gamma_p(\frac{11}{12})^2\Gamma_p(\frac1{12})}{\Gamma_p(\frac53)\Gamma_p(\frac56)^2\Gamma_p(\frac16)}\cdot{}_4F_3\bigg[\begin{matrix}\frac14&\frac1{12}&\frac34&\frac34\\ &1&\frac{11}{12}&\frac{11}{12}\end{matrix}\bigg|\,1\bigg]_{p-1}\pmod{p^2}.
\end{align*}

On the other hand, the truncated ${}_7F_6$ function can be divisible by $p^2$ under another assumptions.
\begin{theorem}\label{alphabetagammadeltaepsilon7F6B}
Let $\alpha,\beta,\gamma,\delta,\epsilon\in\Z_p$.
Suppose that

\medskip\noindent
(i) $\langle-\alpha\rangle_p\leq\min\{\langle-\beta\rangle_p, \langle-\gamma\rangle_p,\langle-\delta\rangle_p, \langle-\epsilon\rangle_p\}$;

\medskip\noindent
(ii) $p+\langle-\alpha\rangle_p>\max\{\langle-\beta\rangle_p+\langle-\gamma\rangle_p,\langle-\beta\rangle_p+\langle-\delta\rangle_p,\langle-\beta\rangle_p+\langle-\epsilon\rangle_p,\langle-\gamma\rangle_p+\langle-\delta\rangle_p\}$;

\medskip\noindent
(iii)
$2p-1+\langle-\alpha\rangle_p\leq \langle-\beta\rangle_p+\langle-\gamma\rangle_p+\langle-\delta\rangle_p+\langle-\epsilon\rangle_p$;

\medskip\noindent
(iv) $(\alpha-\beta+1)_{p-1},(\alpha-\gamma+1)_{p-1},(\alpha-\delta+1)_{p-1},(\alpha-\epsilon+1)_{p-1}\not\equiv0\pmod{p^2}.$

\medskip\noindent
Then
\begin{align}\label{alphabetagammadeltaepsilon7F6BG}
{}_7F_6\bigg[\begin{matrix}\alpha&\alpha&1+\frac12\alpha&\beta&\gamma&\delta&\epsilon\\
&1&\frac12\alpha&\alpha-\beta+1&\alpha-\gamma+1&\alpha-\delta+1&\alpha-\epsilon+1\end{matrix}\bigg|\,1\bigg]_{p-1}
\equiv0\pmod{p^2}.
\end{align}
\end{theorem}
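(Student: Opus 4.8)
The plan is to adapt the two-step strategy of Theorem~\ref{alphabeta11}: first settle the case $\alpha=-a$, where $a:=\langle-\alpha\rangle_p$ and the truncated sum is a genuine terminating series, and then propagate to general $\alpha\in\Z_p$ with $\langle-\alpha\rangle_p=a$ by a first-order $p$-adic Taylor expansion in $\alpha$. Write $b,c,d,e$ for the residues $\langle-\beta\rangle_p,\langle-\gamma\rangle_p,\langle-\delta\rangle_p,\langle-\epsilon\rangle_p$, and let $F(\alpha)$ denote the left-hand side of (\ref{alphabetagammadeltaepsilon7F6BG}) as a function of $\alpha$ with $\beta,\gamma,\delta,\epsilon$ fixed. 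The decisive structural feature is that the series is very-well-poised: since $(1+\frac12\alpha)_k/(\frac12\alpha)_k=(\alpha+2k)/\alpha$, the $k$-th term equals $\frac{\alpha+2k}{\alpha}\cdot\frac{(\alpha)_k^2(\beta)_k(\gamma)_k(\delta)_k(\epsilon)_k}{(k!)^2(\alpha-\beta+1)_k(\alpha-\gamma+1)_k(\alpha-\delta+1)_k(\alpha-\epsilon+1)_k}$, so for every $k>a$ the numerator carries the factor $(\alpha)_k^2=(\alpha+a)^2\cdot(\text{$p$-unit})$. As $\alpha+a\equiv0\pmod p$, a valuation count using conditions (ii) and (iv) shows that every term (and its $\alpha$-derivative) is $p$-integral and that the tail terms contribute $\equiv0\pmod{p^2}$ to $F$ and $0$ to $F'(-a)$. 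Hence $F(\alpha)\equiv F(-a)+(\alpha+a)F'(-a)\pmod{p^2}$, and it suffices to prove $F(-a)\equiv0\pmod{p^2}$ together with $F'(-a)\equiv0\pmod p$.

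For the base case, at $\alpha=-a$ the truncated ${}_7F_6$ coincides with the full terminating series, so Whipple's transformation (\ref{alphabetagammadeltaepsilon7F6H}) applies and expresses $F(-a)$ as a ratio of $p$-adic gamma values times the terminating balanced series ${}_4F_3\big[1-\epsilon,\beta,\gamma,\delta;1,\alpha-\epsilon+1,\beta+\gamma+\delta-\alpha;1\big]_{p-1}$. Conditions (ii) and (iv) guarantee that the $\Gamma_p$-prefactor is a $p$-adic unit. The remaining ${}_4F_3$ is exactly of the shape treated in Theorem~\ref{alphabetagammadeltaepsilon4F3B}: setting $\delta'=\alpha-\epsilon+1$, $\epsilon'=\beta+\gamma+\delta-\alpha$ and $\{\alpha',\beta',\gamma'\}=\{\beta,\gamma,\delta\}$ one checks $\rho=\delta'+\epsilon'-\alpha'-\beta'-\gamma'=1-\epsilon$, so the series is balanced in the required form. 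Using $\langle-\delta'\rangle_p=\langle a+\epsilon-1\rangle_p=p+a-e-1$, the hypothesis $\langle-\alpha'\rangle_p+\langle-\beta'\rangle_p+\langle-\gamma'\rangle_p\ge p+\max\{\langle-\delta'\rangle_p,\langle-\epsilon'\rangle_p\}$ of that theorem unwinds into $b+c+d+e\ge 2p-1+a$, which is precisely condition (iii). Thus Theorem~\ref{alphabetagammadeltaepsilon4F3B} yields ${}_4F_3\equiv0\pmod{p^2}$, and therefore $F(-a)\equiv0\pmod{p^2}$.

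It remains to show $F'(-a)\equiv0\pmod p$; because $\alpha+a\equiv0\pmod p$, only the residue modulo $p$ is needed. Writing $T_k'(\alpha)=T_k(\alpha)\cdot\frac{d}{d\alpha}\log T_k(\alpha)$, this is the combinatorial congruence $\sum_{k=0}^{a}T_k(-a)\,L_k\equiv0\pmod p$, where $L_k$ is the logarithmic derivative, a finite combination of reciprocals $1/(\alpha+j)$, $1/(\alpha-\beta+1+j)$, and so on, evaluated at $\alpha=-a$. Following the model of (\ref{alphabeta11c1ape}), the plan is to obtain this by differentiating (\ref{alphabetagammadeltaepsilon7F6H}) in $\alpha$: since the base-case ${}_4F_3$ is already divisible by $p^2$ and the $\Gamma_p$-prefactor is a unit, the product rule reduces the claim to showing that the $\alpha$-derivative of that ${}_4F_3$ is $\equiv0\pmod p$.

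The hard part will be this derivative step. The obstruction is that Whipple's identity (\ref{alphabetagammadeltaepsilon7F6H}) is available only as a terminating identity valid at the single point $\alpha=-a$, so it cannot be differentiated in $\alpha$ directly. The remedy, as in the method sketched after Theorem~\ref{alphabeta11}, is to force termination through a different parameter: treating $\epsilon=-N$ as a non-positive integer turns (\ref{alphabetagammadeltaepsilon7F6H}) into a polynomial identity in $\alpha$ that may be differentiated and then specialized, after which the general $\epsilon\in\Z_p$ is recovered by interpolation on the truncated polynomials. Carrying this out requires simultaneously tracking the logarithmic derivatives of the $p$-adic gamma factors (harmonic-type sums) modulo $p$ and verifying, through conditions (i)--(iv), that every Pochhammer symbol in the denominators remains $p$-integral along the way; this bookkeeping, rather than any single identity, is the crux.
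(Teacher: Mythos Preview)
Your plan has a genuine gap in the base case, and the paper's proof proceeds quite differently.

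\textbf{The appeal to Theorem \ref{alphabetagammadeltaepsilon4F3B} fails.} With your labeling, the two lower parameters of the ${}_4F_3$ have residues $\langle-\delta'\rangle_p=p+a-e-1$ and $\langle-\epsilon'\rangle_p=b+c+d-a-p$. Condition (i) of Theorem \ref{alphabetagammadeltaepsilon4F3B} forces \emph{two} of the three chosen upper parameters to have residue $\le b+c+d-a-p$ (namely $\alpha'$, and whichever of $\beta',\gamma'$ is paired with the parameter of residue $b+c+d-a-p$). But hypothesis (ii) of the present theorem gives $b+c,\,b+d,\,c+d\le p+a-1$, hence $b+c+d-a-p<\min\{b,c,d\}$; among the available residues $\{p-1-e,b,c,d\}$ only $p-1-e$ can possibly satisfy that bound (and only by virtue of (iii)). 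So condition (i) of Theorem \ref{alphabetagammadeltaepsilon4F3B} is never met, and that theorem does not apply. You verified only its condition (ii).

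There is a second, more structural problem with specializing $\alpha$ to $-a$: the identity (\ref{alphabetagammadeltaepsilon7F6H}) carries the factor $1/\Gamma(\alpha+1)$, which vanishes at $\alpha=-a$, while for generic $p$-adic $\beta,\gamma,\delta,\epsilon$ the accompanying ${}_4F_3$ does not develop a compensating pole. So the identity cannot be used literally at $\alpha=-a$; one would need a careful limiting argument, not a direct substitution.

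\textbf{What the paper does instead.} The paper keeps $\alpha$ fixed and perturbs $\beta$, setting $\Psi(x)={}_7F_6[\ldots,-b+x,\ldots]_a$. At $x=0$ the series terminates via the upper parameter $-b$, so Whipple holds in Pochhammer form:
\[
\Psi(0)=\frac{(1+\alpha)_b(1+\alpha-\gamma-\delta)_b}{(1+\alpha-\gamma)_b(1+\alpha-\delta)_b}\cdot{}_4F_3\bigg[\begin{matrix}1-\epsilon&-b&\gamma&\delta\\ &1&\alpha-\epsilon+1&\gamma+\delta-b-\alpha\end{matrix}\bigg|\,1\bigg].
\]
Here $(1+\alpha)_b\equiv0\pmod p$ because $a\le b$, and $(1+\alpha-\gamma-\delta)_b\equiv0\pmod p$ because (ii) and (iii) force $p+a-b\le c+d\le p+a-1$; a short valuation count then gives $\Psi(0)\equiv0\pmod{p^2}$ directly, with no recourse to Theorem \ref{alphabetagammadeltaepsilon4F3B}. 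For the derivative, replacing all parameters by their integer residues yields an identity with prefactor $(1-a)_d$, which is identically zero since $1\le a\le d$; hence $\psi'(0)=0$ and $\Psi(sp)\equiv\Psi(0)\equiv0\pmod{p^2}$. Varying $\beta$ rather than $\alpha$ is what makes both steps clean.
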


Furthermore, we have a different $p$-analogue of (\ref{alphabetagammadeltaepsilon7F6H}) provided that the truncated ${}_7F_6$ function is divisible by $p$, but not by $p^2$.
\begin{theorem}\label{alphabetagammadeltaepsilon7F6C} Let $\alpha,\beta,\gamma,\delta,\epsilon\in\Z_p$. Suppose that

\medskip\noindent (i) $\langle -\alpha\rangle_p\leq\min\{\langle -\beta\rangle_p,\langle -\gamma\rangle_p,\langle -\delta\rangle_p,\langle -\epsilon\rangle_p\}$;

\medskip\noindent (ii) $p+\langle-\alpha\rangle_p>\max\{\langle-\beta\rangle_p+\langle-\gamma\rangle_p+\langle -\delta\rangle_p,\ \min\{\langle-\beta\rangle_p,\langle-\gamma\rangle_p,\langle -\delta\rangle_p\}+\langle-\epsilon\rangle_p\}$;

\medskip\noindent (iii) $(\alpha-\beta+1)_{p-1}$, $(\alpha-\gamma+1)_{p-1}$, $(\alpha-\delta+1)_{p-1}$, $(\alpha-\epsilon+1)_{p-1}$ and $(\beta+\gamma+\delta-\alpha)_{p-1}$ are not divisible by $p^2$.

\medskip\noindent  Then
\begin{align}\label{alphabetagammadeltaepsilon7F6CG}
&{}_7F_6\bigg[\begin{matrix} \alpha&\alpha&\frac12\alpha+1&\beta&\gamma&\delta&\epsilon\\ &1&\frac12\alpha&\alpha-\beta+1&\alpha-\gamma+1&\alpha-\delta+1&\alpha-\epsilon+1\end{matrix}\bigg|\,1\bigg]_{p-1}\notag\\
\equiv&\big(\alpha+\langle-\alpha\rangle_p\big)\cdot\frac{\Gamma_p(\alpha-\beta+1)\Gamma_p(\alpha-\gamma+1)\Gamma_p(\alpha-\delta+1)\Gamma_p(\alpha-\beta-\gamma-\delta+1)}{\Gamma_p(\alpha+1)\Gamma_p(\alpha-\beta-\gamma+1)\Gamma_p(\alpha-\beta-\delta+1)\Gamma_p(\alpha-\gamma-\delta+1)}\notag\\
&\cdot{}_4F_3\bigg[\begin{matrix} 1-\epsilon&\beta&\gamma&\delta\\ &1&\alpha-\epsilon+1&\beta+\gamma+\delta-\alpha\end{matrix}\bigg|\,1\bigg]_{p-1}\pmod{p^2}.
\end{align}
\end{theorem}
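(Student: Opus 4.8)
The plan is to run the two-step scheme of the Introduction, with $\alpha$ playing the role of the termination parameter. Write $a=\langle-\alpha\rangle_p$, so $\alpha+a\equiv0\pmod p$; this is exactly the prefactor on the right of (\ref{alphabetagammadeltaepsilon7F6CG}), and its appearance forces the truncated ${}_7F_6$ itself to be divisible by $p$. Let $T_k(\alpha)$ denote the $k$-th summand; since both the first numerator parameter and $\rho$ equal $\alpha$, each $T_k$ carries the factor $(\alpha)_k^2$, and the two entries $1+\tfrac12\alpha,\tfrac12\alpha$ combine into the regular very-well-poised factor $(\alpha+2k)/\alpha$. For $k\ge a+1$ the product $(\alpha)_k$ contains $\alpha+a$, so $(\alpha)_k^2$ is divisible by $(\alpha+a)^2$. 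First I would check, using the size hypotheses (i)--(ii) to bound the $p$-adic orders of the denominators $(\alpha-\beta+1)_k,\dots,(\alpha-\epsilon+1)_k$ and the non-vanishing hypotheses (iii), that every such $T_k(\alpha)$ is $p$-integral; since $(\alpha+a)^2=p^2\bigl((\alpha+a)/p\bigr)^2\equiv0\pmod{p^2}$, this gives
\begin{equation*}
{}_7F_6\big[\cdots\big]_{p-1}\equiv\sum_{k=0}^{a}T_k(\alpha)\pmod{p^2}.
\end{equation*}

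Next I would pin down the behaviour of this finite sum at $\alpha=-a$. There the series terminates at $k=a$ and agrees with the genuine very-well-poised ${}_7F_6$, so Whipple's evaluation (\ref{alphabetagammadeltaepsilon7F6H}) applies; its right side carries $\Gamma(\alpha+1)=\Gamma(1-a)$ in the denominator, a pole at the non-positive integer $1-a$, so the right side vanishes and hence $\sum_{k=0}^{a}T_k(-a)=0$. For $k\le a$ the entries $(\alpha-\beta+1)_k,\dots$ are $p$-units at $\alpha=-a$ (by (i), as $b:=\langle-\beta\rangle_p\ge a$ keeps the relevant factors strictly between $1$ and $p$), so each $T_k$ is regular there with $T_k(\alpha)\equiv T_k(-a)\pmod p$. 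Expanding $\sum_{k=0}^{a}T_k(\alpha)=(\alpha+a)\sum_{k=0}^{a}T_k'(-a)+O\big((\alpha+a)^2\big)$ and discarding the $O((\alpha+a)^2)=O(p^2)$ remainder, the whole statement collapses to the single congruence
\begin{equation*}
\sum_{k=0}^{a}T_k'(-a)\equiv\frac{\Gamma_p(\alpha-\beta+1)\Gamma_p(\alpha-\gamma+1)\Gamma_p(\alpha-\delta+1)\Gamma_p(\alpha-\beta-\gamma-\delta+1)}{\Gamma_p(\alpha+1)\Gamma_p(\alpha-\beta-\gamma+1)\Gamma_p(\alpha-\beta-\delta+1)\Gamma_p(\alpha-\gamma-\delta+1)}\cdot{}_4F_3\big[\cdots\big]_{p-1}\pmod p,
\end{equation*}
all entries evaluated at $\alpha=-a$.

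To produce the left-hand derivative I would differentiate Whipple's identity after clearing the offending pole. Because $(\alpha)_k^2$ has a \emph{double} zero at $\alpha=-a$ for every $k>a$, one has $T_k'(-a)=0$ there, so $\sum_{k=0}^{a}T_k'(-a)$ equals the $\alpha$-derivative at $-a$ of the full transform; writing that transform as $\Phi(\alpha)/\Gamma(\alpha+1)$ with $\Phi$ regular and nonzero at $-a$, the simple zero of $1/\Gamma(\alpha+1)$ expands as $(-1)^{a-1}(a-1)!\,(\alpha+a)+O((\alpha+a)^2)$, whence the derivative equals $(-1)^{a-1}(a-1)!\,\Phi(-a)$. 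The constant $(-1)^{a-1}(a-1)!$ is precisely what, via (\ref{Gammapx1Gammapx}) and the resulting value $\Gamma_p(1-a)=1/(a-1)!$ together with the reflection formula, supplies the missing $1/\Gamma_p(\alpha+1)$ and converts the classical Gamma quotient in $\Phi(-a)$ into the $p$-adic one; here $a<p$ guarantees $(a-1)!$ is a $p$-unit. It remains only to identify the classical ${}_4F_3$ value hidden in $\Phi(-a)$ with its truncation ${}_4F_3[\cdots]_{p-1}$ modulo $p$, the tail being $p$-divisible by (ii)--(iii).

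The main obstacle is the legitimacy of the term-by-term differentiation in the previous step: the very-well-poised ${}_7F_6$ at $z=1$ need not converge near $\alpha=-a$ (its parameter excess $2\alpha+4-2(\beta+\gamma+\delta+\epsilon)$ is typically negative there), so one cannot simply differentiate the series analytically. I would circumvent this exactly as for Theorem \ref{alphabetagammadeltaepsilon7F6A}, by working with the finite, pole-cleared form $\Gamma(\alpha+1)\,{}_7F_6=\Phi\cdot{}_4F_3$ as an identity of rational functions in $\alpha$ and invoking the double-zero vanishing of the tail, so that only the truncated ${}_4F_3$ survives. The sole new feature relative to Theorem \ref{alphabetagammadeltaepsilon7F6A} is that here $\alpha$, rather than $\beta$, is the parameter forcing termination, which is what replaces the unit prefactor $\langle-\alpha\rangle_p/(\langle-\alpha\rangle_p-\langle-\beta\rangle_p)$ of that theorem by the $p$-divisible prefactor $\alpha+\langle-\alpha\rangle_p$.
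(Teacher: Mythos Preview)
Your strategy of varying $\alpha$ is plausible, but it diverges from the paper's proof, which instead keeps $\alpha$ fixed and perturbs $\beta$. Writing $b=\langle-\beta\rangle_p$ and $s=(\beta+b)/p$, the paper sets
\[
\Psi(x)={}_7F_6\bigg[\begin{matrix}\alpha&\alpha&\tfrac12\alpha+1&-b+x&\gamma&\delta&\epsilon\\&1&\tfrac12\alpha&\alpha+b-x+1&\alpha-\gamma+1&\alpha-\delta+1&\alpha-\epsilon+1\end{matrix}\bigg|\,1\bigg]_{M},
\]
so that at $x=0$ Whipple applies directly (the series terminates via $\beta=-b$) and yields $\Psi(0)=\Omega(0)\Phi(0)$, where $\Omega(0)$ is the finite Pochhammer ratio $(\alpha+1)_b(\alpha-\gamma-\delta+1)_b/\bigl((\alpha-\gamma+1)_b(\alpha-\delta+1)_b\bigr)$. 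Since $a\le b$, the factor $\alpha+a$ sits exactly once inside $(\alpha+1)_b$, giving $\Omega(0)=(\alpha+a)\Upsilon(0)$ with $\Upsilon(0)$ the $\Gamma_p$-quotient. The crucial simplification is that after replacing all parameters by their integer residues, Whipple's right side carries the factor $(1-a)_d$, which vanishes because $a\le d$; hence the integer-parameter function $\psi(x)$ is identically zero and $\Psi'(0)\equiv\psi'(0)=0\pmod p$. The congruence then follows from $\Psi(sp)\equiv\Psi(0)=(\alpha+a)\Upsilon(0)\Phi(0)\equiv(\alpha+a)\Upsilon(sp)\Phi(sp)\pmod{p^2}$ with no derivative computation whatsoever.

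Your route must confront two issues the paper's sidesteps. First, at $\alpha=-a$ with the genuine $p$-adic $\beta,\gamma,\delta,\epsilon$, the right side of Whipple's identity is not defined: the ${}_4F_3$ does not terminate and the classical $\Gamma$'s have no meaning. Your assertion $\sum_{k\le a}T_k(-a)=0$ is true, but it requires the rational-function argument (specialise $\beta$ to negative integers so both sides terminate, then invoke Zariski density), and the same care is needed for the derivative step. Second, once you reduce to integer parameters mod $p$ to make this rigorous, the vanishing of $1/\Gamma(1-a)$ you exploit is exactly the vanishing of $(1-a)_d$ that the paper uses---so your route ultimately recovers the same mechanism, but with the added labour of extracting the residue $(-1)^{a-1}(a-1)!$ and matching it to $1/\Gamma_p(\alpha+1)$, rather than reading the factor $(\alpha+a)$ directly off a Pochhammer that already lives in $\Z_p$.
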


The importance of  Whipple's ${}_7F_6$ transformation 
is that (\ref{Whippletransformation7F6}) has many interesting consequences. For example, combining (\ref{Whippletransformation7F6}) with (\ref{pffafsaalschutz}), we get the Dougall's formula
\begin{align}
&{}_7F_6\bigg[\begin{matrix}\alpha&1+\frac12\alpha&\beta&\gamma&\delta&\epsilon&-n\\
&\frac12\alpha&\alpha-\beta+1&\alpha-\gamma+1&\alpha-\delta+1&\alpha-\epsilon+1&\alpha+n+1\end{matrix}\bigg|\,1\bigg]\notag\\
=&\frac{(\alpha+1)_n(\alpha-\beta-\gamma+1)_n(\alpha-\beta-\delta+1)_n(\alpha-\gamma-\delta+1)_n}{(\alpha-\beta+1)_n(\alpha-\gamma+1)_n\Gamma(\alpha-\delta+1)_n(\alpha-\beta-\gamma-\delta+1)_n},
\end{align}
where $n\in\N$ and $n=\beta+\gamma+\delta+\epsilon-2\alpha-1$. In particular,
\begin{align}\label{alphabetagammadeltan7F6H}
&{}_7F_6\bigg[\begin{matrix}\alpha&\alpha&1+\frac12\alpha&\beta&\gamma&\delta&-n\\
&1&\frac12\alpha&\alpha-\beta+1&\alpha-\gamma+1&\alpha-\delta+1&\alpha+n+1\end{matrix}\bigg|\,1\bigg]\notag\\
=&\frac{(\alpha+1)_n(\alpha-\beta-\gamma+1)_n(\alpha-\beta-\delta+1)_n(\alpha-\gamma-\delta+1)_n}{(\alpha-\beta+1)_n(\alpha-\gamma+1)_n\Gamma(\alpha-\delta+1)_n(\alpha-\beta-\gamma-\delta+1)_n},
\end{align}
where $n=\beta+\gamma+\delta-\alpha-1$. We have two $p$-adic analogues of (\ref{alphabetagammadeltan7F6H}).
\begin{theorem}\label{alphabetagammadeltan7F6A}
Let $\alpha,\beta,\gamma,\delta\in\Z_p$. Suppose that

\medskip\noindent
(i) $\langle-\beta\rangle_p<\langle-\alpha\rangle_p\leq\min\{\langle-\gamma\rangle_p, \langle-\delta\rangle_p\}$;

\medskip\noindent
(ii) $\langle-\beta\rangle_p+\langle-\gamma\rangle_p+\langle-\delta\rangle_p<p$;

\medskip\noindent
(iii) $\langle-\alpha\rangle_p/\alpha=\langle-\beta\rangle_p/\beta$;

\medskip\noindent
(iv) $(\alpha-\beta+1)_{p-1}$, $(\alpha-\gamma+1)_{p-1}$, $(\alpha-\delta+1)_{p-1}$ and $(\beta+\gamma+\delta)_{p-1}$ are not divisible by $p^2$.

\medskip\noindent
Let $\epsilon=\alpha+1-\beta-\gamma-\delta$. Then
\begin{align}
&{}_7F_6\bigg[\begin{matrix}\alpha&\alpha&1+\frac12\alpha&\beta&\gamma&\delta&\epsilon\\
&1&\frac12\alpha&\alpha-\beta+1&\alpha-\gamma+1&\alpha-\delta+1&\alpha-\epsilon+1\end{matrix}\bigg|\,1\bigg]_{p-1}\notag\\
\equiv&-\frac{\alpha}{\alpha-\beta}\cdot\frac{\Gamma_p(1-\beta-\gamma)\Gamma_p(1-\beta-\delta)\Gamma_p(1-\gamma-\delta)}{\Gamma_p(1-\beta)\Gamma_p(1-\gamma)\Gamma_p(1-\delta)\Gamma_p(1-\beta-\gamma-\delta)}\notag\\
&\cdot\frac{\Gamma_p(\alpha-\beta+1)\Gamma_p(\alpha-\gamma+1)\Gamma_p(\alpha-\delta+1)\Gamma_p(\alpha-\beta-\gamma-\delta+1)}{\Gamma_p(\alpha+1)\Gamma_p(\alpha-\beta-\gamma+1)\Gamma_p(\alpha-\beta-\delta+1)\Gamma_p(\alpha-\gamma-\delta+1)}\pmod{p^2}.
\end{align}
\end{theorem}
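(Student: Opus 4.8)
The plan is to derive this $p$-adic Dougall formula exactly as the classical identity (\ref{alphabetagammadeltan7F6H}) is derived from (\ref{Whippletransformation7F6}) and (\ref{pffafsaalschutz}): namely, by specializing the $p$-adic Whipple ${}_7F_6$ transformation (Theorem \ref{alphabetagammadeltaepsilon7F6A}) to $\epsilon=\alpha+1-\beta-\gamma-\delta$ and then evaluating the resulting truncated series by the $p$-adic Pfaff--Saalsch\"utz evaluation (Theorem \ref{nalphabeta1alphabetan}). First I would apply (\ref{alphabetagammadeltaepsilon7F6AG}), which rewrites the truncated ${}_7F_6$ on the left as
\[
\frac{\langle-\alpha\rangle_p}{\langle-\alpha\rangle_p-\langle-\beta\rangle_p}\cdot\frac{\Gamma_p(\alpha-\beta+1)\Gamma_p(\alpha-\gamma+1)\Gamma_p(\alpha-\delta+1)\Gamma_p(\alpha-\beta-\gamma-\delta+1)}{\Gamma_p(\alpha+1)\Gamma_p(\alpha-\beta-\gamma+1)\Gamma_p(\alpha-\beta-\delta+1)\Gamma_p(\alpha-\gamma-\delta+1)}
\]
times a truncated ${}_4F_3$ at $1$, which already supplies the second Gamma quotient in the claim.

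Second, I would observe that with $\epsilon=\alpha+1-\beta-\gamma-\delta$ one has $1-\epsilon=\beta+\gamma+\delta-\alpha$ and $\alpha-\epsilon+1=\beta+\gamma+\delta$, so the upper parameter $\beta+\gamma+\delta-\alpha$ of that ${}_4F_3$ coincides with one of its lower parameters. Since $(\beta+\gamma+\delta-\alpha)_k$ then occurs in both numerator and denominator of every term, these cancel term by term and the ${}_4F_3$ collapses to
\[
{}_3F_2\bigg[\begin{matrix}\beta&\gamma&\delta\\&1&\beta+\gamma+\delta\end{matrix}\bigg|\,1\bigg]_{p-1}.
\]
This is precisely the shape handled by Theorem \ref{nalphabeta1alphabetan}: taking $(\alpha',\beta',\gamma')=(\beta,\gamma,\beta+\gamma+\delta)$ gives $\gamma'-\alpha'-\beta'=\delta$, and condition (ii) guarantees $\langle-(\beta+\gamma+\delta)\rangle_p=\langle-\beta\rangle_p+\langle-\gamma\rangle_p+\langle-\delta\rangle_p$, so that $\langle-\alpha'\rangle_p+\langle-\beta'\rangle_p<\langle-\gamma'\rangle_p$ holds because $\langle-\delta\rangle_p\geq\langle-\alpha\rangle_p>0$. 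Hence (\ref{nalphabeta1alphabetanG}) evaluates the ${}_3F_2$ as
\[
-\frac{\Gamma_p(1-\beta-\gamma)\Gamma_p(1-\beta-\delta)\Gamma_p(1-\gamma-\delta)}{\Gamma_p(1-\beta)\Gamma_p(1-\gamma)\Gamma_p(1-\delta)\Gamma_p(1-\beta-\gamma-\delta)}.
\]

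Third, I would assemble the three factors. Condition (iii), $\langle-\alpha\rangle_p/\alpha=\langle-\beta\rangle_p/\beta$, lets me replace $\langle-\alpha\rangle_p/(\langle-\alpha\rangle_p-\langle-\beta\rangle_p)$ by $\alpha/(\alpha-\beta)$, and the minus sign from the ${}_3F_2$ evaluation then produces exactly the factor $-\alpha/(\alpha-\beta)$ together with the two Gamma quotients appearing in the statement, completing the derivation.

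The bulk of the work, and the expected obstacle, is checking that the hypotheses here imply all the hypotheses of the two input theorems. The crux is to pin down $\langle-\epsilon\rangle_p$: conditions (i) and (ii) force $-p\leq\langle-\alpha\rangle_p-\langle-\beta\rangle_p-\langle-\gamma\rangle_p-\langle-\delta\rangle_p-1\leq-1$, so $\langle-\epsilon\rangle_p=p+\langle-\alpha\rangle_p-\langle-\beta\rangle_p-\langle-\gamma\rangle_p-\langle-\delta\rangle_p-1$, and from this explicit value the inequalities (i) and (ii) of Theorem \ref{alphabetagammadeltaepsilon7F6A} follow directly; the non-vanishing requirements reduce to the ones assumed here, since $(\alpha-\epsilon+1)_{p-1}=(\beta+\gamma+\delta)_{p-1}$. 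The single most delicate point will be confirming $(\beta+\gamma+\delta-\alpha)_{p-1}\not\equiv0\pmod{p^2}$, which governs the $p$-integrality of the intermediate ${}_4F_3$ and which I would verify by a second-order analysis of the unique factor of that Pochhammer symbol divisible by $p$.
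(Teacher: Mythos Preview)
Your approach is exactly the one the paper uses: specialize Theorem~\ref{alphabetagammadeltaepsilon7F6A} with $\epsilon=\alpha+1-\beta-\gamma-\delta$, observe that the resulting ${}_4F_3$ collapses to a ${}_3F_2$, and evaluate the latter via Theorem~\ref{nalphabeta1alphabetan}. Your verification of the inequality hypotheses is correct, and in fact more explicit than the paper's.

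One correction to your last paragraph: the condition $(\beta+\gamma+\delta-\alpha)_{p-1}\not\equiv0\pmod{p^2}$ is \emph{not} in general a consequence of the hypotheses here, so you will not be able to ``confirm'' it by a second-order analysis --- the single $p$-divisible factor equals $(\beta+b)+(\gamma+c)+(\delta+d)-(\alpha+a)$, and nothing prevents this from being divisible by $p^2$. The point is rather that the condition is \emph{redundant} in this specialization: since $1-\epsilon=\beta+\gamma+\delta-\alpha$ coincides with the lower parameter, the ratio $(1-\epsilon)_k/(\beta+\gamma+\delta-\alpha)_k$ is identically $1$ for every $k$, so the truncated ${}_4F_3$ on the right of (\ref{alphabetagammadeltaepsilon7F6AG}) is literally equal to the ${}_3F_2$, regardless of the $p$-adic valuation of $(\beta+\gamma+\delta-\alpha)_{p-1}$. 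The paper makes this identification silently and then cites Theorem~\ref{alphabetagammadeltaepsilon7F6A} without further comment; strictly speaking one should either note that the proof of Theorem~\ref{alphabetagammadeltaepsilon7F6A} goes through unchanged when the offending parameter cancels, or handle the exceptional case by a small perturbation of $\gamma$ or $\delta$.
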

\begin{theorem} \label{alphabetagammadeltan7F6B}
Let $\alpha,\beta,\gamma,\delta\in\Z_p$. 
Suppose that

\medskip\noindent (i) $\langle -\alpha\rangle_p\leq\min\{\langle -\beta\rangle_p,\langle -\gamma\rangle_p,\langle -\delta\rangle_p,\langle -\epsilon\rangle_p\}$;

\medskip\noindent (ii) $\langle-\beta\rangle_p+\langle-\gamma\rangle_p+\langle-\delta\rangle_p<p$;

\medskip\noindent (iii) $(\alpha-\beta+1)_{p-1}$, $(\alpha-\gamma+1)_{p-1}$, $(\alpha-\delta+1)_{p-1}$ and $(\beta+\gamma+\delta)_{p-1}$ are not divisible by $p^2$.

\medskip\noindent Let $\epsilon=\alpha+1-\beta-\gamma-\delta$.  Then
\begin{align}
&{}_7F_6\bigg[\begin{matrix} \alpha&\alpha&\frac12\alpha+1&\beta&\gamma&\delta&\epsilon\\ &1&\frac12\alpha&\alpha-\beta+1&\alpha-\gamma+1&\alpha-\delta+1&\alpha-\epsilon+1\end{matrix}\bigg|\,1\bigg]_{p-1}\notag\\
\equiv&-\big(\alpha+\langle-\alpha\rangle_p\big)\cdot\frac{\Gamma_p(1-\beta-\gamma)\Gamma_p(1-\beta-\delta)\Gamma_p(1-\gamma-\delta)}{\Gamma_p(1-\beta)\Gamma_p(1-\gamma)\Gamma_p(1-\delta)\Gamma_p(1-\beta-\gamma-\delta)}\notag\\
&\cdot\frac{\Gamma_p(\alpha-\beta+1)\Gamma_p(\alpha-\gamma+1)\Gamma_p(\alpha-\delta+1)\Gamma_p(\alpha-\beta-\gamma-\delta+1)}{\Gamma_p(\alpha+1)\Gamma_p(\alpha-\beta-\gamma+1)\Gamma_p(\alpha-\beta-\delta+1)\Gamma_p(\alpha-\gamma-\delta+1)}\pmod{p^2}.
\end{align}
\end{theorem}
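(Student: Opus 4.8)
The plan is to realize Theorem \ref{alphabetagammadeltan7F6B} as the $p$-adic shadow of the classical passage from Whipple's ${}_7F_6$ transformation to Dougall's formula (\ref{alphabetagammadeltan7F6H}). Classically one specializes $\epsilon=\alpha+1-\beta-\gamma-\delta$ in the $\rho=\alpha$ form of Whipple's transformation (\ref{alphabetagammadeltaepsilon7F6H}); this choice makes the resulting ${}_4F_3$ degenerate into a balanced ${}_3F_2$ summable by Pfaff-Saalsch\"utz. I would mirror these two moves with the congruence tools already in hand: the divisibility-regime $p$-adic Whipple transformation, Theorem \ref{alphabetagammadeltaepsilon7F6C} (which supplies the factor $\alpha+\langle-\alpha\rangle_p$ matching the claim), followed by the $p$-adic Pfaff-Saalsch\"utz evaluation, Theorem \ref{nalphabeta1alphabetan}.

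First I would invoke Theorem \ref{alphabetagammadeltaepsilon7F6C} with $\epsilon=\alpha+1-\beta-\gamma-\delta$, after checking that hypotheses (i)--(iii) of Theorem \ref{alphabetagammadeltan7F6B} force those of Theorem \ref{alphabetagammadeltaepsilon7F6C}. Writing $a=\langle-\alpha\rangle_p$, $b=\langle-\beta\rangle_p$, $c=\langle-\gamma\rangle_p$, $d=\langle-\delta\rangle_p$, the bound $b+c+d<p$ together with $a\le\min\{b,c,d\}$ pins down $\langle-\epsilon\rangle_p=p+a-b-c-d-1$, and one checks that $a\le\langle-\epsilon\rangle_p$ is in fact equivalent to $b+c+d<p$; the remaining inequality in hypothesis (ii) of Theorem \ref{alphabetagammadeltaepsilon7F6C} then reduces to $\min\{b,c,d\}<b+c+d+1$, which is automatic. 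Using the exact identities $\alpha-\epsilon+1=\beta+\gamma+\delta$ and $\beta+\gamma+\delta-\alpha=1-\epsilon$, the $p^2$-nondivisibility conditions on $(\alpha-\epsilon+1)_{p-1}$ and $(\beta+\gamma+\delta-\alpha)_{p-1}$ become conditions on $(\beta+\gamma+\delta)_{p-1}$ and $(1-\epsilon)_{p-1}$ that I must derive from hypothesis (iii). Letting $W$ denote the truncated ${}_7F_6$ on the left-hand side of the claim and $G_1=\Gamma_p(\alpha-\beta+1)\Gamma_p(\alpha-\gamma+1)\Gamma_p(\alpha-\delta+1)\Gamma_p(\alpha-\beta-\gamma-\delta+1)/\big(\Gamma_p(\alpha+1)\Gamma_p(\alpha-\beta-\gamma+1)\Gamma_p(\alpha-\beta-\delta+1)\Gamma_p(\alpha-\gamma-\delta+1)\big)$, the theorem yields
\begin{align*}
W\equiv(\alpha+\langle-\alpha\rangle_p)\,G_1\cdot{}_4F_3\bigg[\begin{matrix}1-\epsilon&\beta&\gamma&\delta\\ &1&\alpha-\epsilon+1&\beta+\gamma+\delta-\alpha\end{matrix}\bigg|\,1\bigg]_{p-1}\pmod{p^2}.
\end{align*}

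The decisive simplification is that $1-\epsilon=\beta+\gamma+\delta-\alpha$ and $\alpha-\epsilon+1=\beta+\gamma+\delta$ hold as exact equalities in $\Z_p$, so the numerator factor $(1-\epsilon)_k=(\beta+\gamma+\delta-\alpha)_k$ cancels term by term against the lower Pochhammer. Hence for every $0\le k\le p-1$ the $k$-th summand of the ${}_4F_3$ equals the $k$-th summand of ${}_3F_2\big[\beta,\gamma,\delta;1,\beta+\gamma+\delta\,\big|\,1\big]$, and the collapse loses nothing modulo $p^2$:
$$
{}_4F_3\bigg[\begin{matrix}1-\epsilon&\beta&\gamma&\delta\\ &1&\alpha-\epsilon+1&\beta+\gamma+\delta-\alpha\end{matrix}\bigg|\,1\bigg]_{p-1}={}_3F_2\bigg[\begin{matrix}\beta&\gamma&\delta\\ &1&\beta+\gamma+\delta\end{matrix}\bigg|\,1\bigg]_{p-1}.
$$
I would evaluate this balanced ${}_3F_2$ with Theorem \ref{nalphabeta1alphabetan}, matching its parameters (there named $\alpha,\beta,\gamma$) with $(\beta,\gamma,\beta+\gamma+\delta)$, so that the third numerator parameter becomes $(\beta+\gamma+\delta)-\beta-\gamma=\delta$. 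Since $\langle-(\beta+\gamma+\delta)\rangle_p=b+c+d$, the inequalities $\max\{b,c\}\le b+c+d$ and $b+c<b+c+d$ defining the nonzero branch hold (exploiting the symmetry in $\beta,\gamma,\delta$ to place a positive residue in the role of $\delta$), while the $p^2$-nondivisibility of $(\beta+\gamma+\delta)_{p-1}$ is hypothesis (iii). Writing $G_2=\Gamma_p(1-\beta-\gamma)\Gamma_p(1-\beta-\delta)\Gamma_p(1-\gamma-\delta)/\big(\Gamma_p(1-\beta)\Gamma_p(1-\gamma)\Gamma_p(1-\delta)\Gamma_p(1-\beta-\gamma-\delta)\big)$, Theorem \ref{nalphabeta1alphabetan} gives ${}_3F_2[\cdots]_{p-1}\equiv-G_2\pmod{p^2}$, and multiplying the two congruences produces $W\equiv-(\alpha+\langle-\alpha\rangle_p)G_1G_2\pmod{p^2}$, which is precisely the asserted formula.

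I expect the main obstacle to be the residue bookkeeping that certifies the hypotheses, rather than any new analytic input. The delicate point is the $p^2$-nondivisibility of $(1-\epsilon)_{p-1}=(\beta+\gamma+\delta-\alpha)_{p-1}$ demanded by the Whipple step: its unique near-zero factor equals $(\beta+\langle-\beta\rangle_p)+(\gamma+\langle-\gamma\rangle_p)+(\delta+\langle-\delta\rangle_p)-(\alpha+\langle-\alpha\rangle_p)$, which differs from the corresponding factor of $(\beta+\gamma+\delta)_{p-1}$ only by $\alpha+\langle-\alpha\rangle_p$, so I must show this term has $p$-adic valuation exactly one under hypothesis (iii) or absorb it into a short case analysis. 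The other sensitive spot is the strict inequality $b+c<b+c+d$ of the Pfaff-Saalsch\"utz branch, which can fail in the degenerate configurations where $\beta,\gamma,\delta$ are all divisible by $p$; these boundary cases sit exactly where Theorems \ref{alphabetagammadeltaepsilon7F6A} and \ref{alphabetagammadeltaepsilon7F6C} would overlap and must be treated (or excluded) separately.
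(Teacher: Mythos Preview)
Your approach is exactly the one the paper takes: it explicitly states that Theorems \ref{alphabetagammadeltan7F6A} and \ref{alphabetagammadeltan7F6B} are deduced from Theorems \ref{alphabetagammadeltaepsilon7F6A} and \ref{alphabetagammadeltaepsilon7F6C} respectively, by specializing $\epsilon=\alpha+1-\beta-\gamma-\delta$, collapsing the ${}_4F_3$ to the ${}_3F_2\big[\beta,\gamma,\delta;1,\beta+\gamma+\delta\,\big|\,1\big]$, and evaluating the latter via Theorem \ref{nalphabeta1alphabetan}. Your residue bookkeeping is in fact more explicit than the paper's terse treatment, which simply notes $\langle-(\beta+\gamma+\delta)\rangle_p=b+c+d>b+c$ and then asserts the result.
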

Furthermore, letting $n\to+\infty$ in (\ref{alphabetagammadeltan7F6H}), we may get
\begin{align}\label{alphaalpha12alphabetagamma5F4H}
{}_5F_4\bigg[\begin{matrix}\alpha&\alpha&1+\frac12\alpha&\beta&\gamma\\
&1&\frac12\alpha&\alpha-\beta+1&\alpha-\gamma+1\end{matrix}\bigg|\,1\bigg]
=\frac{\Gamma(1+\alpha-\beta)\Gamma(1+\alpha-\gamma)\Gamma(1-\beta-\gamma)}{\Gamma(1+\alpha)\Gamma(1-\beta)\Gamma(1-\gamma)\Gamma(1+\alpha-\beta-\gamma)}.
\end{align}
(\ref{alphaalpha12alphabetagamma5F4H}) has the following $p$-adic analogues.
\begin{theorem}\label{alphaalpha12alphabetagamma5F4A}
Let $\alpha,\beta,\gamma\in\Z_p$.
Suppose that

\medskip\noindent
(i) $\langle-\beta\rangle_p<\langle-\alpha\rangle_p\leq\langle-\gamma\rangle_p<(p+\langle-\alpha\rangle_p)/2$;

\medskip\noindent
(ii) $\langle-\beta\rangle_p+\langle-\gamma\rangle_p<p$;

\medskip\noindent
(iii) $\langle-\alpha\rangle_p/\alpha=\langle-\beta\rangle_p/\beta$;

\medskip\noindent
(iv) $(\alpha-\beta+1)_{p-1}$, $(\alpha-\gamma+1)_{p-1}$ are not divisible by $p^2$.

\medskip\noindent
Then
\begin{align}\label{alphaalpha12alphabetagamma5F4GA}
&{}_5F_4\bigg[\begin{matrix}\alpha&\alpha&1+\frac12\alpha&\beta&\gamma\\
&1&\frac12\alpha&\alpha-\beta+1&\alpha-\gamma+1\end{matrix}\bigg|\,1\bigg]_{p-1}\notag\\
\equiv&-\frac{\alpha}{\alpha-\beta}\cdot\frac{\Gamma_p(1+\alpha-\beta)\Gamma_p(1+\alpha-\gamma)\Gamma_p(1-\beta-\gamma)}{\Gamma_p(1+\alpha)\Gamma_p(1-\beta)\Gamma_p(1-\gamma)\Gamma_p(1+\alpha-\beta-\gamma)}\pmod{p^2}.
\end{align}
\end{theorem}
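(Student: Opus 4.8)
The plan is to regard each side of (\ref{alphaalpha12alphabetagamma5F4GA}) as a function of the two free $p$-adic variables and to compare their values to precision $p^2$ and their first derivatives to precision $p$. Write $a=\langle-\alpha\rangle_p$, $b=\langle-\beta\rangle_p$, $c=\langle-\gamma\rangle_p$ and observe that hypothesis (iii) is exactly the assertion $\beta=(b/a)\alpha$; thus $\beta$ is slaved to $\alpha$ and only the two directions $\alpha$ and $\gamma$ are independent, while the prefactor collapses to the constant $-\alpha/(\alpha-\beta)=-a/(a-b)$, a $p$-adic unit since $0<a-b<p$. Putting $\alpha=-a+p\tilde\alpha$ and $\gamma=-c+p\tilde\gamma$, the very-well-poised factor $(\alpha)_k^2$ makes every summand with $k>a$ vanish to order at least two at the base point $\tilde\alpha=\tilde\gamma=0$ (here the upper bound $c<(p+\langle-\alpha\rangle_p)/2$ in (i) is what prevents the denominator $\alpha-\gamma+1$ from producing a compensating pole), so those summands are $O(p^2)$ and only $0\le k\le a$ can influence the congruence. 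I would therefore reduce (\ref{alphaalpha12alphabetagamma5F4GA}) to (a) the base identity at $\alpha=-a$, $\gamma=-c$ modulo $p^2$, together with (b) the two first-order congruences modulo $p$.

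For the base identity (a), note that on the slice $\gamma=-c$ with $\beta=(b/a)\alpha$ the terms with $k>c$ vanish, so the truncated sum coincides with the full very-well-poised ${}_5F_4$ and can be evaluated by (\ref{alphaalpha12alphabetagamma5F4H}). Its right-hand side becomes a ratio of poles as $\alpha\to-a$: because along the tied path $1+\alpha-\beta$ tends to its pole $1-a+b$ at $(a-b)/a$ times the speed at which $1+\alpha$ tends to $1-a$, the limit acquires the extra factor $a/(a-b)$ over the naive terminating value. Translating the resulting rational number into the displayed $\Gamma_p$-product by means of (\ref{Gammapx1Gammapx}) introduces a single sign $-1$, the analogue of the $\Gamma_p(1)=-1$ already seen in Theorem \ref{alphabeta11}; this sign and the path factor $a/(a-b)$ combine to give exactly $-a/(a-b)$. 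Conditions (i) and (ii) ensure that none of the Pochhammers appearing crosses a multiple of $p$, so the translation is unambiguous modulo $p^2$.

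For the derivative step (b), I would differentiate the identity (\ref{alphaalpha12alphabetagamma5F4H}) in $\alpha$ (using $d\beta/d\alpha=b/a$) and in $\gamma$ and pass to the base point; only the summands with $0\le k\le a$ affect these first derivatives, and on them the truncated sum agrees with the full series, so the differentiated identity computes the truncated derivatives. The apparent poles of the digamma function at the non-positive integers $1-a$ and $1-a+b$ cancel—their coefficients sum to zero—leaving combinations of harmonic numbers. Matching these against the logarithmic derivative of the $\Gamma_p$-product, i.e. against truncated harmonic sums via (\ref{Gammapx1Gammapx}), reduces (b) to harmonic-sum congruences modulo $p$ of the same flavour as (\ref{alphabeta11c1ape}) in the Gauss case; the constant prefactor drops out of the logarithmic derivative.

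The main obstacle is a $p$-integrality phenomenon concealed in the smooth description above. When $a\le 2b$ the individual summands with $a-b\le k\le b$ are \emph{not} $p$-integral: the factor $1+\alpha-\beta\equiv0\pmod p$ sits in the denominator with no compensating numerator zero, exactly as in the remark following Theorem \ref{alphabetagammadeltaepsilon7F6A}, and only the full sum is $p$-integral. Hence a naive termwise Taylor expansion is illegitimate, and I would instead run steps (a) and (b) at the level of the closed form supplied by (\ref{alphaalpha12alphabetagamma5F4H}) on the slice $\gamma=-c$, handling the remaining $\gamma$-direction by a separate estimate showing that the tail $\sum_{k\ge p}$ contributes nothing to first order. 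Making this pole cancellation and tail estimate rigorous—ideally by reusing the $p$-integrality lemma already required for the ${}_7F_6$ Theorem \ref{alphabetagammadeltan7F6A}, of which (\ref{alphaalpha12alphabetagamma5F4GA}) is the formal $n\to\infty$ limit—is where the real work lies.
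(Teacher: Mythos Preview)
Your overall strategy—tie $\beta$ to $\alpha$ via (iii), expand about $\alpha=-a$, invoke (\ref{alphaalpha12alphabetagamma5F4H}), match constant terms mod $p^2$ and first derivatives mod $p$—is exactly the paper's. Two differences deserve comment.

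First, the paper does not introduce $\gamma$ as a second deformation variable. It keeps $\gamma$ as the given $p$-adic integer throughout and writes $\alpha=-a(1+x)$, $\beta=-b(1+x)$ in a single parameter $x$; the constant and first-order information in $x$ already determine both sides mod $p^2$, so your separate $\gamma$-direction is an unnecessary complication.

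Second, the $p$-integrality obstacle you correctly flag (non-integral summands for $a-b\le k\le b$ when $a\le 2b$) is resolved in the paper by an explicit construction you do not anticipate. One deletes the single bad factor at $j=a-b$ from the denominator product $(\alpha-\beta+1)_k=\prod_{j=1}^{k}\bigl(j+(b-a)(1+x)\bigr)$ and inserts a compensating $\frac{1}{(b-a)x}$ in front of each term with $k\ge a-b$; the resulting function $\Psi(x)$ is then shown to equal $P(x)/Q(x)$ with $p\nmid Q(0)$ (this is Lemma~\ref{PsixPxQx} in the ${}_7F_6$ argument, which the paper says carries over verbatim to the ${}_5F_4$). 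The key point is that the would-be poles at $x=0$ from the terms $k$ and $a-k$ cancel in pairs, via the reflection $(z)_k(1-a-z)_{a-k}=(-1)^a(z)_{a-k}(1-a-z)_k$ applied to each matched Pochhammer pair. Once $\Psi(x)=P(x)/Q(x)$ is in hand, Lemma~\ref{taylorexpansionrational} applies directly and the rest of your outline (the pole ratio producing $a/(a-b)$, harmonic-sum matching via Lemma~\ref{Gammapadicderivativealphabeta}) goes through as you describe. So your proposal is correct in architecture but stops just short of the one genuinely new trick.
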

\begin{theorem}\label{alphaalpha12alphabetagamma5F4B}
Let $\alpha,\beta,\gamma\in\Z_p$.
Suppose that

\medskip\noindent
(i) $\langle-\alpha\rangle_p\leq\min\{\langle-\beta\rangle_p,\langle-\gamma\rangle_p\}$;\qquad(ii) $(\alpha-\beta+1)_{p-1}$, $(\alpha-\gamma+1)_{p-1}$ are not divisible by $p^2$.

\medskip\noindent
If $p\leq \langle-\beta\rangle_p+\langle-\gamma\rangle_p<p+\langle-\alpha\rangle_p$,
then
\begin{align}\label{alphaalpha12alphabetagamma5F4GB}
&{}_5F_4\bigg[\begin{matrix}\alpha&\alpha&1+\frac12\alpha&\beta&\gamma\\
&1&\frac12\alpha&\alpha-\beta+1&\alpha-\gamma+1\end{matrix}\bigg|\,1\bigg]_{p-1}
\equiv0\pmod{p^2}.
\end{align}
And if $\langle-\beta\rangle_p+\langle-\gamma\rangle_p<p$, then
\begin{align}\label{alphaalpha12alphabetagamma5F4GC}
&{}_5F_4\bigg[\begin{matrix}\alpha&\alpha&1+\frac12\alpha&\beta&\gamma\\
&1&\frac12\alpha&\alpha-\beta+1&\alpha-\gamma+1\end{matrix}\bigg|\,1\bigg]_{p-1}\notag\\
\equiv&-(\alpha+\langle-\alpha\rangle_p)\cdot\frac{\Gamma_p(1+\alpha-\beta)\Gamma_p(1+\alpha-\gamma)\Gamma_p(1-\beta-\gamma)}{\Gamma_p(1+\alpha)\Gamma_p(1-\beta)\Gamma_p(1-\gamma)\Gamma_p(1+\alpha-\beta-\gamma)}\pmod{p^2}.
\end{align}
\end{theorem}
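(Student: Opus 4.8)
The plan is to exploit the very-well-poised shape of the summand and to reduce the whole congruence to a single derivative computation modulo $p$, exactly in the spirit explained after Theorem~\ref{alphabeta11}. Write $a=\langle-\alpha\rangle_p$, $b=\langle-\beta\rangle_p$, $c=\langle-\gamma\rangle_p$, and throughout assume $a\ge 1$ (so that $\alpha$ is a $p$-adic unit; the value $a=0$, where $\alpha\equiv 0$ makes the well-poised factor singular, needs a small separate argument). Denote the $k$-th summand by $T_k(\alpha)$. Using the telescoping identity $(1+\tfrac12\alpha)_k/(\tfrac12\alpha)_k=(\alpha+2k)/\alpha$, I would first record the compact form
\[
T_k(\alpha)=\frac{\alpha+2k}{\alpha}\cdot\frac{(\alpha)_k^2(\beta)_k(\gamma)_k}{(k!)^2(\alpha-\beta+1)_k(\alpha-\gamma+1)_k},
\]
which is a genuine rational function of $\alpha$ with no pole at $\alpha\equiv-a$.

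The first main step is to show that, in both ranges of the theorem, the truncated sum collapses modulo $p^2$ to its initial block, i.e. ${}_5F_4[\cdots]_{p-1}=\sum_{k=0}^{p-1}T_k(\alpha)\equiv G(\alpha):=\sum_{k=0}^{a}T_k(\alpha)\pmod{p^2}$. For $k>a$ the factor $(\alpha)_k^2$ contributes $(\alpha+a)^2$, hence $p$-adic valuation at least $2$; the content of the step is that the denominators $(\alpha-\beta+1)_k$ and $(\alpha-\gamma+1)_k$ — each divisible by $p$ at most once by hypothesis (ii) — cannot erode this, once the extra divisibility from $(\beta)_k,(\gamma)_k$ at $k>b,c$ is taken into account. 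Tracking the precise indices at which each Pochhammer symbol acquires a factor of $p$ and comparing the two ``numerator'' thresholds $b+1,c+1$ against the two ``denominator'' thresholds $p-(b-a),p-(c-a)$, one finds $\nu_p(T_k)\ge 2$ for all $a<k\le p-1$ exactly when $b+c<p+a$; since both hypotheses ($b+c<p$ and $p\le b+c<p+a$) obey this bound, the reduction holds in both cases. This valuation bookkeeping is the first place where genuine care is required.

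Because $\alpha\equiv-a\pmod p$, the quantity $\alpha+a$ has positive valuation, so $(\alpha+a)^2\equiv0\pmod{p^2}$, and $G$, being a $p$-integral rational function near $\alpha=-a$, satisfies $G(\alpha)\equiv G(-a)+(\alpha+a)\,G'(-a)\pmod{p^2}$. Here the classical identity (\ref{alphaalpha12alphabetagamma5F4H}) enters: writing $V(\alpha)$ for its right-hand side, the difference $V(\alpha)-G(\alpha)=\sum_{k>a}T_k(\alpha)$ carries a double zero at $\alpha=-a$ (each term contains $(\alpha+a)^2$). Read as an identity of rational functions in $\beta,\gamma$ — legitimate by continuing from the complex convergence region $\Re(\beta+\gamma)<1$ and then specializing $p$-adically — this yields $G(-a)=V(-a)$ and $G'(-a)=V'(-a)$. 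Since $V(\alpha)$ carries the factor $1/\Gamma(1+\alpha)$, which vanishes at $\alpha=-a$ for $a\ge1$, we get $G(-a)=0$; and differentiating $V(\alpha)=M(\alpha)/\Gamma(1+\alpha)$ gives
\[
G'(-a)=V'(-a)=(-1)^{a-1}(a-1)!\,M(-a),\qquad M(-a)=\frac{(1-a-\beta-\gamma)_a}{(1-a-\beta)_a\,(1-a-\gamma)_a}.
\]
Thus ${}_5F_4[\cdots]_{p-1}\equiv(\alpha+a)\,V'(-a)\pmod{p^2}$, and everything is reduced to the single residue $V'(-a)\bmod p$.

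Finally I would split on the divisibility of $M(-a)$. The factor $(1-a-\beta-\gamma)_a$ is divisible by $p$ precisely when its vanishing index $i\equiv a-1-b-c\pmod p$ lands in $\{0,\dots,a-1\}$, which by hypothesis (i) happens if and only if $b+c\ge p$, while $(1-a-\beta)_a(1-a-\gamma)_a$ stays a $p$-unit. Hence in the range $p\le b+c<p+a$ we get $M(-a)\equiv0\pmod p$, so $(\alpha+a)V'(-a)\equiv0\pmod{p^2}$, which is (\ref{alphaalpha12alphabetagamma5F4GB}). In the range $b+c<p$, $M(-a)$ is a $p$-unit, and it remains to match $(\alpha+a)V'(-a)$ against the stated product in (\ref{alphaalpha12alphabetagamma5F4GC}); as $\alpha+a\equiv0\pmod p$, only the residue of the $\Gamma_p$-ratio matters, so I would convert each ordinary Gamma quotient in $M(-a)$ into a $\Gamma_p$-quotient by repeated use of (\ref{Gammapx1Gammapx}). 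The emitted signs collapse the $\Gamma_p$-ratio to $(-1)^aM(-a)/\Gamma_p(1-a)$, and the whole claim reduces to the clean elementary fact $\Gamma_p(1-a)=1/(a-1)!$ (itself a consequence of (\ref{Gammapx1Gammapx}) and $\Gamma_p(0)=1$), which reconciles the factor $(-1)^{a-1}(a-1)!$ with the sign $-\,$ in front of the $\Gamma_p$-expression. The two main obstacles are therefore the valuation count establishing the collapse to $G(\alpha)$ under the single inequality $b+c<p+a$, and the sign-and-factorial bookkeeping that carries the complex-analytic derivative $V'(-a)$ over to the $p$-adic $\Gamma_p$-expression.
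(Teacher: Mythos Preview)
Your argument is correct for $a\ge 1$ and takes a genuinely different route from the paper's. The paper defers the proof to the template of Theorems~\ref{alphabetagammadeltaepsilon7F6B} and~\ref{alphabetagammadeltaepsilon7F6C}: there one perturbs $\beta$, setting $\Psi(x)={}_5F_4[\ldots,-b+x,\ldots]_{a}$, evaluates $\Psi(0)$ through the identity (\ref{alphaalpha12alphabetagamma5F4H}) --- the ratio $\Gamma(1+\alpha+b)/\Gamma(1+\alpha)=(\alpha+1)_b$ is what supplies the prefactor $\alpha+a$ --- and then eliminates the derivative via $\Psi'(0)\equiv\psi'(0)=0\pmod p$, the auxiliary $\psi$ (all parameters replaced by their integer residues) vanishing identically because $1/\Gamma(1-a)=0$. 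You instead perturb $\alpha$ and exploit the \emph{double} zero of each tail term $T_k$ ($k>a$) at $\alpha=-a$ coming from $(\alpha)_k^2$; this forces $G$ to agree with $V$ to first order there, and the residue of $1/\Gamma$ at $1-a$ gives $V'(-a)$ directly as a rational function of $\beta,\gamma$. Your approach is more self-contained --- it bypasses Lemma~\ref{Gammapadicderivativealphabeta} entirely --- at the price of the more delicate valuation bookkeeping you flag; the paper's approach buys uniformity with the rest of its ${}_mF_{m-1}$ machinery.

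On your parenthetical about $a=0$: this is not repairable by a side argument. When $p\mid\alpha$ the well-poised factor $(\alpha+2k)/\alpha$ costs a full power of $p$, the collapse to $G(\alpha)$ holds only modulo $p$, and (\ref{alphaalpha12alphabetagamma5F4GC}) in fact fails --- e.g.\ $p=5$, $\alpha=5$, $\beta=\gamma=-2$ gives left side $7/2$ but right side $35/2$. The paper's own template (see the proof of Theorem~\ref{alphabetagammadeltaepsilon7F6C}) likewise tacitly needs $a\ge1$ for $\alpha+a$ to emerge from $(\alpha+1)_b$; the hypothesis $\alpha\in\Z_p^\times$, explicit in the parallel Theorem~\ref{alphabetagammadeltaepsilon4F3B}, should be read into the present statement.
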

Notice that Theorem \ref{alphaalphabetaalphabeta1} is factly the consequence of Theorems \ref{alphaalpha12alphabetagamma5F4A} and \ref{alphaalpha12alphabetagamma5F4B}, by substituting $\beta=\alpha/2$. 
Theorems \ref{alphaalpha12alphabetagamma5F4A} and \ref{alphaalpha12alphabetagamma5F4B} also can be deduced from Theorems \ref{alphabetagammadeltaepsilon7F6A}, \ref{alphabetagammadeltaepsilon7F6B} and \ref{alphabetagammadeltaepsilon7F6C} by setting $\epsilon=1+\alpha-\delta$. However, this may cause to some unnecessary additional requirements concerning $\beta$ and $\gamma$.

Let us see another application of (\ref{alphabetagammadeltaepsilon7F6H}). Letting $\epsilon$ tend to $-\infty$ in (\ref{alphabetagammadeltaepsilon7F6H}), we may get
\begin{align}\label{alphabetagammadelta6F5H}
&{}_6F_5\bigg[\begin{matrix} \alpha&\frac12\alpha+1&\alpha&\beta&\gamma&\delta\\ &\frac12\alpha&1&\alpha-\beta+1&\alpha-\gamma+1&\alpha-\delta+1\end{matrix}\bigg|\,-1\bigg]\notag\\
=&\frac{\Gamma(\alpha-\gamma+1)\Gamma(\alpha-\delta+1)}{\Gamma(\alpha+1)\Gamma(\alpha-\gamma-\delta+1)}
\cdot{}_3F_2\bigg[\begin{matrix} 1-\beta&\gamma&\delta\\
&1&\alpha-\beta+1\end{matrix}\bigg|\,1\bigg]_{p}.
\end{align}
\begin{theorem}\label{alphabetagammadelta6F5A} Let $\alpha,\beta,\gamma,\delta\in\Z_p$. Suppose that

\medskip\noindent
(i) $\langle-\beta\rangle_p<\langle-\alpha\rangle_p\leq\min\{\langle-\gamma\rangle_p,\langle-\delta\rangle_p\}$;

\medskip\noindent
(ii) $p+\langle-\alpha\rangle_p>\langle-\gamma\rangle_p+\langle-\delta\rangle_p$;

\medskip\noindent
(iii) $\langle-\alpha\rangle_p/\alpha=\langle-\beta\rangle_p/\beta$;

\medskip\noindent
(iv) $(\alpha-\beta+1)_{p-1}$, $(\alpha-\gamma+1)_{p-1}$, $(\alpha-\delta+1)_{p-1}$ are not divisible by $p^2$.

\medskip\noindent
Then
\begin{align}\label{alphabetagammadelta6F5AG}
&{}_6F_5\bigg[\begin{matrix} \alpha&\alpha&\frac12\alpha+1&\beta&\gamma&\delta\\ &1&\frac12\alpha&\alpha-\beta+1&\alpha-\gamma+1&\alpha-\delta+1\end{matrix}\bigg|\,-1\bigg]_{p-1}\notag\\
\equiv&\frac{\alpha}{\alpha-\beta}\cdot\frac{\Gamma_p(\alpha-\beta+1)\Gamma_p(\alpha-\gamma+1)}{\Gamma_p(\alpha+1)\Gamma_p(\alpha-\beta-\gamma+1)}
\cdot{}_3F_2\bigg[\begin{matrix} 1-\delta&\beta&\gamma\\
&1&\alpha-\delta+1\end{matrix}\bigg|\,1\bigg]_{p-1}\pmod{p^2}.
\end{align}
\end{theorem}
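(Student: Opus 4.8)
The classical identity (\ref{alphabetagammadelta6F5H}) comes from Whipple's ${}_7F_6$ sum on letting $\epsilon\to-\infty$, so one is tempted to deduce (\ref{alphabetagammadelta6F5AG}) from Theorem \ref{alphabetagammadeltaepsilon7F6A} in the same fashion. This cannot work $p$-adically: the truncated ${}_6F_5$ at $-1$ would be a specialization of the truncated ${}_7F_6$ at $1$ only if some $\epsilon\in\Z_p$ gave $(\epsilon)_k/(\alpha-\epsilon+1)_k\equiv(-1)^k\pmod{p^2}$ for all $0\le k\le p-1$, yet already $k=1$ forces $\alpha=-1$. The plan is thus to argue directly, by the paper's two-step scheme: an exact evaluation at the base point where both series terminate, followed by a single first-order correction modulo $p$ read off from the derivative of (\ref{alphabetagammadelta6F5H}).

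Write $a=\langle-\alpha\rangle_p$ and $b=\langle-\beta\rangle_p$, and parametrize the family cut out by hypothesis (iii) by $\lambda=a/\alpha=b/\beta$, so $\alpha=a/\lambda$, $\beta=b/\lambda$ with $\lambda\equiv-1\pmod p$; the base point is $\lambda=-1$, that is $\alpha=-a$, $\beta=-b$. There $(\alpha)_k=(-a)_k$ terminates the very-well-poised ${}_6F_5$ at $k=a\le p-1$, so its truncation to $p-1$ is the full series, and $(\beta)_k=(-b)_k$ terminates the right-hand ${}_3F_2$ at $k=b$; I would also use the symmetry of the ${}_6F_5$ in $\{\beta,\gamma,\delta\}$ to single out $\delta$ in (\ref{alphabetagammadelta6F5H}). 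The delicate feature is that (\ref{alphabetagammadelta6F5H}) degenerates to a pole-against-pole indeterminacy here, since $\alpha-\beta+1=b-a+1\le 0$ and $\alpha+1=1-a\le 0$; the classical ratio $\Gamma(\alpha-\beta+1)/\Gamma(\alpha+1)$ equals the finite number $(1-a)_b$, whereas its $p$-adic regularization $\Gamma_p(\alpha-\beta+1)/\Gamma_p(\alpha+1)$ differs from it by the unit which, together with the boundary term of the terminating very-well-poised sum, assembles into the prefactor $\alpha/(\alpha-\beta)=a/(a-b)$ of (\ref{alphabetagammadelta6F5AG}). This is the same mechanism that produces the identical factor in Theorem \ref{alphabetagammadeltaepsilon7F6A}.

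For general parameters I would perturb along the line (iii) by $\lambda=-1+p\mu$, moving $\alpha$ and $\beta$ proportionally while keeping $\gamma,\delta$ fixed. After clearing the $\Gamma_p$-factors by means of (\ref{Gammapx1Gammapx}), both sides of (\ref{alphabetagammadelta6F5AG}) become $p$-integral expressions whose values at $\mu=0$ coincide by the base case; subtracting and dividing by $p$ collapses the statement modulo $p^2$ to one congruence modulo $p$ between the $\mu$-derivatives at $\mu=0$. That derivative identity is exactly the $(\alpha,\beta)$-directional derivative of (\ref{alphabetagammadelta6F5H}), computed through the expansion $\Gamma_p(x+py)\equiv\Gamma_p(x)\bigl(1+py\,\Gamma_p'(x)/\Gamma_p(x)\bigr)\pmod{p^2}$ for the gamma quotient and the matching linearization of the truncated ${}_3F_2$.

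The hard part will be the perturbation step, and within it the boundary terms of the ${}_6F_5$: away from the base point the very-well-poised sum no longer terminates, so the terms $a<k\le p-1$ survive, are individually $O(p)$, and their joint first-order part must be shown to reconstitute the derivative of the closed form. It is precisely here that hypotheses (i), (ii) and (iv) are used, to keep both sides $p$-integral, to secure the non-vanishing of the various Pochhammer denominators, and to justify dividing by $p$. Once this boundary accounting is controlled, what remains is the routine differentiation of the Dougall--Whipple evaluation (\ref{alphabetagammadelta6F5H}).
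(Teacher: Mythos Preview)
Your overall strategy is the paper's: the authors do not prove this theorem separately but state that it goes through exactly like Theorem~\ref{alphabetagammadeltaepsilon7F6A}, namely by writing $\alpha=-a(1+x)$, $\beta=-b(1+x)$ along the line singled out by (iii), evaluating at $x=0$ via (\ref{alphabetagammadelta6F5H}), and lifting to first order using Lemmas~\ref{taylorexpansionrational} and~\ref{Gammapadicderivativealphabeta}. Your observation that one cannot specialize $\epsilon$ in Theorem~\ref{alphabetagammadeltaepsilon7F6A} is also correct.

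You have, however, misplaced the technical difficulty. The tail $a<k\le p-1$ is not the issue: the very-well-poised numerator carries $(\alpha)_k^2$, not $(\alpha)_k$, so under (i), (ii), (iv) every such term is already $O(p^2)$ and contributes nothing. The genuine obstacle lies in the range $a-b\le k\le b$ when $a\le 2b$: there $(\alpha-\beta+1)_k$ picks up a factor $\equiv 0\pmod p$ while the numerator does not, so the individual summands are \emph{not} $p$-integral and your sentence ``both sides become $p$-integral expressions'' is precisely what must be proved before you can divide by $p$. The paper resolves this for the ${}_7F_6$ by Lemma~\ref{PsixPxQx}: one strips the single bad factor at $j=a-b$ from $(\alpha-\beta+1)_k$, absorbs the compensating $1/((b-a)x)$, and shows via the pairing $k\leftrightarrow a-k$ (identity (\ref{etal1aetaaketaak1aetak})) that the resulting $\Psi(x)$ is $P(x)/Q(x)$ with $Q(0)\in\Z_p^\times$. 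The ${}_6F_5$ version of this lemma is what your proof needs. Relatedly, your value for the classical limit is off: along the direction $(\alpha,\beta)=(-a(1+x),-b(1+x))$ one gets $\lim_{x\to0}\Gamma(1-a-ax+b+bx)/\Gamma(1-a-ax)=\frac{a}{a-b}\,(1-a)_b$, not $(1-a)_b$; the extra $a/(a-b)$ is already present in this directional limit and is exactly what survives after passing to $\Gamma_p$. Once the analogue of Lemma~\ref{PsixPxQx} is in place, the remainder of your outline (differentiate (\ref{alphabetagammadelta6F5H}) and compare via Lemma~\ref{Gammapadicderivativealphabeta}) is correct.
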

\begin{theorem}\label{alphabetagammadelta6F5B} Let $\alpha,\beta,\gamma,\delta\in\Z_p$. Suppose that

\medskip\noindent
(i) $\langle-\alpha\rangle_p\leq\min\{\langle-\beta\rangle_p,\langle-\gamma\rangle_p,\langle-\delta\rangle_p\}$;

\medskip\noindent
(ii) $p+\langle-\alpha\rangle_p>\max\{\langle-\beta\rangle_p+\langle-\gamma\rangle_p, \langle-\beta\rangle_p+\langle -\delta\rangle_p, \langle-\gamma\rangle_p+\langle -\delta\rangle_p\}$;

\medskip\noindent
(iii) $(\alpha-\beta+1)_{p-1}$, $(\alpha-\gamma+1)_{p-1}$ $(\alpha-\delta+1)_{p-1}$ are not divisible by $p^2$.

\medskip\noindent
Then
\begin{align}\label{alphabetagammadelta6F5BG}
&{}_6F_5\bigg[\begin{matrix} \alpha&\alpha&\frac12\alpha+1&\beta&\gamma&\delta\\ &1&\frac12\alpha&\alpha-\beta+1&\alpha-\gamma+1&\alpha-\delta+1\end{matrix}\bigg|-1\bigg]_{p-1}\notag\\
\equiv&
\big(\alpha+\langle-\alpha\rangle_p\big)\cdot
\frac{\Gamma_p(\alpha-\beta+1)\Gamma_p(\alpha-\gamma+1)}{\Gamma_p(\alpha+1)\Gamma_p(\alpha-\beta-\gamma+1)}\cdot
{}_3F_2\bigg[\begin{matrix} 1-\delta&\beta&\gamma\\
&1&\alpha-\delta+1\end{matrix}\bigg|\,1\bigg]_{p-1}\pmod{p^2}.
\end{align}
\end{theorem}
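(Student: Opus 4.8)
Throughout, write $a=\langle-\alpha\rangle_p$ and let $S(\alpha)$ denote the truncated ${}_6F_5$ on the left of (\ref{alphabetagammadelta6F5BG}), regarded as a function of $\alpha$ with $\beta,\gamma,\delta$ held fixed. Using $(1+\tfrac12\alpha)_k/(\tfrac12\alpha)_k=(\alpha+2k)/\alpha$, its general term is
$\tfrac{\alpha+2k}{\alpha}\cdot\tfrac{(\alpha)_k^2(\beta)_k(\gamma)_k(\delta)_k}{(k!)^2(\alpha-\beta+1)_k(\alpha-\gamma+1)_k(\alpha-\delta+1)_k}(-1)^k$. Since $a\equiv-\alpha\pmod p$ we have $\alpha+a\equiv0\pmod p$, so the right-hand side of (\ref{alphabetagammadelta6F5BG}) is automatically divisible by $p$; the content of the theorem is to pin its value down modulo $p^2$. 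The plan is to extract this value as the first-order term of a $p$-adic Taylor expansion of $S$ about the negative integer $\alpha=-a$, the whole expression being forced into the shape $(\alpha+a)\cdot(\text{something})$ by a double zero in the series.

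The first step is to show $S(-a)=0$. At $\alpha=-a$ the factor $(\alpha)_k=(-a)_k$ vanishes for $k>a$, so $S(-a)$ is a terminating very-well-poised ${}_6F_5$; by the classical identity (\ref{alphabetagammadelta6F5H}) (after using the symmetry of the series in $\beta,\gamma,\delta$ to put $\delta$ in the distinguished slot, so that the quoted ${}_3F_2$ matches the one in (\ref{alphabetagammadelta6F5BG})) its value is a fixed multiple of $1/\Gamma(\alpha+1)=1/\Gamma(1-a)=0$, whence $S(-a)=0$. Because $S$ is a finite sum of rational functions of $\alpha$ that are $p$-integral and regular at $\alpha=-a$ under hypotheses (i)--(iii), this yields $S(\alpha)\equiv S(-a)+(\alpha+a)S'(-a)\equiv(\alpha+a)S'(-a)\pmod{p^2}$, the quadratic remainder being annihilated by $(\alpha+a)^2\equiv0\pmod{p^2}$. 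Thus the problem reduces to computing $S'(-a)$ modulo $p$.

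For the derivative, note that for every $k>a$ the summand carries the factor $(\alpha)_k^2$, which has a double zero at $\alpha=-a$, so each such term contributes nothing to $S'(-a)$ and only the head $k=0,\dots,a$ survives. Writing the terminating-at-$(-a)$ series as $\tilde g(\alpha)/\Gamma(\alpha+1)$ with $\tilde g(\alpha)=\tfrac{\Gamma(\alpha-\beta+1)\Gamma(\alpha-\gamma+1)}{\Gamma(\alpha-\beta-\gamma+1)}\,{}_3F_2[\,1-\delta,\beta,\gamma;\,1,\alpha-\delta+1\,]$, and using $\tfrac{d}{d\alpha}\tfrac1{\Gamma(\alpha+1)}\big|_{-a}=(-1)^{a-1}(a-1)!$ together with $1/\Gamma(1-a)=0$, I obtain $S'(-a)=(-1)^{a-1}(a-1)!\,\tilde g(-a)$. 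Rewriting $\tilde g(-a)$ through the functional equation $\Gamma(x+a)=(x)_a\Gamma(x)$ converts the quotient of classical gammas into an explicit product of Pochhammer symbols in $\beta,\gamma,\delta$ times the truncated ${}_3F_2$.

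The final and hardest step is the resulting congruence modulo $p$: one must match $(-1)^{a-1}(a-1)!\,\tilde g(-a)$ against $\tfrac{\Gamma_p(\alpha-\beta+1)\Gamma_p(\alpha-\gamma+1)}{\Gamma_p(\alpha+1)\Gamma_p(\alpha-\beta-\gamma+1)}\,{}_3F_2[\,1-\delta,\beta,\gamma;1,\alpha-\delta+1\,]_{p-1}$, which is $(\alpha+a)^{-1}$ times the claimed right-hand side. As $\alpha\equiv-a\pmod p$ and $\Gamma_p$ is continuous, each $\Gamma_p(\alpha-\cdot+1)$ and the truncated ${}_3F_2$ may be replaced by their values at $\alpha=-a$ modulo $p$; what remains is to convert the classical gamma-quotient and the factor $(-1)^{a-1}(a-1)!$ into the $\Gamma_p$-quotient, via the defining product $\Gamma_p(n)=(-1)^n\prod_{1\le j<n,\,p\nmid j}j$, the functional equation (\ref{Gammapx1Gammapx}), and the reflection formula for $\Gamma_p$, the extra factorial absorbing exactly the factors divisible by $p$ that $\Gamma_p$ skips but $\Gamma$ keeps. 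I expect the obstacle to be twofold: the $p$-integrality and valuation bookkeeping needed to justify that the tail terms $a<k\le p-1$ really drop out of the difference quotient $(S(\alpha)-S(-a))/(\alpha+a)$ modulo $p$ --- which is precisely what the inequalities (i)--(iii) are calibrated to guarantee --- and the clean $\Gamma\leftrightarrow\Gamma_p$ translation carrying out the last congruence.
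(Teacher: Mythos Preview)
Your route differs from the paper's. The paper (proving this in the template of Theorem \ref{alphabetagammadeltaepsilon7F6C}) holds $\alpha$ fixed and perturbs $\beta$ instead: at $\beta=-b$ the ${}_3F_2$ on the right of (\ref{alphabetagammadelta6F5H}) already terminates, the identity applies exactly as a polynomial identity in the remaining parameters, and the factor $(\alpha+a)$ drops out when one converts $\Gamma(\alpha+1)$ to $\Gamma_p(\alpha+1)$ (it is the unique $p$-divisible factor in the relevant Pochhammer). The $\beta$-derivative then vanishes modulo $p$ because the same identity with \emph{all} parameters specialized to negative integers $-a,-b,-c,-d$ is identically zero (the factor $1/\Gamma(1-a)$ kills it), so no derivative of the identity is ever computed.

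Your plan is sound but has a gap in the last step that the $\Gamma\leftrightarrow\Gamma_p$ bookkeeping you describe does not close: your $\tilde g(-a)$ carries the \emph{non-terminating} series ${}_3F_2[\,1-\delta,\beta,\gamma;\,1,-a-\delta+1\,|\,1]$, which has no meaning for $p$-adic $\beta,\gamma,\delta$ and cannot be turned into the truncated ${}_3F_2$ by any functional equation for $\Gamma$. The repair is that $S_0'(-a)$, being a rational function of $\beta,\gamma,\delta$ with unit denominator, only needs to be known modulo $p$; hence you may replace $\beta,\gamma,\delta$ by $-b,-c,-d$ first. At those integer values the ${}_3F_2$ terminates (at $\min\{b,c\}$) and coincides with its truncation, and the classical $\Gamma$-quotient $(b-a)!(c-a)!/(b+c-a)!$ matches the $\Gamma_p$-quotient via $(-1)^{a-1}(a-1)!=1/\Gamma_p(1-a)$. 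A secondary point: for $k>a$ the denominators $(\alpha-\beta+1)_k(\alpha-\gamma+1)_k(\alpha-\delta+1)_k$ evaluated at $\alpha=-a$ can be divisible by $p$ (whenever $k\ge p+a-b$, etc.), so Lemma \ref{taylorexpansionrational} is not applicable to $S$ termwise as you suggest; you must first kill the tail $a<k\le p-1$ by a direct valuation count (condition (ii) gives $k\ge p+a-b\Rightarrow k>c$ and $k>d$, and cyclically, so the numerator $p$-powers always exceed the denominator ones by at least $2$), and only then Taylor-expand the head $S_0$. With those two fixes made explicit, your argument goes through.
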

We can set $\delta=1+\alpha-\gamma$ in (\ref{alphabetagammadelta6F5AG}) and (\ref{alphabetagammadelta6F5BG}) now. Choose $\delta\in\Z_p$ such that neither $(\gamma)_{p-1}$ nor $(\alpha-\gamma+1)_{p-1}$ is divisible by $p^2$, and
$$
\langle\gamma\rangle_p=\begin{cases}\frac12(p+\langle-\alpha\rangle_p-1),&\text{if }\langle-\alpha\rangle_p\text{ is even},\\
\frac12(p+\langle-\alpha\rangle_p)-1,&\text{if }\langle-\alpha\rangle_p\text{ is odd}.
\end{cases}
$$
Then substituting $\delta=1+\alpha-\gamma$ in (\ref{alphabetagammadelta6F5AG}) and (\ref{alphabetagammadelta6F5BG}) and applying Theorem \ref{alphabeta11}, we obtain that
\begin{corollary}\label{} Let $\alpha,\beta\in\Z_p$.
(1) Suppose that
$\langle-\beta\rangle_p<\langle-\alpha\rangle_p$, $\langle-\alpha\rangle_p/\alpha=\langle-\beta\rangle_p/\beta$ and
$p^2$ doesn't divide $(\alpha-\beta+1)_{p-1}$.
Then
\begin{align}\label{4F3alphaalpha12alpha1betam1}
{}_4F_3\bigg[\begin{matrix} \alpha&\alpha&\frac12\alpha+1&\beta\\ &1&\frac12\alpha&\alpha-\beta+1\end{matrix}\bigg|-1\bigg]_{p-1}
\equiv
-\frac{\alpha}{\alpha-\beta}\cdot
\frac{\Gamma_p(\alpha-\beta+1)}{\Gamma_p(\alpha+1)\Gamma_p(1-\beta)}\pmod{p^2}.
\end{align}

\medskip\noindent
(2) Suppose that
$\langle-\alpha\rangle_p\leq \langle-\beta\rangle_p\leq (p+\langle-\alpha\rangle_p-1)/2$ and
$(\alpha-\beta+1)_{p-1}\not\equiv0\pmod{p^2}$.
Then
\begin{align}\label{}
{}_4F_3\bigg[\begin{matrix} \alpha&\alpha&\frac12\alpha+1&\beta\\ &1&\frac12\alpha&\alpha-\beta+1\end{matrix}\bigg|-1\bigg]_{p-1}
\equiv
-\big(\alpha+\langle-\alpha\rangle_p\big)\cdot
\frac{\Gamma_p(\alpha-\beta+1)}{\Gamma_p(\alpha+1)\Gamma_p(1-\beta)}\pmod{p^2}.
\end{align}
\end{corollary}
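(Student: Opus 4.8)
The plan is to derive the Corollary by specializing Theorems~\ref{alphabetagammadelta6F5A} and~\ref{alphabetagammadelta6F5B} at $\delta=1+\alpha-\gamma$, the value that makes two contiguous parameters coincide on each side and collapses the orders of all the series involved. First I would observe that with $\delta=1+\alpha-\gamma$ one has $(\delta)_k=(\alpha-\gamma+1)_k$ and $(\alpha-\delta+1)_k=(\gamma)_k$, so in the summand of the ${}_6F_5$ the factors $(\gamma)_k(\delta)_k$ in the numerator cancel against $(\alpha-\gamma+1)_k(\alpha-\delta+1)_k$ in the denominator term by term, reducing the left-hand side exactly to
$$
{}_4F_3\bigg[\begin{matrix}\alpha&\alpha&\frac12\alpha+1&\beta\\&1&\frac12\alpha&\alpha-\beta+1\end{matrix}\bigg|\,-1\bigg]_{p-1}.
$$
The same substitution gives $1-\delta=\gamma-\alpha$ and $\alpha-\delta+1=\gamma$, so the auxiliary ${}_3F_2[\,1-\delta,\beta,\gamma;1,\alpha-\delta+1\,]_{p-1}$ appearing on the right of (\ref{alphabetagammadelta6F5AG}) and (\ref{alphabetagammadelta6F5BG}) loses its $(\gamma)_k$ factor and collapses to ${}_2F_1[\,\gamma-\alpha,\beta;1\,]_{p-1}$.

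Next I would evaluate this ${}_2F_1$ by Theorem~\ref{alphabeta11}. The chosen $\gamma$ will lie in the range $\langle\alpha-\gamma\rangle_p+\langle-\beta\rangle_p<p$, so the first branch (\ref{alphabeta11c1}) applies and yields
$$
{}_2F_1\bigg[\begin{matrix}\gamma-\alpha&\beta\\&1\end{matrix}\bigg|\,1\bigg]_{p-1}\equiv-\frac{\Gamma_p(1+\alpha-\beta-\gamma)}{\Gamma_p(1+\alpha-\gamma)\Gamma_p(1-\beta)}\pmod{p^2}.
$$
Substituting this into the right-hand sides of (\ref{alphabetagammadelta6F5AG}) and (\ref{alphabetagammadelta6F5BG}), the factors $\Gamma_p(\alpha-\gamma+1)$ and $\Gamma_p(\alpha-\beta-\gamma+1)$ cancel against $\Gamma_p(1+\alpha-\gamma)$ and $\Gamma_p(1+\alpha-\beta-\gamma)$, leaving precisely $-\frac{\alpha}{\alpha-\beta}\cdot\frac{\Gamma_p(\alpha-\beta+1)}{\Gamma_p(\alpha+1)\Gamma_p(1-\beta)}$ from Theorem~\ref{alphabetagammadelta6F5A} and $-(\alpha+\langle-\alpha\rangle_p)\cdot\frac{\Gamma_p(\alpha-\beta+1)}{\Gamma_p(\alpha+1)\Gamma_p(1-\beta)}$ from Theorem~\ref{alphabetagammadelta6F5B}. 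Since these two theorems are governed respectively by $\langle-\beta\rangle_p<\langle-\alpha\rangle_p$ and by $\langle-\alpha\rangle_p\leq\langle-\beta\rangle_p$, they produce exactly cases (1) and (2) of the Corollary.

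The main obstacle is the bookkeeping of least residues needed to confirm that the prescribed $\gamma$ meets every hypothesis of the two source theorems at once. The key identity is $\gamma+\delta=1+\alpha$, which forces $\langle-\gamma\rangle_p+\langle-\delta\rangle_p=p+\langle-\alpha\rangle_p-1$; the midpoint choice of $\gamma$ then makes $\langle-\gamma\rangle_p$ and $\langle-\delta\rangle_p$ both equal to $(p+\langle-\alpha\rangle_p-1)/2$, the parity split in the definition of $\gamma$ being exactly what keeps them integral (when the sum is odd it splits as $\frac{p+\langle-\alpha\rangle_p-2}{2}$ and $\frac{p+\langle-\alpha\rangle_p}{2}$). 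From this I would read off $\langle-\alpha\rangle_p\leq\min\{\langle-\gamma\rangle_p,\langle-\delta\rangle_p\}$ and $p+\langle-\alpha\rangle_p>\langle-\gamma\rangle_p+\langle-\delta\rangle_p$ (conditions (i)--(ii)), together with $\langle\alpha-\gamma\rangle_p+\langle-\beta\rangle_p<p$, which is what places us in branch (\ref{alphabeta11c1}) rather than (\ref{alphabeta11c2}); the upper bound $\langle-\beta\rangle_p\leq(p+\langle-\alpha\rangle_p-1)/2$ imposed in case (2) is used precisely here. Finally, since $\gamma$ is only constrained modulo $p$, I retain the freedom to fix its higher $p$-adic digits so that neither $(\gamma)_{p-1}$ nor $(\alpha-\gamma+1)_{p-1}$ is divisible by $p^2$, which supplies the remaining $p$-integrality hypotheses and completes the reduction.
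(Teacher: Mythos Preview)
Your proposal is correct and follows essentially the same route as the paper: both specialize Theorems~\ref{alphabetagammadelta6F5A} and~\ref{alphabetagammadelta6F5B} at $\delta=1+\alpha-\gamma$, choose $\langle-\gamma\rangle_p$ at (or adjacent to) the midpoint $(p+\langle-\alpha\rangle_p-1)/2$ with higher $p$-adic digits adjusted to avoid $p^2\mid(\gamma)_{p-1}$ or $p^2\mid(\alpha-\gamma+1)_{p-1}$, and then invoke Theorem~\ref{alphabeta11} on the collapsed ${}_2F_1$. Your residue bookkeeping and the parity split for $\langle-\gamma\rangle_p$ and $\langle-\delta\rangle_p$ also agree with the paper's choice.
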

Notice that $\langle-\alpha/2\rangle=\langle-\alpha\rangle/2$ if and only if $\langle-\alpha\rangle$ is even. So substituting $\beta=\alpha/2$ in (\ref{4F3alphaalpha12alpha1betam1}), we obtain that
\begin{corollary}\label{2F1alphaalpham1}
Suppose that $\alpha\in\Z_p$ and $\langle-\alpha\rangle_p$ is even.
\begin{equation}\label{alphaalpham12F1}
{}_2F_1\bigg[\begin{matrix}
\alpha&\alpha\\
&1
\end{matrix}\bigg|\,-1\bigg]_{p-1}\equiv-\frac{2\Gamma_p(1+\frac12\alpha)}{\Gamma_p(1+\alpha)\Gamma_p(1-\frac12\alpha)}\pmod{p^2}.
\end{equation}
\end{corollary}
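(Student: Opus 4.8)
The plan is to derive Corollary~\ref{2F1alphaalpham1} as the specialization $\beta=\alpha/2$ of part~(1) of the preceding corollary, i.e.\ of~(\ref{4F3alphaalpha12alpha1betam1}). The key elementary input, already recorded in the text, is that $\langle-\alpha\rangle_p$ is even if and only if $\langle-\alpha/2\rangle_p=\langle-\alpha\rangle_p/2$: indeed $-\alpha/2\equiv\langle-\alpha\rangle_p/2\pmod p$, and when $\langle-\alpha\rangle_p$ is even the integer $\langle-\alpha\rangle_p/2$ already lies in $\{0,1,\dots,(p-1)/2\}$. Taking $\beta=\alpha/2$, this gives at once the two structural hypotheses of part~(1): the proportionality $\langle-\beta\rangle_p/\beta=(\langle-\alpha\rangle_p/2)/(\alpha/2)=\langle-\alpha\rangle_p/\alpha$, and, as long as $\langle-\alpha\rangle_p\ge 2$, the strict inequality $\langle-\beta\rangle_p=\langle-\alpha\rangle_p/2<\langle-\alpha\rangle_p$.

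Next I would carry out the specialization on both sides. In the summand of the ${}_4F_3$ in~(\ref{4F3alphaalpha12alpha1betam1}), setting $\beta=\alpha/2$ makes the upper parameter $\frac12\alpha+1$ coincide with the lower parameter $\alpha-\beta+1$, and the remaining upper occurrence $\beta=\frac12\alpha$ coincide with the lower parameter $\frac12\alpha$; hence $(\beta)_k/(\frac12\alpha)_k=1$ and $(\frac12\alpha+1)_k/(\alpha-\beta+1)_k=1$, and the $k$-th term collapses to $(\alpha)_k^2(-1)^k/(k!)^2$, which is precisely the $k$-th term of ${}_2F_1[\alpha,\alpha;1\,|\,-1]_{p-1}$. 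On the right-hand side, $\alpha-\beta=\frac12\alpha$ gives $\alpha/(\alpha-\beta)=2$, while $\alpha-\beta+1=\frac12\alpha+1$ and $1-\beta=1-\frac12\alpha$; thus the right-hand side of~(\ref{4F3alphaalpha12alpha1betam1}) becomes exactly $-2\Gamma_p(1+\frac12\alpha)/\bigl(\Gamma_p(1+\alpha)\Gamma_p(1-\frac12\alpha)\bigr)$, the right-hand side of~(\ref{alphaalpham12F1}).

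The step requiring the most care is that the third hypothesis of part~(1), namely $p^2\nmid(\alpha-\beta+1)_{p-1}=(\frac12\alpha+1)_{p-1}$, is not automatic. Writing $a=\langle-\alpha\rangle_p$, the unique factor of $(\frac12\alpha+1)_{p-1}$ divisible by $p$ is $\frac12\alpha+a/2=\frac12(\alpha+\langle-\alpha\rangle_p)$, so this hypothesis fails exactly when $\alpha\equiv-\langle-\alpha\rangle_p\pmod{p^2}$ (and at the integer $\alpha=-a$ itself the specialized ${}_4F_3$ acquires genuine $0/0$ factors, so the ``collapse'' above must be read as the value after cancellation). I would therefore split into two cases. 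For $\alpha\not\equiv-\langle-\alpha\rangle_p\pmod{p^2}$ the specialization of part~(1) applies verbatim and yields~(\ref{alphaalpham12F1}). For the residual class $\alpha\equiv-\langle-\alpha\rangle_p\pmod{p^2}$, I would use that both sides of~(\ref{alphaalpham12F1}) depend on $\alpha$ only through its residue modulo $p^2$: the left side is a polynomial in $\alpha$ with $p$-integral coefficients, and the right side is locally constant modulo $p^2$ by the Lipschitz continuity of $\Gamma_p$ together with the fact that its values are units. Hence it suffices to verify the congruence at the terminating integer $\alpha=-a$, where ${}_2F_1[-a,-a;1\,|\,-1]=\sum_{k=0}^{a}(-1)^k\binom{a}{k}^2=(-1)^{a/2}\binom{a}{a/2}$ and the right-hand side is evaluated directly from the defining product of $\Gamma_p$ and~(\ref{Gammapx1Gammapx}); matching these modulo $p^2$ is the only genuine computation. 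Finally the degenerate boundary $a=0$ must be excluded, since there the required inequality $\langle-\beta\rangle_p<\langle-\alpha\rangle_p$ collapses, so the effective hypothesis is that $\langle-\alpha\rangle_p$ be a positive even residue.
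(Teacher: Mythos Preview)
Your approach---substituting $\beta=\alpha/2$ in (\ref{4F3alphaalpha12alpha1betam1})---is exactly the one-line derivation the paper records immediately before stating the corollary. You go further than the paper in checking the side conditions: the paper does not pause over the exceptional residue $\alpha\equiv-\langle-\alpha\rangle_p\pmod{p^2}$ (where your continuity-plus-direct-evaluation at $\alpha=-a$, matching $(-1)^{a/2}\binom{a}{a/2}$ against the $\Gamma_p$-quotient, is the natural fix) or the degenerate case $\langle-\alpha\rangle_p=0$ (where, as you note, part~(1) does not apply; in fact the congruence itself fails there, e.g.\ $\alpha=0$ gives $1\not\equiv 2$, so this is an implicit restriction in the paper rather than a defect in your argument).

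For comparison, the paper also supplies an independent second proof at the end of Section~\ref{sectionql2F1}: specialize Theorem~\ref{alphaalphazz1} at $z=-1$, invoke (\ref{2F1alpha1alpha12}), and then rewrite the resulting $\Gamma_p$-quotient via the $p$-adic Gauss multiplication formula (\ref{padicGMF}). That route avoids the ${}_7F_6$ machinery altogether, but its final $\Gamma_p$ manipulation (passing from $\Gamma_p(\alpha/2)/\Gamma_p(\alpha)$ to $2\Gamma_p(1+\alpha/2)/\Gamma_p(1+\alpha)$) likewise needs $\alpha\in\Z_p^\times$, so it runs into the same boundary case you flagged.
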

We mention that the special case $\alpha=1/2$  of (\ref{alphaalpham12F1}) was firstly proved by Coster and van Hamme \cite{CoHa91}, and an alternative proof was given in \cite{DFLST16}.

All above $p$-adic transformations will be proved in a unified way. In fact, with help of the same idea and an identity of Kummer, we can prove the following result, which confirms a conjecture of Deines, Fuselier, Long, Swisher and Tu \cite{DFLST16}.
\begin{theorem}\label{F2112121F321113232} Let $p\equiv1\pmod 4$ be a prime. Then
\begin{equation}\label{F2112121F321113232p2}
{}_2F_1\bigg[\begin{matrix} \frac12&\frac12\\ &1\end{matrix}\bigg|\,-1\bigg]_{p-1}
\equiv p^2\cdot
{}_3F_2\bigg[\begin{matrix} 1&1&1\\ &\frac32&\frac32\end{matrix}\bigg|\,-1\bigg]_{p-1}\pmod{p^2}.
\end{equation}
\end{theorem}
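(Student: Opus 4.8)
The plan is to evaluate the two sides separately and show each is congruent to one common $p$-adic gamma value. For the left-hand side I would invoke Kummer's summation ${}_2F_1[\frac12,\frac12;1;-1]=\Gamma(\frac54)/(\Gamma(\frac32)\Gamma(\frac34))$, which is exactly the identity behind Corollary~\ref{2F1alphaalpham1}: since $p\equiv1\pmod4$ forces $\langle-\frac12\rangle_p=(p-1)/2$ to be even, that corollary with $\alpha=\frac12$ gives
\[
{}_2F_1\bigg[\begin{matrix}\tfrac12&\tfrac12\\&1\end{matrix}\bigg|\,-1\bigg]_{p-1}\equiv-\frac{2\Gamma_p(\frac54)}{\Gamma_p(\frac32)\Gamma_p(\frac34)}\pmod{p^2}.
\]
It therefore suffices to prove that $p^2\cdot{}_3F_2[1,1,1;\frac32,\frac32;-1]_{p-1}$ is congruent to the same quantity modulo $p^2$. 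Note the factor $p^2$ is forced: the ${}_3F_2$ summand equals $(-1)^k16^k(k!)^4/((2k+1)!)^2$, which is not $p$-integral once $2k+1\geq p$, so $p^2$ is precisely what makes the right side $p$-integral.

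The second step is a valuation reduction that collapses both sides to complementary half-sums. Writing $t_k=(-1)^k\binom{2k}{k}^2/16^k$ and $p^2s_k$ for the ${}_3F_2$ terms, and using $(2k+1)\binom{2k}{k}=(2k+1)!/(k!)^2$, one checks $\nu_p(t_k)=2$ for $(p+1)/2\leq k\leq p-1$ and $\nu_p(p^2s_k)=2$ for $0\leq k\leq(p-3)/2$, while all other terms are units times a controlled power. Hence, with $n=(p-1)/2$,
\[
{}_2F_1[\cdots]_{p-1}\equiv\sum_{k=0}^{n}t_k,\qquad p^2\,{}_3F_2[\cdots]_{p-1}\equiv\sum_{k=n}^{p-1}p^2s_k\pmod{p^2}.
\]
I would reindex the second sum by $k=n+i$ and extract the single factor $p$ from $(2k+1)!=(p+2i)!=p\,(p-1)!\prod_{j=1}^{2i-1}(p+j)$, which cancels the external $p^2$. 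Setting $P=n!$, $D_i=\prod_{j=1}^{i}(p+2j-1)$, $E_i=\prod_{j=1}^{i}(p+2j)$ and using $(p+2i)E_{i-1}=E_i$, the $i$-th term simplifies to $(-1)^{n+i}4^{p-1}P^4D_i^2/(((p-1)!)^2E_i^2)$.

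The third step reduces the whole $p^2$-congruence to a single congruence modulo $p$. Morley's congruence $\binom{p-1}{n}\equiv(-1)^n4^{p-1}\pmod{p^3}$, together with $\binom{p-1}{n}=(p-1)!/P^2$, identifies the $i$-independent scalar $(-1)^n4^{p-1}/\binom{p-1}{n}^2$ as $(4^{p-1})^{-1}\equiv1-pq\pmod{p^2}$, where $q=(4^{p-1}-1)/p$. The telescoping ratio gives $D_i/E_i\equiv(\binom{2i}{i}/4^i)(1+pO_i)\pmod{p^2}$ with $O_i=\sum_{j=1}^{i}(\frac1{2j-1}-\frac1{2j})$, hence $D_i^2/E_i^2\equiv(\binom{2i}{i}^2/16^i)(1+2pO_i)$. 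Since $\sum_{k=0}^n t_k=\sum_{i=0}^n(-1)^i\binom{2i}{i}^2/16^i$, substituting and keeping the $p$-linear part shows that the target congruence is equivalent to the purely modulo-$p$ identity
\[
2\sum_{i=0}^{n}(-1)^i\frac{\binom{2i}{i}^2}{16^i}\,O_i\equiv q\sum_{i=0}^{n}(-1)^i\frac{\binom{2i}{i}^2}{16^i}\pmod{p}.
\]

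This last identity is exactly what the paper's unified recipe produces, and it is where the "identity of Kummer" enters: differentiating Kummer's summation ${}_2F_1[a,a;1;-1]=\Gamma(1+\frac a2)/(\Gamma(1+a)\Gamma(1-\frac a2))$ in its parameters and evaluating near $a=\frac12$ introduces the weights $\sum_{j}(a+j)^{-1}$, which recombine into the odd/even harmonic combination $O_i$ on the left, while the $p$-adic logarithmic derivative of the $\Gamma$-quotient (via $\Gamma_p'/\Gamma_p$) produces the Fermat quotient $q$ on the right. The main obstacle is making this differentiation rigorous at the truncated level: one must start from the terminating instance of Kummer's identity at $a=-\langle-\alpha\rangle_p$, differentiate, and reduce modulo $p$, keeping careful track of the Fermat and Wilson quotients so that the $p$-linear corrections coming from Morley's congruence, from the telescoping factor $1+pO_i$, and from the $\Gamma_p$-logarithmic derivative all cancel. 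Establishing that harmonic-sum balance modulo $p$ is the crux; the valuation reduction and the algebraic rewriting above are routine by comparison.
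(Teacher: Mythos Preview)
Your plan is correct and lands on the same core mechanism as the paper --- reduce both sides to the half-sum $\sum_{k\le n}(-1)^k\binom{2k}{k}^2/16^k$ and handle the $p$-linear defect by differentiating Kummer's summation --- but the paper packages this more economically. First, invoking Corollary~\ref{2F1alphaalpham1} for the left side is unnecessary: since $(\tfrac12)_k^2\equiv0\pmod{p^2}$ for $k>n=(p-1)/2$, the left side is already $\equiv\Psi(0)\pmod{p^2}$ trivially, where $\Psi(x):=\sum_{k=0}^{n}\frac{(\frac12-x)_k^2}{(1-x)_k^2}(-1)^k$. Second, in place of your $D_i/E_i$ products and Morley's congruence, the paper observes the Pochhammer identity $\frac{(1)_k}{(3/2)_k}=\frac{(1)_{p-1}}{(3/2)_{p-1}}\cdot\frac{(\frac12-p)_{p-1-k}}{(1-p)_{p-1-k}}$, so after reindexing the right side is exactly $\frac{p^2(1)_{p-1}^2}{(3/2)_{p-1}^2}\,\Psi(p)$; the prefactor is evaluated directly via $\Gamma_p$ (Lemma~\ref{Gammapadicderivativealphabeta}) as $1-2pH_{n}\pmod{p^2}$. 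Third, your displayed harmonic identity is equivalent to the paper's $\Psi'(0)\equiv 2H_n\,\Psi(0)\pmod p$, which the paper proves by first shifting the numerator and denominator parameters by $\pm p/2$ (so Kummer's ${}_2F_1[\alpha,\beta;\alpha-\beta+1;-1]$ applies as an exact identity of rational functions of $x$), differentiating, and reducing the resulting gamma-quotient derivative to an elementary harmonic calculation using $p\equiv1\pmod4$. The endgame is then the one-line cancellation $(1-2pH_n)\Psi(p)\equiv(1-2pH_n)(1+2pH_n)\Psi(0)\equiv\Psi(0)\pmod{p^2}$. Your route works but carries extra baggage (the $\Gamma_p$-value target, Morley, the telescoping products) that the $\Psi(x)$ polynomial trick sidesteps entirely.
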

The remainder sections will be organized as follows. In Section \ref{sectiontwolemmas}, we shall firstly prove two key lemmas, which are frequently used throughout the whole paper. Then in Section \ref{sectionGauss}, we shall give a generalization of Theorem \ref{alphabeta11}, which is also a complete $p$-adic analogue of Gauss' identity (\ref{Gaussidentity}). 

From Sections \ref{sectionq2F1z4z1z} to Section \ref{sectionql2F1}, the proofs of those linear and quadratic $p$-adic transformations of ${}_2F_1$ series will be given successively. In particular, we just sketch the proof of Theorem \ref{alpha2alpha21beta4zz1} in Section \ref{sectionq2F1z4z1z2}, since its proof is very similar as Theorem \ref{alpha2alpha214zz1}. Furthermore, in Section \ref{section2F1cm}, we shall discuss the relation between some elliptic curves having complex multiplication and ${}_2F_1\bigg[\begin{matrix} \frac14&\frac14\\ &1\end{matrix}\bigg|\,z\bigg]_{p-1}$, ${}_2F_1\bigg[\begin{matrix} \frac14&\frac34\\ &1\end{matrix}\bigg|\,z\bigg]_{p-1}$ modulo $p^2$. 

In Section \ref{section3F24F3},  we shall prove the $p$-adic analogues of those ${}_3F_2$ and ${}_4F_3$ transformations. Of course, as we have mentioned, Theorem \ref{nalphabeta1alphabetan} easily follows of Theorems \ref{alphabetagammadeltaepsilon4F3} and
\ref{alphabetagammadeltaepsilon4F3B}, and Theorem \ref{alphaalphabetaalphabeta1} easily follows from Theorems \ref{alphaalpha12alphabetagamma5F4A} and \ref{alphaalpha12alphabetagamma5F4B}. So the proofs of those two Theorems would not be given.

In Sections \ref{section7F6I} and \ref{section7F6II}, we shall complete the proofs of those $p$-adic Whipple's ${}_7F_6$ transformation, and explain how to deduce Theorems \ref{alphabetagammadeltan7F6A} and \ref{alphabetagammadeltan7F6B} from Theorems \ref{alphabetagammadeltaepsilon7F6A} and \ref{alphabetagammadeltaepsilon7F6C}. However, Theorems \ref{alphaalpha12alphabetagamma5F4A}-\ref{alphabetagammadelta6F5B} factly can be proved in a very similar way as Theorems \ref{alphabetagammadeltaepsilon7F6A}-\ref{alphabetagammadeltaepsilon7F6C}. So we won't give their detailed proofs here. Finally, in the last section, Theorem \ref{F2112121F321113232}, i.e., the conjecture of Deines, Fuselier, Long, Swisher and Tu, will be proved.

\section{Two auxiliary lemmas}
\label{sectiontwolemmas}
\setcounter{lemma}{0}
\setcounter{theorem}{0}
\setcounter{corollary}{0}
\setcounter{remark}{0}
\setcounter{equation}{0}
\setcounter{conjecture}{0}

In this section, we introduce two auxiliary lemmas, which are the key ingredients of most proofs in this paper.
The first one is nearly trivial.
\begin{lemma}\label{taylorexpansionrational}
Suppose that $P(x),Q(x)\in\Z_p[x]$ are polynomials over $\Z_p$ and let $f(x)=P(x)/Q(x)$. 
Suppose that $\alpha\in\Z_p$ and $p\nmid Q(\alpha)$. Then for any $s\in\Z_p$, we have the $p$-adic expansion
$$
f(\alpha+sp)=f(\alpha)+f'(\alpha)\cdot sp+\frac{f''(\alpha)}{2!}\cdot  s^2p^2+\cdots+\frac{f^{(k)}(\alpha)}{k!}\cdot  s^kp^k+\cdots,
$$
where $f^{(k)}(x)$ denotes the $k$-th ordinary derivative of $f(x)$ as a rational function, not the $p$-adic derivative.
\end{lemma}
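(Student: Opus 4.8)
The proof is really a piece of convergence bookkeeping, so the plan is to split $f=P/Q$ into its polynomial numerator, which is entirely elementary, and the reciprocal $1/Q$, where the only content lies. For the numerator, writing $P(x)=\sum_n a_nx^n$ with $a_n\in\Z_p$, the purely algebraic Taylor expansion of a polynomial over the commutative ring $\Z_p$ gives the \emph{finite} identity $P(\alpha+t)=\sum_k \frac{P^{(k)}(\alpha)}{k!}t^k$, in which $\frac{P^{(k)}(\alpha)}{k!}=\sum_{n\ge k}a_n\binom{n}{k}\alpha^{n-k}\in\Z_p$. Being a polynomial identity in $t$, it holds verbatim at $t=sp$, with no convergence to check and with coefficients already seen to be $p$-adic integers.

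The heart of the matter is $1/Q$. Since $p\nmid Q(\alpha)$, the value $Q(\alpha)$ is a unit of $\Z_p$, so the finite expansion of $Q$ can be written as $Q(\alpha+t)=Q(\alpha)\bigl(1-r(t)\bigr)$, where $r(t)\in t\,\Z_p[t]$ has zero constant term and coefficients in $\Z_p$ (here one uses $Q(\alpha)^{-1}\in\Z_p$). For every $s\in\Z_p$ this forces $r(sp)\in p\Z_p$, hence $|r(sp)|_p\le p^{-1}<1$, and therefore
\[
\frac{1}{Q(\alpha+sp)}=\frac{1}{Q(\alpha)}\sum_{m\ge0}r(sp)^m
\]
converges in $\Z_p$ (note $Q(\alpha+sp)\equiv Q(\alpha)\not\equiv 0\pmod p$ is itself a unit). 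Expanding each $r(sp)^m$ and collecting by powers of $p$---legitimate because the ultrametric makes the double series absolutely convergent---exhibits $1/Q(\alpha+sp)$ as a convergent series $\sum_k b_k(sp)^k$ with $b_k\in\Z_p$.

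It remains to identify the resulting coefficients with the claimed derivatives, and the cleanest route is to pass through the formal ring $\Z_p[[t]]$. There $Q(\alpha+t)$ is invertible, its constant term $Q(\alpha)$ being a unit, so $G(t):=Q(\alpha+t)^{-1}\in\Z_p[[t]]$ and $F(t):=P(\alpha+t)G(t)\in\Z_p[[t]]$ is the unique power series solving $Q(\alpha+t)F(t)=P(\alpha+t)$; comparing this functional equation with the standard formal Taylor expansion of the rational function $f$ over the characteristic-zero field $\Q_p$ forces the coefficient of $t^k$ in $F$ to be exactly $\frac{f^{(k)}(\alpha)}{k!}$, which by the previous steps lies in $\Z_p$. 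Now substitution $t\mapsto sp$ is a continuous $\Z_p$-algebra homomorphism $\Z_p[[t]]\to\Z_p$, since every series with $\Z_p$-coefficients converges at the point $sp\in p\Z_p$; applying it to $Q(\alpha+t)G(t)=1$ yields $Q(\alpha+sp)G(sp)=1$, whence $G(sp)=1/Q(\alpha+sp)$ and $F(sp)=P(\alpha+sp)/Q(\alpha+sp)=f(\alpha+sp)$. Since $F(sp)=\sum_k\frac{f^{(k)}(\alpha)}{k!}(sp)^k$, the lemma follows.

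I do not expect a genuine obstacle; the one point demanding care is the passage from the \emph{formal} identity in $\Z_p[[t]]$ to a \emph{convergent numerical} one under $t\mapsto sp$. This transfer is guaranteed the instant $Q(\alpha)$ is a unit and $sp\in p\Z_p$, because then $|r(sp)|_p<1$ makes both the geometric series and the evaluation homomorphism well defined.
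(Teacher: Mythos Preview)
Your proof is correct. The paper itself gives no proof at all, introducing the lemma only with the remark ``The first one is nearly trivial'' and then moving immediately to the next lemma; so there is nothing to compare against beyond that. Your argument---reducing to the formal Taylor series of $f(\alpha+t)$ in $\Z_p[[t]]$ (well-defined because $Q(\alpha)\in\Z_p^\times$) and then invoking the continuous evaluation homomorphism $t\mapsto sp$ from $\Z_p[[t]]$ to $\Z_p$---is the standard and expected verification, and the one point you single out for care (that evaluation at $sp\in p\Z_p$ is a genuine ring homomorphism carrying the formal identity to a numerical one) is exactly the place where something could go wrong and you handle it correctly.
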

Let us turn the derivative of the $p$-adic gamma function. For a continuous function $f:\,\Z_p\to\Z_p$, define the $p$-adic derivative
$$
f'(\alpha):=\lim_{|x-\alpha|_p\to 0}\frac{f(x)-f(\alpha)}{x-\alpha}.
$$
Clearly if the above limit exists, then
$$
f'(\alpha)=\lim_{n\to\infty}\frac{f(\alpha+p^n)-f(\alpha)}{p^n}.
$$
Let $\Gamma_p'(x)$ denote the $p$-adic derivative of $\Gamma_p(x)$. The following lemma establishes a connection between the $p$-adic derivative of $\Gamma_p(x)$ and the common derivative of $\Gamma(x)$. 
\begin{lemma}\label{Gammapadicderivativealphabeta}
Let $s_1,\ldots,s_{m},t_1,\ldots,t_n$ be some $p$-integral rational numbers with $s_1+\cdots+s_m=t_1+\cdots+t_n$. 
Then for any $\alpha_1,\ldots,\alpha_m,\beta_1,\ldots,\beta_n\in\Z_p$,
\begin{align}
&\frac{\Gamma_p(\alpha_1+s_1p)\cdots\Gamma_p(\alpha_m+s_mp)}{\Gamma_p(\beta_1+t_1p)\cdots\Gamma_p(\beta_n+t_np)}-\frac{\Gamma_p(\alpha_1)\cdots\Gamma_p(\alpha_m)}{\Gamma_p(\beta_1)\cdots\Gamma_p(\beta_n)}\notag\\
\equiv&(-1)^{\delta}p\cdot\frac{d}{d x}\bigg(\frac{\Gamma(a_1+s_1x)\cdots\Gamma(a_m+s_mx)}{\Gamma(b_1+t_1x)\cdots\Gamma(b_n+t_nx)}\bigg)\bigg|_{x=0}\pmod{p^2},
\end{align}
where $a_i=p-\langle-\alpha_i\rangle_p$, $b_j=p-\langle-\beta_j\rangle_p$ and $\delta=a_1+\cdots+a_m-b_1-\cdots-b_n$.
\end{lemma}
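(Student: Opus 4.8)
The plan is to derive both sides of the claimed congruence from a single first-order expansion, carried out once $p$-adically and once classically, and then to match the two leading constants modulo $p$. Throughout write $\psi_p:=\Gamma_p'/\Gamma_p$ for the $p$-adic digamma function, $\psi:=\Gamma'/\Gamma$ for the ordinary one, $a_i=p-\langle-\alpha_i\rangle_p$ and $b_j=p-\langle-\beta_j\rangle_p$, so that $a_i,b_j\in\{1,\dots,p\}$ and $\alpha_i\equiv a_i$, $\beta_j\equiv b_j\pmod p$. First I would record the first-order $p$-adic Taylor expansion of $\Gamma_p$ on a residue disc. Since $\Gamma_p$ is locally analytic with $p$-integral Taylor coefficients and maps $\Z_p$ into $\Z_p^\times$ (see \cite{Robert00,Murty02}), for odd $p$ the terms of order $\ge 2$ in $sp$ contribute $p$-adic valuation $\ge 2$, so for every $\alpha\in\Z_p$ and $s\in\Z_p$,
\[
\Gamma_p(\alpha+sp)\equiv\Gamma_p(\alpha)\bigl(1+sp\,\psi_p(\alpha)\bigr)\pmod{p^2}.
\]

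Next I would multiply these expansions over the numerator indices and divide over the denominator indices. Every cross term carries a factor $p^2$, so the product collapses to
\[
\frac{\prod_i\Gamma_p(\alpha_i+s_ip)}{\prod_j\Gamma_p(\beta_j+t_jp)}\equiv\frac{\prod_i\Gamma_p(\alpha_i)}{\prod_j\Gamma_p(\beta_j)}\Bigl(1+p\Bigl(\sum_i s_i\psi_p(\alpha_i)-\sum_j t_j\psi_p(\beta_j)\Bigr)\Bigr)\pmod{p^2},
\]
whence the left-hand difference equals $\frac{\prod_i\Gamma_p(\alpha_i)}{\prod_j\Gamma_p(\beta_j)}\cdot p\cdot D_p\pmod{p^2}$, where $D_p:=\sum_i s_i\psi_p(\alpha_i)-\sum_j t_j\psi_p(\beta_j)$.

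The heart of the matter is to evaluate $D_p$ modulo $p$ and to recognize it as a genuine rational harmonic sum. Differentiating the functional equation (\ref{Gammapx1Gammapx}) (in the form $\Gamma_p(x+1)=-x\Gamma_p(x)$ for $|x|_p=1$) yields $\psi_p(x+1)=\psi_p(x)+1/x$ whenever $p\nmid x$; telescoping for an integer $1\le a\le p$ gives $\psi_p(a)=\psi_p(1)+H_{a-1}$ with $H_{a-1}=\sum_{k=1}^{a-1}1/k\in\Z_p$. By continuity of $\psi_p$ together with $\alpha_i\equiv a_i\pmod p$ we obtain $\psi_p(\alpha_i)\equiv\psi_p(1)+H_{a_i-1}\pmod p$, and similarly for the $\beta_j$. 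Here the hypothesis $\sum_i s_i=\sum_j t_j$ is decisive: it forces the unknown constant $\psi_p(1)$ (the $p$-adic Euler constant) to cancel, so $D_p\equiv\sum_i s_iH_{a_i-1}-\sum_j t_jH_{b_j-1}\pmod p$. On the classical side,
\[
\frac{d}{dx}\biggl(\frac{\prod_i\Gamma(a_i+s_ix)}{\prod_j\Gamma(b_j+t_jx)}\biggr)\bigg|_{x=0}=\frac{\prod_i\Gamma(a_i)}{\prod_j\Gamma(b_j)}\Bigl(\sum_i s_i\psi(a_i)-\sum_j t_j\psi(b_j)\Bigr),
\]
and since $\psi(a)=-\gamma+H_{a-1}$ the same balancing hypothesis cancels Euler's constant $\gamma$, so the bracket equals exactly the same rational number $\sum_i s_iH_{a_i-1}-\sum_j t_jH_{b_j-1}$.

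Finally I would match the two leading constants. Because $a_i\le p$, all factors below $a_i$ are prime to $p$, so $\Gamma_p(a_i)=(-1)^{a_i}(a_i-1)!=(-1)^{a_i}\Gamma(a_i)$, and continuity gives $\Gamma_p(\alpha_i)\equiv\Gamma_p(a_i)\pmod p$; hence $\frac{\prod_i\Gamma_p(\alpha_i)}{\prod_j\Gamma_p(\beta_j)}\equiv(-1)^{\delta}\frac{\prod_i\Gamma(a_i)}{\prod_j\Gamma(b_j)}\pmod p$ with $\delta=\sum_i a_i-\sum_j b_j$. Since the entire quantity is $p$ times a $p$-integral number, this mod-$p$ agreement of the unit factors upgrades to the asserted congruence modulo $p^2$, and the two sides coincide. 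I expect the main obstacle to be the justification of the first step — the first-order expansion with an honest $O(p^2)$ remainder, i.e. the $p$-integrality of the Taylor coefficients of $\Gamma_p$ — together with the bookkeeping that makes the cancellation of both $\psi_p(1)$ and $\gamma$ hinge precisely on the balancing condition $s_1+\cdots+s_m=t_1+\cdots+t_n$.
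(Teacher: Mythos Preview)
Your proposal is correct and follows essentially the same approach as the paper's proof: a first-order $p$-adic expansion of each $\Gamma_p$-factor, reduction of $\psi_p(\alpha_i)$ modulo $p$ to a constant plus the harmonic number $H_{a_i-1}$, cancellation of that constant (your $\psi_p(1)$, the paper's $\Gamma_p'(0)$; these coincide since $\Gamma_p(x+1)=-\Gamma_p(x)$ near $x=0$) via the balancing hypothesis $\sum s_i=\sum t_j$, the parallel cancellation of $\gamma$ on the classical side, and finally the sign-matching $\Gamma_p(a)=(-1)^a\Gamma(a)$ for $1\le a\le p$. The paper simply quotes the congruence $\psi_p(\alpha)\equiv\Gamma_p'(0)+H_{a-1}\pmod p$ as known, whereas you rederive it from the functional equation---but the argument is otherwise the same.
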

\begin{proof} 
We know that
\begin{equation}\label{padicgammaderivative}
\frac{\Gamma_p'(\alpha)}{\Gamma_p(\alpha)}\equiv \Gamma_p'(0)+H_{p-\langle-\alpha\rangle_p-1}\pmod{p}
\end{equation}
for each $\alpha\in\Z_p$, where the $n$-th harmonic number
$$
H_n:=\sum_{k=1}^n\frac1k.
$$
On the other hand, we have (cf. \cite[Theorem 1.2.5]{AAR99})
\begin{equation}\label{Gammaderivative}
\frac{\Gamma'(x)}{\Gamma(x)}=-\frac1x-\gamma_0+\sum_{n=1}^{\infty}\bigg(\frac1n-\frac1{n+x}\bigg),
\end{equation}
where $\gamma_0$ is the Euler constant. So comparing (\ref{padicgammaderivative}) with (\ref{Gammaderivative}), we obtain that
\begin{equation}
\label{padicgammaderivativecommongammaderivative}
\frac{\Gamma_p'(\alpha)}{\Gamma_p(\alpha)}-\Gamma_p'(0)\equiv 
\frac{\Gamma'(a)}{\Gamma(a)}+\gamma_0\pmod{p}
\end{equation}
where $a=p-\langle-\alpha\rangle_p$.

Clearly 
\begin{align*}
&\frac{\Gamma_p(\alpha_1+s_1p)\cdots\Gamma_p(\alpha_m+s_mp)}{\Gamma_p(\beta_1+t_1p)\cdots\Gamma_p(\beta_n+t_np)}\\
\equiv&\frac{(\Gamma_p(\alpha_1)+\Gamma_p'(\alpha_1)\cdot s_1p)\cdots(\Gamma_p(\alpha_m)+\Gamma_p'(\alpha_m)\cdot s_mp)}{(\Gamma_p(\beta_1)+\Gamma_p'(\beta_1)\cdot t_1p)\cdots(\Gamma_p(\beta_n)+\Gamma_p'(\beta_n)\cdot t_np)}\\
\equiv&\frac{\Gamma_p(\alpha_1)\cdots\Gamma_p(\alpha_m)}{\Gamma_p(\beta_1)\cdots\Gamma_p(\beta_n)}\bigg(1+p\sum_{i=1}^ms_i\cdot \frac{\Gamma_p'(\alpha_i)}{\Gamma_p(\alpha_i)}-
p\sum_{j=1}^nt_j\cdot \frac{\Gamma_p'(\beta_j)}{\Gamma_p(\beta_j)}\bigg)\pmod{p^2}.
\end{align*}
Since $s_1+\cdots+s_m=t_1+\cdots+t_n$, in view of (\ref{padicgammaderivativecommongammaderivative}), we have
\begin{align*}
\sum_{i=1}^ms_i\cdot\frac{\Gamma_p'(\alpha_i)}{\Gamma_p(\alpha_i)}-
\sum_{j=1}^nt_j\cdot\frac{\Gamma_p'(\beta_j)}{\Gamma_p(\beta_j)}
=&\sum_{i=1}^ms_i\bigg(\frac{\Gamma_p'(\alpha_i)}{\Gamma_p(\alpha_i)}-\Gamma_p'(0)\bigg)-
\sum_{j=1}^nt_j\bigg(\frac{\Gamma_p'(\beta_j)}{\Gamma_p(\beta_j)}-\Gamma_p'(0)\bigg)\\
\equiv&\sum_{i=1}^ms_i\bigg(\frac{\Gamma'(a_i)}{\Gamma(a_i)}+\gamma_0\bigg)-
\sum_{j=1}^nt_j\bigg(\frac{\Gamma'(b_j)}{\Gamma(b_j)}+\gamma_0\bigg)\\
=&\sum_{i=1}^ms_i\cdot\frac{\Gamma'(a_i)}{\Gamma(a_i)}-
\sum_{j=1}^nt_j\cdot\frac{\Gamma'(b_j)}{\Gamma(b_j)}\pmod{p}.
\end{align*}
So
\begin{align*}
&\frac{\Gamma_p(\alpha_1+s_1p)\cdots\Gamma_p(\alpha_m+s_mp)}{\Gamma_p(\beta_1+t_1p)\cdots\Gamma_p(\beta_n+t_np)}-
\frac{\Gamma_p(\alpha_1)\cdots\Gamma_p(\alpha_m)}{\Gamma_p(\beta_1)\cdots\Gamma_p(\beta_n)}\\
\equiv&p\cdot\frac{\Gamma_p(a_1)\cdots\Gamma_p(a_m)}{\Gamma_p(b_1)\cdots\Gamma_p(b_n)}\bigg(\sum_{i=1}^m s_i\cdot\frac{\Gamma'(a_i)}{\Gamma(a_i)}-
\sum_{j=1}^nt_j\cdot\frac{\Gamma'(b_j)}{\Gamma(b_j)}\bigg)\\
=&(-1)^{\delta}p\cdot\frac{d}{d x}\bigg(\frac{\Gamma(a_1+s_1x)\cdots\Gamma(a_m+s_mx)}{\Gamma(b_1+t_1x)\cdots\Gamma(b_n+t_nx)}\bigg)\bigg|_{x=0}\pmod{p^2}.
\end{align*}
\end{proof}

\section{The $p$-adic analogue of Gauss' identity}
\label{sectionGauss}
\setcounter{lemma}{0}
\setcounter{theorem}{0}
\setcounter{corollary}{0}
\setcounter{remark}{0}
\setcounter{equation}{0}
\setcounter{conjecture}{0}

In this section, we shall give the detailed proof of Theorem \ref{alphabeta11}. In fact, we have the following $p$-adic analogue of Gauss' identity (\ref{Gaussidentity}).
\begin{theorem}\label{alphabetagamma11}
Let $\alpha,\beta\in\Z_p$ and $\gamma\in\Z_p^\times$ with $\langle-\alpha\rangle_p,\langle-\beta\rangle_p\leq\langle-\gamma\rangle_p$.

\medskip\noindent
(i) If $\langle-\alpha\rangle_p+\langle-\beta\rangle_p\leq\langle-\gamma\rangle_p$, then
\begin{equation}\label{alphabetagamma11c1}
{}_2F_1\bigg[\begin{matrix}
\alpha&\beta\\
&\gamma
\end{matrix}\bigg|\,1\bigg]_{\langle-\gamma\rangle_p}\equiv\frac{\Gamma_p(\gamma)\Gamma_p(\gamma-\alpha-\beta)}{\Gamma_p(\gamma-\alpha)\Gamma_p(\gamma-\beta)}\pmod{p^2}.
\end{equation}

\medskip\noindent
(ii) If $\langle-\alpha\rangle_p+\langle-\beta\rangle_p>\langle-\gamma\rangle_p$, then
\begin{equation}\label{alphabetagamma11c2}
{}_2F_1\bigg[\begin{matrix}
\alpha&\beta\\
&\gamma
\end{matrix}\bigg|\,1\bigg]_{\langle-\gamma\rangle_p}\equiv (\gamma+\langle-\gamma\rangle_p-\alpha-\langle-\alpha\rangle_p-\beta-\langle-\beta\rangle_p)\cdot \frac{\Gamma_p(\gamma)\Gamma_p(\gamma-\alpha-\beta)}{\Gamma_p(\gamma-\alpha)\Gamma_p(\gamma-\beta)}\pmod{p^2}.
\end{equation}
\end{theorem}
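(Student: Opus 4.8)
The plan is to fix $\beta$ and $\gamma$ and reduce the whole statement to the single value $\alpha=-a$, where $a:=\langle -\alpha\rangle_p$; throughout set $b:=\langle -\beta\rangle_p$, $c:=\langle -\gamma\rangle_p$, and write $L(\alpha)$ for the truncated sum and $R(\alpha):=\Gamma_p(\gamma)\Gamma_p(\gamma-\alpha-\beta)/(\Gamma_p(\gamma-\alpha)\Gamma_p(\gamma-\beta))$. The hypotheses $a,b\le c\le p-1$ make $(\gamma)_k$ and $k!$ $p$-adic units for every $0\le k\le c$. First I would treat the base value $\alpha=-a$: since $(-a)_k=0$ for $k>a$ and $a\le c$, the truncation captures every nonzero term, so $L(-a)$ is a genuinely terminating series, and the terminating Gauss (Chu--Vandermonde) identity, the case $z=1$ of \eqref{nbetagammaz1z}, gives $L(-a)=(\gamma-\beta)_a/(\gamma)_a$ exactly. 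Next I would evaluate $R(-a)$ by telescoping its $\Gamma_p$-ratios through \eqref{Gammapx1Gammapx}: because $\gamma+j\equiv j-c\pmod p$ never vanishes for $0\le j\le a-1$ one gets $\Gamma_p(\gamma+a)/\Gamma_p(\gamma)=(-1)^a(\gamma)_a$, and $\Gamma_p(\gamma+a-\beta)/\Gamma_p(\gamma-\beta)=(-1)^a(\gamma-\beta)_a$ \emph{unless} the index $j=c-b$ lies in the range $0\le j\le a-1$, which happens exactly when $a+b>c$. Thus in case (i) one finds $L(-a)=R(-a)$ exactly, while in case (ii) the extra factor $-1$ contributed by \eqref{Gammapx1Gammapx} yields $L(-a)=M(-a)\,R(-a)$ exactly, where $M(\alpha):=\gamma+c-\alpha-a-\beta-b$ is the correction factor of \eqref{alphabetagamma11c2} and $M(-a)=\gamma-\beta+c-b\equiv0\pmod p$.

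To pass to general $\alpha\equiv-a\pmod p$ I would write $\alpha=-a+sp$. Since $L(\alpha)$ is, term by term, a polynomial in $\alpha$ over $\Z_p$ with unit denominators, Lemma \ref{taylorexpansionrational} gives $L(\alpha)\equiv L(-a)+sp\,\partial_\alpha L(-a)\pmod{p^2}$, while Lemma \ref{Gammapadicderivativealphabeta} converts the first-order variation of $R$ into an ordinary derivative of a quotient of classical $\Gamma$'s. In case (i), subtracting the exact identity $L(-a)=R(-a)$ leaves $L(\alpha)-L(-a)\equiv R(\alpha)-R(-a)\pmod{p^2}$; as both sides are divisible by $p$, this is a single congruence modulo $p$ between $\partial_\alpha L(-a)$ and the first-order term of $R$ supplied by Lemma \ref{Gammapadicderivativealphabeta}. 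The clean point is that $(\beta)_k\equiv(-b)_k\pmod p$ vanishes for $k>b$, so in $\partial_\alpha L(-a)=\sum_{k=0}^{c}\frac{(\beta)_k}{(\gamma)_k k!}\,[\partial_\alpha(\alpha)_k]_{-a}$ every term with $k>b$ drops modulo $p$; the surviving sum is precisely $\frac{d}{d\alpha}\,{}_2F_1[\alpha,-b;\gamma;1]|_{-a}$, which, since $-b$ already terminates the series, I may compute from the closed form $(\gamma-\alpha)_b/(\gamma)_b$. On the other side Lemma \ref{Gammapadicderivativealphabeta} renders the $\Gamma_p$-contribution as the $\alpha$-derivative of the same classical Gauss quotient \eqref{Gaussidentity}, and the two agree modulo $p$ term by term, which would settle case (i).

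The main obstacle is case (ii). There the least non-negative residue of $\gamma-\alpha-\beta$ at $\alpha=-a$ wraps around, its value jumping by $p$, so the integer arguments $p-\langle -\cdot\rangle_p$ fed into Lemma \ref{Gammapadicderivativealphabeta} shift and the parity sign flips; at the same time $M(-a)$ is divisible by $p$, so in the product $M(\alpha)R(\alpha)=(M(-a)-sp)R(\alpha)$ the first-order variation of $R$ is annihilated and only $-sp\,R(-a)$ survives modulo $p^2$. Hence, after cancelling the exact base identity $L(-a)=M(-a)R(-a)$, the entire case collapses to proving $\partial_\alpha L(-a)\equiv-R(-a)\pmod p$. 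I expect the real work to be the bookkeeping of this one $p$-divisible factor: one must show that the extra $-1$ produced by \eqref{Gammapx1Gammapx} at the index $j=c-b$, the cancellation of the vanishing factor $\gamma-\beta+c-b$ inside $(\gamma-\beta)_a$, and the derivative of the terminating Chu--Vandermonde sum combine to give exactly $-R(-a)$ modulo $p$. Once this identity is in place the two cases become uniform, and Theorem \ref{alphabeta11} is recovered as the specialization $\gamma=1$, for which $c=p-1$, the truncation index is $p-1$, and the overall sign in \eqref{alphabeta11c1} comes from $\Gamma_p(1)=-1$.
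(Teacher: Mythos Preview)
Your proposal is correct and follows essentially the same route as the paper's own proof. You fix $\alpha=-a$ and invoke the terminating Gauss/Chu--Vandermonde identity for the base case, Taylor-expand via Lemma~\ref{taylorexpansionrational}, and match first-order terms using Lemma~\ref{Gammapadicderivativealphabeta}; in case~(ii) you isolate the single $p$-divisible factor $M(-a)=\gamma-\beta+c-b$ and observe that it kills the first-order variation of $R$, reducing everything to $\partial_\alpha L(-a)\equiv-R(-a)\pmod p$. This is exactly the paper's strategy. The only cosmetic difference is that in case~(ii) the paper shifts the first slot to $p-a+x$ so as to write the derivative as $\frac{d}{dx}\big[(a-c-x)_b\big]\big|_{x=0}$ via classical $\Gamma$'s, whereas you differentiate the $-b$-terminated closed form $(\gamma-\alpha)_b/(\gamma)_b$; both computations pick out the single term where the vanishing factor is omitted and give the same answer.
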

\begin{proof}
Let $a=\langle-\alpha\rangle_p$, $b=\langle-\beta\rangle_p$ and $c=\langle-\gamma\rangle_p$.
Since $a\leq c$, by (\ref{Gaussidentity}), we have
\begin{align*}
{}_2F_1\bigg[\begin{matrix}
-a&\beta\\
&\gamma
\end{matrix}\bigg|\,1\bigg]_{c}={}_2F_1\bigg[\begin{matrix}
-a&\beta\\
&\gamma
\end{matrix}\bigg|\,1\bigg]=\frac{\Gamma(\gamma)\Gamma(\gamma+a-\beta)}{\Gamma(\gamma+a)\Gamma(\gamma-\beta)}.
\end{align*}

First, assume that $a+b\leq c$. Since $\gamma\not\equiv -j\pmod{p}$ for any $0\leq j<a$, in view of (\ref{Gammapx1Gammapx}), we have
$$
\frac{\Gamma(\gamma)}{\Gamma(\gamma+a)}=\prod_{j=0}^{a-1}\frac1{\gamma+j}=(-1)^a\cdot\frac{\Gamma_p(\gamma)}{\Gamma_p(\gamma+a)}.
$$
Similarly,
$$
\frac{\Gamma(\gamma+a-\beta)}{\Gamma(\gamma-\beta)}=(-1)^a\cdot\frac{\Gamma_p(\gamma+a-\beta)}{\Gamma_p(\gamma-\beta)}.
$$
Thus
$$
{}_2F_1\bigg[\begin{matrix}
-a&\beta\\
&\gamma
\end{matrix}\bigg|\,1\bigg]_{c}=
\frac{\Gamma_p(\gamma)\Gamma_p(\gamma+a-\beta)}{\Gamma_p(\gamma+a)\Gamma_p(\gamma-\beta)}.
$$

Let
$$
\Psi(x)={}_2F_1\bigg[\begin{matrix}
-a+x&\beta\\
&\gamma
\end{matrix}\bigg|\,1\bigg]_{c},\qquad 
\psi(x)={}_2F_1\bigg[\begin{matrix}
-a+x&-b\\
&p-c
\end{matrix}\bigg|\,1\bigg].
$$
Clearly $\psi(x)$ is also a polynomial in $x$. And we also have 
$\Psi'(0)\equiv \psi'(0)\pmod{p}$, since $(\beta)_k\equiv(-b)_k\pmod{p}$ and 
$(\gamma)_k\equiv(p-c)_k\pmod{p}$ for each $0\leq k\leq b$.
 Let $s=(\alpha+a)/p$. By Lemma \ref{taylorexpansionrational},
\begin{align*}
&{}_2F_1\bigg[\begin{matrix}
\alpha&\beta\\
&\gamma
\end{matrix}\bigg|\,1\bigg]_{c}-{}_2F_1\bigg[\begin{matrix}
-2&\beta\\
&\gamma
\end{matrix}\bigg|\,1\bigg]_{c}=\Psi(sp)-\Psi(0)\equiv
\Psi'(0)\cdot sp\equiv\psi'(0)\cdot sp\pmod{p^2}.
\end{align*}
On the other hand, using (\ref{Gaussidentity}) and Lemma \ref{Gammapadicderivativealphabeta}, we also get
\begin{align*}
sp\cdot \psi'(0)=&p\cdot \frac{d}{dx}\bigg({}_2F_1\bigg[\begin{matrix}
-a+sx&-b\\
&p-c
\end{matrix}\bigg|\,1\bigg]\bigg)\bigg|_{x=0}\\
=&\frac{\Gamma(p-c)}{\Gamma(p-c+b)}\cdot
p\cdot \frac{d}{dx}\bigg(\frac{\Gamma(p-c+a+b-sx)}{\Gamma(p-c+a-sx)}\bigg)\bigg|_{x=0}\\
\equiv&\frac{\Gamma_p(\gamma)}{\Gamma_p(\gamma-\beta)}\bigg(\frac{\Gamma_p(\gamma-\beta+a-sp)}{\Gamma_p(\gamma+a-sp)}-\frac{\Gamma_p(\gamma-\beta+a)}{\Gamma_p(\gamma+a)}\bigg)\pmod{p^2}.
\end{align*}
It follows that
\begin{align*}
&{}_2F_1\bigg[\begin{matrix}
\alpha&\beta\\
&\gamma
\end{matrix}\bigg|\,1\bigg]_{c}-
\frac{\Gamma_p(\gamma)}{\Gamma_p(\gamma-\beta)}\cdot\frac{\Gamma_p(\gamma-\beta-\alpha)}{\Gamma_p(\gamma-\alpha)}\\
\equiv&{}_2F_1\bigg[\begin{matrix}
-a&\beta\\
&\gamma
\end{matrix}\bigg|\,1\bigg]_{c}-\frac{\Gamma_p(\gamma)}{\Gamma_p(\gamma-\beta)}\cdot\frac{\Gamma_p(\gamma-\beta+a)}{\Gamma_p(\gamma+a)}=0\pmod{p^2}.
\end{align*}
Then (\ref{alphabetagamma11c1}) is concluded.

Next, assume that $a+b\geq c+1$. Since $p$ divides $\gamma-\beta+c-b$, now we have
\begin{align*}
{}_2F_1\bigg[\begin{matrix}
-a&\beta\\
&\gamma
\end{matrix}\bigg|\,1\bigg]_{c}=\frac{\Gamma(\gamma)\Gamma(\gamma+a-\beta)}{\Gamma(\gamma+a)\Gamma(\gamma-\beta)}=&
(\gamma-\beta+c-b)\cdot\frac{\Gamma_p(\gamma)\Gamma_p(\gamma+a-\beta)}{\Gamma_p(\gamma+a)\Gamma_p(\gamma-\beta)}\\
\equiv&
(\gamma-\beta+c-b)\cdot\frac{\Gamma_p(\gamma)\Gamma_p(\gamma-\alpha-\beta)}{\Gamma_p(\gamma-\alpha)\Gamma_p(\gamma-\beta)}\pmod{p^2}.
\end{align*}
Moreover,
\begin{align*}
\frac{d}{dx}\bigg({}_2F_1\bigg[\begin{matrix}
-a+x&\beta\\
&\gamma
\end{matrix}\bigg|\,1\bigg]_c\bigg)\bigg|_{x=0}\equiv&
\frac{d}{dx}\bigg({}_2F_1\bigg[\begin{matrix}
p-a+x&-b\\
&p-c
\end{matrix}\bigg|\,1\bigg]_c\bigg)\bigg|_{x=0}\\
=&
\frac{\Gamma(p-c)}{\Gamma(p-c+b)}\cdot \frac{d}{dx}\bigg(\frac{\Gamma(a+b-c-x)}{\Gamma(a-c-x)}\bigg)\bigg|_{x=0}\pmod{p^2}.
\end{align*}
Note that
\begin{align*}
\frac{d}{dx}\bigg(\frac{\Gamma(a+b-c-x)}{\Gamma(a-c-x)}\bigg)\bigg|_{x=0}=&
\frac{d\big((a-c-x)_b\big)}{dx}\bigg|_{x=0}\\
=&-\prod_{\substack{0\leq j<b\\ j\neq c-a}}(a-c+j)
=(-1)^{b-1}\cdot\frac{\Gamma_p(a+b-c)}{\Gamma_p(a-c)}.
\end{align*}
It follows from Lemma \ref{taylorexpansionrational} that
\begin{align*}
{}_2F_1\bigg[\begin{matrix}
\alpha&\beta\\
&\gamma
\end{matrix}\bigg|\,1\bigg]_{c}-{}_2F_1\bigg[\begin{matrix}
-a&\beta\\
&\gamma
\end{matrix}\bigg|\,1\bigg]_c\equiv&
-sp\cdot\frac{\Gamma_p(p-c)\Gamma_p(a+b-c)}{\Gamma_p(p-c+b)\Gamma_p(a-c)}\\
\equiv&
-sp\cdot\frac{\Gamma_p(\gamma)\Gamma_p(\gamma-\alpha-\beta)}{\Gamma_p(\gamma-\beta)\Gamma_p(\gamma-\alpha)}\pmod{p^2}.
\end{align*}
Thus (\ref{alphabetagamma11c2}) is also concluded since $sp=\alpha+a$.
\end{proof}
We mention that by using the similar discussions, in the other theorems, we also may replace ${}_{m+1}F_m\Big[\begin{matrix}
*&*&*&\ldots&*\\
&1&*&\ldots&*
\end{matrix}\Big|\,z\Big]_{p-1}$  by ${}_{m+1}F_m\Big[\begin{matrix}
*&*&*&\ldots&*\\
&\gamma&*&\ldots&*
\end{matrix}\Big|\,z\Big]_{\langle-\gamma\rangle_p}$. For example, assume that

\medskip\noindent
(i) $\langle-\alpha\rangle_p,\langle-\gamma\rangle_p$ are even and $\langle-\alpha\rangle_p+\langle-\gamma\rangle_p<p$;

\medskip\noindent
(ii) $\langle\beta\rangle<p/2$ and $(2\beta)_{p-1}\not\equiv0\pmod{p^2}$.

\medskip\noindent
Then we have a complete $q$-analogue of Watson's identity (\ref{Watsonidentity}):
\begin{align}\label{padicWatsonidentity}
&{}_3F_2\bigg[\begin{matrix}
\alpha&\beta&\gamma\\
&2\beta&\frac12(\alpha+\gamma+1)
\end{matrix}\bigg|\,1\bigg]_{\frac12(p-1+\langle-\alpha\rangle_p+\langle-\gamma\rangle_p)}\notag\\
\equiv&\frac{\Gamma_p(\frac12)\Gamma_p(\frac12+\beta)\Gamma_p(\frac12+\frac12(\alpha+\gamma))\Gamma_p(\frac12+\beta-\frac12(\alpha+\gamma))}{\Gamma_p(\frac12+\frac12\alpha)\Gamma_p(\frac12+\frac12\gamma)\Gamma_p(\frac12+\beta-\frac12\alpha)\Gamma_p(\frac12+\beta-\frac12\gamma)}\pmod{p^2}.
\end{align}

Furthermore, we have the following $p$-adic analogue of (\ref{nbetagammaz1z}), which also generalizes both (\ref{SunZHalphaz}) and (\ref{alphabetagamma11c1}).
\begin{theorem}\label{alphabetagammaz1z}
Let $\alpha\in\Z_p$ and $\beta,\gamma\in\Z_p^\times$ with $\langle-\alpha\rangle_p+\langle-\beta\rangle_p\leq\langle-\gamma\rangle_p$.
Then
\begin{align}
{}_2F_1\bigg[\begin{matrix}
\alpha&\beta\\
&\gamma
\end{matrix}\bigg|\,z\bigg]_{\langle-\gamma\rangle_p}
\equiv
\frac{\Gamma_p(\gamma)\Gamma_p(\gamma-\alpha-\beta)}{\Gamma_p(\gamma-\alpha)\Gamma_p(\gamma-\beta)}\cdot{}_2F_1\bigg[\begin{matrix}
\alpha&\beta\\
&\alpha+\beta-\gamma+1
\end{matrix}\bigg|\,1-z\bigg]_{\langle\gamma-\alpha-\beta-1\rangle_p}\pmod{p^2}.
\end{align}
\end{theorem}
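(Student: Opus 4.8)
The plan is to follow the two-step strategy used in the proof of Theorem \ref{alphabetagamma11}, with the closed-form Gauss evaluation replaced by the terminating Pfaff transformation (\ref{nbetagammaz1z}). Write $a=\langle-\alpha\rangle_p$, $b=\langle-\beta\rangle_p$, $c=\langle-\gamma\rangle_p$, so the hypothesis reads $a+b\le c$, and set $m=\langle\gamma-\alpha-\beta-1\rangle_p$, which under $a+b\le c$ equals $p-1-(c-a-b)$. As in the Gauss proof one first checks $p$-integrality of both truncated series: on the left $(\gamma)_k$ is a unit for $k\le c$, and on the right no factor of $(\alpha+\beta-\gamma+1)_k$ is divisible by $p$ for $k\le m$, the first bad index being exactly $m+1$. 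The base case is $\alpha=-a$: then the left side ${}_2F_1(-a,\beta;\gamma;z)_c$ is the full terminating polynomial (as $a\le c$), and since $m\ge a$ the right truncation again captures every nonzero term. Using $\Gamma_p(x+1)/\Gamma_p(x)=-x$ on units (legitimate because $a,b\le c$) one verifies the exact equality $\Gamma_p(\gamma)\Gamma_p(\gamma+a-\beta)/\bigl(\Gamma_p(\gamma+a)\Gamma_p(\gamma-\beta)\bigr)=(\gamma-\beta)_a/(\gamma)_a$, so the asserted congruence becomes precisely (\ref{nbetagammaz1z}) with $n=a$, an exact identity.

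For general $\alpha=-a+sp$ with $s=(\alpha+a)/p$, I would expand both sides to first order in $sp$. Writing the left side as the polynomial $\Psi(x)={}_2F_1(-a+x,\beta;\gamma;z)_c$ and the right side as $A(x)B(x)$ with $A(x)=\Gamma_p(\gamma)\Gamma_p(\gamma+a-x-\beta)/\bigl(\Gamma_p(\gamma+a-x)\Gamma_p(\gamma-\beta)\bigr)$ and $B(x)={}_2F_1(-a+x,\beta;-a+x+\beta-\gamma+1;1-z)_m$, Lemma \ref{taylorexpansionrational} gives $\Psi(sp)-\Psi(0)\equiv sp\,\Psi'(0)$ and $B(sp)-B(0)\equiv sp\,B'(0)\pmod{p^2}$, while Lemma \ref{Gammapadicderivativealphabeta} turns the $\Gamma_p$-increment $A(sp)-A(0)$ into $sp$ times a classical logarithmic-derivative expression (one checks the exponent $\delta$ in that lemma is $0$ here, so no sign appears). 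Since the base case gives $\Psi(0)=A(0)B(0)$, the whole statement reduces modulo $p^2$ to the single congruence $\Psi'(0)\equiv \Delta_A\,B(0)+A(0)\,B'(0)\pmod p$, where $sp\,\Delta_A:=A(sp)-A(0)$.

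The crux is to prove this derivative congruence, and the key device is to reduce each ingredient modulo $p$ to a genuine terminating classical identity. Because $(\beta)_k\equiv 0\pmod p$ for $k>b$ while $(\gamma)_k$ is a unit for $k\le c$, the terms with $k>b$ disappear from $\Psi'(0)$, so $\Psi'(0)$ agrees modulo $p$ with the derivative of the terminating series $\widetilde\Psi(x)={}_2F_1(-a+x,-b;p-c;z)$; a parallel reduction replaces $B$ by $\widetilde B(x)={}_2F_1(-a+x,-b;-a+x-b-p+c+1;1-z)$, and Lemma \ref{Gammapadicderivativealphabeta} identifies $\Delta_A$ and $A(0)$ modulo $p$ with the classical factor $\Lambda(x)=\Gamma(p-c)\Gamma(p-c+a+b-x)/\bigl(\Gamma(p-c+a-x)\Gamma(p-c+b)\bigr)$ and its derivative at $x=0$. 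These three objects are tied together by an exact polynomial identity in $x$: in the classical connection formula for ${}_2F_1$ at $z=1$, taking the second upper parameter to be the non-positive integer $-b$ annihilates the $(1-z)^{c-a-b}$-term, whose coefficient carries the factor $1/\Gamma(-b)=0$, leaving the single-term relation $\widetilde\Psi(x)=\Lambda(x)\widetilde B(x)$. Differentiating at $x=0$ gives $\widetilde\Psi'(0)=\Lambda'(0)\widetilde B(0)+\Lambda(0)\widetilde B'(0)$, which is exactly the required congruence after the identifications above.

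The main obstacle is precisely this last point. The transformation (\ref{nbetagammaz1z}) is single-term only when one upper parameter is a non-positive integer, so it cannot be differentiated directly in $\alpha$ through non-integral values near $-a$, where the series fail to terminate. The resolution is to differentiate not in the true variable $\alpha$ but in the reduced terminating model, where termination is supplied instead by $\beta\equiv-b$: the general two-term connection formula then collapses to a single term and yields a bona fide polynomial identity in $x$ that may be differentiated freely. The remaining effort is bookkeeping—verifying that the two truncation levels $c$ and $m$ retain exactly the terms surviving modulo $p$, and that the shifted integer arguments $p-\langle-\cdot\rangle_p$ produced by Lemma \ref{Gammapadicderivativealphabeta} (with $\delta=0$) line up with $\Lambda$.
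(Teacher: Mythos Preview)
Your proposal is correct and follows essentially the same route as the paper's proof: the same reduction to $\alpha=-a$ via (\ref{nbetagammaz1z}), the same first-order expansion with $\Psi$, $\Omega$ (your $A$), $\Phi$ (your $B$), the same mod-$p$ replacement $\beta\mapsto-b$, $\gamma\mapsto p-c$ producing the paper's $\omega$, $\phi$ (your $\Lambda$, $\widetilde B$), and the same differentiation of the terminating identity ${}_2F_1(-a+x,-b;p-c;z)=\omega(x)\phi(x)$. The only cosmetic difference is that you justify this last identity by collapsing the two-term connection formula via $1/\Gamma(-b)=0$, whereas the paper simply invokes (\ref{nbetagammaz1z}) with $n=b$; these are the same statement.
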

\begin{proof}
Let $a=\langle-\alpha\rangle_p$, $b=\langle-\beta\rangle_p$, $c=\langle-\gamma\rangle_p$ and $d=\langle\gamma-\alpha-\beta-1\rangle_p$. Clearly $d=p+a+b-c-1$ and $d\geq\max\{a,b\}$. Let
$$
\Psi(x)={}_2F_1\bigg[\begin{matrix}
-a+x&\beta\\
&\gamma
\end{matrix}\bigg|\,z\bigg]_{c},
\qquad
\Omega(x)=\frac{\Gamma_p(\gamma)\Gamma_p(\gamma+a-\beta-x)}{\Gamma_p(\gamma+a-x)\Gamma_p(\gamma-\beta)}
$$
and
$$
\Phi(x)={}_2F_1\bigg[\begin{matrix}
-a+x&\beta\\
&1-a+\beta-\gamma+x
\end{matrix}\bigg|\,1-z\bigg]_{d}.
$$
In view of (\ref{nbetagammaz1z}), we have
\begin{align*}
\Psi(0)={}_2F_1\bigg[\begin{matrix}
-a&\beta\\
&\gamma
\end{matrix}\bigg|\,z\bigg]
=\frac{\Gamma_p(\gamma)\Gamma_p(\gamma+a-\beta)}{\Gamma_p(\gamma+a)\Gamma_p(\gamma-\beta)}\cdot{}_2F_1\bigg[\begin{matrix}
-a&\beta\\
&1-a+\beta-\gamma
\end{matrix}\bigg|\,1-z\bigg]=\Omega(0)\Phi(0).
\end{align*}

Let
$$
\omega(x)=\frac{\Gamma(p-c)}{\Gamma(p+b-c)}\cdot
\frac{\Gamma(p+a+b-c-x)}{\Gamma(p+a-c-x)}
$$
and
$$
\phi(x)={}_2F_1\bigg[\begin{matrix}
-a+x&-b\\
&1-a-b+c-p+x
\end{matrix}\bigg|\,1-z\bigg].
$$
Clearly
$$
\Phi(0)\equiv\phi(0)\pmod{p},\qquad\Phi'(0)\equiv\phi'(0)\pmod{p}.
$$
By Lemma \ref{Gammapadicderivativealphabeta}, we also have
$$
\Omega(0)\equiv\omega(0)\pmod{p},\qquad\Omega'(0)\equiv\omega'(0)\pmod{p}.
$$
Hence
\begin{align*}
\Psi'(0)\equiv&\frac{d}{dx}\bigg({}_2F_1\bigg[\begin{matrix}
-a+x&-b\\
&p-c
\end{matrix}\bigg|\,z\bigg]\bigg)\bigg|_{x=0}\equiv\frac{d\big(\omega(x)\phi(x)\big)}{dx}\bigg|_{x=0}\\
=&\omega'(0)\phi(0)+\omega(0)\phi'(0)\equiv\frac{d\big(\Omega(x)\Phi(x)\big)}{dx}\bigg|_{x=0}\pmod{p}.
\end{align*}
Letting $s=(\alpha+a)/p$, we get
$$
\Psi(sp)\equiv\Psi(0)+sp\cdot \Psi'(0)\equiv \Omega(0)\Phi(0)+sp\cdot\frac{d\big(\Omega(x)\Phi(x)\big)}{dx}\bigg|_{x=0}\equiv\Omega(sp)\Phi(sp)\pmod{p^2}.
$$
\end{proof}

\section{The $p$-adic quadratic ${}_2F_1$ transformation I: $z\to 4z(1-z)$}
\label{sectionq2F1z4z1z}
\setcounter{lemma}{0}
\setcounter{theorem}{0}
\setcounter{corollary}{0}
\setcounter{remark}{0}
\setcounter{equation}{0}
\setcounter{conjecture}{0}

In this section, we shall prove Theorems \ref{alphaz4z1z} and \ref{alpha1alphaz124z1z}.
The proof of Theorem \ref{alphaz4z1z} is simple.
\begin{proof}[Proof of Theorem \ref{alphaz4z1z}]
Let $a=\langle-\alpha\rangle_p$.
Let
$$
\Psi(x,z)={}_2F_1\bigg[\begin{matrix}
-a-x&1+a+x\\
&1
\end{matrix}\bigg|\,z\bigg]_{p-1}
$$
and
$$
\Phi(x,z)={}_2F_1\bigg[\begin{matrix}
-\frac12(a+x)&\frac12+\frac12(a+x)\\
&1
\end{matrix}\bigg|\,4z(1-z)\bigg]_{p-1}
$$
Since $a$ is even, by (\ref{alphaz4z1ze}),
\begin{align*}
&\Psi(0,z)={}_2F_1\bigg[\begin{matrix}
-a&1+a\\
&1
\end{matrix}\bigg|\,z\bigg]
={}_2F_1\bigg[\begin{matrix}
-\frac12a&\frac12+\frac12a\\
&1
\end{matrix}\bigg|\,4z(1-z)\bigg]=\Phi(0,z).
\end{align*}

Let
$$
F(z)=\frac{\partial\Psi(x,z)}{\partial x}\bigg|_{x=0}\quad\text{and}\quad G(z)=\frac{\partial\Phi(x,z)}{\partial x}\bigg|_{x=0}.
$$
Letting $s=-(\alpha+a)/p$, Theorem \ref{alphaz4z1z} is equivalent to $\Psi(sp,z)\equiv\Phi(sp,z)\pmod{p^2}$. In view of Lemma \ref{taylorexpansionrational},
it suffices to show that
\begin{equation}\label{FzGzzn}
[z^n]F(z)\equiv
[z^n]G(z)\pmod{p}
\end{equation}
for each $0\leq n\leq 2p-2$, where $[z^n]F(z)$ denotes the coefficient of $z^n$ in $F(z)$.
Suppose that $p\leq n\leq 2p-2$. 
Note that $\binom{k}{n-k}$ vanishes for each $0\leq k<n/2$ and $(-a/2)_k=0$ for each $a/2<k\leq p-1$.
It is easy to see that
\begin{align*}
[z^n]G(z)=&(-1)^n\sum_{k=0}^{p-1}\binom{k}{n-k}(-4)^k\cdot
\frac{d}{dx}\bigg(\frac{(-\frac12(a+x))_k}{(1)_k}\cdot\frac{(\frac12+\frac12(a+x))_k}{k!}\bigg)\bigg|_{x=0}\\
=&(-1)^n\sum_{\frac 12n\leq k\leq p-1}\binom{k}{n-k}(-4)^k\cdot\frac{(\frac12+\frac12a)_k}{k!}\cdot
\frac{d}{dx}\bigg(\frac{(-\frac12(a+x))_k}{(1)_k}\bigg)\bigg|_{x=0}.
\end{align*}
Also, if $(p-a+1)/2\leq k\leq p-1$, then $$\bigg(\frac12+\frac a2\bigg)_k=\prod_{j=0}^{k-1}\frac{1+a+2j}{2}\equiv 0\pmod{p}.
$$
Thus we always have
$
[z^n]G(z)\equiv0\pmod{p}
$
for those $p\leq n\leq 2p-2$.
On the other hand, for any $0\leq n\leq p-1$, clearly we have
\begin{align*}
[z^n]F(z)=&\frac{\partial}{\partial x}\bigg([z^n]{}_2F_1\bigg[\begin{matrix}
-a-x&1+a+x\\
&1
\end{matrix}\bigg|\,z\bigg]\bigg)\bigg|_{x=0}\\
=&\frac{\partial}{\partial x}\bigg([z^n]{}_2F_1\bigg[\begin{matrix}
-\frac12(a+x)&\frac12+\frac12(a+x)\\
&1
\end{matrix}\bigg|\,4z(1-z)\bigg]\bigg)\bigg|_{x=0}=[z^n]G(z).
\end{align*}
Thus (\ref{FzGzzn}) is derived.
\end{proof}
Theorem \ref{alpha1alphaz124z1z} can be proved in the same way.
\begin{proof}[Proof of Theorem \ref{alpha1alphaz124z1z}]
Let $a=\langle-\alpha\rangle_p$. 
Let
$$
\Psi(x,z)=\bigg({}_2F_1\bigg[\begin{matrix} -a-x&1+a+x\\ &1\end{matrix}\bigg|\,z\bigg]_{p-1}\bigg)^2
$$
and
$$
\Phi(x,z)={}_3F_2\bigg[\begin{matrix} -a-x&1+a+x&\frac12\\ &1&1\end{matrix}\bigg|\,4z(1-z)\bigg]_{p-1}.
$$
Let $F(z)=\Psi_x'(0,z)$ and $G(z)=\Phi_x'(0,z)$. Then for each $p\leq n\leq 2p-2$, 
$$
[z^n]G(z)=(-1)^n\sum_{k=0}^{p-1}\binom{k}{n-k}(-4)^k\cdot
\frac{d}{dx}\bigg(\frac{(-a-x)_k(1+a+x)_k(\frac12)_k}{(1)_k^3}\bigg)\bigg|_{x=0}\equiv 0\pmod{p},
$$
since $\binom{k}{n-k}=0$ for $0\leq k\leq (p-1)/2$ and $(\frac12)_k\equiv 0\pmod{p}$ for $(p+1)/2\leq k\leq p-1$. Also, note that $(-a)_k\equiv 0\pmod{p}$ for $a+1\leq k\leq p-1$ and $(1+a)_k\equiv 0\pmod{p}$ for $p-a\leq k\leq p-1$. Hence
$$
[z^h]\sum_{k=0}^{p-1}\frac{(-a)_k(1+a)_k}{(1)_k^2}\cdot z^k\equiv 0\pmod{p}
$$
for $h\geq\min\{a+1,p-a\}$, and
$$
[z^h]\sum_{k=0}^{p-1}\bigg(\frac{(-a)_k}{(1)_k}\cdot\frac{d}{d x}\bigg(\frac{(1+a+x)_k}{(1)_k}\bigg)+\frac{(1+a)_k}{(1)_k}\cdot\frac{d}{d x}\bigg(\frac{(-a-x)_k}{(1)_k}\bigg)\bigg)\bigg|_{x=0}\cdot z^k\equiv 0\pmod{p}
$$
for $h\geq\max\{a+1,p-a\}$. So for $p\leq n\leq 2p-2$, we have
$$
[z^n]F(z)=[z^n]2\bigg(\sum_{k=0}^{p-1}\frac{(-a)_k(1+a)_k}{(1)_k^2}\cdot z^k\bigg)\cdot\bigg(\sum_{k=0}^{p-1}\frac{d}{d x}\bigg(\frac{(-a-x)_k(1+a+x)_k}{(1)_k^2}\bigg)\bigg|_{x=0}\cdot z^k\bigg)
$$
is divisible by $p$.
And we also have $[z^n]F(z)=[z^n]G(z)$ for $0\leq n\leq p-1$. Since $\Psi(0,z)=\Psi(0,z)$ by (\ref{alpha1alphaz124z1ze}),
 Theorem \ref{alpha1alphaz124z1z} is concluded, .\end{proof}

In the remainder part of this section, we shall give an explain about the relation between ${}_2F_1\bigg[\begin{matrix}
\frac12&\frac12\\
&1
\end{matrix}\bigg|\,z\bigg]_{p-1}$, ${}_2F_1\bigg[\begin{matrix}
\frac14&\frac34\\
&1
\end{matrix}\bigg|\,z\bigg]_{p-1}$ and the Legendre polynomials.
The Legendre polynomial $P_n(x)$ is given by
$$
\sum_{n=0}^\infty P_n(x)t^n=\frac1{\sqrt{1-2xt+t^2}}.
$$
$P_n(x)$ also has the explicit expression
$$
P_n(x)={}_2F_1\bigg[\begin{matrix}
-n&1+n\\
&1
\end{matrix}\bigg|\,\frac{1-x}{2}\bigg].
$$
Notice that for $0\leq k\leq (p-1)/2$,
$$
\frac{(-\frac{p-1}2)_k(1+\frac{p-1}2)_k}{(1)_k^2}=\prod_{j=0}^{k-1}\frac{(\frac12+j-\frac p2)(\frac12+j-\frac p2)}{(1+j)^2}\equiv\prod_{j=0}^{k-1}\frac{(\frac12+j)^2}{(1+j)^2}=\frac{(\frac12)_k^2}{(1)_k^2}\pmod{p^2},
$$
and $(\frac12)_k\equiv0\pmod{p}$ for $(p+1)/2\leq k\leq p-1$.
Thus we have
\begin{equation}\label{F211212zLegendre}
{}_2F_1\bigg[\begin{matrix}
\frac{1}{2}&\frac{1}{2}\\
&1
\end{matrix}\bigg|\,z\bigg]_{p-1}
\equiv P_{\frac{p-1}{2}}(1-2z)\pmod{p^2}.
\end{equation}
Consequently, by Theorem \ref{alpha1alphaz124z1z},
\begin{equation}
{}_3F_2\bigg[\begin{matrix}
\frac{1}{2}&\frac{1}{2}&\frac12\\
&1&1
\end{matrix}\bigg|\,4z(1-z)\bigg]_{p-1}
\equiv P_{\frac{p-1}{2}}(1-2z)^2\pmod{p^2}.
\end{equation}

Furthermore, we also have
\begin{theorem}
\begin{equation}\label{F2114144z1zLegendre}
{}_2F_1\bigg[\begin{matrix}
\frac{1}{4}&\frac{1}{4}\\
&1
\end{matrix}\bigg|\,4z(1-z)\bigg]_{p-1}
\equiv\begin{cases}
P_{\frac{p-1}{2}}(1-2z)\pmod{p^2},&\text{if }p\equiv1\pmod{4},\\
P_{\frac{3p-1}{2}}(1-2z)\pmod{p^2},&\text{if }p\equiv3\pmod{4}.\end{cases}
\end{equation}
\end{theorem}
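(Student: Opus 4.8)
The plan is to handle both residue classes by a single mechanism, after noting that in each case the Legendre index is $N=2\langle-\frac14\rangle_p$, which is automatically even. Indeed $\langle-\frac14\rangle_p=\frac{p-1}4$ when $p\equiv1\pmod4$ and $\langle-\frac14\rangle_p=\frac{3p-1}4$ when $p\equiv3\pmod4$, so $N=\frac{p-1}2$ and $N=\frac{3p-1}2$ respectively, reproducing the two cases of (\ref{F2114144z1zLegendre}). Thus it suffices to prove, as a congruence of polynomials in $z$,
$$
{}_2F_1\bigg[\begin{matrix}\frac14&\frac14\\&1\end{matrix}\bigg|\,4z(1-z)\bigg]_{p-1}\equiv P_{2\langle-\frac14\rangle_p}(1-2z)\pmod{p^2}.
$$
When $p\equiv1\pmod4$ there is also a one-line route: $\langle-\frac12\rangle_p=\frac{p-1}2$ is then even, so Theorem \ref{alphaz4z1z} with $\alpha=\frac12$ combined with (\ref{F211212zLegendre}) gives the result immediately; the argument below treats both cases at once.

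First I would lower the truncation from $p-1$ to $c:=\langle-\frac14\rangle_p$. For $c<k\le p-1$ the Pochhammer symbol $(\frac14)_k$ contains the factor $\frac14+c\equiv0\pmod p$, while every other factor and the denominator $(1)_k^2=(k!)^2$ is a $p$-adic unit; hence $(\frac14)_k^2\equiv0\pmod{p^2}$ and these tail terms drop out, giving
$$
{}_2F_1\bigg[\begin{matrix}\frac14&\frac14\\&1\end{matrix}\bigg|\,w\bigg]_{p-1}\equiv{}_2F_1\bigg[\begin{matrix}\frac14&\frac14\\&1\end{matrix}\bigg|\,w\bigg]_{c}\pmod{p^2}
$$
as polynomials in $w$. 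Substituting $w=4z(1-z)\in\Z_p[z]$ preserves the congruence.

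Next I would use the terminating quadratic transformation of Legendre polynomials, i.e. (\ref{alphaz4z1ze}) with $\alpha=-2c$: since $P_{2c}(1-2z)={}_2F_1[-2c,1+2c;1\,|\,z]$, this is the polynomial identity
$$
P_{2c}(1-2z)={}_2F_1\bigg[\begin{matrix}-c&\frac12+c\\&1\end{matrix}\bigg|\,4z(1-z)\bigg],
$$
whose right side already terminates at $k=c$. It then remains to match coefficients, namely to show $(-c)_k(\frac12+c)_k\equiv(\frac14)_k^2\pmod{p^2}$ for $0\le k\le c$. Here $-c\equiv\frac14$ and $\frac12+c\equiv\frac14\pmod p$ while $(-c)+(\frac12+c)=\frac12$, so the two upper parameters are symmetric about $\frac14$: writing $-c=\frac14-tp$ and $\frac12+c=\frac14+tp$ with $t=\frac{1+4c}{4p}\in\Z_p$, each factor becomes
$$
\Big(\frac14-tp+j\Big)\Big(\frac14+tp+j\Big)=\Big(\frac14+j\Big)^2-t^2p^2.
$$
For $0\le j\le c-1$ none of the $\frac14+j$ is divisible by $p$, so expanding the product over $j$ yields $(\frac14)_k^2$ plus a multiple of $p^2$, the terms linear in $p$ cancelling automatically thanks to the symmetric perturbations $\pm tp$.

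Chaining the three steps — lower the truncation, apply the Legendre quadratic identity, match the Pochhammer coefficients — proves the congruence. I expect the truncation reduction to carry the real weight: the vanishing $(\frac14)_k^2\equiv0\pmod{p^2}$ for $c<k\le p-1$ is what simultaneously annihilates the high-degree coefficients of the left side (degrees $2c+1,\dots,2p-2$ in $z$) and brings its effective length down to $c$, the degree in $w$ of the Legendre polynomial; the remaining coefficient comparison is a direct $p$-integral computation, and is pleasantly derivative-free because the $\pm tp$ symmetry cancels the $O(p)$ contributions outright.
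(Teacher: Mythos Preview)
Your proof is correct and takes a genuinely different route from the paper's. The paper splits the two residue classes: for $p\equiv1\pmod4$ it invokes Theorem~\ref{alphaz4z1z} with $\alpha=\tfrac12$ (so ultimately the derivative machinery of Lemma~\ref{taylorexpansionrational}) together with (\ref{F211212zLegendre}); for $p\equiv3\pmod4$ it runs a separate linear-interpolation argument on a one-parameter family $\Psi(x)$. Your argument is unified and derivative-free: you reduce the truncation using $(\tfrac14)_k^2\equiv0\pmod{p^2}$ for $k>c=\langle-\tfrac14\rangle_p$, apply the terminating instance of (\ref{alphaz4z1ze}) with $\alpha=-2c$ to identify $P_{2c}(1-2z)$ with $\sum_{k\le c}\frac{(-c)_k(\frac12+c)_k}{(1)_k^2}(4z(1-z))^k$, and then match coefficients via the symmetric perturbation $-c=\tfrac14-tp$, $\tfrac12+c=\tfrac14+tp$, which kills the $O(p)$ terms automatically. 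What your approach buys is transparency and a single argument for both cases; what the paper's approach illustrates is that the result sits inside its general $p$-adic-transformation framework (Theorem~\ref{alphaz4z1z}). Both are valid, and yours is the more self-contained of the two.
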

\begin{proof}
The case $p\equiv 1\pmod{4}$ immediately follows from  (\ref{F211212zLegendre}) and Theorem \ref{alphaz4z1z}.
Assume that $p\equiv 3\pmod{4}$. Let
$$
\Psi(x)={}_2F_1\bigg[\begin{matrix}
-\frac{3p-1}{4}+\frac32x&1+\frac{3p-1}{4}-\frac32x\\
&1
\end{matrix}\bigg|\,4z(1-z)\bigg]_{p-1}.
$$
Clearly $\Psi(p)=\Psi(0)$. Thus
\begin{align*}
{}_2F_1\bigg[\begin{matrix}
\frac{1}{4}&\frac{1}{4}\\
&1
\end{matrix}\bigg|\,4z(1-z)\bigg]_{p-1}=&\Psi\bigg(\frac p2\bigg)\equiv \Psi(0)+\Psi'(0)\cdot\frac{p}{2}\equiv\Psi(0)+\frac{\Psi(p)-\Psi(0)}{p}\cdot\frac{p}{2}\\
=&\Psi(0)=
{}_2F_1\bigg[\begin{matrix}
-\frac{3p-1}{4}&1+\frac{3p-1}{4}\\
&1
\end{matrix}\bigg|\,4z(1-z)\bigg]\\
=&{}_2F_1\bigg[\begin{matrix}
-\frac{3p-1}{2}&1+\frac{3p-1}{2}\\
&1
\end{matrix}\bigg|\,z\bigg]=P_{\frac{3p-1}{2}}(1-2z)\pmod{p^2}.
\end{align*}
\end{proof}
Combining (\ref{F2114144z1zLegendre}) with Theorems \ref{alphaalphazz1} and \ref{alpha1alphaz124z1z}, we have
\begin{equation}
{}_2F_1\bigg[\begin{matrix}
\frac{1}{4}&\frac{3}{4}\\
&1
\end{matrix}\bigg|\,\frac{4z(1-z)}{(1-2z)^2}\bigg]_{p-1}\equiv (1-z)^{\lambda_p(\frac14)-1}P_{\langle-\frac14\rangle_p}(1-2z)-
\bigg(1-\frac1z\bigg)^{\lambda_p(\frac14)-1}P_{\langle-\frac14\rangle_p}\bigg(1-\frac2z\bigg)\pmod{p^2},
\end{equation}
and
\begin{align}
&{}_3F_2\bigg[\begin{matrix}
\frac{1}{4}&\frac{3}{4}&\frac12\\
&1&1
\end{matrix}\bigg|\,\frac{16z(1-z)(1-8z+8z^2)}{(1-2z)^4}\bigg]_{p-1}\notag\\
\equiv&\bigg((1-z)^{\lambda_p(\frac14)-1}P_{\langle-\frac14\rangle_p}(1-2z)-
\bigg(1-\frac1z\bigg)^{\lambda_p(\frac14)-1}P_{\langle-\frac14\rangle_p}\bigg(1-\frac2z\bigg)\bigg)^2\pmod{p^2}.
\end{align}

In \cite{CoHa91}, with help of the theory of elliptic curves, Coster and Hamme computed $P_{\frac{p-1}{2}}(z)$ modulo $p^2$ for some special values of $z$. We shall give a further discussion on this topic in Section \ref{section2F1cm}.

\section{The $p$-adic quadratic ${}_2F_1$ transformation II: $z\to 4z/(1+z)^2$}
\label{sectionq2F1z4z1z2}
\setcounter{lemma}{0}
\setcounter{theorem}{0}
\setcounter{corollary}{0}
\setcounter{remark}{0}
\setcounter{equation}{0}
\setcounter{conjecture}{0}

Let us consider Theorems \ref{alpha2alpha214zz1}
and \ref{alpha2alpha21beta4zz1}.
The proof of Theorem \ref{alpha2alpha214zz1} is a little more complicated, since the function $\lambda_p(\alpha)$ is involved now. Now we can't take the derivative of $z^{-\lambda_p(a+x)}$. So we need the following lemma.
\begin{lemma}\label{polynomialzsp}
Suppose that $P(x),Q(x)\in\Z_p[x]$ are polynomials over $\Z_p$ with $Q(0)\in\Z_{p}^\times$. Let $f(x)=P(x)/Q(x)$. 
Suppose that $z\in\Z_p^\times$ and $s\in\Z$. Then
$$
z^{1+s(p-1)}f(sp)-zf(0)\equiv s\big(z^pf(p)-zf(0)\big)\pmod{p^2}.
$$
\end{lemma}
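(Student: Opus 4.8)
The plan is to reduce both sides to their first-order $p$-adic expansions modulo $p^2$ and then to notice that the expansion I naturally obtain differs from the claimed right-hand side only by a product of two quantities each divisible by $p$.

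First I would expand the power $z^{1+s(p-1)}$. By the definition (\ref{zlambda}) we have $z^{s(p-1)}=\sum_{k\geq 0}\binom{s}{k}(z^{p-1}-1)^k$, and since $z\in\Z_p^\times$ forces $z^{p-1}\equiv1\pmod p$, every summand with $k\geq 2$ is divisible by $p^2$. Hence $z^{s(p-1)}\equiv1+s(z^{p-1}-1)\pmod{p^2}$, and therefore
$$
z^{1+s(p-1)}\equiv z+s(z^p-z)\pmod{p^2}.
$$
Next I would expand $f(sp)$. Since $Q(0)\in\Z_p^\times$ and $sp\equiv p\equiv0\pmod p$, both $Q(sp)$ and $Q(p)$ are units, so $f(sp)$ and $f(p)$ are well-defined $p$-adic integers and $f'(0)\in\Z_p$. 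Applying Lemma \ref{taylorexpansionrational} at the point $0$ gives $f(sp)\equiv f(0)+sp\,f'(0)\pmod{p^2}$ and $f(p)\equiv f(0)+p\,f'(0)\pmod{p^2}$; eliminating $p\,f'(0)$ between these yields
$$
f(sp)\equiv f(0)+s\big(f(p)-f(0)\big)\pmod{p^2}.
$$

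Then I would multiply the two expansions. Writing $z^{1+s(p-1)}=z+B$ with $B=s(z^p-z)\equiv0\pmod p$ and $f(sp)=f(0)+D$ with $D=s(f(p)-f(0))\equiv0\pmod p$, the cross term $BD$ is divisible by $p^2$ and may be dropped, so
$$
z^{1+s(p-1)}f(sp)-zf(0)\equiv zD+Bf(0)=s\big(zf(p)-zf(0)\big)+s\big(z^pf(0)-zf(0)\big)\pmod{p^2}.
$$
The only genuinely non-automatic step—the place where one must be slightly careful rather than grind—is the final comparison of this expression with the target $s\big(z^pf(p)-zf(0)\big)$: a direct cancellation shows that their difference equals $s(z-z^p)\big(f(p)-f(0)\big)$. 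Since $z-z^p=-z(z^{p-1}-1)$ and $f(p)-f(0)\equiv p\,f'(0)$ are each divisible by $p$, this difference vanishes modulo $p^2$, and the two right-hand sides agree. This establishes Lemma \ref{polynomialzsp}.
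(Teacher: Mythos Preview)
Your proof is correct and follows essentially the same approach as the paper's: both expand $z^{s(p-1)}\equiv 1+s(z^{p-1}-1)\pmod{p^2}$, invoke Lemma~\ref{taylorexpansionrational} to linearize $f(sp)$, and then discard a cross term divisible by $p^2$. The only cosmetic difference is that the paper replaces $z^{1+s(p-1)}\cdot spf'(0)$ directly by $z^p\cdot s(f(p)-f(0))$ in one step, whereas you first obtain the coefficient $z$ and then correct to $z^p$ in your final comparison, but this is the same observation packaged differently.
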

\begin{proof}
Note that
$$
z^{s(p-1)}-1=\big(1+(z^{p-1}-1)\big)^s-1\equiv s(z^{p-1}-1)\pmod{p^2}.
$$
By Lemma \ref{taylorexpansionrational}, 
\begin{align*}
z^{1+s(p-1)}f(sp)-zf(0)=&
z^{1+s(p-1)}\cdot\big(f(sp)-f(0)\big)+\big(z^{1+s(p-1)}-z\big)\cdot f(0)\\
\equiv&z^{1+s(p-1)}\cdot spf'(0)+s(z^{p}-z)\cdot f(0)\\
\equiv&z^p\cdot s(f(p)-f(0))+s(z^{p}-z)\cdot f(0)=s\big(z^pf(p)-zf(0)\big)\pmod{p^2}.
\end{align*}
\end{proof}
\begin{proof}[Proof of Theorem \ref{alpha2alpha214zz1}]
Trivially
$$
{}_2F_1\bigg[\begin{matrix}
\alpha&\beta\\
&1
\end{matrix}\bigg|\,z\bigg]_{p-1}\equiv
{}_2F_1\bigg[\begin{matrix}
-\langle-\alpha\rangle_{p^2}&-\langle-\beta\rangle_{p^2}\\
&1
\end{matrix}\bigg|\,z\bigg]_{p-1}\pmod{p^2}.
$$
Without loss of generality, assume that $\alpha=-\langle-\alpha\rangle_{p^2}$.
Let $a=\langle-\alpha\rangle_p$ and $s=-(\alpha+a)/p$.
clearly $a+s(p-1)=-\lambda_p(\alpha).$
Let
$$
\Psi_1(x)={}_2F_1\bigg[\begin{matrix}
-a-x&-a-x\\
&1
\end{matrix}\bigg|\,z\bigg]_{p-1},\quad \Psi_2(x)=z^{a}{}_2F_1\bigg[\begin{matrix}
-a-x&-a-x\\
&1
\end{matrix}\bigg|\,\frac1z\bigg]_{p-1},
$$
and let
$$
\Phi(x)=(1+z)^{a}{}_2F_1\bigg[\begin{matrix}
-\frac{1}2(a+x)&-\frac{1}2(a+x)+\frac12\\
&1
\end{matrix}\bigg|\,\frac{4z}{(1+z)^2}\bigg]_{p-1}.
$$

Note that either $a/2$ or $a/2-1/2$ is a non-negative integer. By (\ref{alpha2alpha214zz1e}),
\begin{align}\label{Psi1zPsi21zPhi}\Psi_1(0)+z\cdot\Psi_2(0)=&{}_2F_1\bigg[\begin{matrix}
-a&-a\\
&1
\end{matrix}\bigg|\,z\bigg]+z^{a+1}{}_2F_1\bigg[\begin{matrix}
-a&-a\\
&1
\end{matrix}\bigg|\,\frac1z\bigg]\notag\\
=&(1+z)^{a+1}{}_2F_1\bigg[\begin{matrix}
-\frac{a}2&-\frac{a}2+\frac12\\
&1
\end{matrix}\bigg|\,\frac{4z}{(1+z)^2}\bigg]=(1+z)\Phi(0).
\end{align}

On the other hand, since $0\leq a\leq p-1$, we also have
\begin{align*}
(1+z)^{p+1}\Phi(p)=&(1+z)^{a+p+1}{}_2F_1\bigg[\begin{matrix}
-\frac{1}2(a+p)&-\frac{1}2(a+p)+\frac12\\
&1
\end{matrix}\bigg|\,\frac{4z}{(1+z)^2}\bigg]\\
=&{}_2F_1\bigg[\begin{matrix}
-a-p&-a-p\\
&1
\end{matrix}\bigg|\,z\bigg]+z^{a+p+1}{}_2F_1\bigg[\begin{matrix}
-a-p&-a-p\\
&1
\end{matrix}\bigg|\,\frac1z\bigg].
\end{align*}
Clearly $(-a-p)_k$ is divisible by $p$ for each $a+1\leq k\leq p-1$. Note that
\begin{equation}\label{aak1akak1k}
\frac{(-a)_k}{(1)_k}=(-1)^k\cdot\binom{a}{k}=
(-1)^k\cdot\binom{a}{a-k}=(-1)^a\cdot\frac{(-a)_{a-k}}{(1)_{a-k}}
\end{equation}
for any $0\leq k\leq a$.
Hence
\begin{align}\label{apap1zapapzapap1z2F1c}
&{}_2F_1\bigg[\begin{matrix}
-a-p&-a-p\\
&1
\end{matrix}\bigg|\,z\bigg]\notag\\
\equiv&\sum_{k=0}^a\frac{(-a-p)_k^2}{(1)_{k}}\cdot\frac{z^k}{k!}+
\sum_{k=0}^a\frac{(-a-p)_{p+k}^2}{(1)_{p+k}}\cdot\frac{z^{p+k}}{(p+k)!}\notag\\
=&\sum_{k=0}^a\frac{(-a-p)_k^2}{(1)_{k}}\cdot\frac{z^k}{k!}+
\sum_{k=0}^a\frac{(-a-p)_{k}^2}{(1)_{k}}\cdot\frac{z^{p+a-k}}{k!}\notag\\
\equiv&{}_2F_1\bigg[\begin{matrix}
-a-p&-a-p\\
&1
\end{matrix}\bigg|\,z\bigg]_{p-1}+z^{a+p}{}_2F_1\bigg[\begin{matrix}
-a-p&-a-p\\
&1
\end{matrix}\bigg|\,\frac1z\bigg]_{p-1}\pmod{p^2}.
\end{align}
It follows that
\begin{align*}
(1+z)^{p}\Phi(p)
\equiv&\Psi_1(p)+z^{p}\Psi_2(p)\pmod{p^2}.
\end{align*}
By Lemma \ref{polynomialzsp} and (\ref{Psi1zPsi21zPhi}), we get that
\begin{align*}
(1+z)^{1+s(p-1)}\Phi(sp)-\Phi(0)\equiv&s\cdot \big((1+z)^p\Phi(p)-(1+z)\Phi(0)\big)\\
\equiv&s\cdot \big(\Psi_1(p)-\Psi_1(0)\big)+s\cdot\big(z^{p}\Psi_2(p)-z\Psi_2(0)\big)\\
\equiv&\big(\Psi_1(sp)-\Psi_1(0)\big)+\big(z^{sp}\Psi_2(sp)-z\Psi_2(0)\big)\pmod{p^2},
\end{align*}
i.e.,
\begin{align*}
(1+z)^{1+s(p-1)}\Phi(sp)
\equiv\Psi_1(sp)+z^{s(p-1)+1}\Psi_2(sp)\pmod{p^2}.
\end{align*}
\end{proof}
Below we just sketch the proof of Theorem  \ref{alpha2alpha21beta4zz1}, which is very similar as the proof of Theorem \ref{alpha2alpha214zz1}.
It is not difficult to check that 
\begin{align*}
\sum_{k=0}^{a}\frac{(-a-p)_{p+k}^2(\beta)_{p+k}}{(1)_{p+k}^2(1-\beta-a-p)_{p+k}}\cdot z^{k}=\frac{(\beta)_p(-\beta-p-a+1)_a}{(-\beta-p-a+1)_p(\beta)_a}\sum_{k=0}^{a}\frac{(-a-p)_k^2(\beta)_{k}}{(1)_k^2(1-\beta-a-p)_{k}}\cdot z^{a-k},
\end{align*} 
where $a=\langle-\alpha\rangle_p$.
It follows that
\begin{align*}
&{}_3F_2\bigg[\begin{matrix} -a-p&-a-p&\beta&\\ &1&1-\beta-a-p\end{matrix}\bigg|\,z\bigg]\\
\equiv&{}_3F_2\bigg[\begin{matrix} -a-p&-a-p&\beta&\\ &1&1-\beta-a-p\end{matrix}\bigg|\,z\bigg]_{p-1}\\
&+(-z)^{a+p}{}_3F_2\bigg[\begin{matrix} -a-p&-a-p&\beta&\\ &1&1-\beta-a-p\end{matrix}\bigg|\,\frac1z\bigg]_{p-1}\pmod{p^2}.
\end{align*}
Then Theorems \ref{alpha2alpha21beta4zz1} can be proved by using the same discussions in the proof of Theorem \ref{alpha2alpha214zz1}.

\section{The $p$-adic quadratic ${}_2F_1$ transformation III: $z\to z^2/(z-2)^2$}
\label{sectionq2F1zz2z22}
\setcounter{lemma}{0}
\setcounter{theorem}{0}
\setcounter{corollary}{0}
\setcounter{remark}{0}
\setcounter{equation}{0}
\setcounter{conjecture}{0}

In order to prove Theorem \ref{alpha12zalpha12zz1}, we need the following lemma.
\begin{lemma}\label{apkbetapkzkt} Suppose that $0\leq a\leq p-1$ and $\beta\in\Z_p$. Let
$
t=(\beta+\langle-\beta\rangle_p)/p
$.
Then
\begin{align}
(1-t)\sum_{k=0}^a\frac{(-a-p)_{p+k}(\beta)_{p+k}}{(1)_{p+k}^2}\cdot z^k\equiv
t(1-z)^a\sum_{k=0}^a\frac{(-a-p)_{p+k}(1-\beta)_{p+k}}{(1)_{p+k}^2}\cdot\frac{z^k}{(z-1)^k}\pmod{p^2}.
\end{align}
\end{lemma}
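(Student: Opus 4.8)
The plan is to trace the claimed congruence back to Pfaff's transformation \eqref{Euleralphabeta1} with top parameter $-a$, the factors $1-t$ and $t$ being produced by the single powers of $p$ hidden in $(\beta)_p$ and $(1-\beta)_p$. First I note that both sides are polynomials in $z$ of degree at most $a$: on the right, $(1-z)^a\cdot z^k/(z-1)^k=(-1)^kz^k(1-z)^{a-k}$ since $0\le k\le a$, so it suffices to compare coefficients modulo $p^2$. I would then apply the elementary splitting $(x)_{p+k}=(x)_p\,(x+p)_k$ to each Pochhammer symbol. This pulls the $k$-independent factors $(-a-p)_p$, $(\beta)_p$ (resp. $(1-\beta)_p$) and $(1)_p^2=(p!)^2$ out of the sums, leaving in each case a reduced truncated sum: on the left $\Sigma_1(z)=\sum_{k=0}^a\frac{(-a)_k(\beta+p)_k}{((1+p)_k)^2}z^k$, and on the right $\Sigma_2(z)=\sum_{k=0}^a\frac{(-a)_k(1-\beta+p)_k}{((1+p)_k)^2}(-1)^kz^k(1-z)^{a-k}$.

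Next I would evaluate the prefactors modulo $p^2$. The product $(\beta)_p$ contains exactly one factor divisible by $p$, namely $\beta+\langle-\beta\rangle_p=tp$, so $(\beta)_p=tp\cdot R_\beta$ with $R_\beta=\prod_{0\le j<p,\,j\ne\langle-\beta\rangle_p}(\beta+j)$ a $p$-adic unit; likewise $(1-\beta)_p=(1-t)p\cdot R'$ with $R'$ a unit, because the factor of $(1-\beta)_p$ lying in $p\Z_p$ equals $p-\beta-\langle-\beta\rangle_p=(1-t)p$. Since $(-a-p)_p=-p\cdot W$ with $W$ a unit and $(p!)^2=p^2((p-1)!)^2$, the two powers $p^2$ cancel, and the left prefactor equals $-t$ times the unit $W R_\beta/((p-1)!)^2$ while the right prefactor equals $-(1-t)$ times $W R'/((p-1)!)^2$. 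After multiplying by the explicit $1-t$ and $t$ in the statement, the whole congruence collapses to
\[
t(1-t)\,\big[R_\beta\,\Sigma_1(z)-R'\,\Sigma_2(z)\big]\equiv 0\pmod{p^2}.
\]

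Modulo $p$ the shifts disappear ($\beta+p\equiv\beta$, $1+p\equiv1$), Wilson's theorem gives $R_\beta\equiv R'\equiv(p-1)!\equiv-1$, and the bracket becomes precisely $-1$ times the difference of the two sides of Pfaff's transformation \eqref{Euleralphabeta1} for the terminating series ${}_2F_1\big[{-a},\beta;1;z\big]$; hence the bracket vanishes modulo $p$. This already disposes of the degenerate cases $t\equiv0$ and $t\equiv1\pmod p$, in which $t(1-t)$ itself supplies the missing power of $p$. The main obstacle is the generic case, where $t(1-t)$ is a $p$-adic unit and one therefore needs the bracket itself to vanish modulo $p^2$, i.e. a first-order-in-$p$ refinement of the Pfaff cancellation.

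I would obtain this refinement by expanding every $p$-shifted factor $(\beta+p)_k$, $(1-\beta+p)_k$, $(1+p)_k$ and each Wilson product $R_\beta$, $R'$ to first order in $p$ through Lemma \ref{taylorexpansionrational} and the harmonic-sum expressions for their logarithmic derivatives, and then checking that the resulting linear-in-$p$ term cancels as well. As in the proof of Lemma \ref{Gammapadicderivativealphabeta}, this cancellation is governed by differentiating Pfaff's identity \eqref{Euleralphabeta1} in its parameters, so the coefficients of $p$ on the two sides agree. Carrying out this expansion, and in particular keeping honest track of which terms of $\Sigma_1$ and $\Sigma_2$ acquire a second power of $p$ (those with $k>\langle-\beta\rangle_p$, where an extra factor $(t+1)p$ appears) so that they may be discarded modulo $p^2$, is the technical heart of the argument; everything else is the Pochhammer bookkeeping of the first two paragraphs.
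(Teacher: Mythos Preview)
Your plan is essentially the paper's own argument, repackaged. Both routes reduce the congruence to Pfaff's transformation for the constant-in-$p$ piece and to a parameter-derivative of Pfaff for the linear-in-$p$ piece. The paper expands the ratio $(\beta)_{p+k}/(1)_{p+k}$ directly via $\Gamma_p$ and its logarithmic derivative, arriving at the formula
\[
\frac{(\beta)_{p+k}}{(1)_{p+k}}\equiv t\cdot\frac{(\beta)_k}{(1)_k}\Big(1+p\big(H_{p-1-b}+\textstyle\sum_{j<k}(\beta+j)^{-1}-H_k\big)\Big)\pmod{p^2},
\]
valid uniformly for all $0\le k\le a$; your factorization $(\beta)_{p+k}=(\beta)_p\,(\beta+p)_k$ is a different bookkeeping of the same expansion, and your Wilson products $R_\beta,R'$ carry the constants $H_{p-1-b},H_b$ that appear in the paper (it is the congruence $H_b\equiv H_{p-1-b}\pmod p$ that makes your $r_\beta-r'$ vanish). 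Both routes end at identity \eqref{akbetak1k2zk}. You should name the deformation that produces it: the paper differentiates the general Pfaff transformation along $(\beta,\gamma)\mapsto(\beta+x,1+2x)$, and the factor $2$ is what generates the $-2H_k$ term; a derivative in $\beta$ alone does not.

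One slip to fix: your claim that the $k>\langle-\beta\rangle_p$ terms of $\Sigma_1$ ``acquire a second power of $p$\dots so that they may be discarded modulo $p^2$'' is wrong. In $\Sigma_1$ those terms carry only \emph{one} power of $p$ (the single factor $(t+1)p$ inside $(\beta+p)_k$), so they contribute nontrivially to the first-order correction; moreover the analogous range on the other side is $k>p-1-b$, not $k>b$, so there is no termwise cancellation. The paper sidesteps this by the uniform formula above: for $k>b$ the harmonic sum $\sum_{j<k}(\beta+j)^{-1}$ is read with its formal pole $1/(tp)$ at $j=b$, which is exactly compensated by the factor $tp$ in $(\beta)_k$, and then identity \eqref{akbetak1k2zk} covers all $0\le k\le a$ at once. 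If you keep your split you must handle the ranges $b<k\le a$ and $p-1-b<k\le a$ explicitly; this can be done, but it is extra work the paper's uniform formula avoids.
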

\begin{proof} 
Clearly for any $0\leq k\leq a$, in view of (\ref{padicgammaderivative}), we have
\begin{align*}
\frac{(-a-p)_{p+k}}{(1)_{p+k}}=&\frac{\Gamma_p(-a+k)\Gamma_p(1)}{\Gamma_p(-a-p)\Gamma_p(p+k+1)}\cdot\frac{-p}{p}\\
\equiv&-
\frac{\Gamma_p(-a+k)\Gamma_p(1)}{\Gamma_p(-a)\Gamma_p(k+1)}\bigg(1+p\cdot\frac{\Gamma_p'(-a)}{\Gamma_p(-a)}-p\cdot\frac{\Gamma_p'(k+1)}{\Gamma_p(k+1)}\bigg)\\
\equiv&-
\frac{(-a)_k}{(1)_k}-p\cdot\frac{(-a)_k}{(1)_k}\cdot(H_{p-a-1}-H_k)\pmod{p^2}.
\end{align*}
Let $b=\langle-\beta\rangle_p$. If $0\leq k\leq b$, then
\begin{align*}
\frac{(\beta)_{p+k}}{(1)_{p+k}}=&\frac{\Gamma_p(\beta+p+k)\Gamma_p(1)}{\Gamma_p(\beta)\Gamma_p(p+k+1)}\cdot\frac{tp}{p}\\\equiv&
t\cdot\frac{\Gamma_p(\beta+k)\Gamma_p(1)}{\Gamma_p(\beta)\Gamma_p(k+1)}\cdot\bigg(1+p\cdot\frac{\Gamma_p'(\beta+k)}{\Gamma_p(\beta+k)}-p\cdot\frac{\Gamma_p'(k+1)}{\Gamma_p(k+1)}\bigg)\\
\equiv&
t\cdot\frac{(\beta)_k}{(1)_k}+tp\cdot\frac{(\beta)_k}{(1)_k}\cdot\bigg(H_{p-1-b}+\sum_{j=0}^{k-1}\frac{1}{j+\beta}-H_k\bigg)\pmod{p^2},
\end{align*}
by noting that
$$
H_{p-1-b+k}=H_{p-1-b}+\sum_{j=1}^k\frac{1}{p-1-b+j}\equiv
H_{b}+\sum_{j=1}^{k}\frac{1}{j+\beta-1}\pmod{p}.$$
And if $k\geq b+1$, we also have
\begin{align*}
\frac{(\beta)_{p+k}}{(1)_{p+k}}=&\frac{\Gamma_p(\beta+p+k)\Gamma_p(1)}{\Gamma_p(\beta)\Gamma_p(p+k+1)}\cdot\frac{tp}{p}\cdot\frac{(t+1)p}{p}\equiv
(t+1)\cdot\frac{(\beta)_k}{(1)_k}\\
\equiv&t\cdot\frac{(\beta)_k}{(1)_k}+tp\cdot\frac{(\beta)_k}{(1)_k}\cdot\bigg(H_{p-1-b}+\sum_{j=0}^{k-1}\frac{1}{j+\beta}-H_k\bigg)\pmod{p^2}.
\end{align*}

Now by the quadratic transformation (\ref{Eulertransformation}),
\begin{align*}
\sum_{k=0}^a\frac{(-a)_{k}(\beta)_{k}}{(1)_{k}^2}\cdot z^k=&
{}_2F_1\bigg[\begin{matrix}
-a&\beta\\
&1
\end{matrix}\bigg|\,z\bigg]
=
(1-z)^a{}_2F_1\bigg[\begin{matrix}
-a&1-\beta\\
&1
\end{matrix}\bigg|\,\frac{z}{z-1}\bigg]\\
=&(1-z)^a\sum_{k=0}^a\frac{(-a)_{k}(1-\beta)_{k}}{(1)_{k}^2}\cdot\frac{z^k}{(z-1)^k}.
\end{align*}
Also, $\langle 1-\beta\rangle_p=p-1-b$ and
$
H_b\equiv H_{p-1-b}\pmod{p}.
$
So it suffices to show that
\begin{align}\label{akbetak1k2zk}
&\sum_{k=0}^a\frac{(-a)_{k}(\beta)_{k}}{(1)_{k}^2}\cdot z^k\cdot\bigg(\sum_{j=0}^{k-1}\frac{1}{j+\beta}-2H_k\bigg)\notag\\
=&
(1-z)^a\sum_{k=0}^a\frac{(-a)_{k}(1-\beta)_{k}}{(1)_{k}^2}\cdot\frac{z^k}{(z-1)^k}\cdot\bigg(\sum_{j=0}^{k-1}\frac{1}{j+1-\beta}-2H_k\bigg).
\end{align}
Clearly (\ref{akbetak1k2zk}) immediately follows from
$$
\frac{d}{dx}\bigg({}_2F_1\bigg[\begin{matrix}
-a&\beta+x\\
&1+2x
\end{matrix}\bigg|\,z\bigg]\bigg)\bigg|_{x=0}
=\frac{d}{dx}\bigg(
(1-z)^a{}_2F_1\bigg[\begin{matrix}
-a&1-\beta+x\\
&1+2x
\end{matrix}\bigg|\,\frac{z}{z-1}\bigg]\bigg)\bigg|_{x=0},
$$
by noting that
$$
\frac{d}{d x}\bigg(\frac{(\beta+x)_k}{(1+2x)_k}\bigg)\bigg|_{x=0}=\frac{(\beta)_k}{(1)_k}\cdot\bigg(\sum_{j=0}^{k-1}\frac{1}{\beta+j}-\sum_{j=0}^{k-1}\frac{2}{1+j}\bigg).$$
\end{proof}
In particular, substituting $\beta=1/2$ in (\ref{apkbetapkzkt}), we get
\begin{corollary}\label{apk12pkzk} Suppose that $0\leq a\leq p-1$. Then
\begin{equation}\label{apk12pkzkcong}
\sum_{k=0}^a\frac{(-a-p)_{p+k}(\frac12)_{p+k}}{(1)_{p+k}^2}\cdot z^k\equiv
(1-z)^a\sum_{k=0}^a\frac{(-a-p)_{p+k}(\frac12)_{p+k}}{(1)_{p+k}^2}\cdot\frac{z^k}{(z-1)^k}\pmod{p^2}.
\end{equation}
\end{corollary}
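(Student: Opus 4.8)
The plan is to obtain Corollary \ref{apk12pkzk} as the single special case $\beta=\tfrac12$ of Lemma \ref{apkbetapkzkt}, so that essentially no new work is needed beyond bookkeeping the constants. First I would compute the quantity $t=(\beta+\langle-\beta\rangle_p)/p$ at $\beta=\tfrac12$. Since $p$ is odd we have $-\tfrac12\equiv(p-1)/2\pmod p$, whence $\langle-\tfrac12\rangle_p=(p-1)/2$ and therefore
$$
t=\frac{\tfrac12+\tfrac{p-1}2}{p}=\frac{p/2}{p}=\frac12.
$$
In particular $1-t=\tfrac12=t$, so the two scalar prefactors in Lemma \ref{apkbetapkzkt} coincide.

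The crucial structural observation is that at $\beta=\tfrac12$ one has $1-\beta=\tfrac12=\beta$, so the Pochhammer symbol $(1-\beta)_{p+k}$ appearing on the right-hand side of Lemma \ref{apkbetapkzkt} is literally equal to $(\tfrac12)_{p+k}=(\beta)_{p+k}$ on the left-hand side. Thus both summations in Lemma \ref{apkbetapkzkt} collapse to exactly the sums displayed in (\ref{apk12pkzkcong}), and substituting $\beta=\tfrac12$ and $t=\tfrac12$ into the conclusion of that lemma yields
$$
\frac12\sum_{k=0}^a\frac{(-a-p)_{p+k}(\tfrac12)_{p+k}}{(1)_{p+k}^2}\,z^k\equiv\frac12(1-z)^a\sum_{k=0}^a\frac{(-a-p)_{p+k}(\tfrac12)_{p+k}}{(1)_{p+k}^2}\cdot\frac{z^k}{(z-1)^k}\pmod{p^2}.
$$

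To finish I would cancel the common factor $\tfrac12$ on both sides. The only point deserving a line of care is precisely this cancellation modulo $p^2$: since $p$ is an odd prime, $2$ is a unit in $\Z/p^2\Z$, so multiplying the above congruence through by $2$ is legitimate and produces exactly (\ref{apk12pkzkcong}). There is no genuine obstacle here—the content lies entirely in Lemma \ref{apkbetapkzkt}, and the corollary is the observation that $\beta=\tfrac12$ is the fixed point of the involution $\beta\mapsto 1-\beta$ that governs that lemma, which forces the two sides to become reflections of a single sum.
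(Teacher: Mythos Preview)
Your proposal is correct and follows exactly the paper's own approach: the paper simply states that the corollary is obtained by substituting $\beta=\tfrac12$ in Lemma~\ref{apkbetapkzkt}, and you have merely spelled out the arithmetic that $t=1-t=\tfrac12$ and $1-\beta=\beta$, then cancelled the unit $\tfrac12$ modulo $p^2$.
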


We are ready to prove Theorem \ref{alpha12zalpha12zz1}. Let $a=\langle-\alpha\rangle_p$ and $s=-(\alpha+a)/p$. 
Let
$$
\Psi_1(x)={}_2F_1\bigg[\begin{matrix}
-a-x&\frac12\\
&1
\end{matrix}\bigg|\,z\bigg]_{p-1},\qquad
\Psi_2(x)=(1-z)^{a}{}_2F_1\bigg[\begin{matrix}
-a-x&\frac12\\
&1
\end{matrix}\bigg|\,\frac{z}{z-1}\bigg]_{p-1},
$$
and let
$$
\Phi(x)=\bigg(1-\frac z2\bigg)^{a}\cdot{}_2F_1\bigg[\begin{matrix}
-\frac12(a+x)&\frac12-\frac12(a+x)\\
&1
\end{matrix}\bigg|\,\frac{z^2}{(z-2)^2}\bigg]_{p-1}.
$$
Since either $a/2$ and $a/2-1/2$ is an non-negative integer, by (\ref{alpha12zalpha12zz1e}),
\begin{align*}
\Psi_1(0)+(1-z)\Psi_2(0)=&{}_2F_1\bigg[\begin{matrix}
-a&\frac12\\
&1
\end{matrix}\bigg|\,z\bigg]+(1-z)^{a+1}{}_2F_1\bigg[\begin{matrix}
-a&\frac12\\&1
\end{matrix}\bigg|\,\frac{z}{z-1}\bigg]\\
=&2\bigg(1-\frac z2\bigg)^{a+1}\cdot{}_2F_1\bigg[\begin{matrix}
-\frac12a&\frac12-\frac12a\\
&1
\end{matrix}\bigg|\,\frac{z^2}{(z-2)^2}\bigg]=(2-z)\cdot\Phi(0).
\end{align*}
On the other hand, by Lemma \ref{polynomialzsp}, we have
\begin{align*}
&
\big(\Psi_1(sp)-\Psi_1(0)\big)+(1-z)^{1+s(p-1)}\cdot\big(\Psi_2(sp)-\Psi_2(0)\big)\notag\\
\equiv&s\big(\Psi_1(p)-\Psi_1(0)\big)+s\big((1-z)^{p}\cdot\Psi_2(p)-(1-z)\cdot\Psi_2(0)\big)
\pmod{p^2}
\end{align*}
and
\begin{align*}
\bigg(1-\frac z2\bigg)^{1+s(p-1)}\cdot\Phi(sp)-\bigg(1-\frac z2\bigg)\cdot\Phi(0)
\equiv s\bigg(1-\frac z2\bigg)^{p}\cdot\Phi(p)-s\bigg(1-\frac z2\bigg)\cdot\Phi(0)
\pmod{p^2}.
\end{align*}
It suffices to show that
$$
\Psi_1(p)+(1-z)^{p}\cdot\Psi_2(p)\equiv 2\bigg(1-\frac z2\bigg)^{p}\cdot\Phi(p)\pmod{p^2}.
$$
Note that either $(a+p)/2$ or $(a+p-1)/2$ is an integer lying in $[0,p-1]$ now. Since
$$
{}_2F_1\bigg[\begin{matrix}
-a-p&\frac12\\
&1
\end{matrix}\bigg|\,z\bigg]=
{}_2F_1\bigg[\begin{matrix}
-a-p&\frac12\\
&1
\end{matrix}\bigg|\,z\bigg]_{p-1}+\sim_{k=0}^a\frac{(-a-p)_{p+k}(\frac12)_{p+k}}{(1)_{p+k}^2}\cdot z^{p+k},
$$
we have
\begin{align*}
2\bigg(1-\frac z2\bigg)^{p}\cdot\Phi(p)=
&
2\bigg(1-\frac z2\bigg)^{a+p}\cdot{}_2F_1\bigg[\begin{matrix}
-\frac12(a+p)&\frac12-\frac12(a+p)\\
&1
\end{matrix}\bigg|\,\frac{z^2}{(z-2)^2}\bigg]\\
=&{}_2F_1\bigg[\begin{matrix}
-a-p&\frac12\\
&1
\end{matrix}\bigg|\,z\bigg]+(1-z)^{a+p}{}_2F_1\bigg[\begin{matrix}
-a-p&\frac12\\&1
\end{matrix}\bigg|\,\frac{z}{z-1}\bigg]\\
\equiv&\Psi_1(p)+(1-z)^p\Psi_2(p)\pmod{p^2},
\end{align*}
where the last step follows from (\ref{apk12pkzkcong}).\qed

\section{The $p$-adic quadratic ${}_2F_1$ transformation IV: $z\to 4\sqrt{z}/(1+\sqrt{z})^2$}
\label{sectionq2F1z4sz1sz2}
\setcounter{lemma}{0}
\setcounter{theorem}{0}
\setcounter{corollary}{0}
\setcounter{remark}{0}
\setcounter{equation}{0}
\setcounter{conjecture}{0}

Let us turn to the proof of Theorem \ref{alphaalphaz2alpha124zz12}.
Let $a=\langle-\alpha\rangle_p$. Let
$$
\Psi_1(x)={}_2F_1\bigg[\begin{matrix}
-a-x&-a-x\\
&1
\end{matrix}\bigg|\,z^2\bigg]_{p-1},\quad
\Psi_2(x)=z^{2a-1}{}_2F_1\bigg[\begin{matrix}
-a-x&-a-x\\
&1
\end{matrix}\bigg|\,\frac1{z^2}\bigg]_{p-1}
$$
and
$$
\Phi(x)=(1+z)^{2a-1}\cdot{}_2F_1\bigg[\begin{matrix}
-a-x&\frac12\\
&1
\end{matrix}\bigg|\,\frac{4z}{(1+z)^2}\bigg]_{p-1}.
$$
In order to prove Theorem \ref{alphaalphaz2alpha124zz12}, we need a lemma which is similar as Corollary \ref{apk12pkzk}.
\begin{lemma}\label{apk12pk4zz1k}
\begin{align}\label{apk12pk4zz1kc}
&2\sum_{k=0}^a\frac{(-a-p)_{p+k}(\frac12)_{p+k}}{(1)_{p+k}^2}\cdot \bigg(\frac{4z}{(1+z)^2}\bigg)^k\notag\\
\equiv&-\frac{\Phi(0)}{4^{p-1}(1+z)^{2a-1}}-\frac{p}{(1+z)^{2a}}\cdot\frac{\Psi_1'(0)+z\cdot\Psi_2'(0)}{2}\pmod{p^2}.
\end{align}
\end{lemma}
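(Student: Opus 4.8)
The plan is to prove the congruence by expanding the coefficients $\frac{(-a-p)_{p+k}(\frac12)_{p+k}}{(1)_{p+k}^2}$ to first order in $p$ and then matching the two resulting pieces against the main term and the derivative term on the right. The tools I would use are exactly the two already developed in this section: the $p$-adic gamma expansions of the ``barrier-crossing'' Pochhammer ratios worked out in the proof of Lemma~\ref{apkbetapkzkt} (specialized to $\beta=\tfrac12$, and thus also governed by Corollary~\ref{apk12pkzk}), together with the polynomial form of the quadratic transformation (\ref{alphaalphaz2alpha124zz12e}) at the integer parameter $\alpha=-a$, whose argument $w:=4z/(1+z)^2$ is invariant under $z\mapsto 1/z$. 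This last symmetry is what will force the right-hand side to contain the \emph{symmetric} combination $\Psi_1'(0)+z\,\Psi_2'(0)$ rather than either derivative alone; indeed, from $\Psi_2(x,z)=z^{2a-1}{}_2F_1\big[\begin{smallmatrix}-a-x&-a-x\\&1\end{smallmatrix}\big|\,1/z^2\big]_{p-1}$ one checks $z^{2a-1}\Psi_1(x,1/z)=\Psi_2(x,z)$, so that $(1+z)^{-2a}(\Psi_1'(0)+z\Psi_2'(0))$ is genuinely $z\mapsto 1/z$ invariant, matching the invariance of the left-hand sum.

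First I would write $c_k:=\frac{(-a-p)_{p+k}(\frac12)_{p+k}}{(1)_{p+k}^2}$ and expand $c_k\equiv c_k^{(0)}+p\,c_k^{(1)}\pmod{p^2}$ by crossing the barrier at $p$ in each Pochhammer, using verbatim the two displayed expansions from the proof of Lemma~\ref{apkbetapkzkt}: namely $\frac{(-a-p)_{p+k}}{(1)_{p+k}}\equiv-\frac{(-a)_k}{(1)_k}\big(1+p(H_{p-a-1}-H_k)\big)$ and the $\beta=\tfrac12$ case of the $\frac{(\beta)_{p+k}}{(1)_{p+k}}$ expansion. The leading factor $c_k^{(0)}$ is then $-\tfrac12\frac{(-a)_k(\frac12)_k}{(1)_k^2}$, so that $2\sum_{k=0}^a c_k^{(0)}w^k\equiv-{}_2F_1\big[\begin{smallmatrix}-a&\frac12\\&1\end{smallmatrix}\big|\,w\big]\pmod p$; rewriting via (\ref{alphaalphaz2alpha124zz12e}) at $\alpha=-a$ and using $\Phi(0)=(1+z)^{2a-1}{}_2F_1\big[\begin{smallmatrix}-a&\frac12\\&1\end{smallmatrix}\big|\,w\big]$ identifies this with $-\Phi(0)/\big((1+z)^{2a-1}\big)$ modulo $p$. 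The promotion of this to the stated $-\Phi(0)/\big(4^{p-1}(1+z)^{2a-1}\big)$ modulo $p^2$ comes from the $k$-\emph{independent} part of the first-order correction $c_k^{(1)}$, which, via the standard Fermat-quotient congruence for $\sum 1/(2j+1)$, assembles into the single factor $4^{p-1}$.

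Next I would handle the $k$-\emph{dependent} part of $2p\sum_{k=0}^a c_k^{(1)}w^k$. Since each such $c_k^{(1)}$ is precisely the logarithmic derivative in the parameter of $(-a-x)_{p+k}^2/(1)_{p+k}^2$ at $x=0$ (this is the same device used in Lemmas~\ref{Gammapadicderivativealphabeta} and~\ref{apkbetapkzkt}), differentiating the polynomial identity (\ref{alphaalphaz2alpha124zz12e}) in $\alpha=-a-x$ converts this sum into the $x$-derivative of a truncated $z^2$-series. The $z\mapsto 1/z$ invariance of $w$ then folds the contributions into the symmetric combination, producing exactly $-\frac{p}{(1+z)^{2a}}\cdot\frac{\Psi_1'(0)+z\Psi_2'(0)}{2}$. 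Summing the main and derivative parts gives the claimed congruence.

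The hard part will be the $p$-adic valuation bookkeeping split across the ranges $0\le k\le \frac{p-1}2$ and $\frac{p+1}2\le k\le a$: in the second range $(\tfrac12)_{p+k}$ crosses the barrier twice, so $c_k^{(0)}$ vanishes — consistently with $(\tfrac12)_k\equiv0\pmod p$ and with the vanishing of the corresponding coefficient of ${}_2F_1[-a,\tfrac12;1;w]$ — while $c_k$ itself is $O(p)$ and contributes only to the derivative term. The most delicate point is isolating the $k$-independent harmonic contribution that refines the main term to carry the Fermat quotient $4^{p-1}$, keeping it separate from the $k$-dependent harmonic contribution that builds the derivative term, and then verifying that the prefactors $(1+z)^{-(2a-1)}$ and $(1+z)^{-2a}$ emerge exactly as written once the $z\mapsto 1/z$ symmetry has been invoked to pair $\Psi_1'(0)$ with $z\,\Psi_2'(0)$.
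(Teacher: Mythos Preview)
Your overall architecture---expand $c_k$ via the barrier-crossing formulas from Lemma~\ref{apkbetapkzkt}, isolate a Fermat-quotient factor $4^{p-1}$ from the $k$-independent harmonic piece, and match the $k$-dependent piece to a derivative on the $z^2$-side---is correct, and this is exactly what the paper does. But there is a genuine gap in your treatment of the $k$-dependent correction.

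Specialising $\beta=\tfrac12$ in the expansions of Lemma~\ref{apkbetapkzkt} gives
\[
c_k\equiv-\frac12\cdot\frac{(-a)_k(\tfrac12)_k}{(1)_k^2}\Big(1+p\big(H_{p-1-a}+H_{\frac{p-1}{2}}+\sum_{j=0}^{k-1}\tfrac{1}{j+\frac12}-2H_k\big)\Big)\pmod{p^2},
\]
so the $k$-dependent part of $c_k^{(1)}$ is $\sum_{j<k}\frac{1}{j+1/2}-2H_k=\frac{d}{dx}\log\frac{(\frac12+x)_k}{(1+2x)_k}\big|_{x=0}$. This is \emph{not} the logarithmic derivative of $(-a-x)_{p+k}^2/(1)_{p+k}^2$, and it is not what you obtain by differentiating (\ref{alphaalphaz2alpha124zz12e}) in $\alpha$: on the $w$-side that differentiation produces $\frac{d}{dx}(-a-x)_k\big|_{x=0}\cdot\frac{(\frac12)_k}{(1)_k^2}$, whose harmonic content is $H_a-H_{a-k}$, a completely different object. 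The one-parameter identity (\ref{alphaalphaz2alpha124zz12e}) therefore cannot carry this term across.

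What is needed---and this is what the paper uses---is the \emph{two}-parameter transformation (\ref{alphabetaz2}),
\[
{}_2F_1\!\begin{bmatrix}\alpha&\beta\\ &\alpha-\beta+1\end{bmatrix}\!(z^2)=(1+z)^{-2\alpha}\,{}_2F_1\!\begin{bmatrix}\alpha&\alpha-\beta+\tfrac12\\ &2\alpha-2\beta+1\end{bmatrix}\!\Big(\frac{4z}{(1+z)^2}\Big),
\]
which at $\alpha=-a$, $\beta=-a-x$ sends $\phi(x):={}_2F_1[-a,\tfrac12+x;1+2x;w]$ to $(1+z)^{-2a}{}_2F_1[-a,-a-x;1+x;z^2]$. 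Now $\phi'(0)$ is exactly the sum with the correct harmonic weight, and on the $z^2$-side one has $\frac{d}{dx}\frac{(-a-x)_k}{(1+x)_k}\big|_{x=0}=\frac{(-a)_k}{(1)_k}(H_a-H_{a-k}-H_k)$. The extra $H_a$ is absorbed by combining $\phi(0)\cdot H_a+\phi'(0)$ (the $pH_a$ term coming from $H_{p-1-a}\equiv H_a$), and then the reflection $k\mapsto a-k$ via (\ref{aak1akak1k}) pairs $(H_a-H_{a-k})$ with $(H_a-H_k)$ to yield exactly $\tfrac12\big(\Psi_1'(0)+z\,\Psi_2'(0)\big)$. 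Your $z\mapsto 1/z$ intuition is the right heuristic, but the actual mechanism is this reflection in the coefficient index, not an invariance argument applied directly to the identity (\ref{alphaalphaz2alpha124zz12e}).
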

\begin{proof}
According to the discussions in the proof of Lemma \ref{apkbetapkzkt}, we know that
$$
\frac{(-a-p)_{p+k}(\frac12)_{p+k}}{(1)_{p+k}^2}\equiv
-\frac{(-a)_{k}(\frac12)_{k}}{2\cdot(1)_{k}^2}\bigg(1+p\bigg(H_{p-1-a}+H_{\frac{p-1}2}+\sum_{j=0}^{k-1}\frac{1}{j+\frac12}-2H_k\bigg)\bigg)\pmod{p^2}
$$
for any $0\leq k\leq a$.
Let
$$
\phi(x)={}_2F_1\bigg[\begin{matrix}
-a&\frac12+x\\
&1+2x
\end{matrix}\bigg|\,\frac{4z}{(1+z)^2}\bigg].
$$
Clearly for $0\leq k\leq a$,
$$
\frac{d}{dx}\bigg(\frac{(\frac12+x)_k}{(1+2x)_k}\bigg)\bigg|_{x=0}=
\frac{(\frac12+x)_k}{(1+2x)_k}\sum_{j=0}^{k-1}\bigg(\frac{1}{j+\frac12}-\frac{2}{j+1}\bigg).
$$
Furthermore, we have $H_{p-1-a}\equiv H_a\pmod{p}$ and
$$
H_{\frac{p-1}{2}}\equiv \frac{2(2^{1-p}-1)}{p}\equiv \frac{4^{1-p}-1}{p}\pmod{p},
$$
since
$$
2^p=2+\sum_{j=1}^{p-1}\frac{p}{j}\cdot\binom{p-1}{j-1}\equiv
2+p\cdot(H_{p-1}-H_{\frac{p-1}{2}})\equiv 2-p\cdot H_{\frac{p-1}{2}}\pmod{p^2}.
$$
Hence
\begin{align*}
\sum_{k=0}^a\frac{(-a-p)_{p+k}(\frac12)_{p+k}}{(1)_{p+k}^2}\cdot \bigg(\frac{4z}{(1+z)^2}\bigg)^k
\equiv-\frac{\phi(0)}{2}\cdot(4^{1-p}+p\cdot H_{a})-\frac{p}{2}\cdot\phi'(0)\pmod{p^2}.
\end{align*}
On the other hand, we have the following general form of (\ref{alphaalphaz2alpha124zz12e}) \cite[Eq. (3.1.11)]{AAR99}
\begin{equation}\label{alphabetaz2}
{}_2F_1\bigg[\begin{matrix}
\alpha&\beta\\
&\alpha-\beta+1
\end{matrix}\bigg|\,z^2\bigg]=(1+z)^{-2\alpha}{}_2F_1\bigg[\begin{matrix}
\alpha&\alpha-\beta+\frac12\\
&2\alpha-2\beta+1
\end{matrix}\bigg|\,\frac{4z}{(1+z)^2}\bigg].
\end{equation}
Hence
$$
\phi(x)=\frac1{(1+z)^{2a}}\cdot
{}_2F_1\bigg[\begin{matrix}
-a&-a-x\\
&1+x
\end{matrix}\bigg|\,z^2\bigg]=
\frac{z^{2a}}{(1+z)^{2a}}\cdot
{}_2F_1\bigg[\begin{matrix}
-a&-a-x\\
&1+x
\end{matrix}\bigg|\,\frac1{z^2}\bigg].
$$
Note that for any $0\leq k\leq a$, we have
$$
\frac{d}{dx}\bigg(\frac{(-a-x)_k}{(1+x)_k}\bigg)\bigg|_{x=0}=
\frac{(-a)_k}{(1)_k}\sum_{j=0}^{k-1}\bigg(\frac{1}{a-j}-\frac1{j+1}\bigg)=
\frac{(-a)_k}{(1)_k}\cdot(H_a-H_{a-k}-H_k).
$$
Also, clearly
$$
\frac{d}{dx}\bigg(\frac{(-a-x)_k}{(1)_k}\bigg)\bigg|_{x=0}=
\frac{(-a)_k}{(1)_k}\cdot(H_a-H_{a-k}).$$
So by (\ref{aak1akak1k}), we get
\begin{align*}
\phi(0)\cdot H_a+\phi'(0)
=&H_a\sum_{k=0}^a\frac{(-a)_k^2}{(1)_k^2}\cdot z^{2k}+\sum_{k=0}^a\frac{(-a)_k^2}{(1)_k^2}\cdot(H_a-H_{a-k}-H_k)\cdot z^{2k}\\
=&\sum_{k=0}^a\frac{(-a)_k^2}{(1)_k^2}\cdot(H_a-H_{a-k})\cdot z^{2k}+
\sum_{k=0}^a\frac{(-a)_{a-k}^2}{(1)_{a-k}^2}\cdot(H_a-H_{k})\cdot z^{2k}\\
=&\frac{d}{dx}\bigg(\sum_{k=0}^a\frac{(-a)_k(-a-x)_k}{(1)_k^2}\cdot z^{2k}+
z^{2a}\sum_{k=0}^a\frac{(-a)_{k}(-a-x)_k}{(1)_{k}^2}\cdot \frac1{z^{2k}}\bigg)\bigg|_{x=0}.
\end{align*}
Thus
\begin{align*}
&-2\sum_{k=0}^a\frac{(-a-p)_{p+k}(\frac12)_{p+k}}{(1)_{p+k}^2}\cdot \bigg(\frac{4z}{(1+z)^2}\bigg)^k\\
\equiv&4^{1-p}\phi(0)+\frac{p}{(1+z)^{2a}}\cdot\frac{d}{dx}\bigg(
{}_2F_1\bigg[\begin{matrix}
-a&-a-x\\
&1
\end{matrix}\bigg|\,z^2\bigg]+z^{2a}{}_2F_1\bigg[\begin{matrix}
-a&-a-x\\
&1
\end{matrix}\bigg|\,\frac1{z^2}\bigg]\bigg)\bigg|_{x=0}\\
\equiv&\frac{\Phi(0)}{4^{p-1}(1+z)^{2a-1}}+\frac{p}{(1+z)^{2a}}\cdot\frac{\Psi_1'(0)+z\cdot\Psi_2'(0)}{2}
\pmod{p^2}.
\end{align*}
\end{proof}
Let us return the proof of Theorem \ref{alphaalphaz2alpha124zz12}. By (\ref{alphaalphaz2alpha124zz12e}),
 we have $$\Psi_1(0)=z\cdot\Psi_2(0)=(1+z)\cdot\Phi(0),$$
whence
$$
\Psi_1(0)+z^2\cdot\Psi_2(0)=(1+z)^2\cdot\Phi(0).
$$
According to Lemma \ref{polynomialzsp}, it suffices to show that
$$
(1+z)^{2p}\cdot\Phi(p)\equiv\Psi_1(p)+z^{2p}\cdot\Psi_2(p)\pmod{p^2}.
$$
In view of (\ref{alphaalphaz2alpha124zz12e}), (\ref{apap1zapapzapap1z2F1c}) and (\ref{apk12pk4zz1kc}), we have
\begin{align*}
&{}_2F_1\bigg[\begin{matrix}
-a-p&\frac12\\
&1
\end{matrix}\bigg|\,\frac{4z}{(1+z)^2}\bigg]_{p-1}\\
=&{}_2F_1\bigg[\begin{matrix}
-a-p&\frac12\\
&1
\end{matrix}\bigg|\,\frac{4z}{(1+z)^2}\bigg]-\bigg(\frac{4z}{(1+z)^2}\bigg)^p\sum_{k=0}^a\frac{(-a-p)_{p+k}(\frac12)_{p+k}}{(1)_{p+k}^2}\cdot \bigg(\frac{4z}{(1+z)^2}\bigg)^k\\
\equiv&\frac{\Psi_1(p)+z^{2p+1}\Psi_2(p)}{(1+z)^{2a+2p}}+\frac{2z^p\cdot\Phi(0)}{(1+z)^{2p+2a-1}}+\frac{z^p\cdot p\big(\Psi_1'(0)+z\cdot\Psi_2'(0)\big)}{(1+z)^{2a+2p}}\pmod{p^2}.
\end{align*}
Clearly
\begin{align*}
z^p\cdot p\big(\Psi_1'(0)+z\cdot\Psi_2'(0)\big)\equiv
z\cdot \big(\Psi_1(p)-\Psi_1(0)\big)+z^{2p}\cdot \big(\Psi_2(p)-\Psi_2(0)\big)\pmod{p^2}.
\end{align*}
Therefore
\begin{align*}
(1+z)^{2p}\cdot\Phi(p)
\equiv&\Psi_1(p)+z^{2p}\cdot\Psi_2(p)+2z^{p}\cdot\Phi(0)-
\frac{z\cdot\Psi_1(0)+z^{2p}\cdot\Psi_2(0)}{1+z}\\
=&\Psi_1(p)+z^{2p}\cdot\Psi_2(p)-\frac{z(z^{p-1}-1)^2\cdot\Psi_1(0)}{1+z}\\
\equiv&\Psi_1(p)+z^{2p}\cdot\Psi_2(p)
\pmod{p^2}.
\end{align*}
\qed

\section{The $p$-adic quadratic ${}_2F_1$ transformation V: $z\to z^2/(4z-4)$}
\label{sectionq2F1zz24z4}
\setcounter{lemma}{0}
\setcounter{theorem}{0}
\setcounter{corollary}{0}
\setcounter{remark}{0}
\setcounter{equation}{0}
\setcounter{conjecture}{0}

In this section, we shall give the proof of 
Theorem \ref{alpha12zalpha12z1z}.
\begin{proof}[Proof of Theorem \ref{alpha12zalpha12z1z}]
Let $a=\langle-\alpha\rangle_{p}$.
Let
$$
\Psi_1(x,y)={}_2F_1\bigg[\begin{matrix}
-a+x&\frac12(1-y)\\
&1
\end{matrix}\bigg|\,z\bigg],$$
$$
\Psi_2(x,y)=(1-z)^{a}{}_2F_1\bigg[\begin{matrix}
-a+x&\frac12(1-y)\\
&1
\end{matrix}\bigg|\,\frac{z}{z-1}\bigg],
$$
and
$$
\Phi(x)=(1-z)^{\frac a2}\cdot{}_2F_1\bigg[\begin{matrix}
-\frac12(a-x)&\frac12+\frac12(a-x)\\
&1
\end{matrix}\bigg|\,\frac{z^2}{4z-4}\bigg].
$$
In view of (\ref{alpha12zalpha12z1ze}),
$$
\Psi_1(0,0)=\Psi_2(0,0)=(1-z)^{\frac a2}\cdot{}_2F_1\bigg[\begin{matrix}
-\frac12a&\frac12+\frac12a\\
&1
\end{matrix}\bigg|\,\frac{z^2}{4z-4}\bigg]=\Phi(0),
$$
i.e., $$\Psi_1(0,0)+\Psi_2(0,0)=2\Phi(0).$$
Furthermore, applying (\ref{alpha12zalpha12z1ze}) reversely,
we get
\begin{align}2\Phi(p)=&2(1-z)^{\frac a2}\cdot{}_2F_1\bigg[\begin{matrix}\frac12+\frac12(p-1-a)&-\frac12(p-1-a)\\
&1
\end{matrix}\bigg|\,\frac{z^2}{4z-4}\bigg]_{p-1}\notag\\
=&(1-z)^{a-\frac{p-1}{2}}\cdot\big(\Omega_1(0)+\Omega_2(0)\big),
\end{align}
where
$$
\Omega_1(y)={}_2F_1\bigg[\begin{matrix}
a+1-p&\frac12(1-y)\\
&1
\end{matrix}\bigg|\,z\bigg]_{p-1}$$
and
$$ 
\Omega_2(y)=(1-z)^{p-1-a}{}_2F_1\bigg[\begin{matrix}
a+1-p&\frac12(1-y)\\
&1
\end{matrix}\bigg|\,\frac{z}{z-1}\bigg]_{p-1}.
$$

By (\ref{alphaalphazz1e}), clearly
$$
\Omega_1(y)=\Omega_2(-y).
$$
Therefore
\begin{align*}
\Omega_1(0)-\Omega_1(p)\equiv -p\cdot\Omega_1'(0)=p\cdot\Omega_2'(0)
\equiv\Omega_2(p)-\Omega_2(0)\pmod{p^2},
\end{align*}
i.e.,
$$
\Omega_1(p)+\Omega_2(p)\equiv\Omega_1(0)+\Omega_2(0)\pmod{p^2}.
$$
It also follows from (\ref{alphaalphazz1e}) that
\begin{align*}
\Omega_1(p)=&{}_2F_1\bigg[\begin{matrix}
\frac12(1-p)&a+1-p\\
&1
\end{matrix}\bigg|\,z\bigg]\\
=&
(1-z)^{\frac{p-1}{2}}\cdot{}_2F_1\bigg[\begin{matrix}
\frac12(1-p)&p-a\\
&1
\end{matrix}\bigg|\,\frac{z}{z-1}\bigg]=(1-z)^{\frac{p-1}{2}-a}\cdot\Psi_2(p,p).
\end{align*}
Similarly, we have $\Omega_2(p)=(1-z)^{\frac{p-1}{2}-a}\cdot\Psi_1(p,p)$, too. On the other hand, clearly
\begin{align*}
\Psi_i(p,p)-\Psi_i(0,0)\equiv p\cdot\frac{\partial\Psi_i(x,0)}{\partial x}\bigg|_{x=0}+p\cdot
\frac{\partial\Psi_i(0,y)}{\partial y}\bigg|_{y=0}\pmod{p^2},\qquad i=1,2.
\end{align*}
Note that by (\ref{alphaalphazz1e}),
\begin{align*}
\Psi_1(0,y)={}_2F_1\bigg[\begin{matrix}
-a&\frac12(1-y)\\
&1
\end{matrix}\bigg|\,z\bigg]=(1-z)^a{}_2F_1\bigg[\begin{matrix}
-a&\frac12(1+y)\\
&1
\end{matrix}\bigg|\,\frac z{z-1}\bigg]=\Psi_2(0,-y),
\end{align*}
whence
$$
\frac{\partial\Psi_1(0,y)}{\partial y}\bigg|_{y=0}+\frac{\partial\Psi_2(0,y)}{\partial y}\bigg|_{y=0}=0.
$$
Thus
\begin{align*}
2\Phi(p)\equiv&(1-z)^{a-\frac{p-1}{2}}\cdot\big(\Omega_1(p)+\Omega_2(p)\big)
\equiv\Psi_1(p,p)+\Psi_2(p,p)\\
\equiv&\sum_{i=1}^2\bigg(\Psi_i(0,0)+
p\cdot\frac{\partial\Psi_i(x,0)}{\partial x}\bigg|_{x=0}+p\cdot
\frac{\partial\Psi_i(0,y)}{\partial y}\bigg|_{y=0}\bigg)\\
=&\sum_{i=1}^2\bigg(\Psi_i(0,0)+p\cdot\frac{\partial\Psi_i(x,0)}{\partial x}\bigg|_{x=0}\bigg)
\equiv\Psi_1(p,0)+\Psi_2(p,0)\pmod{p^2}.
\end{align*}
Letting $s=(\alpha+a)/p$, we get
$$
2\Phi(sp)\equiv \Psi_1(sp,0)+\Psi_2(sp,0)\pmod{p^2}.
$$
\end{proof}
Let us see an application of Theorem \ref{alpha12zalpha12z1z}.
\begin{corollary}
Suppose that $\alpha\in\Z_p$ with $\langle-\alpha\rangle_p$ is even. Then
\begin{align}\label{alpha1222F1}
{}_2F_1\bigg[\begin{matrix}
\alpha&\frac12\\
&1
\end{matrix}\bigg|\,2\bigg]_{p-1}\equiv
(-1)^{1+\frac12\langle-\alpha\rangle_p}\cdot\frac{\Gamma_p(\frac12)}{\Gamma_p(\frac12+\frac12\alpha)\Gamma_p(1-\frac12\alpha)}\pmod{p^2}.
\end{align}
\end{corollary}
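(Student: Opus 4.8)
The plan is to derive~(\ref{alpha1222F1}) as a direct specialization of Theorem~\ref{alpha12zalpha12z1z} at the single value $z=2$, followed by the $p$-adic Gauss evaluation already recorded in~(\ref{2F1alpha1alpha12}). First I would check that the hypotheses of Theorem~\ref{alpha12zalpha12z1z} hold for $z=2$: the corollary assumes that $\langle-\alpha\rangle_p$ is even, which is precisely one of the standing hypotheses, and $z-1=1$ is trivially prime to $p$, so the congruence~(\ref{alpha12zalpha12z1zG}) is available with $z=2$.

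Next I would substitute $z=2$ into the three rational arguments occurring in~(\ref{alpha12zalpha12z1zG}). One computes $1-z=-1$, $\tfrac{z}{z-1}=2$, and $\tfrac{z^2}{4z-4}=\tfrac44=1$. The decisive simplification is that both truncated ${}_2F_1$ functions on the left-hand side now have the same argument $2$ and therefore coincide, while the prefactor $(1-z)^{\langle-\alpha\rangle_p}=(-1)^{\langle-\alpha\rangle_p}$ equals $1$ because $\langle-\alpha\rangle_p$ is even. On the right-hand side $(1-z)^{\frac12\langle-\alpha\rangle_p}=(-1)^{\frac12\langle-\alpha\rangle_p}$, so~(\ref{alpha12zalpha12z1zG}) collapses to
\begin{equation*}
2\,{}_2F_1\bigg[\begin{matrix}\alpha&\frac12\\&1\end{matrix}\bigg|\,2\bigg]_{p-1}
\equiv
2(-1)^{\frac12\langle-\alpha\rangle_p}\,{}_2F_1\bigg[\begin{matrix}\frac12\alpha&\frac12-\frac12\alpha\\&1\end{matrix}\bigg|\,1\bigg]_{p-1}\pmod{p^2}.
\end{equation*}
Cancelling the common factor $2$ (legitimate since $p$ is odd) and then invoking the chain of congruences~(\ref{2F1alpha1alpha12}), which evaluates the truncated ${}_2F_1$ at argument $1$ as $-\Gamma_p(\tfrac12)/\big(\Gamma_p(1-\tfrac12\alpha)\Gamma_p(\tfrac12+\tfrac12\alpha)\big)$ modulo $p^2$, I would combine the extra sign $(-1)^{\frac12\langle-\alpha\rangle_p}$ with the leading $-1$ of the Gauss value to produce the overall factor $(-1)^{1+\frac12\langle-\alpha\rangle_p}$, which is exactly the right-hand side of~(\ref{alpha1222F1}).

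Since every ingredient is a specialization of results already established earlier in the paper, there is essentially no analytic obstacle; the only point deserving a moment's care is confirming that the Gauss-type evaluation~(\ref{alphabeta11c1}) underlying~(\ref{2F1alpha1alpha12}) really applies to the parameters $\tfrac12\alpha$ and $\tfrac12-\tfrac12\alpha$, i.e.\ that $\langle-\tfrac12\alpha\rangle_p+\langle\tfrac12\alpha-\tfrac12\rangle_p<p$. Writing $a=\langle-\alpha\rangle_p$ (even), one finds $\langle-\tfrac12\alpha\rangle_p=a/2$ and $\langle\tfrac12\alpha-\tfrac12\rangle_p=(p-1-a)/2$, whose sum is $(p-1)/2<p$; this is the very condition under which~(\ref{2F1alpha1alpha12}) was derived, so no further work is required. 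Hence the argument is a short deduction rather than a fresh proof.
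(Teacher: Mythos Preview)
Your proposal is correct and follows essentially the same route as the paper's own proof: substitute $z=2$ into Theorem~\ref{alpha12zalpha12z1z}, observe that both left-hand terms coincide so the factor $2$ cancels, and then evaluate the resulting ${}_2F_1$ at argument $1$ via~(\ref{2F1alpha1alpha12}) (equivalently Theorem~\ref{alphabeta11}). Your write-up is in fact slightly more careful than the paper's in explicitly verifying the hypothesis $\langle-\tfrac12\alpha\rangle_p+\langle\tfrac12\alpha-\tfrac12\rangle_p<p$ needed for~(\ref{alphabeta11c1}).
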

For example, for any prime $p\equiv1\pmod{3}$,
$$
{}_2F_1\bigg[\begin{matrix}
\frac13&\frac12\\
&1
\end{matrix}\bigg|\,2\bigg]_{p-1}\equiv (-1)^{\frac{p+5}{6}}\cdot\frac{\Gamma_p(\frac12)}{\Gamma_p(\frac23)\Gamma_p(\frac56)}\pmod{p^2}.
$$
\begin{proof}
According to (\ref{lambdalambdas}), we have
$$
(-1)^{\lambda_p(\alpha)}\equiv(-1)^{-\langle-\alpha\rangle_p}=1\pmod{p^2}
$$ 
since $\langle-\alpha\rangle_p$ is even now.
So substituting $z=2$ in Theorem \ref{alpha12zalpha12z1z} and applying Theorem \ref{alphabeta11}, we get
\begin{align*}
{}_2F_1\bigg[\begin{matrix}
\alpha&\frac12\\
&1
\end{matrix}\bigg|\,2\bigg]_{p-1}\equiv&
(-1)^{\frac12\langle-\alpha\rangle_p}{}_2F_1\bigg[\begin{matrix}
\frac12\alpha&\frac12-\frac12\alpha\\
&1
\end{matrix}\bigg|\,1\bigg]_{p-1}\\
\equiv&\frac{(-1)^{1+\frac12\langle-\alpha\rangle_p}\cdot\Gamma_p(\frac12)}{\Gamma_p(\frac12+\frac12\alpha)\Gamma_p(1-\frac12\alpha)}\pmod{p^2}.
\end{align*}
\end{proof}
Substituting $\alpha=1/2$ and $z=2$ in (\ref{alphaz4z1zc}), we have
\begin{equation}
{}_2F_1\bigg[\begin{matrix}
\frac14&\frac14\\
&1
\end{matrix}\bigg|\,-8\bigg]_{p-1}\equiv{}_2F_1\bigg[\begin{matrix}
\frac12&\frac12\\
&1
\end{matrix}\bigg|\,2\bigg]_{p-1}\equiv (-1)^{\frac{p+3}{4}}\cdot\frac{\Gamma_p(\frac12)}{\Gamma_p(\frac34)^2}\pmod{p^2}.
\end{equation}
for prime $p\equiv 1\pmod{4}$.
Furthermore, by setting $\alpha=1/2$ and $z=2$ in (\ref{alpha2alpha214zz1c}) and using (\ref{2F1alpha1alpha12}), we obtain that for prime $p\equiv 1\pmod{4}$,
\begin{align}\label{2F1143489}
{}_2F_1\bigg[\begin{matrix}
\frac14&\frac34\\
&1
\end{matrix}\bigg|\,\frac89\bigg]_{p-1}
\equiv&\frac{1}{3^{1-\lambda_p(\frac12)}}\bigg(
{}_2F_1\bigg[\begin{matrix}
\frac12&\frac12\\
&1
\end{matrix}\bigg|\,2\bigg]_{p-1}+2^{1-\lambda_p(\frac12)}{}_2F_1\bigg[\begin{matrix}
\frac12&\frac12\\
&1
\end{matrix}\bigg|\,\frac12\bigg]_{p-1}
\bigg)\notag\\
\equiv&-\jacob{6}{p}\cdot\frac{\Gamma_p(\frac12)}{\Gamma_p(\frac34)^2}\pmod{p^2}.
\end{align}
Combining (\ref{2F1143489}) with Theorem \ref{alpha1alphaz124z1z}, we also have
\begin{align}\label{3F21434123281}
{}_2F_1\bigg[\begin{matrix}
\frac14&\frac34&\frac12\\
&1&1
\end{matrix}\bigg|\,\frac{32}{81}\bigg]_{p-1}
\equiv\frac{\Gamma_p(\frac12)^2}{\Gamma_p(\frac34)^4}\pmod{p^2}
\end{align}
for $p\equiv 1\pmod{4}$.

\section{The $p$-adic linear ${}_2F_1$ transformation.}
\label{sectionql2F1}
\setcounter{lemma}{0}
\setcounter{theorem}{0}
\setcounter{corollary}{0}
\setcounter{remark}{0}
\setcounter{equation}{0}
\setcounter{conjecture}{0}

Let us consider Theorem \ref{alphaalphazz1}, i.e., the $p$-adic analogue of the linear transformation (\ref{Euleralphabeta1}).
Let $a=\langle-\alpha\rangle_p$. Let
$$
\Psi_1(x)={}_2F_1\bigg[\begin{matrix}
-a-x&-a-x\\
&1
\end{matrix}\bigg|\,z\bigg]_{p-1},\quad
\Psi_2(x)=z^a{}_2F_1\bigg[\begin{matrix}
-a-x&-a-x\\
&1
\end{matrix}\bigg|\,\frac1z\bigg]_{p-1},$$
and let
$$
\Phi(x)=(1-z)^a{}_2F_1\bigg[\begin{matrix}
-a-x&1+a+x\\
&1
\end{matrix}\bigg|\,\frac z{z-1}\bigg]_{p-1}.
$$
Then $\Psi_1(0)=\Phi(0)$ by (\ref{alphaalphazz1e}). Moreover, by (\ref{aak1akak1k}), we also have
$$
\Psi_1(0)=\sum_{k=0}^a\frac{(-a)_k^2}{(1)_k^2}\cdot z^k
=\sum_{k=0}^a\frac{(-a)_{a-k}^2}{(1)_{a-k}^2}\cdot z^{k}=z^a
\sum_{k=0}^a\frac{(-a)_{k}^2}{(1)_{k}^2}\cdot \frac1{z^{k}}=\Psi_2(0).
$$
So
\begin{align*}
\Psi_1(0)-z\cdot \Psi_2(0)=(1-z)\cdot\Phi(0).
\end{align*}

Clearly
\begin{align}\label{dx1za2F1ax1a1}
&\frac{d}{d x}\bigg((1-z)^a{}_2F_1\bigg[\begin{matrix}
-a-x&1+a\\
&1
\end{matrix}\bigg|\,\frac{z}{z-1}\bigg]_{p-1}\bigg)\bigg|_{x=0}\notag\\
\equiv&
\frac{d}{d x}\bigg((1-z)^a{}_2F_1\bigg[\begin{matrix}
-a-x&1+a-p\\
&1
\end{matrix}\bigg|\,\frac{z}{z-1}\bigg]\bigg)\bigg|_{x=0}\notag\\
=&\frac{(1-z)^a}p\bigg({}_2F_1\bigg[\begin{matrix}
-a-p&1+a-p\\
&1
\end{matrix}\bigg|\,\frac{z}{z-1}\bigg]-{}_2F_1\bigg[\begin{matrix}
-a&1+a-p\\
&1
\end{matrix}\bigg|\,\frac{z}{z-1}\bigg]\bigg)\notag\\
=&\frac{1}p\bigg(\frac1{(1-z)^p}\cdot{}_2F_1\bigg[\begin{matrix}
-a-p&p-a\\
&1
\end{matrix}\bigg|\,z\bigg]-{}_2F_1\bigg[\begin{matrix}
-a&p-a\\
&1
\end{matrix}\bigg|\,z\bigg]\bigg)\pmod{p}.
\end{align}
For $0\leq k\leq p-1$, in view of (\ref{padicgammaderivative}), we have
\begin{align*}
(-a-p)_k(p-a)_k=\frac{\Gamma_p(-a+k-p)}{\Gamma_p(-a-p)}\cdot\frac{\Gamma_p(-a+k+p)}{\Gamma_p(-a+p)}
\equiv\frac{\Gamma_p(-a+k)^2}{\Gamma_p(-a)^2}=(-a)_k^2\pmod{p^2}.
\end{align*}
And if $0\leq k\leq a$,
\begin{align*}
\frac{(-a-p)_{p+k}(p-a)_{p+k}}{(1)_{p+k}^2}=&\frac{\Gamma_p(-a+k)}{\Gamma_p(-a-p)}\cdot\frac{\Gamma_p(2p-a+k)}{\Gamma_p(p-a)}\cdot\frac{\Gamma_p(1)^2}{\Gamma_p(p+k+1)^2}\cdot\frac{(-p)\cdot p}{p^2}\notag\\
\equiv&-\frac{(-a)_k^2}{(1)_{k}^2}\cdot\big(1+2p(H_{p-1-a+k}-H_k)\big)\pmod{p^2}.
\end{align*}
Thus
\begin{equation}
{}_2F_1\bigg[\begin{matrix}
-a-p&p-a\\
&1
\end{matrix}\bigg|\,z\bigg]\equiv\sum_{k=0}^a\frac{(-a)_k^2}{(1)_k^2}\cdot z^k-
z^p\sum_{k=0}^a\frac{(-a)_k^2}{(1)_k^2}\cdot\big(1+2p(H_{a-k}-H_k)\big)\cdot z^k\pmod{p^2}.
\end{equation}
On the other hand, we have
$$
\frac{d}{d x}\bigg(\frac{(-a-x)_k}{(1)_k}\bigg)\bigg|_{x=0}=\frac{(-a)_k}{(1)_k}\sum_{j=0}^{k-1}\frac{1}{a-j}=\frac{(-a)_k}{(1)_k}\cdot(H_{a}-H_{a-k}).
$$
It follows that
\begin{align*}
\frac{(-a)_k^2}{(1)_{k}^2}\cdot(H_{a-k}-H_k)=\frac{d}{d x}\bigg(\frac{(-a)_{a-k}(-a-x)_{a-k}}{(1)_{a-k}^2}-\frac{(-a)_k(-a-x)_{k}}{(1)_k^2}\bigg)\bigg|_{x=0}\pmod{p}.
\end{align*}
In view of (\ref{dx1za2F1ax1a1}), we get
\begin{align}\label{dx1za2F1ax1a1modp}
&\frac{d}{d x}\bigg((1-z)^a{}_2F_1\bigg[\begin{matrix}
-a-x&1+a\\
&1
\end{matrix}\bigg|\,\frac{z}{z-1}\bigg]_{p-1}\bigg)\bigg|_{x=0}\notag\\
\equiv&
\frac{1}p\bigg(\frac{1-z^p}{(1-z)^p}\cdot{}_2F_1\bigg[\begin{matrix}
-a&-a\\
&1
\end{matrix}\bigg|\,z\bigg]-{}_2F_1\bigg[\begin{matrix}
-a&p-a\\
&1
\end{matrix}\bigg|\,z\bigg]\bigg)\notag\\
&-\frac{2z^p}{(1-z)^p}\cdot\frac{d}{d x}\bigg(\sum_{k=0}^a\frac{(-a)_{a-k}(-a-x)_{a-k}}{(1)_{a-k}^2}\cdot z^k-\sum_{k=0}^a\frac{(-a)_k(-a-x)_{k}}{(1)_k^2}\cdot z^k\bigg)\bigg|_{x=0}\pmod{p}.
\end{align}

Note that
\begin{align}
\Psi_1'(0)=&2\cdot\frac{d}{dx}\bigg({}_2F_1\bigg[\begin{matrix}
-a& -a-x\\
&1
\end{matrix}\bigg|\,z\bigg]_{p-1}\bigg)\bigg|_{x=0}\notag\\=&
2\cdot\frac{d}{dx}\bigg((1-z)^a{}_2F_1\bigg[\begin{matrix}
-a& 1+a+x\\
&1
\end{matrix}\bigg|\,\frac z{z-1}\bigg]_{p-1}\bigg)\bigg|_{x=0}
\end{align}
and
\begin{align}
\Psi_2'(0)=&2z^a\cdot\frac{d}{d x}\bigg({}_2F_1\bigg[\begin{matrix}
-a&-a-x\\
&1
\end{matrix}\bigg|\,\frac1z\bigg]\bigg)\bigg|_{x=0}\notag\\
=&2z^a\cdot\frac{d}{d x}\bigg(\sum_{k=0}^a\frac{(-a)_{a-k}(-a-x)_{a-k}}{(1)_{a-k}^2}\cdot\frac{1}{z^{a-k}}\bigg)\bigg|_{x=0}.
\end{align}
Hence by (\ref{dx1za2F1ax1a1modp}),
\begin{align*}
&\Phi'(0)=(1-z)^a\cdot\frac{d}{d x}\bigg({}_2F_1\bigg[\begin{matrix}
-a&1+a+x\\
&1
\end{matrix}\bigg|\,\frac{z}{z-1}\bigg]_{p-1}+{}_2F_1\bigg[\begin{matrix}
-a-x&1+a\\
&1
\end{matrix}\bigg|\,\frac{z}{z-1}\bigg]_{p-1}\bigg)\bigg|_{x=0}\\
\equiv&\frac{\Psi_1'(0)}{2}+\frac{1}p\bigg({}_2F_1\bigg[\begin{matrix}
-a&-a\\
&1
\end{matrix}\bigg|\,z\bigg]-{}_2F_1\bigg[\begin{matrix}
-a&p-a\\
&1
\end{matrix}\bigg|\,z\bigg]\bigg)+\frac{1-z^p-(1-z)^p}{p\cdot(1-z)^p}\cdot{}_2F_1\bigg[\begin{matrix}
-a&-a\\
&1
\end{matrix}\bigg|\,z\bigg]\\
&+\frac{2z^p}{(1-z)^p}\cdot\frac{d}{d x}\bigg(\sum_{k=0}^a\frac{(-a)_k(-a-x)_{k}}{(1)_k^2}\cdot z^k-\sum_{k=0}^a\frac{(-a)_{a-k}(-a-x)_{a-k}}{(1)_{a-k}^2}\cdot z^k\bigg)\bigg|_{x=0}\\
\equiv&\bigg(1+\frac{z^p}{(1-z)^p}\bigg)\cdot\Psi_1'(0)-\frac{z^p}{(1-z)^p}\cdot\Psi_2'(0)+\frac{1-z^p-(1-z)^p}{p\cdot(1-z)^p}\cdot{}_2F_1\bigg[\begin{matrix}
-a&-a\\
&1
\end{matrix}\bigg|\,z\bigg]\pmod{p}.
\end{align*}
Thus
\begin{align}\label{alphaalphazz1phippsip}
&(1-z)^p\cdot\Phi(p)\equiv (1-z)^p\cdot p\Phi'(0)+(1-z)^p\cdot\Phi(0)\notag\\
\equiv&p\big(\Psi_1'(0)-z^p\cdot\Psi_2'(0)\big)+\big(1-z^p-(1-z)^p\big)\cdot\Psi_1(0)+(1-z)^p\cdot\Phi(0)\notag\\
\equiv&\big(\Psi_1(p)-\Psi_1(0)\big)-z^p\cdot\big(\Psi_2(p)-\Psi_2(0)\big)+(1-z^p)\cdot\Psi_1(0)\notag\\
=&\Psi_1(p)-z^p\cdot\Psi_2(p)\pmod{p^2}.
\end{align}
By Lemma \ref{polynomialzsp}, Theorem \ref{alphaalphazz1} is concluded.\qed

With help of Theorem \ref{alphaalphazz1}, we can give another proof of Corollary \ref{2F1alphaalpham1}. In fact, letting $z=1$ in (\ref{alphaalphazz1c}) and noting that $(-1)^{-\lambda_p(\alpha)}\equiv(-1)^{\langle-\alpha\rangle_p}=1\pmod{p^2}$ now, we have
\begin{align*}
2{}_2F_1\bigg[\begin{matrix}
\alpha&\alpha\\
&1
\end{matrix}\bigg|\,-1\bigg]_{p-1}\equiv2^{1-\lambda_p(\alpha)}{}_2F_1\bigg[\begin{matrix}
\alpha&1-\alpha\\
&1
\end{matrix}\bigg|\,\frac12\bigg]_{p-1}
\equiv-\frac{2^{1-\lambda_p(\alpha)}\cdot\Gamma_p(\frac12)}{\Gamma_p(1-\frac12\alpha)\Gamma_p(\frac12+\frac12\alpha)}\pmod{p^2},
\end{align*}
where (\ref{2F1alpha1alpha12}) is used in the second step. And substituting $x=\alpha/2$ and $m=2$ in (\ref{padicGMF}), 
$$
\Gamma_p\bigg(\frac\alpha2\bigg)\Gamma_p\bigg(\frac\alpha2+\frac12\bigg)=2^{-\lambda_p(\alpha)}\Gamma_p(\alpha)\Gamma_p\bigg(\frac12\bigg),
$$
i.e.,
\begin{align*}
\frac{2^{-\lambda_p(\alpha)}\cdot\Gamma_p(\frac12)}{\Gamma_p(1-\frac12\alpha)\Gamma_p(\frac12+\frac12\alpha)}=
-\frac{2\Gamma_p(1+\frac12\alpha)}{\Gamma_p(1+\alpha)\Gamma_p(1-\frac12\alpha)}.
\end{align*}

\section{Truncated ${}_2F_1$ series and complex multiplication}
\label{section2F1cm}
\setcounter{lemma}{0}
\setcounter{theorem}{0}
\setcounter{corollary}{0}
\setcounter{remark}{0}
\setcounter{equation}{0}
\setcounter{conjecture}{0}

Consider the elliptic curve
$$
\sE:\ y^2=x(x^2+Ax+B),
$$
where $A,B\in\Z$. Let $\omega,\omega'$ be a basis of periods of $\sE$. Then $\sE$ has {\it complex multiplication} if and only if $\omega'/\omega$ belongs to some imaginary quadratic field $\Q(\sqrt{-d})$ where $d$ is a square-free positive integer. An elliptic curve having complex multiplication is often called {\it CM elliptic curve}.

In \cite{CoHa91}, Coster and Hamme established a connection between the congruences for the Legendre polynomials and the CM elliptic curves.
\begin{theorem}\label{CoHa}
Let $\sE:\ y^2=x(x^2+Ax+B)$ be a CM elliptic curve, where $A,B\in\Z$. Suppose that 
$\omega,\omega'$ form a basis of periods of $\sE$ such that

\medskip\noindent
(i) $\tau=\omega'/\omega\in \Q(\sqrt{-d})$ for some square-free positive integer $d$, and the imaginary part of $\tau$ is positive;

\medskip\noindent
(ii)  $A=3\sP(\frac12\omega)$ and $\sqrt{\Delta}=\sP(\frac12\omega'+\frac12\omega)-\sP(\frac12\omega')
$, where $\Delta=A^2-4B$ and $\sP$ denotes the Weierstrass function.

\medskip\noindent
Suppose that $p$ is an odd prime with $\big(\frac{-d}p\big)=1$, and $\pi\bar{\pi}=p$ where $\pi\in\Q(\sqrt{-d})$. Write $\pi=u_1+v_1\tau$ and $\pi\tau=u_2+v_2\tau$, where $u_1,v_1,u_2,v_2\in\Z$ and $v_1$ is even. Then for each odd $m\geq $1,
\begin{equation}\label{CHLegendre}
P_{\frac{mp-1}2}\bigg(\frac A{\sqrt{\Delta}}\bigg)\equiv \bi^{-u_2v_2+v_2+p-2}\cdot \bar{\pi}\cdot P_{\frac{m-1}{2}}\bigg(\frac A{\sqrt{\Delta}}\bigg)\pmod{\pi^2},
\end{equation}
where $\bi=\sqrt{-1}$.
\end{theorem}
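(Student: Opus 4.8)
The plan is to interpret the Legendre values $P_{\frac{mp-1}2}\big(A/\sqrt\Delta\big)$ as arithmetic invariants of the CM elliptic curve $\sE$ and to recognize (\ref{CHLegendre}) as an Atkin--Swinnerton-Dyer / Dwork unit-root congruence governed by the splitting $p=\pi\bar\pi$. The guiding observation is the base-$p$ identity $\frac{mp-1}2=p\cdot\frac{m-1}2+\frac{p-1}2$, which says that the index $\frac{mp-1}2$ sits above the index $\frac{m-1}2$ with top ``digit'' $\frac{p-1}2$; a Lucas/Dwork-type relation for a period family should therefore relate the two Legendre values up to multiplication by the unit root of Frobenius, and that unit root is exactly what I expect to be $\bar\pi$.

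First I would make the geometric dictionary explicit. The curve $y^2=x(x^2+Ax+B)$ acquires full $2$-torsion over the splitting field of $x^2+Ax+B$; placing it in Weierstrass form $y^2=4x^3-g_2x-g_3$ by the shift $x\mapsto x-A/3$ and matching the half-period values $e_i=\sP(\cdot)$ shows, using hypothesis (ii) together with $A=3\sP(\tfrac12\omega)$ and $\sqrt\Delta=\sP(\tfrac12\omega'+\tfrac12\omega)-\sP(\tfrac12\omega')$, that the modular parameter $t:=A/\sqrt\Delta$ is precisely the cross-ratio coordinate at which the Legendre/Hasse polynomial is evaluated. Then the Hasse invariant of the reduction of $\sE$ modulo $p$ equals $P_{\frac{p-1}2}(t)$ up to an explicit sign, and since $\big(\frac{-d}p\big)=1$ forces ordinary reduction, this invariant is a $p$-adic unit; this is exactly why $\pi^2$ (and not $p^2$) is the natural modulus. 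By the theory of complex multiplication the Frobenius endomorphism is, up to a root of unity, multiplication by a Hecke value lying over $p$, with eigenvalues $\pi$ and $\bar\pi$, and the one that is a $p$-adic unit is $\bar\pi$.

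Next I would feed this into a Dwork congruence for the Legendre family $_2F_1\big[\begin{smallmatrix}\frac12&\frac12\\&1\end{smallmatrix}\big|\,\cdot\,\big]$, whose truncations are the Legendre polynomials by (\ref{F211212zLegendre}). The $F$-crystal attached to this family splits, after ordinary reduction, into a unit-root part and its complement; the unit-root part multiplies by $\bar\pi$ under Frobenius. Reading off the coefficient at index $\frac{mp-1}2$ against the coefficient at index $\frac{m-1}2$ then gives $P_{\frac{mp-1}2}(t)\equiv(\text{unit root})\cdot P_{\frac{m-1}2}(t)\pmod{\pi^2}$, and an induction on the odd integer $m$ via the three-term recursion reduces the general case to $m=1$, where the statement is the refined Hasse-invariant congruence.

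The hard part will be two-fold. First, sharpening from modulo $\pi$ (ordinarity, i.e. nonvanishing of the Hasse invariant) to modulo $\pi^2$: this demands the second-order $p$-adic period of $\sE$, computed through the formal group law and the Weierstrass $\sP$-expansion, and it is exactly here that the half-period normalizations in (ii) are needed to control the quadratic term. Second, pinning down the precise root of unity $\bi^{-u_2v_2+v_2+p-2}$: this constant records how the chosen period basis $\omega,\omega'$ transforms under the CM action $\pi=u_1+v_1\tau$, $\pi\tau=u_2+v_2\tau$, and extracting it amounts to tracking a theta-multiplier / quadratic Gauss-sum factor in the action of $\pi$ on the $2$-torsion and on the invariant differential, with the parity constraint that $v_1$ be even guaranteeing the formula is well defined. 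Assembling these sign contributions into the stated power of $\bi$ is the most delicate bookkeeping and would occupy the bulk of the argument.
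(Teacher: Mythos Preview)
The paper does not prove this theorem at all: it is quoted verbatim as a result of Coster and Van Hamme \cite{CoHa91} and used as a black box in Section~\ref{section2F1cm}. So there is no ``paper's own proof'' to compare against.

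As for your proposal itself, it is a reasonable high-level roadmap of the Coster--Van Hamme argument (formal group, unit-root Frobenius, Atkin--Swinnerton-Dyer congruence), but it is not a proof. You explicitly flag the two genuinely hard steps---lifting from $\pi$ to $\pi^2$ via the second-order formal-group expansion, and extracting the exact fourth-root-of-unity $\bi^{-u_2v_2+v_2+p-2}$ from the CM action on the period lattice---and then stop short of carrying them out. Those are precisely the steps where all the content lies; the rest is orientation. If you want to actually prove the statement, you would need to reproduce the detailed formal-group computation and theta-multiplier bookkeeping from \cite{CoHa91}, which is several pages of work that your sketch only gestures at.
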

Substituting $z=(1-A/\sqrt{\Delta})/2$ and $\alpha=1/2$ in (\ref{F2114144z1zLegendre}), we get
\begin{equation}
{}_2F_1\bigg[\begin{matrix}
\frac14&\frac14\\
&1
\end{matrix}\bigg|\,-\frac{4B}{\Delta}\bigg]_{p-1}\equiv \begin{cases}\bi^{-u_2v_2+v_2+p-2}\cdot \bar{\pi}\pmod{\pi^2},&\text{if }p\equiv1\pmod{4},\\
\bi^{3(-u_2v_2+v_2+p-2)}\cdot\frac{A}{\sqrt{\Delta}}\cdot \bar{\pi}\pmod{\pi^2},&\text{if }p\equiv3\pmod{4}.\end{cases}
\end{equation}
For example, for each prime $p\equiv 1\pmod{3}$, we can uniquely write $p=a^2+3b^2$, where one of $a,b\in\Z$ is positive even integer and $a+b\equiv1\pmod{4}$. According to Theorem \ref{CoHa} and Tables II and III of \cite{CoHa91}, we know that
$$
P_{\frac{p-1}{2}}(\sqrt{-3})\equiv a+b\sqrt{-3}\pmod{p^2},
$$
where $\sqrt{-3}\in\Z_p$ with $\sqrt{-3}\equiv a/b\pmod{p}$.
It follows from (\ref{F2114144z1zLegendre}) that
\begin{equation}\label{2F114144}
{}_2F_1\bigg[\begin{matrix}
\frac14&\frac14\\
&1
\end{matrix}\bigg|\,4\bigg]_{p-1}\equiv \begin{cases}a+b\sqrt{-3}\pmod{p^2},&\text{if }p\equiv1\pmod{4},\\
a\sqrt{-3}-3b\pmod{p^2},&\text{if }p\equiv3\pmod{4}.\end{cases}
\end{equation}

Furthermore, we have the following theorem.
\begin{theorem}\label{2F114141434}
Under the assumptions of Theorem \ref{CoHa}, letting $z=-4B/\Delta$, we have
\begin{align}
{}_2F_1\bigg[\begin{matrix}
\frac14&\frac14\\
&1
\end{matrix}\bigg|\,z\bigg]_{p-1}\equiv& z^{-\lambda_p(\frac14)}{}_2F_1\bigg[\begin{matrix}
\frac14&\frac14\\
&1
\end{matrix}\bigg|\,\frac1z\bigg]_{p-1}\pmod{p^2}\label{F211414zF2114141z}\\
\equiv&(1-z)^{-\lambda_p(\frac14)}{}_2F_1\bigg[\begin{matrix}
\frac14&\frac34\\
&1
\end{matrix}\bigg|\,\frac z{z-1}\bigg]_{p-1}\pmod{p^2}.\label{F211414zF211434zz1}
\end{align}
\end{theorem}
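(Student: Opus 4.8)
The plan is to deduce the two congruences together, leaning on the partial analogue already established. Since $1-z=A^2/\Delta$ is prime to $p$, Theorem \ref{alphaalphazz1} with $\alpha=1/4$ gives
\begin{equation*}
{}_2F_1\bigg[\begin{matrix}\frac14&\frac14\\&1\end{matrix}\bigg|\,z\bigg]_{p-1}-z^{1-\lambda_p(\frac14)}{}_2F_1\bigg[\begin{matrix}\frac14&\frac14\\&1\end{matrix}\bigg|\,\frac1z\bigg]_{p-1}\equiv(1-z)^{1-\lambda_p(\frac14)}{}_2F_1\bigg[\begin{matrix}\frac14&\frac34\\&1\end{matrix}\bigg|\,\frac z{z-1}\bigg]_{p-1}\pmod{p^2}.
\end{equation*}
If (\ref{F211414zF2114141z}) holds, then $z^{1-\lambda_p(\frac14)}{}_2F_1[\frac14,\frac14;1|1/z]_{p-1}\equiv z\cdot{}_2F_1[\frac14,\frac14;1|z]_{p-1}$, so the left side above collapses to $(1-z)\,{}_2F_1[\frac14,\frac14;1|z]_{p-1}$, and dividing by the unit $1-z$ yields (\ref{F211414zF211434zz1}) at once. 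Hence it is enough to prove the reciprocal symmetry (\ref{F211414zF2114141z}); the linear transformation (\ref{F211414zF211434zz1}) then comes for free.

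To prove (\ref{F211414zF2114141z}) I would turn both truncated series into Legendre polynomials. Writing $4\zeta(1-\zeta)=z$ gives $(1-2\zeta)^2=1-z=A^2/\Delta$, so by (\ref{F2114144z1zLegendre}) the left-hand side is $\equiv P_{(p-1)/2}(A/\sqrt\Delta)$ when $p\equiv1\pmod4$ and $\equiv P_{(3p-1)/2}(A/\sqrt\Delta)$ when $p\equiv3\pmod4$. For the argument $1/z$ I would pass to the two-isogenous curve $\widehat\sE:\ y^2=x(x^2-2Ax+\Delta)$, which is again a CM curve and whose invariants $\widehat A=-2A$, $\widehat B=\Delta$, $\widehat\Delta=16B$ satisfy $-4\widehat B/\widehat\Delta=1/z$ and $\widehat A/\sqrt{\widehat\Delta}=-A/(2\sqrt B)$. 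The same device then represents ${}_2F_1[\frac14,\frac14;1|1/z]_{p-1}$ by $P_{(p-1)/2}(-A/(2\sqrt B))$ (resp.\ $P_{(3p-1)/2}$); since these indices are even, the sign of the argument is immaterial.

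Next I would evaluate both Legendre values through Theorem \ref{CoHa}, applied to $\sE$ and to $\widehat\sE$ after choosing suitable bases of periods. Because $p$ splits in the common CM field as $\pi\bar\pi$, the $\pi$-adic completion is $\Z_p$, the conjugate $\bar\pi$ is a unit there, and $\pi\Z_p=p\Z_p$, so the Coster--Hamme congruence modulo $\pi^2$ is exactly a congruence modulo $p^2$. Moreover $\sE$ and $\widehat\sE$ are $\Q$-isogenous, hence share the same $L$-function and the same Hecke character, so the same prime $\pi$ occurs for both. Consequently each value is $\bar\pi$ times an explicit root of unity — a power of $\bi$, multiplied by the extra factor $P_1(A/\sqrt\Delta)=A/\sqrt\Delta$ (resp.\ $-A/(2\sqrt B)$) in the case $p\equiv3\pmod4$ — and the quotient
\[
{}_2F_1\bigg[\begin{matrix}\frac14&\frac14\\&1\end{matrix}\bigg|\,z\bigg]_{p-1}\bigg/{}_2F_1\bigg[\begin{matrix}\frac14&\frac14\\&1\end{matrix}\bigg|\,\frac1z\bigg]_{p-1}
\]
reduces modulo $p^2$ to a root of unity times, for $p\equiv3\pmod4$, a power of $\sqrt z$.

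The hard part will be matching this quotient with $z^{-\lambda_p(\frac14)}$ modulo $p^2$. For that I would compute $z^{-\lambda_p(\frac14)}\pmod{p^2}$ explicitly via (\ref{lambdalambdas}), which replaces $\lambda_p(\frac14)$ by the finite datum built from $\langle-\frac14\rangle_{p^2}$, and then compare it with the difference of the Coster--Hamme exponents $-u_2v_2+v_2$ attached to $\sE$ and to $\widehat\sE$, together with the $\sqrt z$-contribution coming from the $m=3$ factor $P_1$ when $p\equiv3\pmod4$. The delicate point is tracking how the two-isogeny transforms the period data $(u_1,v_1,u_2,v_2)$ of Theorem \ref{CoHa} (including the possible interchange of $\pi$ and $\bar\pi$), and verifying that the surviving powers of $\bi$ and of $\sqrt z$ are precisely $z^{-\lambda_p(\frac14)}$; the split-prime reduction guarantees that once these roots of unity agree modulo $p^2$ the congruence is exact.
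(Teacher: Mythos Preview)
Your reduction of (\ref{F211414zF211434zz1}) to (\ref{F211414zF2114141z}) via Theorem \ref{alphaalphazz1}, the passage to the two-isogenous CM curve, and the application of Coster--Hamme on both curves all match the paper exactly. The one substantive difference is how you propose to pin down the root of unity.

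You plan to track the period data $(u_1,v_1,u_2,v_2)$ through the two-isogeny and compare the Coster--Hamme exponents directly. The paper avoids this entirely with the following observation: once you know that the quotient of the two truncated series (divided, in the $p\equiv 3\pmod 4$ case, by the explicit $\sqrt{z}$-type factor coming from $P_1$) is some $\delta\in\{\pm1,\pm\bi\}$, you only need to know $\delta$ modulo $p$ to know it modulo $p^2$. And modulo $p$ the quotient is elementary: for $p\equiv1\pmod4$ one has
\[
{}_2F_1\bigg[\begin{matrix}\tfrac14&\tfrac14\\&1\end{matrix}\bigg|\,z\bigg]_{p-1}\equiv\sum_{k=0}^{(p-1)/4}\binom{(p-1)/4}{k}^2 z^k
= z^{(p-1)/4}\sum_{k=0}^{(p-1)/4}\binom{(p-1)/4}{k}^2 z^{-k}
\equiv z^{(p-1)/4}\,{}_2F_1\bigg[\begin{matrix}\tfrac14&\tfrac14\\&1\end{matrix}\bigg|\,\tfrac1z\bigg]_{p-1}\pmod p,
\]
so $\delta\equiv z^{(p-1)/4}\pmod{\pi}$. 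Then via (\ref{lambdalambdas}) one has $z^{-\lambda_p(1/4)}\equiv (z^{(p-1)/4})^p\equiv \delta^p+p(z^{(p-1)/4}-\delta)\equiv\delta\pmod{\pi^2}$, since $\delta^p=\delta$ for a fourth root of unity when $p\equiv1\pmod4$. The case $p\equiv3\pmod4$ runs the same way with $(3p-1)/4$ in place of $(p-1)/4$ and the factor $P_1(A/\sqrt\Delta)=A/\sqrt\Delta$ absorbed into an explicit $\sqrt{z}$. Your period-tracking program would certainly work if carried through, but it is exactly the labour the paper's mod-$p$ shortcut is designed to bypass.
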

\begin{proof}
For the CM elliptic curve $\sE:\,y^2=x(x^2+Ax+B)$, let
$$
\sF:\ y^2=x(x^2-2Ax+\Delta).
$$ 
Then we know (cf. \cite[pp. 91-96]{Hu87}) that $\sE$ is isogenous to $\sF$. Furthermore, if $\upsilon,\upsilon'$ form the basis corresponding to $\sF$, then we have  $\upsilon'/\upsilon=2\omega'/\omega$. So $\upsilon'/\upsilon\in\Q(\sqrt{-d})$, i.e., $\sF$ is also CM.

Note that
$$
-\frac{4\Delta}{(-2A)^2-4\Delta}=-\frac{\Delta}{4B}.
$$
Hence in view of Theorem  \ref{CoHa}, recalling that $z=-4B/\Delta$, we obtain that
$$
{}_2F_1\bigg[\begin{matrix}
\frac14&\frac14\\
&1
\end{matrix}\bigg|\,z\bigg]_{p-1}\equiv \delta\cdot {}_2F_1\bigg[\begin{matrix}
\frac14&\frac14\\
&1
\end{matrix}\bigg|\,\frac1z\bigg]_{p-1}\pmod{\pi^2}
$$
where $\delta \in\{\pm\bi,\pm1\}$.

Assume that $p\equiv 1\pmod{4}$. Then we have
$$
{}_2F_1\bigg[\begin{matrix}
\frac14&\frac14\\
&1
\end{matrix}\bigg|\,z\bigg]_{p-1}\equiv\sum_{k=0}^{\frac{p-1}4}\binom{\frac{p-1}{4}}{k}^2z^k=\sum_{k=0}^{\frac{p-1}4}\binom{\frac{p-1}{4}}{k}^2z^{\frac{1}{4}(p-1)-k}\equiv
z^{\frac{p-1}{4}}{}_2F_1\bigg[\begin{matrix}
\frac14&\frac14\\
&1
\end{matrix}\bigg|\,\frac1z\bigg]_{p-1}\pmod{p}.
$$
So we must have $z^{\frac{p-1}{4}}\equiv \delta\pmod{\pi}$. It follows from (\ref{lambdalambdas}) that
$$
z^{-\lambda_p(\frac14)}\equiv z^{\frac{p-1}{p}\cdot(\frac{p^2-1}{4}-\frac{p-1}{4})+\frac{p-1}{4}}=(z^{\frac{p-1}{4}})^p\equiv \delta^p+p(z^{\frac{p-1}{4}}-\delta)\equiv\delta\pmod{\pi^2}.
$$
Thus (\ref{F211414zF2114141z}) is valid when $p\equiv 1\pmod{4}$.

Now assume that $p\equiv 3\pmod{4}$. We have
$$
P_{\frac{3p-1}{2}}\bigg(\frac{A}{\sqrt{\Delta}}\bigg)={}_2F_1\bigg[\begin{matrix}
-\frac{3p-1}{2}&\frac{3p+1}{2}\\
&1
\end{matrix}\bigg|\,\frac{\sqrt{\Delta}-A}{2\sqrt{\Delta}}\bigg] 
\equiv
{}_2F_1\bigg[\begin{matrix}
\frac14&\frac14\\
&1
\end{matrix}\bigg|\,z\bigg]_{p-1}\pmod{p^2}.
$$
According to Theorem \ref{CoHa}, 
$$
P_{\frac{3p-1}{2}}\bigg(\frac{A}{\sqrt{\Delta}}\bigg)\equiv \bi^{3(-u_2v_2+v_2+p-2)}\cdot \bar{\pi}\cdot \frac A{\sqrt{\Delta}}\pmod{p^2}.
$$
Hence 
\begin{align*}
{}_2F_1\bigg[\begin{matrix}
\frac14&\frac14\\
&1
\end{matrix}\bigg|\,z\bigg]_{p-1}\equiv& \delta\cdot \frac A{\sqrt{\Delta}}\cdot\bigg(-\frac{\sqrt{A^2-4\Delta}}{2A}\bigg)\cdot {}_2F_1\bigg[\begin{matrix}
\frac14&\frac14\\
&1
\end{matrix}\bigg|\,\frac1z\bigg]_{p-1}\\
\equiv&-\delta\sqrt{-z}\cdot {}_2F_1\bigg[\begin{matrix}
\frac14&\frac14\\
&1
\end{matrix}\bigg|\,\frac1z\bigg]_{p-1}\pmod{\pi^2},
\end{align*}
where $\delta\in\{\pm\bi,\pm 1\}$. Also,
\begin{align*}
{}_2F_1\bigg[\begin{matrix}
\frac14&\frac14\\
&1
\end{matrix}\bigg|\,z\bigg]_{p-1}\equiv
{}_2F_1\bigg[\begin{matrix}
-\frac{3p-1}4&-\frac{3p-1}4\\
&1
\end{matrix}\bigg|\,z\bigg]\equiv
z^{\frac{3p-1}4}{}_2F_1\bigg[\begin{matrix}
\frac14&\frac14\\
&1
\end{matrix}\bigg|\,\frac1z\bigg]_{p-1}\pmod{p}.
\end{align*}
So $z^{\frac{3p-1}4}\equiv z^{-\frac{p-3}{4}}\equiv -\bi\delta\cdot\sqrt{z}\pmod{\pi}$. Then
$$
z^{-\lambda_p(\frac14)}\equiv z^{\frac{p-1}{p}\cdot(\frac{p^2-1}{4}-\frac{3p-1}{4})+\frac{3p-1}{4}}=z^{\frac{p+1}{2}}\cdot (z^{-\frac{p-3}{4}})^{-p}\equiv
z^{\frac{p+1}{2}}\cdot (-\bi\delta\cdot\sqrt{z})^{-p}
=-\bi\delta\cdot\sqrt{z}\pmod{\pi^2}.
$$

Finally, clearly (\ref{F211414zF211434zz1}) is a consequence of (\ref{F211414zF2114141z}) and Theorem \ref{alphaalphazz1}.
\end{proof}
For example, it follows from (\ref{2F114144}) and Theorem \ref{2F114141434} that
\begin{equation}
{}_2F_1\bigg[\begin{matrix}
\frac14&\frac14\\
&1
\end{matrix}\bigg|\,\frac14\bigg]_{p-1}\equiv \begin{cases}\big(\frac{2}{p}\big)\cdot (a+b\sqrt{-3})\pmod{p^2},&\text{if }p\equiv1\pmod{4},\\
\big(\frac{2}{p}\big)\cdot 2(a\sqrt{-3}-3b)\pmod{p^2},&\text{if }p\equiv3\pmod{4},\end{cases}
\end{equation}
and
\begin{equation}\label{F211434m13ab}
{}_2F_1\bigg[\begin{matrix}
\frac14&\frac34\\
&1
\end{matrix}\bigg|\,-\frac13\bigg]_{p-1}\equiv \begin{cases}\big(\frac{\sqrt{-3}}{p}\big)\cdot (a+b\sqrt{-3})\pmod{p^2},&\text{if }p\equiv1\pmod{4},\\
-\big(\frac{\sqrt{-3}}{p}\big)\cdot 3(a+b\sqrt{-3})\pmod{p^2},&\text{if }p\equiv3\pmod{4},\end{cases}
\end{equation}

On the other hand, using an identity of Goursat, we can obtain an equivalent form of (\ref{F211434m13ab}), which unifies two cases $p\equiv 1$ and $3\pmod{4}$.
\begin{theorem}\label{F21number}
Suppose that $p\equiv 1\pmod{3}$. Then
\begin{equation}\label{F211434m13}
{}_2F_1\bigg[\begin{matrix}
\frac14&\frac34\\
&1
\end{matrix}\bigg|\,-\frac13\bigg]_{p-1}\equiv-\jacob{2}{p}\cdot\frac{3\Gamma_p(\frac43)}{2\Gamma_p(\frac32)\Gamma_p(\frac56)}\pmod{p^2}.
\end{equation}
\end{theorem}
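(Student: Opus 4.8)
The plan is to recognise the stated right-hand side as a $p$-adic Gauss value at argument $1$, and then to obtain the congruence from a one-parameter cubic transformation of Goursat by the same two-step scheme used throughout the paper.

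First I would simplify the target using only the functional equation of $\Gamma_p$. Applying (\ref{Gammapx1Gammapx}) with $x=\tfrac13$ gives $\Gamma_p(\tfrac43)=-\tfrac13\Gamma_p(\tfrac13)$, and with $x=\tfrac12$ gives $\Gamma_p(\tfrac32)=-\tfrac12\Gamma_p(\tfrac12)$, so that
$$
-\jacob2p\cdot\frac{3\Gamma_p(\tfrac43)}{2\Gamma_p(\tfrac32)\Gamma_p(\tfrac56)}=-\jacob2p\cdot\frac{\Gamma_p(\tfrac13)}{\Gamma_p(\tfrac12)\Gamma_p(\tfrac56)}.
$$
For an odd prime $p\equiv1\pmod3$ we have $p\equiv1\pmod6$, whence $\langle-\tfrac12\rangle_p+\langle-\tfrac16\rangle_p=\tfrac{p-1}2+\tfrac{p-1}6=\tfrac{2(p-1)}3<p$, so Theorem \ref{alphabeta11} (first case) applied to $(\alpha,\beta)=(\tfrac12,\tfrac16)$ identifies $-\Gamma_p(\tfrac13)/(\Gamma_p(\tfrac12)\Gamma_p(\tfrac56))$ with ${}_2F_1[\tfrac12,\tfrac16;1;1]_{p-1}$. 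Hence (\ref{F211434m13}) is equivalent to the clean congruence
$$
{}_2F_1\bigg[\begin{matrix}\tfrac14&\tfrac34\\&1\end{matrix}\bigg|\,-\tfrac13\bigg]_{p-1}\equiv\jacob2p\cdot{}_2F_1\bigg[\begin{matrix}\tfrac12&\tfrac16\\&1\end{matrix}\bigg|\,1\bigg]_{p-1}\pmod{p^2},
$$
and it is this form I would aim to establish.

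The key input is a classical cubic transformation of Goursat carrying a free variable $z$, in which one side has argument $-z/3$ and an algebraic prefactor that is a half-integral power of $(1+z)$; setting $z=1$ then sends the left argument to $-\tfrac13$ and the right argument to $1$, and the terminating instances of such an identity are exact polynomial identities. I would take this identity with the top parameter of the ${}_2F_1[\tfrac14,\tfrac34;\cdots]$ replaced by a variable $x$, specialized so that at $x=0$ that parameter becomes the negative integer $-\langle-\tfrac14\rangle_p$ (namely $\tfrac{p-1}4$ or $\tfrac{3p-1}4$ according to $p\bmod4$), which makes the truncated series terminate and the classical identity applicable verbatim. Then the standard reduction applies: by Lemma \ref{taylorexpansionrational} and Lemma \ref{polynomialzsp} the difference between the truncated series at the true $p$-adic parameter and at $x=0$ is governed modulo $p^2$ by a single $x$-derivative; by Lemma \ref{Gammapadicderivativealphabeta} the derivative of the $\Gamma_p$-prefactor matches the ordinary derivative of the classical algebraic prefactor; and the $2$-power prefactor is handled by the $z^{-\lambda_p}$ device, so that its value at $z=1$ reduces to $2^{-\lambda_p(\gamma)}\equiv\jacob2p\pmod{p^2}$ (via (\ref{lambdalambdas}), for the appropriate $\gamma$ with $\langle-\gamma\rangle_p=\tfrac{p-1}2$), exactly as the factor $2^{-\lambda_p}$ was treated in the derivation of Corollary \ref{2F1alphaalpham1}. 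Evaluating the resulting argument-$1$ series by Theorem \ref{alphabeta11} then yields the equivalent congruence above, hence (\ref{F211434m13}). An alternative route would start from the complex-multiplication evaluation (\ref{F211434m13ab}) and invoke the Gross--Koblitz formula to rewrite $a+b\sqrt{-3}$ as the same $\Gamma_p$-ratio uniformly in $p\bmod4$; the Goursat route has the advantage of bypassing complex multiplication entirely.

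The main difficulty will be pinning down the precise Goursat transformation and its change of variable with $-\tfrac13\mapsto1$, together with its algebraic prefactor, and then verifying that the terminating specialization is genuinely exact under the residue hypotheses --- that no boundary terms are lost on specializing to $z=1$ and that the truncation cut-offs on the two sides agree to order $p^2$. A secondary point requiring care is the bookkeeping of the $3$-power factors in the prefactor: for $p\equiv1\pmod3$ these should be $p$-adic units whose $\lambda_p$-contribution is trivial modulo $p^2$, so that no $\jacob3p$ survives and precisely the single $\jacob2p$ of the statement remains; confirming this cleanly is where the bulk of the computation lies.
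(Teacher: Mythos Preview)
Your opening simplification is correct and elegant: rewriting the target as
\[
{}_2F_1\bigg[\begin{matrix}\tfrac14&\tfrac34\\&1\end{matrix}\bigg|-\tfrac13\bigg]_{p-1}\equiv\jacob2p\cdot{}_2F_1\bigg[\begin{matrix}\tfrac12&\tfrac16\\&1\end{matrix}\bigg|\,1\bigg]_{p-1}\pmod{p^2}
\]
via (\ref{Gammapx1Gammapx}) and Theorem~\ref{alphabeta11} is a useful sanity check. The Goursat ingredient is also correctly identified. But your proposed execution drifts away from what actually works, and the paper proceeds differently.

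The paper does \emph{not} use a Goursat transformation with a free argument $z$ and then set $z=1$. Instead it first combines Goursat's cubic transformation with Gauss at the classical level to obtain the closed-form evaluation
\[
{}_2F_1\bigg[\begin{matrix}\alpha&\tfrac12+\alpha\\&\tfrac32-2\alpha\end{matrix}\bigg|-\tfrac13\bigg]=\Big(\tfrac{8}{9}\Big)^{-2\alpha}\frac{\Gamma(\tfrac43)\Gamma(\tfrac32-2\alpha)}{\Gamma(\tfrac32)\Gamma(\tfrac43-2\alpha)},
\]
and then runs the $p$-adic machinery on \emph{this} identity with the argument already fixed at $-\tfrac13$. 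Concretely, one sets
\[
\Psi(x)={}_2F_1\bigg[\begin{matrix}\tfrac14-\tfrac x4&\tfrac34-\tfrac x4\\&1+\tfrac x2\end{matrix}\bigg|-\tfrac13\bigg]_{p-1},
\]
evaluates $\Psi(p)$ and $\Psi(3p)$ \emph{exactly} by the closed-form above (at each of these one of the top parameters is a non-positive integer, regardless of $p\bmod4$, so the truncation is harmless), and then interpolates linearly via Lemma~\ref{taylorexpansionrational}. The matching $\Gamma_p$ side is interpolated the same way. This two-point interpolation is also what makes the residue-class split $p\equiv1,3\pmod4$ disappear uniformly.

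Your account of the prefactor is where the proposal goes astray. The prefactor is not a power of $(1+z)$ but $(8/9)^{-2\alpha}$, a function of the \emph{parameter}; and the $\lambda_p$ device does not produce $\jacob2p$ here (for instance $\lambda_p(\tfrac12)=0$, so $2^{-\lambda_p(1/2)}=1$). In the paper the Legendre symbol arises because $8^{(p-1)/2}\equiv\jacob2p$ and $9^{(p-1)/2}\equiv1\pmod p$, so that $\kappa:=\jacob2p\cdot(8/9)^{(p-1)/2}\equiv1\pmod p$; the mod~$p^2$ bookkeeping of this scalar is then handled by the same two-point interpolation, not by any $z^{-\lambda_p}$ substitution. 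If you try instead to carry a free $z$ through a $p$-adic version of the cubic transformation, you would need to prove a new $p$-adic quadratic/cubic transformation identity first, and the parameter constraints of Goursat's cubics make it unclear that one with the shape you describe (argument $-z/3$, parameters $[\tfrac14,\tfrac34;1]$ on the left, parameters $[\tfrac12,\tfrac16;1]$ on the right) exists at all. Collapsing Goursat$+$Gauss to a closed-form \emph{before} the $p$-adic step is what makes the argument go through.
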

With help of the Gross-Koblitz formula, it is not difficult to that (\ref{F211434m13ab}) and (\ref{F211434m13}) are equivalent.
\begin{proof}
In fact, we will prove a stronger result:
\begin{equation}
{}_2F_1\bigg[\begin{matrix}
\frac14-\frac{sp}4&\frac34-\frac{sp}{4}\\
&1+\frac{sp}{2}
\end{matrix}\bigg|\,-\frac13\bigg]_{p-1}\equiv\jacob{2}{p}^{s-1}\cdot\frac{8^{\frac{s(p-1)}2}}{9^{\frac{s(p-1)}2}}\cdot\frac{3\Gamma_p(\frac43)\Gamma_p(1+\frac{sp}2)}{2\Gamma_p(\frac32)\Gamma_p(\frac56+\frac{sp}2)}\pmod{p^2}
\end{equation}
for any $s\in\Z_p$.
We need the  hypergeometric identity 
\begin{equation}\label{alpha12alpha322alpha13}
{}_2F_1\bigg[\begin{matrix}
\alpha&\frac12+\alpha\\
&\frac32-2\alpha
\end{matrix}\bigg|\,-\frac13\bigg]=\frac{8^{-2\alpha}}{9^{-2\alpha}}\cdot\frac{\Gamma(\frac43)\Gamma(\frac32-2\alpha)}{\Gamma(\frac32)\Gamma(\frac43-2\alpha)},
\end{equation}
which can be deduced from Goursat's cubic transformation \cite[Eq. (119)]{Goursat1881} and (\ref{Gaussidentity}).
Let
$$
\Psi(x)={}_2F_1\bigg[\begin{matrix}
\frac14-\frac14x&\frac34-\frac14x\\
&1+\frac12x
\end{matrix}\bigg|\,-\frac13\bigg]_{p-1}.
$$
It follows from (\ref{alpha12alpha322alpha13}) that 
$$
\Psi(p)=
{}_2F_1\bigg[\begin{matrix}
\frac{1-p}4&\frac{3-p}{4}\\
&1+\frac{p}{2}
\end{matrix}\bigg|\,-\frac13\bigg]=
\frac{8^{\frac{p-1}{2}}}{9^{\frac{p-1}{2}}}\cdot\frac{\Gamma(\frac43)\Gamma(\frac32+\frac{p-1}2)}{\Gamma(\frac32)\Gamma(\frac43+\frac{p-1}2)}
$$
and
$$
\Psi(3p)=
{}_2F_1\bigg[\begin{matrix}
\frac{1-3p}4&\frac{3-3p}{4}\\
&1+\frac{3p}{2}
\end{matrix}\bigg|\,-\frac13\bigg]=
\frac{8^{\frac{3p-1}{2}}}{9^{\frac{3p-1}{2}}}\cdot\frac{\Gamma(\frac43)\Gamma(\frac32+\frac{3p-1}2)}{\Gamma(\frac32)\Gamma(\frac43+\frac{3p-1}2)}.
$$
By Lemma \ref{taylorexpansionrational},
$$
\Psi(sp)-\Psi(p)\equiv(s-1)p\cdot\Psi'(0)\equiv\frac{s-1}{2}\cdot\big(\Psi(3p)-\Psi(p)\big)\pmod{p^2}.
$$
Since $p\equiv 1\pmod{3}$, we have
$$
\frac{\Gamma(\frac43)}{\Gamma(\frac43+\frac{p-1}2)}\cdot\frac{\Gamma(\frac32+\frac{p-1}2)}{\Gamma(\frac32)}=\frac{3}{p}\cdot
\frac{\Gamma_p(\frac43)}{\Gamma_p(\frac43+\frac{p-1}2)}\cdot\frac{p}{2}\cdot
\frac{\Gamma_p(\frac32+\frac{p-1}2)}{\Gamma_p(\frac32)},$$
$$  
\frac{\Gamma(\frac43)}{\Gamma(\frac43+\frac{3p-1}2)}\cdot\frac{\Gamma(\frac32+\frac{3p-1}2)}{\Gamma(\frac32)}=\frac{9}{4p}\cdot
\frac{\Gamma_p(\frac43)}{\Gamma_p(\frac43+\frac{3p-1}2)}\cdot\frac{3p}{4}\cdot
\frac{\Gamma_p(\frac32+\frac{3p-1}2)}{\Gamma_p(\frac32)}.
$$
Let
$$
\Omega(x)=\frac{3\Gamma_p(\frac43)\Gamma_p(1+\frac{x}2)}{2\Gamma_p(\frac32)\Gamma_p(\frac56+\frac{x}2)}
$$
Viewing $\Omega(x)$ as a function over $\Z_p$, we also have
$$
\Omega(sp)-\Omega(p)=(s-1)p\cdot\Omega'(p)\equiv \frac{s-1}{2}\cdot\big(\Omega(3p)-\Omega(p)\big)
\pmod{p^2}.
$$
Let
$$
\kappa=\jacob{2}p\cdot\frac{8^{\frac{p-1}{2}}}{9^{\frac{p-1}{2}}}.
$$
Clearly $\kappa\equiv 1\pmod{p}$. So
\begin{align}\label{kappasOmegaspkappaOmegap}
\kappa^{s}\Omega(sp)-
\kappa\Omega(p)=&
(\kappa^{s-1}-1)\cdot\kappa\Omega(sp)+
\kappa\cdot\big(\Omega(sp)-\Omega(p)\big)\notag\\
\equiv&\frac{s-1}{2}\cdot\big(
(\kappa^{2}-1)\cdot\kappa\Omega(3p)+
\kappa\cdot(\Omega(3p)-\Omega(p))\big)\notag\\
=&\frac{s-1}{2}\cdot\big(\kappa^{3}\Omega(3p)-
\kappa\Omega(p)\big)\pmod{p^2}.
\end{align}
It follows that
\begin{align*}
\Psi(sp)\equiv&\Psi(p)+\frac{s-1}{2}\cdot\big(\Psi(3p)-\Psi(p)\big)\\
=&\jacob{2}p\bigg(\kappa\Omega(p)+\frac{s-1}{2}\cdot\big(\kappa^{3}\Omega(3p)-\kappa\Omega(p)\big)\bigg)
\equiv\jacob{2}p\cdot\kappa^{s}\Omega(sp)\pmod{p^2}.
\end{align*}
\end{proof}
Similarly, combining (\ref{alpha12alpha322alpha13}) with (\ref{Eulertransformation}), we have
\begin{equation}\label{alpha12alpha322alpha14}
{}_2F_1\bigg[\begin{matrix}
\alpha&1-3\alpha\\
&\frac32-2\alpha
\end{matrix}\bigg|\,\frac14\bigg]=\frac{16^{-\alpha}}{27^{-\alpha}}\cdot\frac{\Gamma(\frac43)\Gamma(\frac32-2\alpha)}{\Gamma(\frac32)\Gamma(\frac43-2\alpha)}.
\end{equation}
With help of (\ref{alpha12alpha322alpha14}),
we can get
\begin{equation}\label{F2114144}
{}_2F_1\bigg[\begin{matrix}
\frac14&\frac14\\
&1
\end{matrix}\bigg|\,4\bigg]_{p-1}
\equiv-\jacob{2}{p}\cdot\frac{3^{1-\lambda_p(\frac14)}\Gamma_p(\frac43)}{2\Gamma_p(\frac32)\Gamma_p(\frac56)}\pmod{p^2},
\end{equation}
for prime $p\equiv1\pmod{3}$. And by Theorem \ref{alphaalphazz1}, we obtain that for prime $p\equiv1\pmod{3}$,
\begin{equation}\label{F21141414}
{}_2F_1\bigg[\begin{matrix}
\frac14&\frac14\\
&1
\end{matrix}\bigg|\,\frac14\bigg]_{p-1}
\equiv-\jacob{2}{p}\cdot\bigg(\frac 34\bigg)^{1-\lambda_p(\frac14)}\cdot\frac{2\Gamma_p(\frac43)}{\Gamma_p(\frac32)\Gamma_p(\frac56)}\pmod{p^2}.
\end{equation}

Moreover, combining (\ref{F211434m13}) with Theorem \ref{alpha1alphaz124z1z}, we get
\begin{equation}
{}_2F_1\bigg[\begin{matrix}
\frac14&\frac34&\frac12\\
&1&1
\end{matrix}\bigg|\,-\frac{16}9\bigg]_{p-1}\equiv\frac{9\Gamma_p(\frac43)^2}{4\Gamma_p(\frac32)^2\Gamma_p(\frac56)^2}\pmod{p^2}
\end{equation}
for $p\equiv 1\pmod{3}$, i.e.,
\begin{equation}
\sum_{k=0}^{p-1}\binom{2k}{k}^2\binom{4k}{2k}\cdot\frac{1}{144^k}\equiv\frac{9\Gamma_p(\frac43)^2}{4\Gamma_p(\frac32)^2\Gamma_p(\frac56)^2}\pmod{p^2}.
\end{equation}

According to (\ref{SunZHalphaz}), (\ref{F211434m13}) is equivalent to
\begin{equation}
{}_2F_1\bigg[\begin{matrix}
\frac14&\frac34\\
&1
\end{matrix}\bigg|\,\frac43\bigg]_{p-1}\equiv-\jacob{2}{p}\cdot\frac{3\Gamma_p(\frac43)}{2\Gamma_p(\frac32)\Gamma_p(\frac56)}\pmod{p^2}.
\end{equation}
Note that
$$
\binom{2k}{k}\binom{4k}{2k}=\frac{(\frac14)_k(\frac34)_k}{(1)_k^2}\cdot 64^k.
$$
So with help of the Gross-Koblitz formula, we can get
\begin{equation}
\sum_{k=0}^{p-1}\binom{2k}{k}\binom{4k}{2k}\cdot\frac{1}{48^k}\equiv2a-\frac{p}{2a}\pmod{p^2},
\end{equation}
where prime $p\equiv1\pmod{3}$ is written as $p=a^2+3b^2$ with $a\equiv 1\pmod{3}$. This confirms a conjecture of Sun \cite[Conjecture 5.14(i)]{SunZW11b} for the case $p\equiv1\pmod{3}$.

\section{$p$-adic ${}_3F_2$ and ${}_4F_3$ transformations}
\label{section3F24F3}
\setcounter{lemma}{0}
\setcounter{theorem}{0}
\setcounter{corollary}{0}
\setcounter{remark}{0}
\setcounter{equation}{0}
\setcounter{conjecture}{0}

In this section, we shall prove Theorems \ref{alphaalphabeta2beta}, \ref{alphabetagamma1delta13F2}, \ref{alphabetagammadeltaepsilon4F3} and \ref{alphabetagammadeltaepsilon4F3B}. 
First, Theorem \ref{alphabetagamma1delta13F2} can be proved in a similar way as Theorem \ref{alphabeta11}.
\begin{proof}[Proof of Theorem \ref{alphabetagamma1delta13F2}]
Let $a=\langle-\alpha\rangle_p$, 
$b=\langle-\beta\rangle_p$, $c=\langle-\gamma\rangle_p$ and $d=\langle-\delta\rangle_p$.
Let $M=\max\{a,b,c\}$ and $N=\max\{a,p-1-b,p-1-c\}$. 
Since $\max\{a,b,c\}\leq d$, $b+c\geq d$ and $a+b+c\leq p+d-1$, we have
$$
\frac{(\alpha)_k(\beta)_k(\gamma)_k}{(\delta)_k}\equiv 0\pmod{p^2},\qquad\frac{(\alpha)_j(1-\beta)_j(1-\gamma)_j}{(1+\delta-\beta-\gamma)_j}\equiv 0\pmod{p^2},
$$
for any $M+1\leq k\leq p-1$ and $N+1\leq j\leq p-1$.
Let
$$
\Psi(x)={}_3F_2\bigg[\begin{matrix}-a+x&\beta&\gamma\\
&1&\delta\end{matrix}\bigg|\,1\bigg]_{M},\qquad
\Phi(x)={}_3F_2\bigg[\begin{matrix}-a+x&1-\beta&1-\gamma\\
&1&1+\delta-\beta-\gamma\end{matrix}\bigg|\,1\bigg]_{N}
$$
Then
$$
{}_3F_2\bigg[\begin{matrix}\alpha&\beta&\gamma\\
&1&\delta\end{matrix}\bigg|\,1\bigg]_{p-1}\equiv\Psi(sp)\pmod{p^2},\qquad
{}_3F_2\bigg[\begin{matrix}\alpha&1-\beta&1-\gamma\\
&1&1+\delta-\beta-\gamma\end{matrix}\bigg|\,1\bigg]_{p-1}\equiv\Phi(sp)\pmod{p^2},
$$
where $s=(\alpha+a)/p$.
By (\ref{alphabetagamma1delta13F2H}), we have
\begin{align*}
\Psi(0)=&{}_3F_2\bigg[\begin{matrix}-a&\beta&\gamma\\
&1&\delta\end{matrix}\bigg|\,1\bigg]\\
=&\frac{\Gamma(\delta)\Gamma(1+\delta+a-\beta-\gamma)}{
\Gamma(\delta+a)\Gamma(1+\delta-\beta-\gamma)}\cdot{}_3F_2\bigg[\begin{matrix}-a&1-\beta&1-\gamma\\
&1&1+\delta-\beta-\gamma\end{matrix}\bigg|\,1\bigg].
\end{align*}
Since $a\leq d$ and $0\leq b+c-d<p-a$, it is easy to check that
$$
\frac{\Gamma(\delta)}{
\Gamma(\delta+a)}=(-1)^a\cdot\frac{\Gamma_p(\delta)}{
\Gamma_p(\delta+a)},\qquad
\frac{\Gamma(1+\delta+a-\beta-\gamma)}{
\Gamma(\Gamma(1+\delta-\beta-\gamma)}=(-1)^a\cdot\frac{\Gamma_p(1+\delta+a-\beta-\gamma)}{
\Gamma(\Gamma_p(1+\delta-\beta-\gamma)}.
$$
Hence
letting
$$
\Omega(x)=\frac{\Gamma_p(\delta)\Gamma_p(1+\delta+a-x-\beta-\gamma)}{\Gamma_p(\delta+a-x)\Gamma_p(1+\delta-\beta-\gamma)},
$$
we have
$$\Psi(0)=\Omega(0)\Phi(0).$$ 

On the other hand,
\begin{align*}
\Psi'(0)\equiv&\frac{d}{dx}\bigg({}_3F_2\bigg[\begin{matrix}-a+x&-b&p-c\\
&1&p-d\end{matrix}\bigg|\,1\bigg]\bigg)\bigg|_{x=0}\\
=&
\frac{d}{dx}\bigg(\frac{\Gamma(p-d)\Gamma(1+a+b+c-d-x)}{\Gamma(p-d+a-x)\Gamma(1+b+c-d)}\cdot{}_3F_2\bigg[\begin{matrix}-a+x&1+b&1+c-p\\
&1&1+b+c-d\end{matrix}\bigg|\,1\bigg]\bigg)\bigg|_{x=0}\\
\equiv&\frac{d}{dx}\bigg(\frac{\Gamma(p-d)\Gamma(1+a+b+c-d-x)}{\Gamma(p-d+a-x)\Gamma(1+b+c-d)}\bigg)\bigg|_{x=0}\cdot\Phi(0)+
\frac{(1+\delta-\beta-\gamma)_a}{(\delta)_a}\cdot\Phi'(0)\pmod{p}.
\end{align*}
Hence by Lemma \ref{Gammapadicderivativealphabeta}, we have
$$
\Psi(sp)-\Psi(0)\equiv\big(\Omega(sp)-\Omega(0)\big)\Phi(sp)+
\Omega(0)\big(\Phi(sp)-\Phi(0)\big)=\Omega(sp)\Phi(sp)-\Omega(0)\Phi(0)\pmod{p^2}.
$$
We are done.
\end{proof}
The proof of Theorem \ref{alphaalphabeta2beta} is a little different. We shall construct a polynomial $\Psi(x)$ such that both $\Psi(0)$ and $\Psi(p)$ can be explicitly computed.
\begin{proof}
Let $a=\langle-\alpha\rangle_p$, $b=\langle-\beta\rangle_p$  and $s=(\alpha+a)/p$. Since $b<p/2$, $(\beta)_k/(2\beta)_k$ is $p$-integral for each $0\leq k\leq p-1$. Let
$$
\Psi(x)={}_3F_2\bigg[\begin{matrix}
-a+x&1+a-x&\beta\\
&1&2\beta
\end{matrix}\bigg|\,1\bigg]_{p-1}
$$
and
$$
\Phi(x)=\frac{\Gamma_p(\frac12)\Gamma_p(\frac12+\beta)\Gamma_p(\beta)}{\Gamma_p(\frac12-\frac{a-x}2)\Gamma_p(1+\frac{a-x}2)\Gamma_p(\beta-\frac{a-x}2)\Gamma_p(\frac12+\beta+\frac{a-x}2)}.
$$

First, assume that $a$ is even. By (\ref{alphaalphabeta2betaH}),
\begin{align*}
\Psi(0)={}_3F_2\bigg[\begin{matrix}
-a&1+a&\beta\\
&1&2\beta
\end{matrix}\bigg|\,1\bigg]
=
\frac{\Gamma(\frac12)\Gamma(\frac12+\beta)\Gamma(\beta)}{\Gamma(\frac12-\frac12a)\Gamma(1+\frac12a)\Gamma(\beta-\frac12a)\Gamma(\frac12+\beta+\frac12a)}.
\end{align*}
Now we have
$$
\frac{\Gamma(\frac12+\beta)}{\Gamma(\frac12+\beta+\frac12a)}=\prod_{j=0}^{\frac12a-1}\frac1{\frac12+\beta+j}=(-1)^{\frac12a}\cdot\frac{\Gamma_p(\frac12+\beta)}{\Gamma_p(\frac12+\beta+\frac12a)}.
$$
since $b<p/2$. Similarly,
$$
\frac{\Gamma(\frac12)}{\Gamma(\frac12+\frac12a)}=(-1)^{\frac12a}\cdot\frac{\Gamma_p(\frac12)}{\Gamma_p(\frac12+\frac12a)},
\qquad \frac{\Gamma(\beta)}{\Gamma(\beta-\frac12a)}=(-1)^{\frac12a}\cdot\frac{\Gamma_p(\beta)}{\Gamma_p(\beta-\frac12a)}.
$$
It follows that
\begin{align*}
\Psi(0)=-
\frac{\Gamma_p(\frac12)\Gamma_p(\frac12+\beta)\Gamma_p(\beta)}{\Gamma_p(\frac12-\frac12a)\Gamma_p(1+\frac12a)\Gamma_p(\beta-\frac12a)\Gamma_p(\frac12+\beta+\frac12a)}=-\Phi(0).
\end{align*}

Furthermore,  in view of (\ref{alphaalphabeta2betaH}), we also have
\begin{align*}
\Psi(p)=&{}_3F_2\bigg[\begin{matrix}
-(p-1-a)&1+(p-1-a)&\beta\\
&1&2\beta
\end{matrix}\bigg|\,1\bigg]\\
=&
\frac{\Gamma(\frac12)\Gamma(\frac12+\beta)\Gamma(\beta)}{\Gamma(\frac12-\frac{p-1-a}2)\Gamma(1+\frac{p-1-a}2)\Gamma(\beta-\frac{p-1-a}2)\Gamma(\frac12+\beta+\frac{p-1-a}2)}\\
=&-
\frac{\Gamma_p(\frac12)\Gamma_p(\frac12+\beta)\Gamma_p(\beta)}{\Gamma_p(1+\frac{a-p}2)\Gamma_p(\frac12-\frac{a-p}2)\Gamma_p(\frac12+\beta+\frac{a-p}2)\Gamma_p(\beta-\frac{a-p}2)}=-\Phi(p).
\end{align*}
Thus we get
$$
\Psi(sp)\equiv\Psi(0)+sp\cdot\big(\Psi(p)-\Psi(0)\big)=
-\Phi(0)-sp\cdot\big(\Phi(p)-\Phi(0)\big)=-\Phi(sp)\pmod{p^2}.
$$ 

Next, assume that $a$ is odd. It follows from (\ref{alphaalphabeta2betaH}) that
\begin{align*}
\Psi(0)=&{}_3F_2\bigg[\begin{matrix}
-a&1+a&\beta\\
&1&2\beta
\end{matrix}\bigg|\,1\bigg]=\lim_{x\to0}{}_3F_2\bigg[\begin{matrix}
-a+x&1+a-x&\beta\\
&1&2\beta
\end{matrix}\bigg|\,1\bigg]\\
=&\lim_{x\to0}\frac{\Gamma(\frac12)\Gamma(\frac12+\beta)\Gamma(\beta)}{\Gamma(\frac12-\frac{a-x}2)\Gamma(1+\frac{{a-x}}2)\Gamma(\beta-\frac{a-x}2)\Gamma(\frac12+\beta+\frac{a-x}2)}=0,
\end{align*}
and
\begin{align*}
\Psi(p)=&\lim_{x\to0}{}_3F_2\bigg[\begin{matrix}
-a+p+x&1+a-p-x&\beta\\
&1&2\beta
\end{matrix}\bigg|\,1\bigg]\\
=&\lim_{x\to0}\frac{\Gamma(\frac12)\Gamma(\frac12+\beta)\Gamma(\beta)}{\Gamma(\frac12-\frac{p-1-a+x}2)\Gamma(1+\frac{{p-1-a+x}}2)\Gamma(\beta-\frac{p-1-a+x}2)\Gamma(\frac12+\beta+\frac{p-1-a+x}2)}=0.
\end{align*}
Hence
$$
\Psi(sp)\equiv\Psi(0)+sp\cdot\big(\Psi(p)-\Psi(0)\big)=0\pmod{p^2}.
$$\end{proof}

Now let us turn to the $p$-adic ${}_4F_3$ transformation. 
\begin{proof}[Proof of Theorem \ref{alphabetagammadeltaepsilon4F3}]
Let $a=\langle-\alpha\rangle_p$, $b=\langle-\beta\rangle_p$, $c=\langle-\gamma\rangle_p$, $d=\langle-\delta\rangle_p$, $e=\langle-\epsilon\rangle_p$
and $f=\langle-\rho\rangle_p$. Let $s=(\alpha+a)/p$.
According to the conditions (i), (ii) and (iii), it is easy to check that
$$
\frac{(\beta)_k(\gamma)_k}{(\delta)_k(\epsilon)_k},\ \frac{(1-\beta)_k(1-\gamma)_k}{(1+\delta-\beta-\gamma)_k(1+\epsilon-\beta-\gamma)_k}\in\Z_p
$$
for each $0\leq k\leq p-1$.
Let $$\varrho_a=\delta+\epsilon+a-\beta-\gamma.$$ 
Let
$$
\Psi(x)={}_4F_3\bigg[\begin{matrix}\varrho_a-x&-a+x&\beta&\gamma\\
&1&\delta&\epsilon\end{matrix}\bigg|\,1\bigg]_{p-1}
$$
and
$$
\Phi(x)={}_4F_3\bigg[\begin{matrix}\varrho_a-x&-a+x&1-\beta&1-\gamma\\
&1&1+\delta-\beta-\gamma&1+\epsilon-\beta-\gamma\end{matrix}\bigg|\,1\bigg]_{p-1}.
$$
By (\ref{alphabetagammadeltaepsilon4F3H}),
\begin{align*}
&\Psi(0)=\frac{(\delta-\varrho_a)_a(\epsilon-\varrho_a)_a}{(\delta)_a(\epsilon)_a}\cdot\Phi(0).
\end{align*}
By (ii) and (iii),
$$
\langle\epsilon-\varrho_a\rangle_p=\langle\beta+\gamma-\delta-a\rangle_p=p+d-b-c-a\leq p-a.
$$
Hence
$$
(\epsilon-\varrho_a)_a=(-1)^a\cdot\frac{\Gamma_p(\beta+\gamma-\delta)}{
\Gamma_p(\epsilon-\varrho_a)}.$$
Similarly, by (i), (ii) and (iii), we have $$\max\big\{\langle\delta-\varrho_a\rangle_p,\langle \delta\rangle_p,\langle \epsilon\rangle_p\big\}\leq p-a.$$
It follows that 
\begin{align*}
\Psi(0)=\frac{\Gamma_p(\beta+\gamma-\delta)}{\Gamma_p(\delta-\varrho_a)}\cdot \frac{\Gamma_p(\beta+\gamma-\epsilon)}{\Gamma_p(\epsilon-\varrho_a)}
\cdot\frac{\Gamma_p(\delta)}{\Gamma_p(\delta+a)}\cdot\frac{\Gamma_p(\epsilon)}{
\Gamma_p(\epsilon+a)}\cdot\Phi(0)=\Omega(0)\Phi(0),
\end{align*}
where 
$$
\Omega(x)=\frac{\Gamma_p(\beta+\gamma-\delta)\Gamma_p(\beta+\gamma-\epsilon)\Gamma_p(\delta)\Gamma_p(\epsilon)}{\Gamma_p(\delta-\varrho_a+x)\Gamma_p(\epsilon-\varrho_a+x)\Gamma_p(\delta+a-x)\Gamma_p(\epsilon+a-x)}.
$$

On the other hand, recalling that $f=\langle-\varrho_a\rangle_p$ and letting $t=(\varrho_a+f)/p$, we have
\begin{align*}
&\Psi(tp)={}_4F_3\bigg[\begin{matrix}-f&-a+\varrho_a+f&\beta&\gamma\\
&1&\delta&\epsilon\end{matrix}\bigg|\,1\bigg]\\
=&\frac{(\delta+a-\varrho_a-f)_f
(\epsilon+a-\varrho_a-f)_f
}{(\delta)_f(\epsilon)_f}\cdot{}_4F_3\bigg[\begin{matrix}-f&\varrho_a-a+f&1-\beta&1-\gamma\\
&1&\varrho_a-a-\delta+1&\varrho_a-a-\epsilon+1\end{matrix}\bigg|\,1\bigg]\\
=&\frac{\Gamma(\delta+a-\varrho_a)\Gamma(\epsilon+a-\varrho_a)\Gamma(\delta)\Gamma(\epsilon)}{\Gamma(\delta+a-\varrho_a-f)\Gamma(\epsilon+a-\varrho_a-f)\Gamma(\delta+f)\Gamma(\epsilon+f)}\cdot\Phi(tp).
\end{align*}
It is also easy to check that
\begin{align*}
&\frac{\Gamma(\delta+a-\varrho_a)\Gamma(\epsilon+a-\varrho_a)\Gamma(\delta)\Gamma(\epsilon)}{\Gamma(\delta+a-\varrho_a-f)\Gamma(\epsilon+a-\varrho_a-f)\Gamma(\delta+f)\Gamma(\epsilon+f)}\\
=&
\frac{\Gamma_p(\beta+\gamma-\delta)\Gamma_p(\beta+\gamma-\epsilon)\Gamma_p(\delta)\Gamma_p(\epsilon)}{\Gamma_p(\delta+a-tp)\Gamma_p(\epsilon+a-tp)\Gamma_p(\delta-\varrho_a+tp)\Gamma_p(\epsilon-\varrho_a+tp)}
=\Omega(tp).
\end{align*}
If $t\neq 0$, then
\begin{align*}
\Psi(sp)=&\Psi(0)+\frac{s}{t}\cdot\big(\Psi(tp)-\Psi(0)\big)\\
=&
\Omega(0)\Phi(0)+\frac{s}{t}\cdot\big(\Omega(tp)\Phi(tp)-\Omega(0)\Phi(0)\big)\equiv
\Omega(sp)\Phi(sp)\pmod{p^2}.
\end{align*}
Of course, if $t=0$, i.e., $\varrho_a=-f$, then we can easily check that 
\begin{align*}
&{}_4F_3\bigg[\begin{matrix}-f&\alpha&\beta&\gamma\\
&1&\delta&\epsilon\end{matrix}\bigg|\,1\bigg]_{p-1}={}_4F_3\bigg[\begin{matrix}-f&\alpha&\beta&\gamma\\
&1&\delta&\epsilon\end{matrix}\bigg|\,1\bigg]\\
=&\frac{\Gamma(\delta-\alpha+f)\Gamma(\epsilon-\alpha+f)\Gamma(\delta)\Gamma(\epsilon)}{
\Gamma(\delta-\alpha)\Gamma(\epsilon-\alpha)\Gamma(\delta+f)\Gamma(\epsilon+f)}\cdot{}_4F_3\bigg[\begin{matrix}-f&\alpha&1-\beta&1-\gamma\\
&1&\alpha-\delta-f+1&\alpha-\epsilon-f+1\end{matrix}\bigg|\,1\bigg]\\
=&\frac{\Gamma_p(\beta+\gamma-\delta)\Gamma_p(\beta+\gamma-\epsilon)\Gamma_p(\delta)\Gamma_p(\epsilon)}{
\Gamma_p(\delta-\alpha)\Gamma_p(\epsilon-\alpha)\Gamma_p(\delta+f)\Gamma_p(\epsilon+f)}\cdot {}_4F_3\bigg[\begin{matrix}-f&\alpha&1-\beta&1-\gamma\\
&1&\alpha-\delta-f+1&\alpha-\epsilon-f+1\end{matrix}\bigg|\,1\bigg]_{p-1}.
\end{align*}

\end{proof}

However, in order to prove (\ref{alphabetagammadeltaepsilon4F3GB}),
we need another hypergeometric identity
\begin{equation}\label{alphabetagammadeltaepsilon4F3HB}
{}_4F_3\bigg[\begin{matrix}-n&\alpha&\beta&\gamma\\
&1&\delta&\epsilon\end{matrix}\bigg|\,1\bigg]=\frac{(1-\alpha)_n(\epsilon-\alpha)_n}{n!\cdot(\epsilon)_n}\cdot
{}_4F_3\bigg[\begin{matrix}-n&\alpha&\delta-\beta&\delta-\gamma\\
&\delta&\alpha-n&1+\alpha-n-\epsilon\end{matrix}\bigg|\,1\bigg]
\end{equation}
where $n=\delta+\epsilon-\alpha-\beta-\gamma\in\N$. Clearly (\ref{alphabetagammadeltaepsilon4F3HB}) follows from (\ref{Whippletransformation4F3}) by setting $\delta=1$.

\begin{proof}[Proof of Theorem \ref{alphabetagammadeltaepsilon4F3B}]
Without loss of generality, we may assume that $d\geq e$. Then $\max\{b,c\}\leq d$ by the condition (i). Let
$$
\Psi(x)={}_4F_3\bigg[\begin{matrix}\varrho_a-x&-a+x&\beta&\gamma\\
&1&\delta&\epsilon\end{matrix}\bigg|\,1\bigg]_{p-1}
$$ 
where $\varrho_a=\delta+\epsilon+a-\beta-\gamma$. Note that $\varrho_a\in\Z_p^\times$ by (ii).
With help of (\ref{alphabetagammadeltaepsilon4F3HB}),
\begin{align}\label{alphabetagammadeltaepsilon4F3HBPsi0}
\Psi(0)=\frac{(1-\varrho_a)_a(\epsilon-\varrho_a)_a}{a!\cdot(\epsilon)_a}\cdot
{}_4F_3\bigg[\begin{matrix}\varrho_a&-a&\delta-\beta&\delta-\gamma\\
&\delta&\varrho_a-a&1+\varrho_a-a-\epsilon\end{matrix}\bigg|\,1\bigg].
\end{align}

If $a+b+c>p+d$, then by (i) and (ii), we have
$$
\langle\beta+\gamma-\delta-a\rangle_p=2p+d-a-b-c>p+d-b-c=\langle\beta+\gamma-\delta\rangle_p.
$$
It follows that
\begin{equation}\label{epsilonvarrhoaa0modp}
(\epsilon-\varrho_a)_a=(\beta+\gamma-\delta-a)_a\equiv0\pmod{p}.
\end{equation}
Of course, since $a\geq 1$, (\ref{epsilonvarrhoaa0modp}) also holds when $a+b+c=p+d$.
Similarly, we can get
\begin{equation}\label{1varrhoaa0modp}
(1-\varrho_a)_a=(1+\beta+\gamma-\delta-\epsilon-a)_a\equiv0\pmod{p}.
\end{equation}
Let $r_1=\nu_p((1-\varrho_a)_a)$ and $r_2=\nu_p((\epsilon-\varrho_a)_a)$. It is easy to see that $\nu_p((\varrho_a-a)_{p-1})=r_1$ and $\nu_p((1+\varrho_a-a-\epsilon)_{p-1})=r_2$.
Moreover, by (i) and (ii), we have
\begin{equation}\label{rhoaadeltabetap}
\langle-(\varrho_a-a)\rangle_p=d+e-b-c\geq d-b=\langle-(\delta-\beta)\rangle_p
\end{equation}
and
\begin{equation}\label{rhoaa1epsilondeltagammap}
\langle-(1+\varrho_a-a-\epsilon)\rangle_p=p+d-b-c-1\geq d-c=\langle-(\delta-\gamma)\rangle_p.
\end{equation}
So
$$
p^{r_1+r_2-2}\cdot\frac{(\varrho_a)_k(-a)_k}{(\delta)_k}\cdot\frac{(\delta-\beta)_k}{(\varrho_a-a)_k}\cdot\frac{(\delta-\gamma)_k}{(1+\varrho_a-a-\epsilon)_k}\in\Z_p
$$
for any $0\leq k\leq p-1$. It follows from (\ref{alphabetagammadeltaepsilon4F3HBPsi0}) that
\begin{align*}
\Psi(0)\equiv0\pmod{p^2}.
\end{align*}

Let
$$
\phi(x,y)=
{}_4F_3\bigg[\begin{matrix}-f-x&-a+y&\delta-\beta&\delta-\gamma\\
&\delta&-f-a-x+y&1-f-a-\epsilon-x+y\end{matrix}\bigg|\,1\bigg]_{p-1}.
$$
Since $(\beta)_k(\gamma)_k/((\delta)_k(\epsilon)_k)$ is $p$-integral for $0\leq k\leq p-1$, in view of (\ref{alphabetagammadeltaepsilon4F3HB}),
\begin{align*}
&\Psi'(0)\equiv\frac{d}{d x}\bigg({}_4F_3\bigg[\begin{matrix}-f-x&-a&\beta&\gamma\\
&1&\delta&\epsilon\end{matrix}\bigg|\,1\bigg]+{}_4F_3\bigg[\begin{matrix}-f&-a+x&\beta&\gamma\\
&1&\delta&\epsilon\end{matrix}\bigg|\,1\bigg]\bigg)\bigg|_{x=0}\\
\equiv&\frac{d}{d x}\bigg(\frac{(1+f+x)_a(\epsilon+f+x)_a}{a!\cdot(\epsilon)_a}\cdot\phi(x,0)
+\frac{(1+a-x)_f(\epsilon+a-x)_f}{f!\cdot(\epsilon)_f}\cdot\phi(0,x)\bigg)\bigg|_{x=0}\\
\equiv&\frac{d}{d x}\bigg(\frac{(1+f+x)_a(\epsilon+f+x)_a}{a!\cdot(\epsilon)_a}
+\frac{(1+a-x)_f(\epsilon+a-x)_f}{f!\cdot(\epsilon)_f}\bigg)\bigg|_{x=0}\cdot\phi(0,0)\\
&+\frac{(1+f)_a(\epsilon+f)_a}{a!\cdot(\epsilon)_a}\cdot\phi_x'(0,0)+\frac{(1+a)_f(\epsilon+a)_f}{f!\cdot(\epsilon)_f}\cdot\phi_y'(0,0)\pmod{p}.
\end{align*}
Clearly (\ref{epsilonvarrhoaa0modp}) and (\ref{1varrhoaa0modp}) implies that $(1+f)_a$
and $(\epsilon+f)_a$ are the multiples of $p$,
 i.e., $p$ divides 
both $(\epsilon+a)_f$ and $(1+a)_f$. 
So
$$
\frac{d}{d x}\bigg(\frac{(1+f+x)_a(\epsilon+f+x)_a}{a!\cdot(\epsilon)_a}
+\frac{(1+a-x)_f(\epsilon+a-x)_f}{f!\cdot(\epsilon)_f}\bigg)\bigg|_{x=0}\equiv 0\pmod{p}.
$$
Furthermore, since $e\geq a$ and $(\epsilon)_{p-1}\not\equiv0\pmod{p^2}$, we must have $(\epsilon+a)_{p-1},(\epsilon+f)_{p-1}\not\equiv 0\pmod{p^2}$. 
Hence by (\ref{epsilonvarrhoaa0modp}),  $\nu_p((\epsilon+f)_a)=1$, as well as $\nu_p((1-f-a-\epsilon)_a)=1$.
In view of (\ref{rhoaadeltabetap}) and (\ref{rhoaa1epsilondeltagammap}), 
\begin{align*}p\cdot\phi_x'(0,0)
=&\sum_{k=0}^a\frac{p\cdot (-a)_k(\delta-\beta)_k(\delta-\gamma)_k}{(\delta)_k(-f-a)_k(1-f-a-\epsilon)_k}\cdot\frac{d\big((-f-x)_k\big)}{d x}\bigg|_{x=0}\\
&+\sum_{k=0}^a\frac{(-f)_k(-a)_k(\delta-\beta)_k(\delta-\gamma)_k}{(\delta)_k(-f-a)_k(1-f-a-\epsilon)_k}\sum_{j=0}^{k-1}\bigg(\frac{p}{j-f-a}+\frac{p}{j+1-f-a-\epsilon}\bigg)
\end{align*}
is $p$-adic integral. Similarly,
we also have $p\cdot\phi_y'(0,0)\in\Z_p$. 
Hence
$$
\Psi(sp)\equiv\Psi(0)+sp\cdot\Psi'(0)\equiv 0\pmod{p^2}.
$$
\end{proof}

\section{$p$-adic Whipple's ${}_7F_6$ transformation I: Theorem \ref{alphabetagammadeltaepsilon7F6A}}
\label{section7F6I}
\setcounter{lemma}{0}
\setcounter{theorem}{0}
\setcounter{corollary}{0}
\setcounter{remark}{0}
\setcounter{equation}{0}
\setcounter{conjecture}{0}

In this section, we shall prove Theorem \ref{alphabetagammadeltaepsilon7F6A}. 
Of course, as we have mentioned, Theorems \ref{alphaalpha12alphabetagamma5F4A} and  \ref{alphabetagammadelta6F5A}, whose proofs wouldn't be given here,
can be proved in the same way. And at the end of this section, we shall explain how to deduce Theorem \ref{alphabetagammadeltan7F6A} from Theorem \ref{alphabetagammadeltaepsilon7F6A}.

Let $a=\langle-\alpha\rangle_p$, $b=\langle-\beta\rangle_p$, $c=\langle-\gamma\rangle_p$, $d=\langle-\delta\rangle_p$ and $e=\langle-\epsilon\rangle_p$.
Note that $b<a\leq\min\{c,d,e\}$ and $\max\{c+d,c+e,d+e\}\leq p+a-1$ by (i) and (ii). It is not difficult to verify that
$$
\frac{(\alpha)_k^2(1+\frac12\alpha)_k(\beta)_k(\gamma)_k(\delta)_k(\epsilon)_k}{(\frac12\alpha)_k(\alpha-\beta+1)_k(\alpha-\gamma+1)_k(\alpha-\delta+1)_k(\alpha-\epsilon+1)_k}\equiv0\pmod{p^2}
$$
for any $a+1\leq k\leq p-1$. And since
$\max\{p-1-e,b,c,d\}\leq p+a-1-e$ and $\max\{c,d\}\leq b+c+d-a$,
we obtain that
$$
\frac{(1-\epsilon)_k(\beta)_k(\gamma)_k(\delta)_k}{(\alpha-\epsilon+1)_k(\beta+\gamma+\delta-\alpha)_k}\equiv0\pmod{p^2}
$$
provided $$1+\max\big\{b,\min\{c,d,p-1-e\}\big\}\leq k\leq p-1.$$

Let
$$
\cA_k(x)=(-a-ax)_k^2(-b-bx)_k(\gamma)_k(\delta)_k(\epsilon)_k
$$
and
$$
\cB_k(x)=(1)_k^2(1-a-ax-\gamma)_k(1-a-ax-\delta)_k(1-a-ax-\epsilon)_k\prod_{\substack {1\leq j\leq k\\ j\neq a-b}}(j-a-ax+b+bx).
$$
Let
$$
\Psi(x)=\sum_{k=0}^{a-b-1}\frac{(1-\frac12a-\frac12ax)_k}{(-\frac12a-\frac12ax)_k}\cdot\frac{\cA_k(x)}{\cB_k(x)}+\sum_{k=a-b}^{M}\frac{(1-\frac12a-\frac12ax)_k}{(-\frac12a-\frac12ax)_k}\cdot\frac{1}{(b-a)x}\cdot\frac{\cA_k(x)}{\cB_k(x)},
$$
where $M=\min\{c,d,e\}$.
Write $\alpha=-a(1+sp)$.
Clearly
$$
\Psi(sp)=
{}_7F_6\bigg[\begin{matrix} \alpha&1+\frac12\alpha&\alpha&\beta&\gamma&\delta&\epsilon\\ &\frac12\alpha&1&1+\alpha-\beta&1+\alpha-\gamma&1+\alpha-\delta&1+\alpha-\epsilon\end{matrix}\bigg|\,1\bigg]_M.
$$
\begin{lemma}\label{PsixPxQx}
$$\Psi(x)=\frac{P(x)}{Q(x)},$$
where $P(x)$ and $Q(x)$ are two polynomials with $p\nmid Q(0)$. Furthermore, the coefficients of $P(x)$ and $Q(x)$ are all the polynomials in $a,b,\gamma,\delta,\epsilon$ with integral coefficients.
\end{lemma}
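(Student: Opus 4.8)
The plan is to view $\Psi(x)$ as a finite $\Z[a,b,\gamma,\delta,\epsilon]$-combination of explicit rational functions of $x$ and to locate its only possible pole at $x=0$. First I would clear the half-integer parameters coming from the very-well-poised factor: writing $y=-\tfrac a2(1+x)$ and using $(y+1)_k/(y)_k=(y+k)/y$,
\begin{equation}
\frac{(1-\tfrac12a-\tfrac12ax)_k}{(-\tfrac12a-\tfrac12ax)_k}=\frac{y+k}{y}=\frac{a+ax-2k}{a+ax},
\end{equation}
so the halves cancel and this factor is a rational function with numerator and denominator in $\Z[a][x]$, the denominator being $a(1+x)$ with $p\nmid a$ (as $1\le a\le p-1$). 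Each Pochhammer in $\cA_k(x)$ and $\cB_k(x)$ is a product of linear forms such as $j-a(1+x)$, $j-b(1+x)$, $j+1-a-\gamma-ax$ and $j-(a-b)(1+x)$, while $(\gamma)_k,(\delta)_k,(\epsilon)_k$ are constants; hence $\cA_k,\cB_k\in\Z[a,b,\gamma,\delta,\epsilon][x]$. Consequently every summand, and thus $\Psi$ itself, is a rational function with coefficients in $\Z[a,b,\gamma,\delta,\epsilon]$, and putting the terms over a common denominator built from $a(1+x)$, $(b-a)x$ and the $\cB_k(x)$ gives a first presentation $\Psi=P/Q$ with $P,Q\in\Z[a,b,\gamma,\delta,\epsilon][x]$.

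It remains to secure $p\nmid Q(0)$. The honest denominator factors are all $p$-adic units at $x=0$: one has $a(1+x)\big|_{x=0}=a$; $(k!)^2$ is prime to $p$ since $k\le M=\min\{c,d,e\}\le p-1$; the products $(1-a-ax-\gamma)_k$, $(1-a-ax-\delta)_k$, $(1-a-ax-\epsilon)_k$ are prime to $p$ at $x=0$ because hypotheses (i) and (ii) force $M+c<p+a$ (and likewise with $d,e$), so no factor $1-a-\gamma+j$ is $\equiv 0\pmod p$; and the integer product $\prod_{1\le j\le k,\,j\ne a-b}(j-(a-b))$ consists of nonzero integers of absolute value $<p$. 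So the sole threat to regularity at $x=0$ is the explicit $\tfrac{1}{(b-a)x}$ appearing in the second sum.

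The heart of the matter is therefore to show this apparent simple pole cancels. I would reassemble the two sums into the single family of genuine ${}_7F_6$ summands $T_k(x)$ (with $\alpha=-a(1+x)$), noting that $\tfrac{1}{(b-a)x}$ is exactly the reciprocal of the $j=a-b$ factor extracted from $(\alpha-\beta+1)_k=\prod_{i=1}^{k}\!\big(i-(a-b)(1+x)\big)$, whose only factor vanishing at $x=0$ is $-(a-b)x$. Since $(-a-ax)_k^2$ has a double zero at $x=0$ for $k>a$ and $(-b-bx)_k$ a simple zero for $k>b$, the residue of $T_k$ at $x=0$ can be nonzero only for $a-b\le k\le\min\{a,b\}$. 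If $a>2b$ this range is empty and $\Psi$ is automatically regular. If $a\le 2b$ the surviving residues are governed by the antisymmetric very-well-poised factor $(a-2k)$: the term $k=a/2$ vanishes outright, and under the involution $k\mapsto a-k$ of the interval $[a-b,b]$ this factor changes sign, so that $\sum_{k=a-b}^{b}\operatorname{Res}_{x=0}T_k$ should vanish as a terminating very-well-poised sum (a Dougall/${}_5F_4$-type reduction, the pole being created by specializing one numerator parameter). Verifying this residue identity is the main obstacle, and it is the analytic shadow of the remark after Theorem~\ref{alphabetagammadeltaepsilon7F6A} that the truncated ${}_7F_6$ stays $p$-integral even when individual summands are not. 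Once the residue is zero, $x$ divides the numerator, one factor of $x$ cancels, all operations remain inside $\Z[a,b,\gamma,\delta,\epsilon][x]$, and the reduced denominator evaluates at $x=0$ to a product of the units identified above; hence $p\nmid Q(0)$ and the lemma follows.
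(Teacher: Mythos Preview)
Your outline mirrors the paper's strategy almost exactly: rewrite the very-well-poised factor as $(a+ax-2k)/(a(1+x))$, observe that every ingredient lies in $\Z[a,b,\gamma,\delta,\epsilon][x]$, check that $\cB_k(0)\in\Z_p^\times$, note that $\cA_k(x)$ kills the pole for $k>a$ and that $(-b)_k=0$ handles $b<k\le a$, and then reduce everything to the involution $k\mapsto a-k$ on $[a-b,b]$ when $a\le 2b$.

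The gap is precisely the step you flagged yourself. You say the residues should cancel in pairs because the $(a-2k)$ factor is antisymmetric under $k\mapsto a-k$, and you appeal to a ``Dougall/${}_5F_4$-type reduction'' to justify that the remaining piece is symmetric. But this symmetry---equivalently $\cA_k(0)\cB_{a-k}(0)=\cA_{a-k}(0)\cB_k(0)$---is the entire content of the lemma, and it does not come for free from a named summation formula. The paper proves it directly and elementarily: from $\Gamma(z)\Gamma(1-z)=\pi/\sin(\pi z)$ one extracts the reflection identity
\[
(z)_k(1-a-z)_{a-k}=(-1)^a(z)_{a-k}(1-a-z)_k,
\]
applies it with $z=\gamma,\delta,\epsilon$ (and $z=-a$ for the $(-a)_k^2/(1)_k^2$ piece), and then handles the $(-b)_k$ versus the $j\ne a-b$ product by taking a limit $z\to 0$ of the same identity with $z$ perturbing $-b$. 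Each application contributes a sign $(-1)^a$, and the product of an even number of such signs is $1$, yielding the exact pairwise equality. Without this computation your argument remains a plan rather than a proof; once you insert it, your write-up and the paper's are essentially the same.
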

That is, we may write
\begin{equation}\label{PxQx}
P(x)=\sum_{k\geq 0}{\mathfrak p}_k(a,b,\gamma,\delta,\epsilon)x^k,\qquad
Q(x)=\sum_{k\geq 0}{\mathfrak q}_k(a,b,\gamma,\delta,\epsilon)x^k,
\end{equation}
where ${\mathfrak p}_k,{\mathfrak q}_k$ are polynomials whose coefficients are integers.
\begin{proof}
Clearly $p\nmid \cB_k(0)$ for any $0\leq k\leq M$. Also, the polynomial $\cA_k(x)$ is divisible by $x^3$ for those $a+1\leq k\leq M$.
Note that
$$\frac{(1-\frac12a-\frac12ax)_k}{(-\frac12a-\frac12ax)_k}=1-\frac{2k}{a+ax}.
$$
According to definition of $\Psi(x)$, 
we only need to prove that for each $a-b\leq k\leq a/2$, the constant term of the numerator of
$$
\bigg(1-\frac{2k}{a+ax}\bigg)\cdot\frac{\cA_k(x)}{\cB_k(x)}+
\bigg(1-\frac{2a-2k}{a+ax}\bigg)\cdot\frac{\cA_{a-k}(x)}{\cB_{a-k}(x)}
$$
is zero, i.e.,
\begin{equation}\label{a2kAkBak2kaAakBk}
(a-2k)\cA_k(0)\cB_{a-k}(0)+
(2k-a)\cA_{a-k}(0)\cB_{k}(0)=0.
\end{equation}
There is nothing to do when $2b<a$, since $\cA_k(0)=0$ for any $k>b$.
So we may assume that $2b\geq a$.
Since $\Gamma(z)\Gamma(1-z)=\pi/\sin(\pi z)$, for any non-integral $z\in\R$, we have
\begin{align*}
\frac{(z)_{a-k}}{(1-a-z)_{a-k}}=\frac{\Gamma(z+a-k)\Gamma(1-a-z)}{\Gamma(z)\Gamma(1-z-k)}
=(-1)^a\cdot\frac{\Gamma(1-a-z)\Gamma(z+k)}{\Gamma(1-a-z+k)\Gamma(z)}=\frac{(-1)^a(z)_{k}}{(1-a-z)_{k}},
\end{align*}
i.e.,
\begin{equation}\label{etal1aetaaketaak1aetak}
(z)_{k}(1-a-z)_{a-k}=(-1)^a(z)_{a-k}(1-a-z)_{k}.
\end{equation}
Clearly both sides of (\ref{etal1aetaaketaak1aetak}) are the polynomials in $z$. So (\ref{etal1aetaaketaak1aetak}) is factly valid for every $z\in\R$. Further,
\begin{align*}
&(-b)_k\prod_{\substack {1\leq j\leq a-k\\ j\neq a-b}}(j-a+b)=\lim_{z\to 0}\frac{(-b-z)_k(1-a+b+z)_{a-k}}{z}\\
=&
(-1)^a\lim_{z\to 0}\frac{(-b-z)_{a-k}(1-a+b+z)_{k}}{z}=(-1)^a(-b)_{a-k}\prod_{\substack {1\leq j\leq k\\ j\neq a-b}}(j-a+b).
\end{align*}
Thus (\ref{a2kAkBak2kaAakBk}) is immediately derived.
\end{proof}
Let
$$
\Omega_1(x)=\frac{\Gamma(1-a-ax+b+bx)}{\Gamma(1-a-ax)},
$$
\begin{align*}
\Omega_2(x)=\frac{\Gamma(1-a-ax-\gamma)\Gamma(1-a-ax-\delta)\Gamma(1-a-ax+b+bx-\gamma-\delta)}
 {\Gamma(1-a-ax+b+bx-\gamma)\Gamma(1-a-ax+b+bx-\delta)\Gamma(1-a-ax-\gamma-\delta)},
\end{align*}
and
$$
\Phi(x)={}_4F_3\bigg[\begin{matrix} 1-\epsilon&-b-bx&\gamma&\delta\\ &1&1-a-ax-\epsilon&a+ax-b-bx+\gamma+\delta\end{matrix}\bigg|\,1\bigg]_N
$$
where $N=\max\{b,\min\{c,d,p-1-e\}\}$. Note that
\begin{align*}
\lim_{x\to0}\Omega_1(x)=
 \lim_{x\to0}\frac{\Gamma(1-ax+bx)}{\Gamma(1-ax)}\cdot\frac{(1-a-ax)(2-a-ax)\cdots(-ax)}{(1-a-ax+b+bx)\cdots(-ax+bx)}=\frac{a\cdot(1-a)_b}{a-b}.
\end{align*}
So we set
$$
\Omega_1(0)=\frac{a}{a-b}\cdot(1-a)_b.
$$
Moreover, it is not difficult to check that $\Omega_1(x)$ is continuously differentiable on $x=0$, i.e.,
$$
\Omega_1'(0)=\lim_{x\to 0}\Omega_1'(x).
$$ 
Recall that $\beta,\delta,\epsilon$ are all negative integers, and $\cA_k(x)$ is divisible by $x^3$ for each $k>a$. In view of (\ref{alphabetagammadeltaepsilon7F6H}), we have
\begin{align*}
\Psi(0)=\lim_{x\to 0}\Psi(x)=\lim_{x\to 0}\Omega_1(x)\Omega_2(x)\Phi(x)
=\Omega_1(0)\Omega_2(0)\Phi(0).
\end{align*}

Let
\begin{align*}
\Upsilon(x)=&\frac{\Gamma_p(1-a-ax+b+bx)\Gamma_p(1-a-ax-\gamma)}{\Gamma_p(1-a-ax)\Gamma_p(1-a-ax+b+bx-\gamma)}\\
&\cdot\frac{\Gamma_p(1-a-ax-\delta)\Gamma_p(1-a-ax+b+bx-\gamma-\delta)}
 {\Gamma_p(1-a-ax+b+bx-\delta)\Gamma_p(1-a-ax-\gamma-\delta)}.
\end{align*}
Since $b<a\leq\min\{c,d\}$, we may get
$$
(1-a)_b=(-1)^{b}\cdot\frac{\Gamma_p(1-a+b)}{\Gamma_p(1-a)},
\qquad
\frac{\Gamma(1-a-\gamma)}{\Gamma(1-a+b-\gamma)}=(-1)^{b}\cdot\frac{\Gamma_p(1-a-\gamma)}{\Gamma_p(1-a+b-\gamma)}
$$
and
$$
\frac{\Gamma(1-a-\delta)}{\Gamma(1-a+b-\delta)}=(-1)^{b}\cdot\frac{\Gamma_p(1-a-\delta)}{\Gamma_p(1-a+b-\delta)}.
$$
Also, since $b+c+d\leq p+a-1$ by (ii), we have
$$
\frac{\Gamma(1-a+b-\gamma-\delta)}{\Gamma(1-a-\gamma-\delta)}=
(-1)^b\cdot\frac{\Gamma_p(1-a+b-\gamma-\delta)}{\Gamma_p(1-a-\gamma-\delta)}.
$$
Thus
$$
\Upsilon(0)=\frac{a-b}{a}\cdot\Omega_1(0)\Omega_2(0),
$$
i.e.,
 \begin{align}\label{7F6APsi0alphaalphagammaUpsilon0Phi0}
 \Psi(0)=\frac{a}{a-b}\cdot
\Upsilon(0)\Phi(0).
\end{align}

Let
$$
\psi(x)=\sum_{k=0}^{a-b-1}\frac{(1-\frac12a-\frac12ax)_k}{(-\frac12a-\frac12ax)_k}\cdot\frac{\cA_k^*(x)}{\cB_k^*(x)}+\sum_{k=a-b}^{M}\frac{(1-\frac12a-\frac12ax)_k}{(-\frac12a-\frac12ax)_k}\cdot\frac{1}{(b-a)x}\cdot\frac{\cA_k^*(x)}{\cB_k^*(x)},
$$
where
$$
\cA_k^*(x)=(-a-ax)_k^2(-b-bx)_k(-c)_k(-d)_k(p-e)_k
$$
and
$$
\cB_k^*(x)=(1)_k^2(1-a-ax+c)_k(1-a-ax+d)_k(1-a-ax+e-p)_k\prod_{\substack {1\leq j\leq k\\ j\neq a-b}}(j-a-ax+b+bx).
$$
In view of (\ref{PxQx}), we have
$
\psi(x)=P_*(x)/Q_*(x),
$
where
$$
P_*(x)=\sum_{k\geq 0}{\mathfrak p}_k(a,b,-c,-d,-e)x^k,\qquad
Q_*(x)=\sum_{k\geq 0}{\mathfrak q}_k(a,b,-c,-d,-e)x^k.
$$
So by Lemma \ref{PsixPxQx}, clearly
\begin{equation}\label{7F6APsi0psi0}
\Psi'(0)\equiv\psi'(0)\pmod{p}.
\end{equation}

Let
$$
\omega_2(x)=\frac{\Gamma(1-a-ax+c)\Gamma(1-a-ax+d)\Gamma(1-a-ax+b+bx+c+d)}
 {\Gamma(1-a-ax+b+bx+c)\Gamma(1-a-ax+b+bx+d)\Gamma(1-a-ax+c+d)}
$$
and
$$
\phi(x)={}_4F_3\bigg[\begin{matrix} 1+e-p&-b-bx&-c&-d\\ &1&1-a-ax+e-p&a+ax-b-bx-c-d\end{matrix}\bigg|\,1\bigg].
$$
By (\ref{alphabetagammadeltaepsilon7F6H}),
\begin{equation}\label{7F6ApsixOmega1xomega2xphix}
\psi(x)=\Omega_1(x)\omega_2(x)\phi(x).
\end{equation}
It is easy to see that
\begin{equation}\label{7F6APhi0phi0modp}
\Phi(0)\equiv \phi(0)\pmod{p},
\qquad
\Phi'(0)\equiv \phi'(0)\pmod{p},
\end{equation}
and
\begin{equation}\label{7F6Aomega20Omega20modp}
\omega_2(0)=\frac{(1-a+c+d)_b}
 {(1-a+c)_b(1-a+d)_b}\equiv\frac{(1-a-\gamma-\delta)_b}
 {(1-a-\gamma)_b(1-a-\delta)_b}=\Omega_2(0)\pmod{p}.
\end{equation}

According to Lemma \ref{Gammapadicderivativealphabeta},
$$
\Upsilon(sp)-\Upsilon(0)\equiv sp\cdot\frac{d}{dx}\bigg(\frac{\Gamma(p+1-a-ax+b+bx)}{\Gamma(p+1-a-ax)}\cdot\omega_2(x)\bigg)\bigg|_{x=0}\pmod{p^2}.
$$
We have
$$
\frac{\Gamma(p+1-a-ax+b+bx)}{\Gamma(p+1-a-ax)}=
\Omega_1(x)\cdot\frac{(1-a-ax+b+bx)_p}{(1-a-ax)_p}.
$$
Clearly
$$
\lim_{x\to0}\frac{(1-a-ax+b+bx)_p}{(1-a-ax)_p}=
\frac{a-b}{a}\cdot \prod_{\substack{1\leq j\leq p\\ j\neq a-b}}(j-a+b)\cdot\prod_{\substack{1\leq j\leq p\\ j\neq a}}\frac1{j-a}\equiv \frac{a-b}{a}\pmod{p}
$$
and
\begin{align*}
&\frac{d}{dx}\bigg(\frac{(1-a-ax+b+bx)_p}{(1-a-ax)_p}\bigg)\bigg|_{x=0}\\
=&\frac{a-b}{a}\cdot \prod_{\substack{1\leq j\leq p\\ j\neq a-b}}(j-a+b)\cdot\prod_{\substack{1\leq j\leq p\\ j\neq a}}\frac1{j-a}\cdot \bigg(\sum_{\substack{1\leq j\leq p\\ j\neq a}}\frac{a}{j-a}-\sum_{\substack{1\leq j\leq p\\ j\neq a-b}}\frac{a-b}{j-a+b}\bigg)\equiv 0\pmod{p}.
\end{align*}
It follows that
$$
\frac{d}{dx}\bigg(\frac{\Gamma(p+1-a-ax+b+bx)}{\Gamma(p+1-a-ax)}\bigg)\bigg|_{x=0}
\equiv
\frac{a-b}{a}\cdot\Omega_1'(0)\pmod{p}.
$$
Hence
\begin{equation}\label{7F6AUpsilonspUpsilon0acaOmega10omega20}
\Upsilon(sp)-\Upsilon(0)\equiv sp\cdot\frac{a-b}{a}\cdot\big(\Omega_1'(0)\cdot\omega_2(0)+\Omega_1(0)\cdot\omega_2'(0)\big)\pmod{p^2}.
\end{equation}

Since
$$
\Upsilon(sp)\equiv \Upsilon(0)\equiv\frac{a-b}{a}\cdot\Omega_1(0)\omega_2(0)\pmod{p},
$$
combining (\ref{7F6APsi0psi0}), (\ref{7F6ApsixOmega1xomega2xphix}), (\ref{7F6APhi0phi0modp}), (\ref{7F6Aomega20Omega20modp}) and (\ref{7F6AUpsilonspUpsilon0acaOmega10omega20}), we obtain that
\begin{align*}
\Psi(sp)-\Psi(0)\equiv&sp\cdot\psi'(0)\equiv sp\cdot\big(\Omega_1'(0)\omega_2(0)\phi(0)+
\Omega_1(0)\omega_2'(0)\phi(0)+\Omega_1(0)\omega_2(0)\phi'(0)\big)\\
\equiv&\frac{a}{a-b}\cdot\big(\Upsilon(sp)-\Upsilon(0)\big)\cdot\Phi(0)+\frac{a}{a-b}\cdot\Upsilon(0)\cdot(\Phi(sp)-\Phi(0))\\
\equiv&\frac{a}{a-b}\cdot\big(\Upsilon(sp)\Phi(sp)-\Upsilon(0)\phi(0)\big)\pmod{p^2}.
\end{align*}
It follows from (\ref{7F6APsi0alphaalphagammaUpsilon0Phi0}) that
$$
\Psi(sp)\equiv \frac{a}{a-b}\cdot\Upsilon(sp)\Phi(sp)=
\frac{\alpha}{\alpha-\beta}\cdot\Upsilon(sp)\Phi(sp)\pmod{p^2}.
$$
\qed

Finally,
let us give the proof of Theorem \ref{alphabetagammadeltan7F6A}. Let $a=\langle-\alpha\rangle_p$, $b=\langle-\beta\rangle_p$, $c=\langle-\gamma\rangle_p$, $d=\langle-\delta\rangle_p$ and $e=\langle-\epsilon\rangle_p$. By the condition (ii), clearly
$$
e=p+a-1-b-c-d\geq a.
$$
Note that now
$$
\big\langle-(\beta+\gamma+\delta)\big\rangle_p=b+c+d>b+c=\langle-\beta\rangle_p+
\langle-\gamma\rangle_p.
$$
According to Theorem \ref{nalphabeta1alphabetan}, we know that
\begin{align*}
&{}_4F_3\bigg[\begin{matrix} 1-\epsilon&\beta&\gamma&\delta\\ &1&\alpha-\epsilon+1&\beta+\gamma+\delta-\alpha\end{matrix}\bigg|\,1\bigg]_{p-1}\\
=&{}_3F_2\bigg[\begin{matrix} \beta&\gamma&\delta\\ &1&\beta+\gamma+\delta\end{matrix}\bigg|\,1\bigg]_{p-1}
\equiv-\frac{\Gamma_p(1-\beta-\gamma)\Gamma_p(1-\beta-\delta)\Gamma_p(1-\gamma-\delta)}{\Gamma_p(1-\beta)\Gamma_p(1-\gamma)\Gamma_p(1-\delta)\Gamma_p(1-\beta-\gamma-\delta)}\pmod{p^2}.
\end{align*}
Thus by Theorem \ref{alphabetagammadeltaepsilon7F6A}, we immediately get the desired result.

\section{$p$-adic Whipple's ${}_7F_6$ transformation II: Theorems \ref{alphabetagammadeltaepsilon7F6B} and 
\ref{alphabetagammadeltaepsilon7F6C}}

\label{section7F6II}
\setcounter{lemma}{0}
\setcounter{theorem}{0}
\setcounter{corollary}{0}
\setcounter{remark}{0}
\setcounter{equation}{0}
\setcounter{conjecture}{0}

\begin{proof}[Proof of Theorem \ref{alphabetagammadeltaepsilon7F6B}]
Let $a=\langle-\alpha\rangle_p$, $b=\langle-\beta\rangle_p$, $c=\langle-\gamma\rangle_p$, $d=\langle-\delta\rangle_p$ and $e=\langle-\epsilon\rangle_p$.
Since $a\leq\min\{b,c,d,e\}$ and $\max\{b+e,c+d\}\leq p+a-1$ by (i) and (ii),
we have
$$
\frac{(\alpha)_k^2(\frac12\alpha+1)_k(\beta)_k(\gamma)_k(\delta)_k(\epsilon)_k}{(\frac12\alpha)_k(\alpha-\beta+1)_k(\alpha-\gamma+1)_k(\alpha-\delta+1)_k(\alpha-\epsilon+1)_k}\equiv0\pmod{p^2}
$$
for each $a+1\leq k\leq p-1$.
Let
$$
\Psi(x)={}_7F_6\bigg[\begin{matrix} \alpha&\alpha&\frac12\alpha+1&-b+x&\gamma&\delta&\epsilon\\ &1&\frac12\alpha&\alpha+b-x+1&\alpha-\gamma+1&\alpha-\delta+1&\alpha-\epsilon+1\end{matrix}\bigg|\,1\bigg]_a
$$
In view of (\ref{alphabetagammadeltaepsilon7F6H}),
\begin{align*}
 \Psi(0)=\frac{(1+\alpha)_b(1+\alpha-\gamma-\delta)_b}
 {(1+\alpha-\gamma)_b(1+\alpha-\delta)_b}\cdot{}_4F_3\bigg[\begin{matrix} 1-\epsilon&-b&\gamma&\delta\\ &1&\alpha-\epsilon+1&\gamma+\delta-b-\alpha\end{matrix}\bigg|\,1\bigg].
 \end{align*}
Note that (iii) implies $p+a\leq b+c+d$.
As $a\leq b$ and $c+d-a<p$, we get
$$
(1+\alpha)_b\equiv(1+\alpha-\gamma-\delta)_b\equiv0\pmod{p}.
$$
Also, since $a\leq\min\{c,d\}$ and $p+a-b>\max\{c,d\}$,
neither $
(1+\alpha-\gamma)_b$ nor $(1+\alpha-\delta)_b$ is divisible by $p$.
Moreover, we have $a\leq e$, $p-1-e+a\geq b$ and $p-1-e\leq b+c+d-a-p$ by (i), (ii) and (iii) respectively. Letting $r=\nu_p\big((1+\alpha-\gamma-\delta)_b\big)$, we obtain that
$$
p^{r-1}\cdot\frac{(1-\epsilon)_k(-b)_k(\gamma)_k(\delta)_k}{(\alpha-\epsilon+1)_k(\gamma+\delta-b-\alpha)_k}
$$
is $p$-integral for each $0\leq k\leq b$, i.e.,
$$
p^{r-1}\cdot{}_4F_3\bigg[\begin{matrix} 1-\epsilon&-b&\gamma&\delta&\\ &1&\alpha-\epsilon+1&\gamma+\delta-b-\alpha\end{matrix}\bigg|\,1\bigg]\in\Z_p.
$$
It follows that
 \begin{align*}
 \Psi(0)\equiv0\pmod{p^2}.
\end{align*}

Let
$$
\psi(x)={}_7F_6\bigg[\begin{matrix} -a&1-\frac12a&-a&-b+x&-c&-d&-e\\ &-\frac12a&1&1-a+b-x&1-a+c&1-a+d&1-a+e\end{matrix}\bigg|\,1\bigg].
$$
By (\ref{alphabetagammadeltaepsilon7F6H}),
\begin{align*}
 \psi(x)=\frac{(1-a)_d(1-a+c+b-x)_d}
 {(1-a+c)_d(1-a+b-x)_d}\cdot{}_4F_3\bigg[\begin{matrix} 1+e&-b+x&-c&-d\\ &1&1-a+e&a-c-d-b+x\end{matrix}\bigg|\,1\bigg]
 \end{align*}
always vanishes, since $(1-a)_d=0$. Hence letting $s=(\beta+b)/p$, we have
$$
\Psi(sp)\equiv\Psi(0)+sp\cdot\Psi'(0)\equiv\Psi(0)+sp\cdot\psi'(0)\equiv 0\pmod{p^2}.
$$
\end{proof}

\begin{proof}[Proof of Theorem \ref{alphabetagammadeltaepsilon7F6C}]
Let $a=\langle-\alpha\rangle_p$, $b=\langle-\beta\rangle_p$, $c=\langle-\gamma\rangle_p$, $d=\langle-\delta\rangle_p$ and $e=\langle-\epsilon\rangle_p$. Let $s=(\beta+b)/p$. Let $M=\min\{b,c,d,e\}$ and $N=\max\{b,c,d\}$.
In view of (ii), we have
$$
(\alpha-\beta+1)_k(\alpha-\gamma+1)_k(\alpha-\delta+1)_k(\alpha-\epsilon+1)_k\not\equiv 0\pmod{p}
$$
for any $0\leq k\leq M$, and 
$$
\frac{(\alpha)_k^2(\frac12\alpha+1)_k(\beta)_k(\gamma)_k(\delta)_k(\epsilon)_k}{(\frac12\alpha)_k(\alpha-\beta+1)_k(\alpha-\gamma+1)_k(\alpha-\delta+1)_k(\alpha-\epsilon+1)_k}\equiv0\pmod{p^2}
$$
for any $M+1\leq k\leq p-1$.
Further, since $N<b+c+d-a<p$ and $p-1-e\leq p+a-1-e$, 
$$
\frac{(1-\epsilon)_k(\beta)_k(\gamma)_k(\delta)_k}{(\alpha-\epsilon+1)_k(\beta+\gamma+\delta-\alpha)_k}\equiv0\pmod{p^2}
$$
for any $N<k\leq p-1$.
Let
$$
\Psi(x)={}_7F_6\bigg[\begin{matrix} \alpha&\alpha&\frac12\alpha+1&-b+x&\gamma&\delta&\epsilon\\ &1&\frac12\alpha&\alpha+b-x+1&\alpha-\gamma+1&\alpha-\delta+1&\alpha+\epsilon+1\end{matrix}\bigg|\,1\bigg]_M,
$$
$$
\Phi(x)={}_4F_3\bigg[\begin{matrix} 1-\epsilon&-b+x&\gamma&\delta\\ &1&1+\alpha-\epsilon&\gamma+\delta-\alpha-b+x\end{matrix}\bigg|\,1\bigg]_N
$$
and
$$
\Omega(x)=\frac{\Gamma(1+\alpha+b-x)\Gamma(1+\alpha-\gamma)\Gamma(1+\alpha-\delta)\Gamma(1+\alpha-\delta-\gamma+b-x)}{
\Gamma(1+\alpha)\Gamma(1+\alpha-\gamma+b-x)\Gamma(1+\alpha-\delta+b-x)\Gamma(1+\alpha-\delta-\gamma)}.
$$
By (\ref{alphabetagammadeltaepsilon7F6H}),
\begin{equation}\label{7F6CPsi0Omega0Phi0}
\Psi(0)=\Omega(0)\Phi(0).
\end{equation}

Recall that $a\leq\min\{b,c,d\}$ and $0<b+c+d-a<p$. It is easy to check that
\begin{align*}
&\frac{\Gamma(\alpha+b+1)}{\Gamma(\alpha+1)}\cdot\frac{\Gamma(\alpha-\gamma+1)}{\Gamma(\alpha-\gamma+b+1)}\cdot\frac{\Gamma(\alpha-\delta+1)}{\Gamma(\alpha-\delta+b+1)}\cdot\frac{\Gamma(\alpha-\delta-\gamma+b+1)}{\Gamma(\alpha-\delta-\gamma+1)}\\
=&(\alpha+a)\cdot\frac{\Gamma_p(\alpha+b+1)}{\Gamma_p(\alpha+1)}\cdot\frac{\Gamma_p(\alpha-\gamma+1)}{\Gamma_p(\alpha-\gamma+b+1)}\cdot\frac{\Gamma_p(\alpha-\delta+1)}{\Gamma_p(\alpha-\delta+b+1)}\cdot\frac{\Gamma_p(\alpha-\delta-\gamma+b+1)}{\Gamma_p(\alpha-\delta-\gamma+1)}.
\end{align*}
Thus we have
\begin{equation}\label{7F6COmega0alphaaaUpsilon0}
\Omega(0)=(\alpha+a)\cdot\Upsilon(0),
\end{equation}
where
$$
\Upsilon(x)=\frac{\Gamma_p(1+\alpha+b-x)\Gamma_p(1+\alpha-\gamma)\Gamma_p(1+\alpha-\delta)\Gamma_p(1+\alpha-\delta-\gamma+b-x)}{
\Gamma(_p1+\alpha)\Gamma_p(1+\alpha-\gamma+b-x)\Gamma_p(1+\alpha-\delta+b-x)\Gamma_p(1+\alpha-\delta-\gamma)}.
$$

Applying Lemma \ref{taylorexpansionrational}, we have
$$
\Psi(sp)-\Psi(0)\equiv sp\cdot\Psi'(0)\equiv sp\cdot\psi'(0)\pmod{p^2},
$$ 
where
\begin{align*}
\psi(x)=&{}_7F_6\bigg[\begin{matrix} -a&-a&1-\frac12a&-b+x&-c&-d&-e\\ &1&-\frac12a&1-a+b-x&1-a+c&1-a+d&1-a+e\end{matrix}\bigg|\,1\bigg]\\
=&\frac{(1-a)_d(1-a+c+b-x)_d}
 {(1-a+c)_d(1-a+b-x)_d}\cdot{}_4F_3\bigg[\begin{matrix} 1+e&-b+x&-c&-d\\ &1&1-a+e&a-c-d-b+x\end{matrix}\bigg|\,1\bigg]
\end{align*}
by (\ref{alphabetagammadeltaepsilon7F6H}). Since $1\leq a\leq d$, we have $\psi(x)=0$.
Further, clearly
$$
\Upsilon(sp)\Phi(sp)\equiv \Upsilon(0)\Phi(0)\pmod{p}.
$$
So
$$
\Psi(sp)-\Psi(0)\equiv 0\equiv (\alpha+a)\cdot\big(\Upsilon(sp)\Phi(sp)-\Upsilon(0)\Phi(0)\big)\pmod{p^2}.
$$ 
It follows from (\ref{7F6CPsi0Omega0Phi0}) and (\ref{7F6COmega0alphaaaUpsilon0}) that
$$
\Psi(sp)\equiv (\alpha+a)\cdot\Upsilon(sp)\Phi(sp)\pmod{p^2}.
$$
\end{proof}

\section{A conjecture of Deines-Fuselier-Long-Swisher-Tu}
\label{sectionconjDFLST}
\setcounter{lemma}{0}
\setcounter{theorem}{0}
\setcounter{corollary}{0}
\setcounter{remark}{0}
\setcounter{equation}{0}
\setcounter{conjecture}{0}

Deines, Fuselier, Long, Swisher and Tu \cite[Conjecture 18]{DFLST16} conjectured
\begin{equation}\label{DFLSTcongruence}
\sum_{k=0}^{p-1}\bigg(\frac{(\frac12)_k}{k!}\bigg)^2\cdot(-1)^k\equiv p^2\sum_{k=\frac{p-1}2}^{p-1}\bigg(\frac{k!}{(\frac32)_k}\bigg)^2\cdot(-1)^k\pmod{p^2}
\end{equation}
for any prime $p\equiv 1\pmod{4}$. Note that $(3/2)_k$ is not divisible by $p$ for any $0\leq k<(p-1)/2$. Clearly (\ref{DFLSTcongruence}) is an equivalent form of (\ref{F2112121F321113232p2}).

It is easy to see that $$\frac{(1)_k}{(\frac32)_k}=\frac{(1)_{p-1}}{(1-p)_{p-1-k}}\cdot\frac{(\frac12-p)_{p-1-k}}{(\frac32)_{p-1}}.$$
We have
$$
p^2\sum_{k=\frac{p-1}2}^{p-1}\frac{(1)_k^2}{(\frac32)_k^2}\cdot(-1)^k=\frac{p^2\cdot(1)_{p-1}^2}{(\frac32)_{p-1}^2}\sum_{k=0}^{\frac{p-1}2}\frac{(\frac12-p)_k^2}{(1-p)_k^2}\cdot(-1)^k.$$
We need the following identity due to Kummer \cite[Corollary 3.1.2]{AAR99}:
\begin{equation}
{}_2F_1\bigg[\begin{matrix} \alpha&\beta\\ &\alpha-\beta+1\end{matrix}\bigg|\,-1\bigg]
=\frac{\Gamma(\alpha-\beta+1)\Gamma(\frac12\alpha+1)}{\Gamma(\alpha+1)\Gamma(\frac{1}{2}\alpha-\beta+1)}.
\end{equation}
Let
$$
\Psi(x)=\sum_{k=0}^{\frac{p-1}2}\frac{(\frac12-x)_k^2}{(1-x)_k^2}\cdot(-1)^k.
$$
Clearly
\begin{align*}
\Psi'(0)=&\frac{d}{dx}\bigg(2\sum_{k=0}^{\frac{p-1}2}\frac{(\frac12-x)_k\cdot(\frac12)_k}{(1-x)_k\cdot(1)_k}\cdot(-1)^k\bigg)\bigg|_{x=0}\\
\equiv&\frac{d}{dx}\bigg(2\sum_{k=0}^{\frac{p-1}2}\frac{(\frac12+\frac12p-x)_k\cdot(\frac12-\frac12p)_k}{(1+p-x)_k\cdot(1)_k}\cdot(-1)^k\bigg)\bigg|_{x=0}\\
=&2\cdot\frac{d}{dx}\bigg(\frac{\Gamma(1+\frac p2-x)\Gamma(\frac{5}{4}-\frac x{2})}{\Gamma(\frac{3}2-x)\Gamma(\frac{3}{4}+\frac p2-\frac x2)}\bigg)\bigg|_{x=0}\pmod{p}.
\end{align*}
We have
\begin{align*}
&\frac{d}{dx}\bigg(\frac{\Gamma(1+\frac p2-x)\Gamma(\frac{5}{4}-\frac x{2})}{\Gamma(\frac{3}2-x)\Gamma(\frac{3}{4}+\frac p2-\frac x2)}\bigg)\bigg|_{x=0}\\
=&\frac{d}{dx}\bigg(\frac{(\frac{3}2-x)_{\frac{p-1}{2}}}{(\frac{5}{4}-\frac x{2})_{\frac{p-1}{2}}}\bigg)\bigg|_{x=0}
=\frac{(\frac{3}2)_{\frac{p-1}{2}}}{(\frac{5}{4})_{\frac{p-1}{2}}}\sum_{j=1}^{\frac{p-1}{2}}\bigg(\frac{1}{2j+\frac12}-\frac1{j+\frac12}\bigg)\\
=&{}_2F_1\bigg[\begin{matrix} \frac12(1+p)&\frac12(1-p)\\ &1+p\end{matrix}\bigg|\,-1\bigg]\cdot\sum_{j=1}^{\frac{p-1}{2}}\bigg(\frac{1}{2j+\frac12}-\frac1{j+\frac12}\bigg).
\end{align*}
Note that since $p\equiv 1\pmod{4}$,
\begin{align*}
&\sum_{j=1}^{\frac{p-1}{2}}\bigg(\frac{1}{2j+\frac12}-\frac1{j+\frac12}\bigg)=
\sum_{\substack{1\leq j\leq\frac{p-1}{2}\\ j\neq\frac{1}{4}(p-1)}}\frac2{4j+1}-
\sum_{j=1}^{\frac{p-3}{2}}\frac2{2j+1}\\
\equiv&\bigg(\sum_{j=\frac{p+3}4}^{\frac{p-1}2}\frac2{4j+1-p}-\sum_{j=1}^{\frac{p-5}4}\frac2{p-(4j+1)}\bigg)+\sum_{j=1}^{\frac{p-3}{2}}\frac2{p-(2j+1)}
=\sum_{j=1}^{\frac{p-1}{2}}\frac1j\pmod{p}.
\end{align*}
Thus
\begin{align*}
\Psi'(0)=2\Psi(0)\cdot H_{\frac{p-1}{2}}\pmod{p}.
\end{align*}

On the other hand, by Lemma \ref{Gammapadicderivativealphabeta},
\begin{align*}
p^2\cdot\frac{(1)_{p-1}^2}{(\frac32)_{p-1}^2}=&\frac{\Gamma_p(1+p)^2\Gamma_p(\frac12)^2}{\Gamma_p(1)^2\Gamma_p(\frac12+p)^2}\\\equiv&1+p\cdot
\frac{\Gamma_p(\frac12)^2}{\Gamma_p(1)^2}\cdot\frac{d}{dx}\bigg(\frac{\Gamma(1+x)^2}{\Gamma(\frac{p+1}{2}+x)^2}\bigg)\bigg|_{x=0}
=1-2pH_{p-1}\pmod{p^2},
\end{align*}
since $\Gamma_p(1/2)^2=(-1)^{\langle-\frac12\rangle_p}=1$.
Hence
\begin{align*}
p^2\sum_{k=\frac{p-1}2}^{p-1}\frac{(1)_k^2}{(\frac32)_k^2}\cdot(-1)^k
\equiv&
\frac{p^2(1)_{p-1}^2}{(\frac32)_{p-1}^2}\cdot\big(\Psi(0)+p\cdot\Psi'(0)\big)\\
\equiv&(1-2pH_{\frac{p-1}2})\cdot\Psi(0)(1+2pH_{\frac{p-1}2})\equiv\Psi(0)\pmod{p^2}.
\end{align*}

\begin{acknowledgment}
We are grateful to Professors George Andrews, Richard Askey, Dennis Stanton and Wadim Zudilin for their helpful comments on this paper.
\end{acknowledgment}

\end{document}